\documentclass[11pt]{article}
\usepackage{glauber}
\usepackage{physics}
\usepackage{mathtools}

\usepackage{rgd}
\usetikzlibrary{arrows.meta,positioning}
\newlength{\capht}
\allowdisplaybreaks

\begin{document}

\title{Sampling from the Sherrington--Kirkpatrick Model up to Criticality}
\author{
    Hang Du\thanks{MIT. \texttt{hangdu@mit.edu}.} \and
    Brice Huang\thanks{Stanford University. \texttt{bmhuang@stanford.edu}.} \and
    Sidhanth Mohanty\thanks{Northwestern University. \texttt{sidhanth.mohanty@northwestern.edu}.} \and
    Amit Rajaraman\thanks{MIT. \texttt{amit\_r@mit.edu}.}
}
\date{\today}
\maketitle
\begin{abstract} 
    We prove that the simulated annealing algorithm efficiently samples from the Sherrington--Kirkpatrick model at inverse temperature $\beta < 1$ to an arbitrary inverse polynomial accuracy in total variation.
\end{abstract}


\thispagestyle{empty}
\setcounter{page}{0}
\newpage
\tableofcontents
\thispagestyle{empty}
\setcounter{page}{0}
\newpage



\section{Introduction}

We study the problem of sampling from the Sherrington--Kirkpatrick (SK) model. This is an Ising model $\mu_{\beta\bW}$ with the random interaction matrix $\beta\bW$, for $\bW \sim \GOE(N)$, that is, $\bW_{ij} = \bW_{ji} \sim \calN\left( 0 , \frac{1}{N} \right)$. More explicitly, the density of this distribution is given by
\begin{equation}
    \label{eq:def-sk}
    \mu_{\beta\bW}(\sigma) \propto \exp\left( \frac{\beta}{2} \left\langle \sigma, \bW\sigma\right\rangle \right) \mper
\end{equation}
There is a vast literature dedicated to the study of Ising models, due to their connections to physics, optimization, statistical inference, among others; see \cite{WJ08,MM09,FV17} and references therein.
The SK model in particular has provided a rich testbed for techniques, and has been the subject of much work for several decades. 
Some landmarks in the study of the SK model are the Parisi formula predicting its free energy \cite{Par79,Par80a,Par80b}, foundational works towards rigorous study \cite{GT02,Gue03,ASS03}, Talagrand's proof of the Parisi formula \cite{Tal06}, and Panchenko's resolution of the ultrametricity conjecture \cite{Pan13}; we refer the reader to \cite{Tal10,Tal11,Pan12} for more details.

Of special interest has been the task of \emph{sampling} from the SK model in the high-temperature regime $0 \le \beta < 1$. It is believed that the canonical Glauber dynamics Markov chain mixes rapidly at this temperature \cite{SZ81,SZ82,MPV87}.

Proving this has been a major open problem in the past decade, as outlined in \cite{Sim20} and \cite[Problem 15]{BDLNR26}.
We defer a detailed discussion of prior results to \Cref{subsec:related-work}.

While rapid mixing of the fixed-temperature Glauber dynamics remains open, in this paper, we study \emph{simulated annealing}, a simple modification of the standard Markov chain Monte Carlo paradigm that is frequently used in practice across a variety of areas, including VLSI design, combinatorial optimization, and scheduling among others; see \cite{KGV83,SS85,LA87,WLL88,JAMS89,YRN94,KAJ94,HJJ03}.

The simulated annealing algorithm is a time-dependent Markov chain where each step is from the Glauber dynamics Markov chain, but the inverse temperature starts at $0$ and is slowly incremented to $\beta$ over the course of running the Markov chain.
The intuitive picture a reader should keep in mind is that while a Markov chain to sample from a target distribution $\mu_{\beta \bW}$ may not mix from a generic initialization, it would nevertheless sample from $\mu_{\beta \bW}$ when initialized at $\mu_{(\beta-\delta)\bW}$.

Our main result is the following.

\begin{theorem}
    \label{th:main}
    Let $0 \le \beta < 1$ and let $a > 0$ be a (large) constant. With probability at least $1 - N^{-a}$ over $\bW \sim \GOE(N)$, the output distribution $\wh{\mu}$ of the simulated annealing algorithm run for $O_{a,\beta}(N \log N)$ steps of the Glauber dynamics per iteration, with inverse temperature gap $c/N$, run on the SK model $\mu_{\beta\bW}$, is such that $\dtv{\wh{\mu}}{\mu_{\beta\bW}} \le N^{-a}$.
\end{theorem}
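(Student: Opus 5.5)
The plan is to reduce the statement to a one-step claim about warm starts and then chain it along the annealing schedule. Set $\beta_k = kc/N$ for $k = 0,1,\dots,K$ with $K = \lceil \beta N/c\rceil$, and write $\mu_k = \mu_{\beta_k\bW}$. Let $P_k$ be the Glauber kernel for $\mu_k$, and let $\hat\mu_k$ be the law of the state after iteration $k$. At $\beta_0=0$ the target is uniform on $\{\pm1\}^N$, so $\hat\mu_0 = \mu_0$ exactly. Since each $P_k$ is a TV contraction, the triangle inequality gives
\[
\dtv{\hat\mu_k}{\mu_k} \le \dtv{\hat\mu_{k-1}}{\mu_{k-1}} + \dtv{\mu_{k-1}P_k^{T}}{\mu_k}.
\]
It therefore suffices to show that, with probability $1-N^{-a}$ over $\bW$, simultaneously for all $k \le K$,
\[
\dtv{\mu_{k-1}P_k^{T}}{\mu_k} \le N^{-a-1}
\qquad \text{for } T = O_{a,\beta}(N\log N).
\]
Summing over the $K = O(N)$ steps then gives the theorem.

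For the one-step estimate I would use the standard warm-start bound $\dtv{\nu P^T}{\mu} \le \tfrac12 \sqrt{\chi^2(\nu\|\mu)}\,(1-\gamma)^T$, where $\gamma$ is the spectral gap of $P$. Alternatively, one can use the entropy version with a modified log-Sobolev constant.

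\textbf{The warm start.} The likelihood ratio is
\[
\frac{d\mu_{k-1}}{d\mu_k}(\sigma) = \frac{Z_k}{Z_{k-1}}\exp\Bigl(-\tfrac{c}{2N}\langle\sigma,\bW\sigma\rangle\Bigr).
\]
Since $\|\bW\|_{op}\le 2.1$ w.h.p., the exponent is $O(c)$ uniformly in $\sigma$. Hence $\chi^2(\mu_{k-1}\|\mu_k) = O(1)$, and the warm start costs nothing.

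\textbf{Restricted mixing.} The real issue is the decay rate. A global spectral gap of order $1/N$ for Glauber at every $\beta_k<1$ is precisely the open problem, so I would instead prove a \emph{restricted} or conditional mixing statement. The idea is to exhibit a set $G_k\subseteq\{\pm1\}^N$ with the following three properties:
\begin{itemize}
\item $\mu_k(G_k^c)\le N^{-10a}$;
\item the chain started from $\mu_{k-1}$ stays in $G_k$ for $T$ steps with probability $1-N^{-2a}$;
\item Glauber restricted to $G_k$ contracts at rate $\Omega(1/N)$.
\end{itemize}
The natural candidate for $G_k$ is the set of configurations whose local fields, or whose overlap structure with the TAP/AMP fixed point, are typical. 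To verify the first two properties I would use stochastic localization or the known high-temperature concentration results for the SK model, such as the Gibbs measure being close to a product measure in the Wasserstein sense and quenched concentration of $\langle \sigma,\bW\sigma\rangle/N$.

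\textbf{The main obstacle.} The hard step is the third property, a log-Sobolev or spectral estimate for the restricted chain all the way up to $\beta<1$. Existing spectral-gap techniques, such as Bauerschmidt--Bodineau and Eldan--Koehler--Zeitouni, only reach $\beta<1/4$. My first attempt would combine stochastic localization with a tilted, quadratic-plus-external-field decomposition of $\mu_k$: write $\mu_k$ as a mixture of measures $\mu_{\beta\bW - t I, h}$ localized along the Hubbard--Stratonovich field. For $\beta<1$, the mixture components with typical $h$ should satisfy an approximate Poincaré inequality, obtained from a TAP/AMP-based analysis of the covariance $\mathrm{Cov}(\mu_{k,h}) \preceq C_\beta I$ that holds w.h.p. over $h$.

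Approximate conservation of the spectral gap under this decomposition then yields mixing from any warm start drawn from $\mu_{k-1}$, up to an exceptional mass controlled by the small inverse-temperature gap. The covariance bound up to criticality, uniformly over the $O(N)$ temperatures, is where I expect essentially all of the difficulty. To get uniformity I would union bound over a polynomial grid of temperatures using concentration of $\bW$.
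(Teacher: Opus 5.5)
\textbf{Verdict: there is a genuine gap.} The reduction is sound, but the proposal defers the actual content of the theorem, and its intermediate target is the wrong one.

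\textbf{What is right.} Your warm-start bookkeeping is correct. The $c/N$ gap makes $d\mu_{k-1}/d\mu_k$ bounded by $e^{O(c)}$ uniformly, and chaining with the triangle inequality over $O(N)$ temperatures is how the annealing bound is assembled. Your decomposition into $\mu_{\beta\mathbf{W}-tI,h}$ is, on the hypercube, just stochastic localization, since the $tI$ term is constant. That is the right scheme.

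\textbf{The intermediate target.} You aim for a set $G_k$ of mass $1-N^{-10a}$ on which restricted Glauber has gap $\Omega(1/N)$. This is never established, and localization does not produce such a set. What localization gives, combined with a covariance bound that holds only with high probability along the path, is a weak Poincar\'e inequality $\mathcal{E}(f,f)\ge \rho^{-1}(\mathrm{Var}(f)-\varepsilon\|f\|_\infty^2)$. Here $\rho=O(N)$ comes from the endpoint Poincar\'e constant times $e^{KT}$. Also $\varepsilon\approx\delta e^{KT}$, where $\delta$ is the failure probability of the path event. Together with your $L^\infty$-warm start, this gives per-temperature error $e^{-m/(2\rho)}+\sqrt{\varepsilon}$, with no $G_k$ needed. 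So the exceptional mass is controlled by $\delta$, not by the temperature gap as your last paragraph says. Since $O(N)$ errors of size $\sqrt{\varepsilon}$ accumulate, you need $\delta=N^{-\Omega(a)}$ at every temperature. Your ``w.h.p.\ over $h$'' does not address this, and the existing TAP/AMP results you would lean on only give probability $1-o(1)$.

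\textbf{The missing core.} The substance of the theorem is the covariance bound, which you leave as ``a TAP/AMP-based analysis''. Three concrete ingredients are missing.
\begin{enumerate}
\item The localization field is $h=t\sigma+B_t$ with $\sigma\sim\mu_{\beta\mathbf{W}}$. Its law depends on $\mathbf{W}$ in an uncontrolled way, so you cannot analyze it directly. The paper passes to the planted model $\mathbf{W}+\frac{\beta}{N}\mathbf{x}\mathbf{x}^\top$. There $\sigma$ can be replaced by the plant, so $h=t\mathbf{x}+\sqrt{t}\,\mathbf{g}$ has an explicit law, and contiguity transfers the conclusion back.
\item For each fixed $t$ you need $\|\mathrm{Cov}(\mu_t)\|\le L$ with probability $1-O(N^{-a})$. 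The paper does this as follows.
\begin{itemize}
\item It writes $K\,\mathrm{Cov}-\mathrm{Id}=\mathrm{diag}(\lambda)\mathrm{Cov}+R$, where $K$ is the TAP Hessian.
\item It bounds $\|R\|$ via cavity-field Taylor expansions. The remainders are controlled by cumulant moment bounds: crude ones from overlap concentration, then improved ones from differentiating the TAP residual.
\item It proves $K\succeq\gamma$ by adapting Celentano's post-AMP Sudakov--Fernique argument, which needs an extra rank-one term. It then boosts the $1-o(1)$ probability to $1-N^{-a}$ using Gaussian isoperimetry and Lipschitzness of the AMP iterate.
\item A net over times extends the bound to the whole path.
\end{itemize}
\item You also need a Poincar\'e inequality for the endpoint measure at a large constant time. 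The paper imports this from Bandeira--El Alaoui--R\"odder.
\end{enumerate}
Without these, the proposal restates the problem rather than solving it.
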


This is the first algorithm sampling from the SK model for all $\beta < 1$ up to an arbitrary inverse polynomial accuracy---the previous best results \cite{Cel24,LSS26a} only attain a total variation distance that is slightly $o(1)$. 
Another contribution is that our algorithm of choice is based on a Markov chain, which has consequences for the spiked Wigner model. In this statistical inference task, for some $\bx \sim \{\pm 1\}^N$, we are tasked with inferring $\bx$ from $\bW + \frac{\lambda}{N} \bx\bx^\top$ for $\bW \sim \GOE(N)$. In particular, using the techniques of \cite{LMRRW24,RW26}, our result makes progress towards establishing that Markov chains succeed at non-trivial recovery for all $\lambda > 1$, matching the so-called BBP phase transition \cite{BBP05} for when non-trivial recovery is possible. Completing this proof would require a version of the above statement not just for the zero-field model, but even for non-zero external fields aligned with the all-ones direction.

We prove the above by establishing a \emph{weak \Poincare inequality} \cite{Aid98,RW01,Mat06,HMRW24} for the SK model. A distribution $\pi$ on $\{\pm 1\}^N$ is said to satisfy a $(\rho,\eps)$-weak \Poincare inequality if for all functions $f$,
\begin{equation}
    \label{eq:wpi}
    \calE(f,f) \ge \frac{1}{\rho} \left( \Var_{\pi}[f] - \eps \|f\|_{\infty}^2 \right) \mcom 
\end{equation}
where $\calE(f,f)$ is the standard Dirichlet form.
When $\eps = 0$, this devolves into the usual \Poincare inequality, equivalent to a spectral gap for the Glauber dynamics.
For nonzero $\eps$, this yields rapid mixing (down to total variation error $\approx \sqrt{\eps}$) from a warm start.
\cite{HMRW24} establishes that if all the $\{\mu_{\alpha\bW}\}_{0 \le \alpha \le \beta}$ satisfy $(\rho,\eps)$ weak \Poincare inequalities with $\eps$ a sufficiently small subconstant quantity, then this yields guarantees for simulated annealing.
We prove \Cref{th:main} by establishing a high-probability weak \Poincare inequality for each $\mu_{\beta\bW}$, and taking a union bound over the inverse temperatures encountered along the trajectory of simulated annealing.

\begin{theorem}
    \label{th:wpi}
    Let $0 < \beta < 1$ and $a > 0$ be constants. With probability at least $1 - N^{-a}$ over $\bW \sim \GOE(N)$, the SK model $\mu_{\beta\bW}$ satisfies an $(O(N) , N^{-a})$-weak \Poincare inequality.
\end{theorem}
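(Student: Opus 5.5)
We will obtain the weak \Poincare inequality through stochastic localization: we decompose $\mu_{\beta\bW}$ into a mixture of simpler measures, each satisfying a genuine \Poincare inequality, and we allow an exceptional set of mixture mass at most $N^{-a}$.

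Concretely, write $\beta\bW \preceq \beta(\lambda_{\max}(\bW)+\eta)\Id$ and fix the PSD matrix $\bA = \gamma\Id - \beta\bW$ with $\gamma = \beta(2+\eta)$. The Hubbard--Stratonovich identity then expresses $\mu_{\beta\bW}$, up to the constant shift $\gamma$, as a mixture over $\by \sim \nu$ of product-like measures. A better route is the Bauerschmidt--Bodineau/Eldan--Koehler--Zeitouni style localization run only partially, up to a time $t$. Then
\[
\mu_{\beta\bW} = \E_{\by}\, \mu_{\by},\qquad \mu_{\by}(\sigma) \propto \exp\Big(\tfrac12\langle \sigma, (\beta\bW - t\Id')\sigma\rangle + \langle \by,\sigma\rangle\Big),
\]
where the residual interaction has operator norm below $1$ in the relevant sense. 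The standard variance decomposition gives
\[
\Var_\mu f = \E_{\by}\Var_{\mu_{\by}} f + \Var_{\by}\big(\E_{\mu_{\by}} f\big).
\]
For the first term, we would show that with high probability over $\bW$, for all $\by$ outside a bad set $B$ with $\nu(B)\le N^{-a}$, the measure $\mu_{\by}$ has spectral gap $\Omega(1/N)$. The contribution of $B$ is at most $\nu(B)\|f\|_\infty^2$, which is exactly the $\eps$ slack, and Dirichlet forms average over the mixture. For the second term, we use the usual Bakry--\'Emery/covariance bound along the localization path: it is controlled by $\int_0^t \|\Cov(\mu_{s})\|\,ds$ times the Dirichlet form. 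This reduces to showing $\|\Cov(\mu_{\by,s})\|_{\mathrm{op}} = O(1)$ along the path, again off a bad set of small measure.

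The key input in both terms is therefore an operator-norm covariance bound for tilted SK measures $\mu_{\beta'\bW,\by}$ with a random external field $\by$ generated by the localization process, uniformly in $\beta' \le \beta < 1$. We would prove it by the TAP/AMP approach used in \cite{Cel24}: with high probability the tilted measure is close to a TAP fixed point, and the covariance is close to $(\Id(1+\beta^2(1-q)) - \beta\bW + \dots)^{-1}$, which is bounded when $\beta<1$. To reach polynomially small failure probability we cannot use the $o(1)$ concentration of \cite{Cel24,LSS26a}. Instead we plan to bound the failure probability by high moments, using $\E\|\Cov\|^{k}$ with $k \asymp a$ computed via the cavity method or a Gaussian interpolation bound, combined with Markov's inequality. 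This step is the main obstacle, because the bad set must have measure $N^{-a}$, not merely $o(1)$.

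Finally, for the local \Poincare inequality for good $\mu_{\by}$, once the residual coupling matrix has norm $<1$ and the field is typical, we invoke the spectral-independence/entropic-independence results known for Ising models with $\|J\|_{\mathrm{op}}<1$ (Eldan--Koehler--Zeitouni). Together with $\Var_\mu$ decomposition, this gives $\calE(f,f)\ge \frac{c}{N}(\Var_\mu f - N^{-a}\|f\|_\infty^2)$. The probability over $\bW$ is then handled by a union bound combined with Gaussian concentration of $\lambda_{\max}(\bW)$, which yields $\lambda_{\max}\le 2+\eta$ with probability $1-e^{-cN}$.
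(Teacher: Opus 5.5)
Your architecture is the right one; it is \Cref{lem:sl-wpi}: localize, obtain a \Poincare inequality at the endpoint, pay for the lost variance with a path bound $\sup_t\|\Cov(\mu_t)\|_{\mathrm{op}}\le K$, and absorb an exceptional set of mass $N^{-a}$ into the slack. But the step you flag as the main obstacle fails as proposed.

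\textbf{The covariance bound.} First, the path bound enters exponentially, not linearly in the Dirichlet form: $\frac{\dif}{\dif t}\E\Var_{\mu_t}f=-\E\|\Cov_{\mu_t}(\sigma,f)\|_2^2\ge -K\,\E\Var_{\mu_t}f$, which is why \Cref{lem:sl-wpi} loses a factor $e^{KT}$. Second, bounding $\E\|\Cov\|_{\mathrm{op}}^k=O_k(1)$ for constant $k\asymp a$ and applying Markov only gives $\|\Cov\|_{\mathrm{op}}\le N^{a/k}$ off an event of probability $N^{-a}$. It cannot give a constant $K$ with polynomially small failure probability, and $K=N^{\delta}$ produces a factor $e^{N^{\delta}T}$ in both parameters. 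This is exactly the obstruction noted in \Cref{sec:warmup}.

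The paper never applies Markov to $\|\Cov\|_{\mathrm{op}}$ itself. It writes $\Cov=K^{-1}(K\Cov)$ with the TAP Hessian \eqref{eq:inv-cov-pred} and proves two separate high-probability statements.
\begin{enumerate}
\item[(i)] $\|K\Cov-\Id\|_{\mathrm{op}}=O(1)$. This comes from a row-by-row cavity/Taylor expansion whose error terms are of size $N^{-1/2-c}$ in $L^p$, so Markov has polynomial slack. The single borderline term $\kappa_3^{(i)}[\bM_{i,-i}^{\otimes 2}-\Id/N]$ is handled by hypercontractive concentration plus the high-probability bound $\|\kappa_3^{(i)}\|_F\le C\sqrt N$. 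The off-diagonal part is then bounded in Frobenius norm by summing rows.
\item[(ii)] $K\succeq\gamma$. For $t$ bounded away from $0$, this comes from Celentano's AMP/post-AMP Sudakov--Fernique argument, adapted to the extra rank-one term, at probability only $1-o(1)$. It is then boosted to $1-N^{-a}$ using the Lipschitz dependence of the AMP iterate on $(\bW,\bg)$, a neighbourhood version of the convexity statement, and Gaussian isoperimetry.
\end{enumerate}
So your dismissal of the $o(1)$ results of \cite{Cel24} is too hasty: they are used, after boosting. Meanwhile your sentence ``the covariance is close to the inverse TAP Hessian, which is bounded'' silently assumes this nontrivial lower bound on $K$ at nonzero field.

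\textbf{The decomposition.} This also needs repair. In the isotropic scheme (field $t\bsigma+\sqrt t\,\bg$), subtracting $t\Id$ from the coupling does nothing on $\{\pm1\}^N$. So the residual interaction is still $\beta\bW$, of spectral width $\approx 4\beta$, and the Eldan--Koehler--Zeitouni criterion is unavailable for $\beta\ge 1/4$. The paper instead gets the endpoint \Poincare inequality from the strength of the field $T\bx+\sqrt T\,\bg$ at a large constant $T$ (\Cref{lem:large-t-poinc}, via \cite{BER26}). If you instead strip off a $\bW$-dependent PSD matrix such as $\Sigma=(\beta-\beta')(\bW+(2+\eta)\Id)$, leaving $\beta'\bW$ with $\beta'<1/4$, the endpoint becomes easy. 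But the path field then has law $\calN(\Sigma_s\bsigma,\Sigma_s)$, which is correlated with the disorder, and the cavity and AMP analyses are not set up to handle the resulting path measures.

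\textbf{Planting.} In either case the field is built from $\bsigma\sim\mu_{\beta\bW}$, which depends on $\bW$ in an intractable way. The missing ingredient is the planting step. By contiguity (\Cref{lem:planted-contiguity}) one may work with the planted model, where $\bsigma$ can be replaced by the spike $\bx$. The path is then $\mu_{\beta\bM,\,t\bx+\sqrt t\,\bg}$ with $\bg$ independent of the disorder, and this is what makes the overlap concentration of \cite{EKJ17} and the cavity expansion available.
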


To establish this weak \Poincare inequality, we use the \emph{localization schemes} framework of Chen--Eldan \cite{CE22}.
We elaborate more on the details of this framework and how we use it in \Cref{subsec:tech-overview}; using techniques from prior works, this reduces the proof of \Cref{th:main} to establishing high-probability covariance bounds for a suitably modified SK model with a random external field---this is the primary technical contribution of our work.
A special case of our analysis is a high-probability covariance bound for the zero-field SK model.

\begin{restatable}{theorem}{zerofieldcovbound}
    \label{th:zero-field-cov-bound}
    Let $0 < \beta < 1$ and $a > 0$. Then, for some bounded constant $L = L_{\beta,a}$, with probability at least $1 - N^{-a}$ over $\bW \sim \GOE(N)$, the SK model \eqref{eq:def-sk} is such that $\Cov\left( \mu_{\beta\bW} \right) \psdle L\cdot\Id$.
\end{restatable}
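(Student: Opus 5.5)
Fix a direction: it suffices to show that, with probability $1-N^{-a}$, for every unit vector $v\in\R^N$ we have $\Var_{\mu_{\beta\bW}}[\langle v,\sigma\rangle]\le L$. A union bound over a net of directions is far too lossy, since a $1/2$-net of the sphere has size $e^{O(N)}$ while the variance concentrates only polynomially. So the plan is to prove an operator-norm bound directly.

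The operator-norm bound will come from a stochastic localization (Eldan) type argument. Run the Gaussian channel $y_t = t\sigma + B_t$ for $t\in[0,T]$. The tilted measures are $\mu_t\propto \mu_{\beta\bW}(\sigma)\exp(\langle h_t,\sigma\rangle)$, i.e.\ SK models with random external field $h_t$, and $\Cov(\mu_{\beta\bW}) = \E[\Cov(\mu_T)] + \int_0^T \E[\Cov(\mu_t)^2]\,dt$. Choose $T$ a large constant, so that the channel is informative enough that $\Cov(\mu_T)\psdle L_0\Id$ with high probability. At this point the measure is in the replica-symmetric high-field regime: TAP/AMP concentration gives the Bayes-optimal posterior at signal-to-noise ratio above the threshold $\beta<1$. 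One then needs to control the integral, which amounts to a bootstrapping argument: show that $\|\Cov(\mu_t)\|_{\op}\le L$ along the whole path. To do so, write $\Cov(\mu_t)\approx (\Id\cdot\text{diag}(1-m_i^2)^{-1} - \beta\bW + \text{Onsager})^{-1}$ via the TAP/cavity linear-response identity. Bound its operator norm using the fact that $\beta\|\bW\|_{\op}\to 2\beta$ and that the Onsager correction shifts the spectrum so that the resolvent stays bounded exactly when $\beta<1$ (spectral edge $2\beta$ versus diagonal $1+\beta^2(1-q)$, with $1+\beta^2-2\beta=(1-\beta)^2>0$).

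High probability at level $N^{-a}$ requires replacing the usual in-expectation/$1-o(1)$ arguments with concentration. I would condition on the event $\|\bW\|_{\op}\le 2+\eta$, which holds with probability $1-e^{-cN}$. I would then use Gaussian concentration of the free energy together with a planted-model (contiguity) comparison: under the planted model the pair $(\sigma,\bW)$ is exchangeable with the field, and the failure probability of the TAP approximation can be made $N^{-a}$ by taking moment bounds of order $a$ on the overlap fluctuations $\langle (\sigma^1-m)\cdot(\sigma^2-m)\rangle/N$.

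The main obstacle is the linear-response step: the operator-norm control of $\Cov(\mu_t)$ uniformly in $t$ with probability $1-N^{-a}$, not merely trace or Frobenius control. I would first try a direct argument. Show that $\E\langle \langle v,\sigma-m\rangle^{2k}\rangle$ is bounded uniformly for eigenvectors $v$ of $\bW$, using the cavity method in the eigenbasis. Then handle generic $v$ by the resolvent representation, so that only the top $O(1)$ eigen-directions need individual treatment, giving at most $N$ directions and a polynomial union bound. The case $t=0$ yields the theorem.
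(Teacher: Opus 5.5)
There is a genuine gap. The step you flag as ``the main obstacle'' is the whole content of the theorem, and neither the localization layer nor the eigenbasis plan supplies it.

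\textbf{The localization layer is circular.} The identity $\Cov(\mu_0)=\E\,\Cov(\mu_T)+\int_0^T\E\,\Cov(\mu_t)^2\,dt$ is correct, but a bound on $\Cov(\mu_T)$ says nothing about $\Cov(\mu_0)$. For a Gaussian with covariance $A$ one has $\Cov(\mu_T)=(A^{-1}+T\Id)^{-1}\psdle T^{-1}\Id$ whatever $A$ is. The integrand near $t=0$ is essentially $\Cov(\mu_0)^2$, so, as you concede, you need the bound at $t=0$, which is the statement itself.

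The detour also makes things harder. Your scalar comparison of $1+\beta^2(1-q)$ with $2\beta$ is only valid when $m=0$. At $t>0$ the diagonal $\mathrm{diag}((1-m_i^2)^{-1})$ is inhomogeneous and correlated with $\bW$, and lower-bounding $K$ needs Celentano's post-AMP Sudakov--Fernique argument. At $t=0$, by contrast, $K=(1+\beta^2)\Id-\beta\bW\psdge(1-\beta)^2-o(1)$ is just the GOE edge estimate you already have.

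\textbf{The eigenbasis plan fails on three counts.}
\begin{enumerate}
\item There is no cavity method in the eigenbasis of $\bW$. The eigenvectors depend on all of the disorder, so removing a spin decouples nothing.
\item A union bound over the $N$ eigenvectors only controls the diagonal of $\Cov$ in that basis, which does not bound $\|\Cov\|_{\mathrm{op}}$. A component $c\,ww^\top$ with $w$ delocalized over the eigenbasis adds only about $c/N$ to each diagonal entry.
\item Handling generic $v$ ``by the resolvent representation'' presupposes $\Cov\approx K^{-1}$ in operator norm, which is what must be proved.
\end{enumerate}
Moreover, moment bounds of order $a$ on overlap fluctuations alone give $\|\Cov\|_{\mathrm{op}}=O_{L^p}(1)$. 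By Markov this yields only $\|\Cov\|_{\mathrm{op}}\le N^{\delta}$ with probability $1-N^{-a}$, not a constant bound.

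\textbf{The missing idea.} What suffices is Frobenius control of the \emph{residual} $K\Cov-\Id$, not of $\Cov$, and this is reachable row by row through the coordinate cavity.
\begin{itemize}
\item The law of total covariance gives $(K\Cov-\Id)_i=\alpha^i\Cov_i+Y_i-Z_i$, with $Y_{i,-i}=d^i-\beta\Cov^{(i)}\bW_i$ and $Z_{i,-i}=\beta\bW_i^\top(D^i-\Cov^{(i)})$. Here $d^i,D^i$ are the cavity mean and covariance at field $\beta\bW_i$.
\item This field has entries of order $N^{-1/2}$ and is independent of the cavity model. Taylor expanding in it cancels the leading term exactly. What remains are contractions $\kappa^{(i)}_r[\bW_i^{\otimes\ell}]$ of cavity cumulants against an independent Gaussian, whose moments follow from Wick's formula and hypercontractivity.
\item In zero field the main remainder is $\kappa_4^{(i)}[\bW_i^{\otimes 3}]$. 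Overlap concentration gives only $\|\kappa_4\|_{\mathrm F}\lesssim N$, which is too weak. The paper upgrades this to $N^{3/4}$, together with $\|\kappa_3\|_{\mathrm F}\lesssim N^{1/2}$ and $\|\Cov\|_{\mathrm{op}}=O_{L^p}(1)$, by bootstrapping through TAP-residual identities such as $K\kappa_3=\nabla^2 r-D^2T\{\Cov,\Cov\}$ combined with $K\psdge\gamma$.
\item The improved cumulant bounds, plus high-probability bounds $\|\Cov\|_{\mathrm F},\|\kappa_3\|_{\mathrm F}\le C\sqrt N$ with a universal $C$, make each row error $O(N^{-1/2})$ with probability $1-N^{-a}$. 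Each piece is either polynomially below its threshold in $L^p$ or covered by these universal-constant bounds.
\item Hence $\|Y\|_{\mathrm F},\|Z\|_{\mathrm F}=O(1)$ and $\max_i|\alpha^i|=o(1)$, so $\Cov=(K-\mathrm{diag}(\alpha))^{-1}(\Id+Y-Z)$ has bounded operator norm.
\end{itemize}
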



\subsection{Technical overview}
\label{subsec:tech-overview}

In this section, we provide an overview of how we reduce \Cref{th:wpi} to establishing appropriate covariance bounds using localization schemes.
Localization schemes are a principled approach to designing measure decompositions, providing a measure-valued martingale $(\mu_{t})_{0 \le t \le T}$ with $\mu_0$ being the Gibbs measure.
For this decomposition to be helpful in proving a weak \Poincare inequality for the Gibbs measure, we will desire the following two properties:
\begin{enumerate}
    \item First, the (random) endpoint measure $\mu_T$ is ``nice'' in some sense.
    In our case, this will mean proving that $\mu_T$ satisfies a \Poincare inequality with high probability, say at least $1-\eps$.
    \item Second, we may establish some pathwise control that will allow us to lift this niceness back up, to proving a weak \Poincare inequality for the base measure $\mu_0$.
    In our case, this will mean proving that with high probability, say at least $1-\eps$, over the localization path, the covariances of all the $\mu_t$ are spectrally bounded by some constant (independent of $N$).
\end{enumerate}
Given this, we can execute the following string of inequalities, for $\calE$ being the event that the endpoint measure satisfies a \Poincare inequality, so $\Pr[\calE] \ge 1 - \eps$:
\begin{align*}
    \calE_{\mu}(f,f) &\ge \E \calE_{\mu_T}(f,f) \\
        &\gtrsim \E \Var_{\mu_{T}}[f] \bone_{\calE} \\
        &\ge \E \Var_{\mu_{T}}[f] - 4\eps \|f\|^2_{\infty} \\
        &\gtrsim \Var_{\mu}[f] - C\eps \|f\|^2_\infty \mper
\end{align*}
In the above $\gtrsim$ hides $\mathrm{poly}(N)$ factors.
The non-trivial inequalities here are the second and fourth: the second uses the high-probability \Poincare inequality for the endpoint measure, and the fourth uses the pathwise control.
The latter uses It\^{o}'s formula to compute the time-derivative of the expected variance, and controls this with the aforementioned ``pathwise control'' of the $(\mu_t)$ measures; we refer the interested reader to \cite[Section 4]{CE22} and \cite[Section 6]{HMRW24} for more details.

Our localization scheme of choice will be \emph{stochastic localization} \cite{Eld13}. For a measure $\mu_0 = \mu$ on $\{ \pm 1 \}^N$, stochastic localization is defined by the following measure-valued martingale:
\begin{equation}
    \label{eq:sl-sde}
    \dif \mu_t(x) = \left\langle x - \bary(\mu_t) , \dif B_t \right\rangle \cdot \mu_t(x) \mper
\end{equation}
\cite{EM22} provides an alternate perspective on this, which will be far more helpful for us.
Draw $\bsigma \sim \mu$ and a Brownian motion $(B_t)_{t \ge 0}$. Then, the process $(\mu_t)_{t \ge 0}$ is equivalently given by the measure, which at time $t$, is the posterior on $\bsigma$ conditioned on $\bz_t = t\bsigma + B_t$.
At time $0$, $\bz_t$ is uninformative so this is simply the Gibbs measure.
On the other hand, at infinite time, $\frac{1}{t} \bz_t$ is simply equal to $\bsigma$, and the posterior is then just the Dirac delta on this (random) point $\bx$ which is marginally distributed according to $\mu$. 
This posterior measure has an easy explicit formula:
\begin{equation}
    \label{eq:sl-trajectory}
    \mu_t(y) \propto \exp\left( \frac{\beta}{2} y^\top \bW y + \langle t \bsigma + B_{t} , y\rangle \right) \mper
\end{equation}
We now wish to show that $\Cov(\mu_t)$ is bounded with high probability, and that $\mu_{T}$ satisfies a \Poincare inequality with high probability.
However, as stated, this is rather intractable: the random variable $\bsigma$ depends on the disorder $\bW$ in some complicated way, which makes the analysis of $\mu_{t}$ very difficult.

To get around this issue, we shall shift to the so-called \emph{planted} model, first used by Achlioptas--Coja-Oghlan \cite{AC08} in random CSPs. 
This technique is now standard in the study of stochastic localization on the SK model \cite{EMS22,EMS23,HMP24,HMRW24}.
The planted model $\pi$ is defined by the following random Hamiltonian: draw $\bx \sim \mathrm{Unif}\{\pm 1\}^N$, and $\bW \sim \GOE(N)$, and set
\[ \pi(y) \propto \exp\left( \frac{\beta}{2} \left( y^\top \bW y + \frac{\beta}{N} \langle \bx,y \rangle^2 \right) \right) \mper \]
The planted model is helpful for two reasons.
The first, perhaps unsurprisingly, is that we may argue about it instead of the original SK model; they are mutually contiguous, in that high-probability events for either Hamiltonian transfer to high-probability events for the other, in a manner made precise in \Cref{lem:planted-contiguity}.
For example, we may use this statement with the event being ``the model satisfies a weak \Poincare inequality.''

The second, which is more interesting, is that the trajectory of stochastic localization is significantly more tractable in this model. In fact, the posterior on $\bx$ conditioned on $\bM \defeq \bW + \frac{\beta}{N} \bx\bx^\top$ is precisely the distribution $\pi$. 
This allows us to swap out the sample $\bsigma$ used in \eqref{eq:sl-trajectory} for the hidden ``plant'' $\bx$! That is, we are interested in models of the form
\[ \pi_{t}(y) \propto \exp\left( \frac{\beta}{2} y^\top \left( \bW + \beta \frac{\bx\bx^\top}{N} \right) y + \left\langle t\bx + \bg\sqrt{t} , y \right\rangle \right) \mcom \]
for $\bg \sim \calN(0,\Id)$. We shall prove a high-probability covariance bound for each fixed $t$ (not depending on the disorder), then use a union bound and a simple perturbation argument to translate this to a high-probability covariance bound for all $t$.
The problem then reduces to establishing high-probability covariance bounds for these models.

\begin{restatable}{theorem}{thcovbounds}
    \label{th:cov-bounds}
    Let $0 < \beta < 1$ and let $a > 0$ be a (large) constant. Fix $t > 0$ bounded, possibly decaying with $N$. For $\bg \sim \calN(0,\Id)$ and $\bx \sim \mathrm{Unif}\{\pm 1\}^N$, consider the probability measure $\mu_{t}$ on $\{\pm 1\}^N$ with
    \[ \mu_t(y) \propto \exp\left( \frac{\beta}{2} y^\top \left( \bW + \beta \frac{\bx\bx^\top}{N} \right) y + \left\langle t\bx + \bg\sqrt{t} , y \right\rangle \right) \mper \]
    Then, there is some constant $L = L_{\beta,a}$ such that with probability $1 - O(N^{-a})$, $\|\Cov(\mu_t)\| \le L$. 
\end{restatable}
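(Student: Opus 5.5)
Throughout, write $\bM \defeq \bW + \frac{\beta}{N}\bx\bx^\top$ for the planted matrix and $h \defeq t\bx + \sqrt{t}\,\bg$ for the external field.

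My plan is to control $\Cov(\mu_t)$ by bounding the moments of the overlap between two independent replicas. Let $\sigma^1,\sigma^2$ be two independent samples from $\mu_t$, and recall the identity
\[ \E_{\sigma^1,\sigma^2}\left\langle \sigma^1-\sigma^2 , v\right\rangle^2 = 2\langle v,\Cov(\mu_t) v\rangle . \]
More useful is the trace-moment identity
\[ \mathrm{tr}\,\Cov(\mu_t)^{k} = \E \langle \sigma^1-m,\sigma^2-m\rangle\cdots \]
or, more simply,
\[ \|\Cov\|^{k} \le \mathrm{tr}\,\Cov^{k} \le \E_{\mu_t^{\otimes k}} \prod_{i} \langle \tilde\sigma^{i},\tilde\sigma^{i+1}\rangle , \]
where $\tilde\sigma = \sigma - m$ and the product is cyclic. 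I would take $k \asymp \log N$ and show that $\E_{\bW,\bx,\bg}\,\mathrm{tr}\,\Cov(\mu_t)^k \le (L')^k N$. Markov's inequality at level $L^k$ with $L = eL'$ then gives failure probability $N e^{-k} \le N^{-a}$ once $k = (a+1)\log N$.

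The key structural input is the Nishimori/planted property. Under the joint law of $(\bx,\bW,\bg)$, the plant $\bx$ is itself a sample from $\mu_t$ given the observations $(\bM,\bz_t)$. Averages of replica functionals can therefore replace one replica by $\bx$. This also means that $\mu_t$ is a genuine posterior, so $\E\,\mathrm{tr}\,\Cov(\mu_t)$ equals the MMSE, and higher moments correspond to replica correlations.

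The main technical step is to bound the cyclic overlap moments $\E\,\langle \prod_i R_{i,i+1}\rangle$, where $R_{ij} = \langle\tilde\sigma^i,\tilde\sigma^j\rangle$. I would do this through a cavity/interpolation argument in the style of Talagrand's high-temperature analysis for $\beta<1$. Interpolating $\beta_s = s\beta$ and differentiating in $s$ by Gaussian integration by parts produces overlap polynomials with one more pair of replicas. These give a Gronwall-type inequality whose constant stays bounded exactly when $\beta^2$ multiplied by the TAP susceptibility $\E(1-m_i^2)^2$-type quantities is less than $1$. The effective "replicon" condition is
\[ \beta^2\,\frac{1}{N}\sum_i (1-m_i^2)^2 < 1 , \]
which is automatic since $|m_i| \le 1$ and $\beta<1$.

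An alternative, which I would actually try first, is to write
\[ \Cov(\mu_t) \approx (D^{-1} - \beta \bM + \text{Onsager})^{-1} \]
from TAP/AMP. Here $D = \mathrm{diag}(1-m_i^2)$ with $m$ the AMP fixed point. This gives a spectrum bounded by $\bigl(1-\beta\cdot 2\cdot\max(1-m_i^2)+\dots\bigr)^{-1}$, which is $O(1)$ because the effective field $h$ only shrinks $D$.

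I expect the main obstacle to be getting the $k \asymp \log N$ moment control with constants uniform in $k$, as opposed to merely $O(1)$ moments. This is what upgrades the usual "with probability $1-o(1)$" statements to probability $1-N^{-a}$. It is also the place where the rank-one spike $\frac{\beta}{N}\bx\bx^\top$ threatens to create an outlier direction. For the spike I would handle the $\bx$ direction separately: decompose $v = \alpha \bx/\sqrt N + v^\perp$. The Nishimori identity reduces the variance along $\bx$ to the overlap $\langle \sigma,\bx\rangle/N$, whose fluctuations are $O(1/\sqrt N)$ because $\beta^2<1$ sits below the BBP threshold. On $v^\perp$, the spike is invisible and the previous argument applies. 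The two pieces combine via $\|\Cov\| \le 2(\|P\Cov P\| + \|P^\perp \Cov P^\perp\|)$.
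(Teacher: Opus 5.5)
Your proposal has a genuine gap: neither route supplies a mechanism for the step that carries all of the difficulty.

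\textbf{The primary route.} It rests entirely on $\E\,\mathrm{tr}\,\Cov(\mu_t)^k\le (L')^kN$ for $k\asymp\log N$ with $L'$ uniform in $k$. The only justification offered is an interpolation/Gronwall argument governed by the ``replicon'' quantity $\beta^2\frac1N\sum_i(1-m_i^2)^2$. That condition underlies overlap concentration at \emph{fixed} moments, $\E\langle|R_{12}-q|^{2k}\rangle\lesssim_k N^{-k}$, which the paper imports from El Alaoui--Krzakala--Jordan. But it controls no worst-case direction of $\Cov$. Gaussian integration by parts at order $k\sim\log N$ also generates combinatorial factors growing with $k$, which your sketch never addresses.

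This is precisely the known barrier. Talagrand's analysis at zero field yields only $\E\|\Cov\|_{\mathrm{op}}=O(\log^8N)$. As the paper's warmup notes, fixed-moment overlap bounds give only $\|\Cov\|_{\mathrm{op}}=O(N^{\delta})$ with probability $1-N^{-a}$. You correctly flag this as the main obstacle, but overcoming it is the whole theorem.

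Your spike step has the same defect. Fixed-moment concentration of $R_{1\star}$ gives $\frac1N\Var_{\mu_t}\langle\sigma,\bx\rangle\le N^{\delta}$ at probability $1-N^{-a}$, not $O(1)$. Also, the spike is not ``invisible'' on $\bx^{\perp}$: together with the field $t\bx$ it determines all the magnetizations and hence the whole Gibbs measure.

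\textbf{The alternative route.} It names the right object, but both of its steps fail as stated.

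First, AMP approximates the mean, not the covariance, so ``$\Cov\approx K^{-1}$'' is not available from TAP/AMP. Take $K=\diag(v_i^{-1})-\beta\bM+\beta^2(1-q_N)\Id-\frac{2\beta^2}{N}mm^\top$, built from the true Gibbs mean. Proving $\|K\Cov-\Id\|_{\mathrm{op}}=O(1)$ with probability $1-N^{-a}$ is the paper's main technical work. It consists of:
\begin{itemize}
\item a row-wise cavity decomposition;
\item Taylor expansion of the cavity mean and covariance in the small field $\beta\bM_{i,-i}$;
\item cumulant-contraction moment bounds;
\item sharper bounds $\|\Cov\|_F,\|\kappa_3\|_F\le C\sqrt N$, with $C$ independent of $a$, derived from the TAP-residual identities $K\Cov-\Id=\grad r$ and $K\kappa_3=\grad^2r-D^2T\{\Cov,\Cov\}$.
\end{itemize}
It is this row-wise Frobenius structure, not $\log N$-th moments, that lets fixed-$p$ moments plus Markov and a union bound reach probability $1-N^{-a}$.

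Second, your positivity bound is false. Some coordinates are weakly magnetized, so $\max_i(1-m_i^2)\approx1$ and $1-2\beta\max_i(1-m_i^2)<0$ once $\beta>1/2$. Even including the Onsager term, the naive estimate is only
$\lambda_{\min}(K)\ge\min_iv_i^{-1}-\beta\|\bM\|+\beta^2(1-q_N)-2\beta^2q_N\approx(1-\beta)^2-3\beta^2q_N$,
which is positive only when $q_N$ is small. The field does not help monotonically. It enlarges $D^{-1}$ only on strongly magnetized coordinates, while it shrinks the Onsager term and adds a negative rank-one term.

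The paper uses this naive perturbation only for small $t$, starting from Ma\"ida's bound at $t=0$. For $t$ bounded away from $0$ it needs Celentano's AMP plus post-AMP Sudakov--Fernique argument, with the extra rank-one term handled explicitly. It then uses Gaussian isoperimetry to boost the probability from $1-o(1)$ to $1-N^{-a}$.
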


The technical bulk of this paper will go towards proving the above, and we provide an overview of some ideas involved in \Cref{sec:warmup}.

At a high level, we prove the above as follows. We consider a prediction $\bK^{-1}$ for the covariance matrix based on the Thouless--Anderson--Palmer (TAP) approximation to the free energy, the ``inverse TAP Hessian''. This matrix has a term linear in the disorder, and a handful of terms involving the mean of the measure. In an entry of the $\bK \cdot \Cov$ that involves an element $\bW_{ij}$ of the disorder and some cumulant of the Gibbs measure, we may then decouple the cumulant from the disorder using the cavity method, at the cost of introducing a small (random) external field. We can now Taylor expand any given cumulant in terms of this small random external field; this introduces terms involving zero-field SK models (on $N-1$ spins) and Gaussians independent of the disorder defining these models. Gaussian concentration allows us to control the error in these approximations, using only the crude bounds that are yielded by overlap concentration, which are later refined using other means.
Finally, showing that this matrix $\bK$ is bounded away from $0$ requires other machinery, which we may establish using techniques from previous work \cite{Cel24}.

We further believe that many of our techniques to prove covariance bounds should extend to the entire replica-symmetric regime \cite{AT78} given sufficiently good overlap concentration, which might follow from recent results \cite{Lop26,KN26}. We have made no attempt to do this extension.

We must also show that at sufficiently large $T$, $\pi_{T}$ satisfies a \Poincare inequality with high probability. This near-immediately follows from the techniques of the previous work of Bandeira--El Alaoui--R\"{o}dder \cite{BER26}.

\begin{restatable}[\cite{BER26}]{lemma}{largetpoinc}
    \label{lem:large-t-poinc}
    Let $\mu_{T}$ be the measure obtained by running stochastic localization on $\mu_{\beta\bW + \frac{\beta^2}{N} \bx\bx^\top}$ for bounded time $T > 0$. Then, for a sufficiently large (constant) $T$, with probability $1 - e^{-cN}$, $\mu_{T}$ has \Poincare constant $O(N)$.
\end{restatable}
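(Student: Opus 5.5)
Write $\bM = \bW + \frac{\beta}{N}\bx\bx^\top$ and $h_T = T\bx + \sqrt{T}\bg$. The measure $\mu_T$ is an Ising model
\[ \mu_T(y) \propto \exp\Bigl(\tfrac{\beta}{2}\langle y,\bM y\rangle + \langle h_T, y\rangle\Bigr), \]
whose external field has magnitude of order $T$ per coordinate and is strongly aligned with $\bx$. The plan is to follow \cite{BER26}. The point is that once the field is this large, the measure concentrates near $\bx$. Near $\bx$, the interaction $\beta\bM$ (with $\|\bW\| \le 2+o(1)$ with probability $1-e^{-cN}$, and a bounded rank-one spike) is weak compared with the field. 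So the measure behaves like a product measure perturbed by a bounded-norm coupling.

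First, I reduce to a single-site condition. The Dobrushin/spectral-type criterion I would use is: if the influence matrix has operator norm at most $1-\delta$, the Glauber dynamics has spectral gap $\Omega(\delta/N)$, i.e.\ \Poincare{} constant $O(N)$. Here the influence matrix has entries $|\partial_{h_i}\tanh(\cdot)|\cdot\beta|\bM_{ij}|$, or its spectral-norm analogue, where only the operator norm of $\bM$ matters. For a site $i$, the conditional law given the rest has local field $h_i + \beta(\bM y)_i$, and the influence of $j$ on $i$ is at most $\beta|\bM_{ij}|\,\mathrm{sech}^2(\text{local field}_i)$. So I need the local fields to be large for most sites.

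Second, I control the local fields. The coordinate field $T\bx_i + \sqrt{T}\bg_i$ equals $\bx_i(T + O(\sqrt{T}))$ except on a set of sites of density $e^{-cT}$. For $y$ in a Hamming ball around $\bx$, $\|\bM y\|_2 \le 3\sqrt{N}$, so $(\bM y)_i$ is $O(1)$ for all but a small fraction of sites. Hence $\mathrm{sech}^2$ of the local field is at most $e^{-cT}$ on all but an $e^{-cT}$-fraction of sites. This is the setting of \cite{BER26}'s ``large field'' argument. I split into good sites, where the damping is $e^{-cT}$, and a small bad set $S$ with $|S| \le e^{-cT}N$. The effective coupling restricted to $S$ still has norm $\le \beta\|\bM\| \le 2\beta + o(1)$, so it is not automatically small. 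I would handle it exactly as \cite{BER26} does, by a block or approximate-tensorization argument. The key input is that for any set $S$ of density $\eta$, $\|\bW_{S,S}\| \le O(\sqrt{\eta})$ uniformly over $S$, with probability $1-e^{-cN}$ (by a net and union bound over $\binom{N}{\eta N}$ sets). Since $\|\bW_{S,S}\| \le O(\sqrt{\eta})$ is small, the bad block's internal coupling is small enough for the criterion to apply after choosing $T$ large.

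Third, I handle the configurations far from $\bx$. The Glauber analysis above is only local, so I also need that $\mu_T$ puts mass at most $e^{-cN}$ outside the Hamming ball of radius $\epsilon N$ around $\bx$. The field energy gain is $\approx 2T\cdot d(y,\bx)$, while the quadratic term changes by at most $O(d(y,\bx))$ in total. So the comparison is straightforward for large $T$. I would then combine the local spectral gap with this concentration via a restriction/decomposition argument, or directly via the Dobrushin-type bound holding on all of $\{\pm1\}^N$. The main obstacle is the second step: making the bound on the influence matrix hold uniformly over configurations rather than just near $\bx$. That is precisely the content of \cite{BER26}, and I would import their statement after checking that the rank-one spike $\frac{\beta}{N}\bx\bx^\top$ and the Gaussian part of the field only change constants.
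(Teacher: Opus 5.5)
Your last move, importing \cite{BER26} after checking that the spike and the Gaussian part of the field only change constants, matches the paper. The gap is in the mechanism you build around it: it would not work, and it is not the route the paper takes.

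\textbf{Why the Dobrushin route fails.} A Dobrushin/Hayes criterion uses \emph{worst-case} influences, i.e.\ a supremum over the configuration of the other spins. For SK couplings these influences are undamped. Since $\beta\sum_j|\bM_{ij}|\approx\beta\sqrt{2N/\pi}\gg T$, some configuration $y$ makes the local field $h_i+\beta(\bM y)_i$ essentially $0$. So the influence of $j$ on $i$ is $\approx\beta|\bM_{ij}|$, and this matrix has row sums and operator norm of order $\sqrt N$.

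The signed criterion available off the shelf (\cite{EKZ22,CE22}) is field-independent. It needs the spectral diameter of $\beta\bM$ below $1$, which fails for $\beta\ge 1/4$ however large $T$ is. You do not say what criterion would combine the signed norm $\|\bM\|$ with configuration-dependent $\mathrm{sech}^2$ damping.

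Your observation that for \emph{each} $y$ all but a small fraction of sites feel a field of order $T$ is the right robust fact; it holds for every $y$, since $\|\bM y\|_2=O(\sqrt N)$. But Dobrushin needs \emph{every} site damped for \emph{every} $y$. So the obstacle you flag as the main one is exactly what remains open, and a good/bad split does not close it.

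Your third step also falls short. A spectral gap for the chain restricted near $\bx$, plus $e^{-cN}$ mass outside the ball, gives by restriction only a weak Poincar\'e inequality (error $e^{-cN}\|f\|_\infty^2$), not the constant $O(N)$ claimed.

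\textbf{The missing idea: pinning plus spectral independence.} This route never needs configuration-uniform control of local fields.
\begin{enumerate}
\item Take $\bx=\bone$ and $T$ large. Then $\|\sqrt T\bg\|\ll T\sqrt N$ with probability $1-e^{-cN}$, so samples have at least $(1-\delta/2)N$ positive spins.
\item \cite[Lemma 2.2]{BER26} turns this into a correlation bound for $\mu_T$ and its pinnings. That bound is used only until $(1-\delta)N$ coordinates are pinned.
\item What remains is an Ising model on a block $S$ with $|S|=\delta N$, interaction $\beta\bM_{S,S}$, and an arbitrary field. Uniformly over such $S$, $\|\beta\bM_{S,S}\|$ is small with probability $1-e^{-cN}$. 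Your union-bound estimate is the right input here, up to a $\sqrt{\log(1/\delta)}$ factor, and the spike adds only $\beta^2\delta$.
\item \cite[Corollary 51]{CE22} then gives a Poincar\'e inequality on the block regardless of the field. The spectral-independence local-to-global argument lifts it back to $\mu_T$.
\end{enumerate}
So your small-submatrix bound enters as the hypothesis of a field-independent criterion on the last $\delta N$ free spins. It does not enter as a bound on a bad block inside a Dobrushin matrix.
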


\subsection{Related work}
\label{subsec:related-work}

\parhead{Sampling from spin glasses.} It was shown in a series of works \cite{BB19,EKZ22,CE22} that the Glauber dynamics satisfies a \Poincare inequality for $\beta < 1/4$, which was improved on by Anari--Koehler--Vuong \cite{AKV24} to show that it succeeds up to $\beta \approx 0.295$; both of these proofs use stochastic localization.
More recently, the ``Bochner method'' was used by Wang \cite{Wan26} and in a follow-up work by Boban--Li--Oveis Gharan \cite{BLO26} to establish fast mixing up to $\beta = \frac{1}{2} + \eps$ (for some small constant $\eps$).

An independent line of work pioneered by El Alaoui, Montanari, and Sellke \cite{EMS22} studies efficient sampling via \emph{algorithmic stochastic localization}, based on simulating the stochastic differential equation obtained from the time-reversal of \eqref{eq:sl-sde}.
It was first shown in \cite{EMS22,Cel24} that this algorithm achieves (normalized) Wasserstein distance $o(1)$ to the Gibbs measure.
The work of Huang, Montanari, and Pham \cite{HMP24} illustrates how these ideas can be used to design algorithms achieving a total variation distance guarantee of $o(1)$ using this technique in the related spherical $p$-spin model.
Recently, works of Davies, Lee, Sandhu, and Shi \cite{DLSS26a,LSS26a} extended these ideas to design an algorithm that achieves total variation distance $o(1)$ to the Gibbs measure for the SK model.

The same set of authors has an accompanying line of work \cite{DLSS26b}, which builds on these ideas to establish a $\left( O(N) , o(1) \right)$-weak \Poincare inequality for the Gibbs measure when $\beta < 1/2$. However, their quantitative weakness parameter is not small enough to establish success of simulated annealing, and they must use the Langevin diffusion on another distribution to generate a sufficiently warm start.

Beyond the SK model, the ``$2$-spin'' model, the problem of sampling from the more general class of $p$-spin models has been studied.
These are defined by coefficients $(\beta_p)_{0 \le p \le p_*}$ for some constant $p_*$, and have density
\[ \mu(y) \propto \exp\left( \sum_{p \ge 0} \frac{\beta_p}{N^{(p-1)/2}} \langle G^{(p)} , y^{\otimes p} \rangle \right) \mcom  \]
for $G^{(p)}_{i_1,\ldots,i_p} \sim \calN(0,1)$ iid.
It is predicted that these models exhibit a richer phase diagram, with a true \Poincare inequality for $\beta$ up to a uniqueness threshold $\beta_\uniq$ \cite{BBM96,AJ24,AJKPV24}, that Glauber dynamics initialized uniformly at random succeed up to a shattering threshold $\beta_\shatter$ \cite{CHS93}, and that Markov chain-based methods fail at sampling past $\beta_\shatter$ \cite{CHS93,AJ24,EMS25,El25}.

The spherical setting, where the state space is $\sqrt{N} \cdot \bbS^{N-1}$ instead of $\{\pm 1\}^N$, is generally more tractable.
The corresponding $2$-spin model does not suffer any phase transition in sampling, with efficient sampling possible at any $\beta$ \cite{GLLR21}, but higher spin models are not fully understood.
Of note are \cite{HMP24}, which uses algorithmic stochastic localization, and \cite{HMRW24}, which analyzes simulated annealing using weak \Poincare inequalities, to sample up to a threshold $\beta_\SL$ that, for pure $p$-spin models, is within a universal constant of the predicted sharp ``shattering'' threshold $\beta_\shatter$.
While algorithmic stochastic localization provably cannot do better, it seems plausible that simulated annealing reaches $\beta_\shatter$.

\parhead{Covariance bounds.} Proving covariance bounds for the SK Gibbs measure, as in \Cref{th:cov-bounds}, has also been studied, mostly towards the goal of sampling.
Talagrand \cite{Tal11} showed that when $\beta < 1$, $\E \|\Cov(\mu_{\beta\bW})\|_{\mathrm{op}} = O\left( \log^8 N \right)$, and explicitly posed as an open question whether $\E \|\Cov(\mu_{\beta\bW})\|_{\mathrm{op}} = O(1)$ when $\beta < 1$.
A more precise question one may ask is whether the ``inverse TAP Hessian'' $\bK^{-1} = \left( (1+\beta^2) \Id - \beta \bW \right)^{-1}$ is an accurate predictor of the covariance of the zero-field SK model.
It is believed that with high probability, $\opnorm{\bK \Cov - \Id} = O\left( \frac{1}{\sqrt{N}} \right)$.
In fact, a suitable modification of this belief is expected to hold not just for the high-temperature zero-field model, but for replica-symmetric models with a sufficiently large random external field, below the so-called de Almeida--Thouless line \cite{AT78}.


All the results mentioned in the previous paragraph on sampling from the SK model with Markov chains, which establish \Poincare inequalities, also immediately yield (high-probability) covariance bounds, but the best such result only holds for $\beta < \frac{1}{2} + \eps$. El Alaoui--Gaitonde \cite{EG24}, building on estimates in \cite{ABSY21}, successfully resolved Talagrand's conjecture and showed that $\E \left\| \bK \Cov - \Id \right\|_{F}^2 = O(1)$ for all $\beta < 1$. \cite{BSXY24,BXY23} were able to establish high-probability covariance bounds in a setting where there is an external field $h$---their regime of $(\beta,h)$ is a large subset of the predicted replica-symmetric regime \cite{Tal10,Tal11}.

\parhead{Our techniques.}
We now discuss the works that inspired our techniques.
The local-to-global principle reducing a weak \Poincare inequality to high-probability covariance bounds was inspired by the localization schemes based approach to proving \Poincare inequalities for Markov chain \cite{ALOV24,EKZ22,CE22,LMRW24,AKV24}.
The use of these methods to establish weak \Poincare inequalities appears in the works of \cite{AEGP23,HMRW24,DLSS26b}.

The specific localization scheme of choice---stochastic localization---was introduced by Eldan \cite{Eld13}, and subsequently developed extensively in work on the KLS conjecture \cite{LV24,Che21b,KL22,SZ26,BKL26}.

Our proof of the covariance bounds relies on proving that the covariance is close to a prediction for $\bK^{-1}$ that arises from the TAP equations for the free energy of the SK model, developed by Thouless--Anderson--Palmer \cite{TAP77}.
To prove that $\bK^{-1}$ is bounded, we need to prove that $\bK$ is bounded away from $0$, for which we use an iterative procedure based on approximate message passing (AMP), a technique now used widely in statistical inference \cite{DMM09,BM11,DAM16,MV21}, and in studying spin glasses \cite{Bol14,DS19,Mon19,CT21,EAMS21,ES22,HS24}.

The proof that the covariance is close to $\bK^{-1}$ relies on the classical \emph{cavity method}---comparing a Gibbs distribution of a Hamiltonian to a version where one site is removed---introduced in statistical physics literature by M\'{e}zard--Parisi--Virasoro \cite{MPV87b}, developed by Aizenman--Sims--Starr \cite{ASS03}, and extensively used thereafter by Talagrand \cite{Tal10,Tal11} among others.

\parhead{Concurrent work by OpenAI.} The problem of sampling from the SK model was one among $722$ claimed results released very recently by OpenAI. In addition to establishing a spectral gap for $\beta < 1$, they prove cutoff from a worst-case initialization, and establish a mixing time of $n^{5/3 + o(1)}$ at $\beta = 1$. We provide a short summary of our interpretation of their proof of a spectral gap \cite{Ope26} below.

They begin essentially by proving a weak \Poincare inequality, albeit with a much better weakness parameter than ours. Using the standard stochastic localization that we do, they show that the covariance is bounded on the localization path with probability $1 - e^{-cN}$, and that the endpoint measure satisfies a \Poincare inequality with probability $1 - e^{-cN}$. We remark that their proof bounding the covariance significantly differs from ours. They show that under a certain ``stability'' criterion, the distribution has bounded covariance.

The aforementioned stability criterion requires local convexity at \emph{every} approximate fixed point of the TAP free energy---this differs from previous results \cite{Cel24}, where it is shown that a late AMP iterate approximates the mean well, and that one has local convexity at this AMP iterate. In particular, their Lemma 5.7 uses this to conclude that there is a unique TAP fixed point.

We remark that such conclusions are expected to be \emph{false} in the $p$-spin model (in the regime with metastable states); it seems likely that our proof is more flexible in this regard, and indeed a similar strategy succeeds at establishing a weak \Poincare inequality for $p$-spin models up to the $\beta_{\mathsf{SL}}$ threshold from \cite[Eq. (1.5)]{HMP24}.

Given this weak \Poincare inequality, the key technical innovation is that they are then able to boost this to a true \Poincare inequality. To do this, they express the \Poincare inequality in a more fine-grained fashion. For the stopping time $\tau$ as in \cite[Lemma 6.5]{HMRW24} (which is equal to $T$ with probability $1 - e^{-cN}$), denote by $\calE$ the event that $\tau \ne T$ or that $\mu_{T}$ does not satisfy a \Poincare inequality---this is an event on the space of trajectories of stochastic localization. One then has  
\[ \calE(f,f) \geq  \rho \cdot \left( \Var[f] - \E[\bone_{\calE} \cdot \Var_{\mu_{\tau}}[f]] \right) \mper \]
Their goal now is to show that $\E\left[ \bone_{\calE} \cdot \Var_{\mu_{\tau}}[f] \right]$ is small relative to $\calE(f,f)$. While their proof as stated does not express it in this way, it may be interpreted as showing that for any function $f$ that has mean $0$ and variance $1$ (under $\mu_0$),
\begin{equation}
    \label{eq:oai-target-probability-transfer}
    \E\left[ \bone_{\calE} \cdot \left\langle f^2 \right\rangle_{\mu_{\tau}} \right] \le o(1) + O(1) \cdot \calE(f,f) \mper
\end{equation}
Given this, one can substitute this bound and rearrange in order to prove a true \Poincare inequality.
In contrast, in our work and in \cite{HMRW24}, we use the throwaway bound $\E\bracks*{ \Ind_{\calE} \cdot \Var_{\mu_{\tau}}[f] } \le \osc(f)^2 \cdot \Pr[\calE]$.

To prove this, they consider the auxiliary stochastic localization process defined as follows. Denote by $\nu_0$ the distribution with density $\mu_0 \cdot f^2$. For some draw $X$ and $Y_{t} = tX + B_{t}$, let $\bbP$ be the law of the posterior on $X$ under the prior $X \sim \mu_0$, defining a measure-valued process $(\mu_t)_{t \ge 0}$. Analogously, let $\bbQ$ be the law of the posteriors under the prior $X \sim \nu_0$. On a common observation path $\calF_t = \sigma\left( (Y_s)_{s \le t} \right)$, Bayes' rule yields that the law of $\nu_t$ is simply that of $\frac{\mu_t \cdot f^2}{\langle f^2 \rangle_{\mu_t}}$, and so for any time $t$,
\begin{equation}
    \label{eq:oai-reweighed-density}
    \frac{\dif \bbQ_{t}}{\dif \bbP_{t}} = \langle f^2 \rangle_{\mu_t} \mper
\end{equation}
Consequently,
\[ \E \left[ \bone_{\calE} \langle f^2 \rangle_{\mu_{\tau}} \right] = \bbQ_{\tau}(\calE) \mper \]
The goal now is to port the established bound $\bbP_{\tau}(\calE) \le e^{-cN}$ over to a useful bound on $\bbQ_{\tau}(\calE)$. To do this, they show that the KL divergence
\[ \KL{\bbQ_{\tau}}{\bbP_{\tau}} = \Ent\left[ \langle f^2 \rangle_{\mu_{\tau}} \right] = \Ent_{\mu_0}[f^2] - \E \left[ \Ent_{\mu_{\tau}}[f^2] \right] \]
is $o(N) + O(N) \cdot \calE(f,f)$.
This divergence can be expressed using It\^{o}'s formula in terms of the trajectory of stochastic localization, which yields
\[ \KL{\bbQ_{\tau}}{\bbP_{\tau}} = \frac{1}{2} \E_{X \sim \nu_0, B} \int_0^\tau \left\| \bary(\nu_t) - \bary(\mu_t) \right\|^2 \mcom \]
where the measures $\mu_t$ and $\nu_t$ are coupled by using the same observation path, as in \eqref{eq:oai-reweighed-density}. The remainder of the proof is in two steps. First, they replace $\bary(\mu_t)$ with the TAP approximation, which is more tractable. The second step, which is more involved, requires going through the proof that the TAP fixed point approximates the mean and tracking how reweighing by $f^2$ influences the proof. In particular, they ``Taylor expand'' $f$ out in terms of its discrete derivatives to order $1$. These discrete derivative terms may then be bounded in terms of the Dirichlet form (and the variance), yielding the second term in \eqref{eq:oai-target-probability-transfer}; the first term arises from errors in the TAP approximation. We refer the reader to \cite[Proposition 7.3]{Ope26} for more details.

\parhead{Organization.} We begin in \Cref{sec:prelims} with an overview of the techniques we will require. \Cref{sec:warmup} provides a self-contained proof of the high-probability zero-field covariance bound \Cref{th:zero-field-cov-bound}, whose approach we build on in the remainder of the paper. \Cref{sec:cov-bound} establishes the high-probability covariance bound \Cref{th:cov-bounds} at nonzero localization time, conditioned on the inverse prediction $\bK$ being positive and bounded away from $0$. \Cref{sec:lower-bd-inv-cov-pred} shows that the inverse prediction $\bK$ is indeed bounded away from $0$ with high probability. In \Cref{app:technical-estimates}, we prove a handful of lemmas omitted from the main body.

\subsection{Future directions}

We propose a few directions for further study.

\parhead{Mixing in SK model with a field.}
An interesting extension of the results in this paper would be to establish mixing in the SK model under a field up to the critical threshold.
\begin{problem}[Mixing under the de Almeida--Thouless line] \label{prob:AT-line-mixing}
    In the presence of a field $h\cdot\bone$ for $h\in\R$, for $\bW\sim\GOE(N)$, prove that simulated annealing samples from the Ising model $\mu_{\beta\bW, h\cdot\bone}$ in polynomial time as long as $0 \le \beta < \beta_{\star}(h)$, where $\beta_{\star}(h)$ is described by the de Almeida--Thouless line from \cite{AT78}.
\end{problem}
A qualitative version of the above is due to \cite{BER26}.
Beyond settling a conjectured spin glass threshold, the above question is of interest in the connection between sampling from the SK model, and the efficacy of Markov chains at optimal inference in the spiked Wigner model via a connection made by \cite{LMRRW24,RW26}.

\parhead{Counting vs.\ sampling in average-case models.}
A conventional wisdom in the classical sampling literature is that the algorithmic task of computing a partition function has roughly the same complexity as sampling from a Gibbs distribution, and there is a formal equivalence \cite{JVV86,SJ89} in that an algorithm for computing the partition function for a family of Hamiltonians gives a sampling algorithm for the corresponding Gibbs distributions.

In the average-case setting, the situation is murkier.
As a running example, consider the spherical $p$-spin glass model, where the Hamiltonian is a degree-$p$ random polynomial whose coefficients are drawn independently from a Gaussian distribution.
It is believed that Markov chain-based methods should fail to sample from the Gibbs distribution when the inverse temperature exceeds the \emph{shattering threshold} $\beta_{\shatter}$, and the best sampling algorithms work up to a constant of the threshold \cite{HMP24,HMRW24}.
On the other hand, an algorithm for computing the partition function was given well beyond $\beta_{\shatter}$ up to the ``replica symmetric threshold'' in a work of \cite{BHLLG26}, which suggests that spin glass models may exhibit phases where sampling is hard, but computing the partition function is tractable.
However, this story was complicated by the recent work \cite{ZSSJ26}, which uses a quantum-inspired algorithm to sample in the shattered phase.

We consider it an interesting direction to understand what the true algorithmic threshold is for sampling, as well as what the threshold is for Markov chains.



\parhead{Certifiable sampling.}
We say that a sampling algorithm for an Ising model is \emph{certifiable} if there is a $\poly(N)$-time verifier that takes as input the interaction matrix of an Ising model, and outputs either \texttt{yes} or \texttt{abstain}, and further outputs \texttt{yes} with high probability over the disorder.
When the verifier outputs \texttt{yes}, the sampling algorithm must additionally (approximately) sample from the target distribution.

Many of the previous results for mixing in Ising models \cite{EKZ22,CE22,AKV24,Wan26} that yielded samplers for the SK model when $\beta < 1/2$ in fact give certifiable samplers since the proofs of mixing rely on spectral conditions on the interaction matrix, or other closely related matrices.
The proof of mixing in this paper relies significantly more on the randomness of the disorder compared to previous works, and hence does not yield an efficiently checkable condition.

\begin{problem}
    What is the threshold $\beta_{\star}$ such that there is a certifiable sampler from the SK model at inverse temperature $\beta < \beta_{\star}$?
\end{problem}

We anticipate that certifiable samplers are a useful primitive for certain downstream applications of sampling to statistical inference and privacy, where it may be essential to be certain that one is indeed sampling from the desired target distribution.
Beyond applications, if $\beta_{\star}$ is indeed equal to $1$, we anticipate that it would yield significantly simpler proofs of the results in this paper.
In a world where $\beta_{\star}$ is \emph{not} equal to $1$, an interesting takeaway is that the proof of success of any sampler for the SK model necessarily strongly exploits the distributional assumption of the input.



\subsection{AI methodology}

This paper is the result of a long-running project on sampling from the high-temperature SK model up to $\beta < 1$ using simulated annealing, starting early 2025. Our original approach was centered on the TAP Hessian $\bK$, and tried to show that for constant $p$, one has the bound $\E \Tr\left( \left( Z \cdot (\bK \Cov - \Id) \right)^{2p} \right) \le \frac{1}{N^{p-1}} \cdot (\E Z)^{2p}$, by manually expanding out this expectation and witnessing various cancellations. At nonzero localization time, this would further involve ``TAP planting'' in order to have a tractable approximation to the mean.

We established this in the zero-field case for $p = 1$, and for $p=2$, used a computer program to bound various lower-order terms in this expansion (for context, we would require $p\ge 12$ in order to get meaningful guarantees for simulated annealing). We shortly discovered that this expectation is not bounded all the way up to $\beta = 1$, due to low-probability tail events. Nevertheless, we had a good sense of the kind of high-probability event that one might be able to condition on in order to get a useful high-probability covariance bound using this proof.

In mid-2026, we tried using a higher-order prediction for the covariance matrix using the Plefka expansion \cite{Ple82,GY91}: this is a more precise prediction $\bK^{(\ell)}$ for the inverse covariance featuring higher-order corrections, and we tried showing that $\E \left\| Z \cdot (\bK^{(\ell)} \Cov - \Id) \right\|_{\mathrm F}^2 \le \frac{1}{N^{\ell-1}}$. This plan was successfully executed for some small $\ell$, but it was not sufficient for the simulated annealing application.

At the launch of GPT-5.5, we attempted using it to establish the bounds described above. These attempts were not fruitful, and produced nothing beyond the earlier results that had been proved by hand. This process was repeated with the launch of GPT-5.6 Sol, which produced a 150-page document: this document featured a high-probability zero-field covariance bound using the Plefka expansion down to an arbitrarily small inverse polynomial error probability using the previously sketched approaches, but failed to prove similar results in the non-zero field case.

When GPT-6 Astra Pro launched, all the previous documents were fed in with a proof strategy, and it produced an $85$-page document with a complete proof of a weak \Poincare inequality strong enough to yield guarantees for simulated annealing. However, this proof deviated from the prompt, and regressed from using the Plefka expansion to the earlier idea of $\Tr\left( \left( Z \cdot \left(\bK \Cov - \Id\right) \right)^{2p} \right)$ after conditioning on a high-probability event, in addition to using the so-called ``$1/N$-expansion,'' e.g. \cite{CGTvH26}. Furthermore, at nonzero localization time, it continued to use the recommended approach of TAP planting to have a tractable approximation of the mean. Various parts of this proof were digested over the following week. We note that this proof did successfully establish that the covariance prediction is $O\left( \frac{1}{\sqrt{N}} \right)$-close to the covariance, a bound that the present manuscript does \emph{not} establish. In a nutshell, this proof considered two expansions of the inverse covariance, one in $\beta$ and the other in $1/N$; showing the well-definedness of this requires establishing analyticity of certain functions. It then analyzed various terms in the $\beta$-expansion, then transferred these bounds over to useful statements for the coefficients of the $1/N$-expansion.

After digestion, the authors provided suggestions to simplify the proof based on their understanding of the zero-field proof, attempting to take more advantage of the aforementioned analyticity. This produced a $33$-page document containing a proof similar to that in the present manuscript (which, notably, does not use this analyticity). Over the subsequent weeks, the authors digested this proof and simplified numerous parts of it, resulting in the present document: these efforts utilized many incarnations of GPT-6 Astra, ranging in power from Medium to Ultra.

GPT-6 Astra was additionally used in typesetting, the detection of bugs, and finding references. Tautologically, the authors take all responsibility for this manuscript.




\section{Preliminaries}
\label{sec:prelims}

\subsection{Notation}

\begin{itemize}
	\item We use $C$ to denote appropriately large universal constants whose value may change from line to line, and $c$ to denote similarly fickle small constants.
	\item Given a symmetric matrix $M$, we denote by $\opnorm{M}$ or $\|M\|$ its largest singular value, and by $\|M\|_{\mathrm F}$ its Frobenius norm. 
	\item Given a Gibbs measure $\mu$ over $\{\pm 1\}^N$, we denote by $\langle f(\sigma)\rangle_{\mu}$ the Gibbs average of $f$ with respect to the measure $\mu$. When the measure is clear from context, we denote it simply as $\langle f(\sigma)\rangle$ or $\langle f\rangle$.
	\item Given tensors $M \in (\R^{N})^{\otimes s}$ and $K \in (\R^{N})^{\otimes t}$ for some integer $t \le s$, we denote by $M[K] \in (\R^{N})^{\otimes (s-t)}$ the tensor
	\[ (M[K])_{i_1,\ldots,i_{s-t}} = \sum_{i_{s-t+1},\ldots,i_{s}} M_{i_1,\ldots,i_s} \cdot K_{i_{s-t+1},\ldots,i_{s}} \mper \]
\end{itemize}

\subsection{Simulated annealing}
In this section, let $H:\{\pm1\}^N \to \R$ be a Hamiltonian, and let $\mu_{\beta H}(x)\propto \exp\parens*{ \beta H(x) }$ be the corresponding Gibbs distribution.

We begin by defining Glauber dynamics.
\begin{definition}[Glauber dynamics]
    We describe the \emph{Glauber dynamics} Markov chain for sampling from $\mu_{\beta H}$ via a single step of the Markov chain.
    \begin{displayquote}
        Let $x$ be the current state.
        Pick $i\sim[N]$ uniformly at random, and transition $x'\sim\mu_{\beta H}|x_{-i}$ where $x_{-i}$ refers to $x$ with the $i$-th coordinate erased.
    \end{displayquote}
\end{definition}
In the simulated annealing algorithm, one runs Glauber dynamics but changes $\beta$ according to a schedule.
\begin{definition}[Simulated annealing]
    Let $(\beta_i)_{i\in[T]} > 0$ be a sequence of inverse temperatures and $x\in\{\pm1\}^N$ be an initialization.
    The simulated annealing algorithm with schedule $(\beta_i)_{i\in[T]}$ runs $T$ steps of Glauber dynamics, where the $t$-th step is taken according to Glauber dynamics for $\mu_{\beta_t H}$.
\end{definition}

We rely on the fact that a weak \Poincare inequality for $\mu_{\beta H}$ implies that simulated annealing samples; see, e.g., \cite[Theorem 4.13]{HMRW24}.
\begin{lemma}[Simulated annealing from weak \Poincare inequalities]
    \label{lem:wpi-annealing}
    Let $0=\beta_0<\beta_1<\cdots<\beta_L=\beta$.
    Suppose that each $\mu_{\beta_j H}$ satisfies a
    $(\rho,\eps)$-weak \Poincare inequality.
    Suppose for each $j\in[L]$ and some sufficiently small constant $c > 0$:
    \[
        \beta_j - \beta_{j-1} \le \frac{c}{H_{\max} - H_{\min}}\mcom
    \]
    Starting from a uniformly random initialization, run $m$
    Glauber updates at each inverse temperature $\beta_j$.
    Then the output distribution $\nu$ satisfies
    \[
        \dtv{\nu}{\mu_{\beta H}}
        \le
        O\left( L \left( e^{-\frac{m}{2\rho}}+\sqrt{\eps} \right) \right) \mper
    \]
\end{lemma}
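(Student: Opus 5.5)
The plan is a warm-start argument, applied one temperature at a time, with errors added up along the schedule. Write $\nu_j$ for the law of the chain after the $m$ Glauber updates at inverse temperature $\beta_j$, and set $\mu_j \defeq \mu_{\beta_j H}$. We start from $\nu_0 = \mu_0$, which is uniform. The quantity I would track is not total variation alone but a decomposition
\[ \nu_j = (1-\delta_j)\,\nu_j' + \delta_j\,\nu_j'' \mcom \]
where $\nu_j'$ has density $h_j = \dif\nu_j'/\dif\mu_j$ with $\|h_j\|_\infty \le C_0$ for a fixed constant $C_0$, and $\delta_j$ is small. Since Glauber dynamics is a contraction in total variation, the bad part $\delta_j\nu_j''$ only ever contributes its mass $\delta_j$ to the final error. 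The whole proof is then bookkeeping on the good part.

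\textbf{Step 1: warmness survives a small temperature step.} Let $Z_\beta$ be the partition function. Then
\[ \frac{\mu_{j}(x)}{\mu_{j-1}(x)} = e^{(\beta_j-\beta_{j-1})H(x)}\,\frac{Z_{\beta_{j-1}}}{Z_{\beta_j}} \mper \]
Both $e^{(\beta_j-\beta_{j-1})H(x)}$ and $Z_{\beta_j}/Z_{\beta_{j-1}}$ lie between $e^{(\beta_j-\beta_{j-1})H_{\min}}$ and $e^{(\beta_j-\beta_{j-1})H_{\max}}$, so the ratio lies in $[e^{-c},e^{c}]$ by the step-size hypothesis. Hence a density bounded by $C_0'$ with respect to $\mu_{j-1}$ becomes a density bounded by $e^{c}C_0'$ with respect to $\mu_j$. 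This is the only place the step-size condition is used.

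\textbf{Step 2: contraction at a fixed temperature under the weak \Poincare inequality.} Let $P$ be the Glauber kernel for $\mu_j$. It is reversible, and the standard normalization gives $\calE(f,f) = \langle f,(I-P)f\rangle_{\mu_j}$. For $f = h - 1$ with $h$ the current density, the weak \Poincare inequality gives
\[ \Var_{\mu_j}(h) - \Var_{\mu_j}(Ph) \ge \calE(h,h) \ge \rho^{-1}\bigl(\Var_{\mu_j}(h) - \eps\,\osc(h)^2\bigr) \mper \]
Here I would use the fact that $P$ is positive semidefinite ($P$ is an average of conditional-expectation projections), so $\langle f, P^2 f\rangle \le \langle f, Pf\rangle$. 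Since $P$ is Markov, $\|P^k h\|_\infty \le \|h\|_\infty$, and therefore $\osc(P^kh) \le 2C$ along the whole run, where $C$ is the density bound from Step 1. Iterating $m$ times gives
\[ \Var_{\mu_j}(P^m h) \le e^{-m/\rho}\,\Var_{\mu_j}(h) + O(\eps C^2) \mper \]
By Cauchy--Schwarz, the total variation distance to $\mu_j$ is $O\bigl(e^{-m/(2\rho)} + \sqrt{\eps}\bigr)$.

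\textbf{Step 3: restoring the density bound.} The density after Step 2 is still bounded by $e^{c}C_0'$, and the factor $e^{c}$ would compound over $L$ stages. This is the main obstacle. I would resolve it with a truncation. Write $P^m h = 1 + g$ with $\|g\|_{L^2(\mu_j)}$ small. Split $\nu_j$ into the part whose density is $\min(P^m h, 1+\gamma)$, after renormalizing, and the remainder. By Chebyshev the remainder has mass $O(\|g\|_2/\gamma)$, and the good part has density at most $(1+\gamma)/(1-O(\|g\|_2/\gamma))$. Choosing $\gamma$ a small constant, say with $e^{c}(1+2\gamma)\le C_0$, keeps the density bound at $C_0$ uniformly in $j$. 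Each stage then adds $O\bigl(e^{-m/(2\rho)}+\sqrt{\eps}\bigr)$ to $\delta_j$. The truncated-away piece adds only its mass, and the bad part carried from earlier stages adds only its existing mass, by TV contraction.

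Summing the per-stage losses over $j \in [L]$ gives $\delta_L = O\bigl(L(e^{-m/(2\rho)}+\sqrt\eps)\bigr)$, which bounds $\dtv{\nu}{\mu_{\beta H}}$. At the final step I would use the full TV estimate from Step 2 on the good part, rather than the truncation. The delicate point is Step 3: the constants must not depend on $L$. This works because the $e^{c}$ growth in Step 1 is absorbed by relaxation back toward density $1+O(\text{small})$ in Step 2. That relaxation is guaranteed once $e^{-m/(2\rho)}+\sqrt{\eps}$ is below a small constant; otherwise the claimed bound is trivial anyway.
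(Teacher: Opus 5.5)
Your proof is correct, but it takes a more elaborate route than the standard argument behind the paper's citation to \cite[Theorem 4.13]{HMRW24}. The paper itself gives no proof.

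That standard argument never tracks the density of the actual chain. At stage $j$ it compares with the chain started from the previous stationary law $\mu_{j-1}$. By the triangle inequality and contraction of total variation under $P_j^m$,
\[
d_{\mathrm{TV}}(\nu_j,\mu_j) \le d_{\mathrm{TV}}(\nu_{j-1}P_j^m,\mu_{j-1}P_j^m) + d_{\mathrm{TV}}(\mu_{j-1}P_j^m,\mu_j) \le d_{\mathrm{TV}}(\nu_{j-1},\mu_{j-1}) + O\bigl(e^{-m/(2\rho)}+\sqrt{\eps}\bigr),
\]
where the last term is exactly your Step 2 applied to the density $\dif\mu_{j-1}/\dif\mu_j\in[e^{-c},e^{c}]$ from your Step 1. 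Your use of positivity of $P$ is what makes Step 2 work in discrete time. Since the warm start at every stage is the exact measure $\mu_{j-1}$, warmness never degrades. So the compounding of $e^{c}$ that you single out as the main obstacle never arises, Step 3 is unnecessary, and summing over $j$ finishes the proof.

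What your mixture decomposition buys in exchange is a better dependence on $L$, provided you bound the truncated mass sharply. That mass is $\E_{\mu_j}[(g-\gamma)_+]\le \|g\|_{L^2(\mu_j)}^2/\gamma$, not merely $O(\|g\|_{L^2(\mu_j)}/\gamma)$. With this bound, each intermediate stage adds only $O(e^{-m/\rho}+\eps)$ to $\delta_j$, and only the final stage pays a square root. This gives
\[
d_{\mathrm{TV}}(\nu,\mu_{\beta H}) \le O\bigl(e^{-m/(2\rho)}+\sqrt{\eps}+L(e^{-m/\rho}+\eps)\bigr),
\]
which needs only $\eps\ll 1/L$ rather than $\eps\ll 1/L^2$. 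With the linear bound you wrote, your argument recovers exactly the stated estimate, at the cost of noticeably more bookkeeping than the telescoping proof.
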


\subsection{Stochastic localization}

To establish a weak \Poincare inequality, we use stochastic localization (see, e.g, \eqref{eq:sl-sde}) to (approximately) decompose the Gibbs distribution into measures that each satisfy a \Poincare inequality.
In particular, we use the following consequence of \cite[Lemmas 6.8 and A.6]{HMRW24}.
\begin{lemma}[Localization criterion]
    \label{lem:sl-wpi}
    Let $(\mu_t)_{0\le t\le T}$ be stochastic localization
    initialized at a fixed distribution $\mu$ on $\{\pm1\}^N$.
    Suppose that, with probability at least $1-\delta$, the following two events hold:
    \begin{itemize}
        \item $\sup_{0\le t\le T} \|\Cov(\mu_t)\|_{\mathrm{op}}\le K$.
        \item $\mu_T$ satisfies a \Poincare inequality with constant $\rho_T$.
    \end{itemize}
    Then $\mu$ satisfies a $(\rho_Te^{KT},C\delta e^{KT})$-weak \Poincare inequality.
\end{lemma}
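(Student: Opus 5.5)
The argument follows the Chen--Eldan local-to-global template. Let $\delta$ be the failure probability of the good event. Write $\calE_\mu$ for the Dirichlet form of Glauber dynamics on $\mu$. Let $\tau$ be the first time $t \le T$ at which $\|\Cov(\mu_t)\|_{\mathrm{op}} > K$, with $\tau = T$ if this never happens.

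\textbf{Step 1: the Dirichlet form is a supermartingale.} For a fixed test function $f$, the map $\nu \mapsto \calE_\nu(f,f)$ is linear in $\nu$ up to the normalization of conditional laws. Single-site conditionals of a tilted measure keep the same form, so $\calE_{\mu_t}(f,f)$ is a supermartingale. This gives $\calE_\mu(f,f) \ge \E\,\calE_{\mu_\tau}(f,f)$, and it is the content of \cite[Lemma A.6]{HMRW24}.

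\textbf{Step 2: variance decay up to $\tau$.} By It\^o's formula applied to \eqref{eq:sl-sde},
\[ \dif\, \E\Var_{\mu_t}[f] = -\E\, \big\|\mathrm{Cov}_{\mu_t}(f,\sigma)\big\|^2 \dif t \mper \]
Cauchy--Schwarz gives $\|\mathrm{Cov}_{\mu_t}(f,\sigma)\|^2 \le \|\Cov(\mu_t)\|\,\Var_{\mu_t}[f]$. On $\{t < \tau\}$ this yields $\frac{\dif}{\dif t}\E[\Var_{\mu_{t\wedge\tau}} f] \ge -K\,\E[\Var_{\mu_{t\wedge \tau}} f]$. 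Gr\"onwall then gives
\[ \E\Var_{\mu_\tau}[f] \ge e^{-KT}\Var_\mu[f] \mper \]

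\textbf{Step 3: combine.} Let $G$ be the event that both bullets of \Cref{lem:sl-wpi} hold, so that $\tau = T$ on $G$.
\begin{itemize}
\item On $G$, the endpoint Poincar\'e inequality gives $\calE_{\mu_T}(f,f) \ge \rho_T^{-1}\Var_{\mu_T}[f]$.
\item Off $G$, we use the throwaway bound $\Var_{\mu_\tau}[f] \le \|f\|_\infty^2$ together with $\Pr[G^c] \le \delta$.
\end{itemize}
Chaining these with Steps 1 and 2,
\[ \calE_\mu(f,f) \ge \rho_T^{-1}\big(\E\Var_{\mu_\tau}[f] - \delta\|f\|_\infty^2\big) \ge \rho_T^{-1}\big(e^{-KT}\Var_\mu[f] - \delta\|f\|_\infty^2\big) \mper \]
Multiplying through by $e^{KT}$ gives a $(\rho_Te^{KT}, C\delta e^{KT})$-weak Poincar\'e inequality. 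The constant $C$ absorbs the oscillation-versus-sup-norm conversion and the normalization of $\rho$.

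\textbf{Main obstacle.} The delicate point is Step 1, together with the interaction between $\tau$ and $G$. Two things need care:
\begin{itemize}
\item One must check that the Glauber Dirichlet form really is a supermartingale under the tilt. This holds because the Glauber Dirichlet form can be written as a mixture over sites $i$ of $\E_{\sigma_{-i}}\Var(f\mid\sigma_{-i})$, and conditional variances under a mixture are superadditive.
\item One must apply optional stopping at the bounded stopping time $\tau$.
\end{itemize}
Everything else is the standard It\^o and Gr\"onwall computation.
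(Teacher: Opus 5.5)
Your proof is correct and follows essentially the route the paper takes by citing \cite[Lemmas 6.8 and A.6]{HMRW24} and sketching it in the technical overview: the Dirichlet form is a supermartingale, It\^o and Gr\"onwall give decay of $\E\Var_{\mu_{t\wedge\tau}}[f]$ up to the stopping time $\tau$, and the bad event is handled by the throwaway bound $\E[\bone_{G^c}\Var_{\mu_\tau}[f]]\le\delta\|f\|_\infty^2$ (in fact $C=1$ suffices). One thing to tidy in Step 1: the supermartingale property comes from concavity of $\nu\mapsto\calE_\nu(f,f)$, not from linearity (which would give a martingale), as your ``main obstacle'' bullet correctly says; concretely, each site term is $\frac{ab}{a+b}\bigl(f(\sigma^{i,+})-f(\sigma^{i,-})\bigr)^2$ where $a,b$ are the masses of the two configurations.
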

Thus, it remains to control the covariance along the localization
path.
To obtain a handle on the distributions encountered along a localization path, we will shift to the so-called ``planted'' model in order to make the localization process more tractable.

\begin{lemma}
	\label{lem:planted-contiguity}
	Let $0 \le \beta < 1$. Let $\mu_{\nullmodel}$ be the law over measures defined by a Hamiltonian $\bH_{\nullmodel}$ obtained by drawing $\bW \sim \GOE(N)$ and setting $H_{\nullmodel}(\sigma) = \frac{\beta}{2} \cdot \sigma^\top \bW \sigma$. Let $\mu_{\pl}$ be the law over measures defined by a Hamiltonian $\bH_{\pl}$ obtained by drawing $\bW \sim \GOE(N)$ and $\bx \sim \{\pm 1\}^N$, then setting $\bH_{\pl}(\sigma) = \frac{\beta}{2} \cdot \sigma^\top \bW \sigma + \frac{\beta^2}{2N} \cdot \langle \sigma,\bx\rangle^2$. Then, $\mu_{\nullmodel}$ and $\mu_{\pl}$ are mutually contiguous. More quantitatively, for any event $\calE$ over the space of measures on $\{\pm 1\}^N$,
	\[ c \mu_{\pl}(\calE)^2 \le \mu_{\nullmodel}(\calE) \le C \sqrt{\mu_{\pl}(\calE)} \mper \]
\end{lemma}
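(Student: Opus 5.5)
The plan is to write the law of $\bH_{\pl}$ as a change of measure from the null model and control the resulting likelihood ratio. Since $\bx\bx^\top/N$ shifts the disorder, the joint law of $(\bW + \frac{\beta}{N}\bx\bx^\top)$ relative to $\GOE(N)$ has an explicit Radon--Nikodym derivative. Concretely, for $\bM = \bW + \frac{\beta}{N}\bx\bx^\top$ (so that $\beta\bM$ yields the planted Hamiltonian up to the constant $\frac{\beta^2}{2N}\cdot N$ from the diagonal, which does not change the measure), the Gaussian density of $\GOE(N)$ gives
\[ \frac{\dif \mu_{\pl}}{\dif \mu_{\nullmodel}}(\bM) = \E_{\bx}\exp\left( \frac{\beta N}{2}\cdot\frac{1}{N^2}\langle \bx, N\bM\bx\rangle\cdot\frac{1}{N}\cdot N - \frac{\beta^2 N}{4} \right)\Big/\text{(normalization)} \mcom \]
which, up to the harmless diagonal convention, equals $Z(\beta\bM)/\E Z(\beta\bW)$, with $Z$ the SK partition function. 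This is the standard fact that the planted model is the null model reweighted by $Z/\E Z$. I will first verify this identity carefully, tracking the GOE variance convention on the diagonal. The event $\calE$ is a function of $\bM$, which is why it suffices to work with the law of $\bM$.

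Both inequalities then reduce to moment bounds on $Y \defeq Z/\E Z$ under the null model. For the upper bound on $\mu_{\pl}(\calE)$, Cauchy--Schwarz gives
\[ \mu_{\pl}(\calE) = \E_{\nullmodel}[Y\bone_\calE] \le \sqrt{\E Y^2}\sqrt{\mu_{\nullmodel}(\calE)} \mcom \]
which yields $c\,\mu_{\pl}(\calE)^2 \le \mu_{\nullmodel}(\calE)$ once $\E Y^2 = O(1)$. The second moment is computed with two replicas: $\E Z^2/(\E Z)^2 = \E_{\sigma,\tau}\exp(\frac{\beta^2}{2N}\langle\sigma,\tau\rangle^2 + O(1))$ over uniform $\sigma,\tau$. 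Since $\langle\sigma,\tau\rangle/\sqrt N$ is subgaussian and approaches $\calN(0,1)$, this is bounded by roughly $(1-\beta^2)^{-1/2}$ for $\beta<1$. Uniform boundedness in $N$ needs a routine local-CLT or Hoeffding tail argument, splitting the overlap at $\eps N$ and using the entropy bound for large overlaps, where $\frac{\beta^2}{2}q^2 < $ the binary entropy gap for $\beta<1$.

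For the other direction, I write $\mu_{\nullmodel}(\calE) = \E_{\pl}[Y^{-1}\bone_\calE] \le \sqrt{\E_{\pl} Y^{-2}}\sqrt{\mu_{\pl}(\calE)}$, so it suffices that $\E_{\pl}Y^{-2} = \E_{\nullmodel}Y^{-1} = O(1)$. This is the main obstacle, because it requires a lower-tail bound on the partition function: $\Pr_{\nullmodel}[Y < s]$ must decay fast enough to make $\E Y^{-1}$ finite uniformly in $N$. I would try the following route. Classical results (Aizenman--Lebowitz--Ruelle) give that $\log Y$ converges to a Gaussian with explicit variance for $\beta<1$, so $Y$ is tight away from $0$. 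To upgrade tightness to an integrable moment, I would combine the bounded second moment (Paley--Zygmund gives $\Pr[Y \ge 1/2] \ge c$) with Gaussian concentration of $\log Z$, which is $\beta$-Lipschitz in $\bW$ w.r.t.\ the Frobenius norm scaled by $\sqrt N$. Concretely, $\log Z$ has $O(1)$-subgaussian fluctuations around its median, and Paley--Zygmund places that median at $\ge \log\E Z - O(1)$. Hence $\Pr[Y< e^{-s}] \le e^{-cs^2}$, and therefore $\E Y^{-1} = O(1)$.

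The Lipschitz constant of $\bW \mapsto \frac{\beta}{2}\langle\sigma,\bW\sigma\rangle$ is $\frac{\beta}{2}\|\sigma\sigma^\top\|_F\cdot N^{-1/2}$ in standardized coordinates, which is $O(\sqrt N)$ in general. If this crude constant is too weak to give $O(1)$ fluctuations, I would fall back on the truncated second moment restricted to typical overlaps, which gives the needed $O(1)$-scale concentration of $\log Z$ at $\beta<1$.
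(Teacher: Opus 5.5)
Your reduction is the same as the paper's. You identify $\frac{\dif\mu_{\pl}}{\dif\mu_{\nullmodel}} = Y = Z/\E Z$ at the level of $\bM$, and you get $c\,\mu_{\pl}(\calE)^2 \le \mu_{\nullmodel}(\calE)$ from Cauchy--Schwarz and $\E Y^2 = \E_{\sigma,\tau} e^{\beta^2\langle\sigma,\tau\rangle^2/(2N)} = O(1)$. That second-moment bound needs no local CLT: write $e^{\beta^2 S^2/(2N)} = \E_{g} e^{\beta g S/\sqrt{N}}$ and use $\cosh x \le e^{x^2/2}$ to get $(1-\beta^2)^{-1/2}$. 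You then reduce the other inequality to $\E_{\nullmodel}[Y^{-1}] = O(1)$.

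The gap is in that last estimate, which is the only nontrivial step. Global Gaussian concentration cannot give $O(1)$ fluctuations of $\log Z$. Writing $\bW_{ij} = g_{ij}/\sqrt{N}$, the gradient of $\log Z$ in $g$ has entries $\beta\langle\sigma_i\sigma_j\rangle/\sqrt{N}$, so its norm can be of order $\beta\sqrt{N/2}$. The resulting tail $\Pr[\log Y < -s - O(1)] \le e^{-s^2/(cN)}$ only yields $\E Y^{-1} \le e^{O(N)}$. You notice this, but your fallback does not close it. A truncated second-moment argument with Paley--Zygmund only gives statements of the form $\Pr[Y \ge c] \ge c'$. The Aizenman--Lebowitz--Ruelle limit only gives tightness of $\log Y$. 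Neither controls the lower tail at the rate needed for uniform integrability of $Y^{-1}$, so $\mu_{\nullmodel}(\calE) \le C\sqrt{\mu_{\pl}(\calE)}$ is not established.

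The paper does not prove this step from scratch either: it integrates Talagrand's lower-tail estimate for $Z_{\beta\bW}$ at $\beta<1$, \cite[Theorem 11.2.1]{Tal11}. For a self-contained argument, the missing idea is to combine your positive-probability information with \emph{convexity} of $\log Z$ in the disorder.
\begin{itemize}
\item Let $A = \{Y \ge 1/2,\ \langle R_{12}^2\rangle \le K/N\}$. Paley--Zygmund gives $\Pr[Y\ge 1/2] \ge 1/(4\E Y^2)$.
\item The weighted second moment satisfies $\E[Y^2\langle R_{12}^2\rangle] = N^{-2}\,\E_{\sigma,\tau}\big[\langle\sigma,\tau\rangle^2 e^{\beta^2\langle\sigma,\tau\rangle^2/(2N)}\big] = O(1/N)$. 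Since $\langle R_{12}^2\rangle \le 4Y^2\langle R_{12}^2\rangle$ on $\{Y \ge 1/2\}$, Markov shows $\Pr[A] \ge c$ for $K$ large.
\item On $A$ the gradient norm is at most $L = O(\sqrt{K})$. Because $\log Z$ is a log-sum-exp of affine functions of $g$, $\log Z(g) \ge \log Z(g') - L\|g - g'\|$ for every $g' \in A$. Hence $\log Y(g) \ge -\log 2 - L\,\mathrm{dist}(g, A)$.
\item Gaussian isoperimetry then gives a subgaussian lower tail for $\log Y$ at scale $O(1)$, so $\E_{\nullmodel}[Y^{-1}] = O(1)$.
\end{itemize}
Your proof needs either this argument or the citation to be complete.
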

\begin{proof}
	Let us focus on the second inequality. We have
	\[ \mu_{\nullmodel}(\calE) = \E_{\pl} \left[ \bone_{\calE} \cdot \frac{\dif \mu_{\nullmodel}}{\dif \mu_{\pl}} \right] \le \sqrt{ \mu_{\pl}(\calE)  } \cdot \sqrt{ \E_{\pl} \left[ \left(\frac{\dif \mu_{\nullmodel}}{\dif \mu_{\pl}}\right)^2 \right] } = \sqrt{ \mu_{\pl}(\calE)  } \cdot \sqrt{ \E_{\nullmodel} \left[ \frac{\dif \mu_{\nullmodel}}{\dif \mu_{\pl}} \right] } \mper \]
	It remains to show that second quantity is $O(1)$. Note now that for any prescribed interaction matrix $W$ (without the scaling by $\beta$),
	\begin{align*}
		\frac{\dif \mu_{\pl}}{\dif \mu_{\nullmodel}}(W \mid \bx = x) &= \frac{ \exp\left( - \frac{N}{4} \left\| W - \frac{\beta}{N} xx^\top \right\|_{F}^2 \right) }{{ \exp\left( - \frac{N}{4} \|W\|_{F}^2 \right) }} \propto \exp\left( \frac{\beta}{2} x^\top W x \right) \mcom \text{so} \\
		\frac{\dif \mu_{\pl}}{\dif \mu_{\nullmodel}}(W) &\propto \sum_{x \in \{\pm 1\}^N} \exp\left( \frac{\beta}{2} x^\top W x \right) = Z_{\beta W} \mper
	\end{align*}
	Our goal is then simply to show that
	\[ \E \left[\frac{1}{Z_{\beta\bW}}\right] \lesssim \frac{1}{\E[Z_{\beta\bW}]} \mper \]
	This estimate is easily obtained by integrating the lower tail described in \cite[Theorem 11.2.1]{Tal11}. The first inequality is similar and requires us to show that $\E[Z_{\beta\bW}^2] \lesssim \E[Z_{\beta\bW}]^2$, which is far easier to establish.
\end{proof}




By the characterization of stochastic localization given by El Alaoui and Montanari \cite{EM22}, along with contiguity \Cref{lem:planted-contiguity}, and the fact that $(\bx,\bH_{\pl})$ has the same law as sampling $\bH_{\pl}$ and then sampling $\bx$ from $\mu_{\beta\bM}$ for $\bM\coloneqq \bW + \beta\frac{\bx\bx^{\top}}{N}$, it suffices to study the covariance matrix of the Ising model $\mu_t \coloneqq \mu_{\beta \bM, t\bx+\bB_t}$.

\thcovbounds*

We now describe how to complete the proof of the main theorem.
\begin{proof}[Proof of \Cref{th:main}]
    We need the following stronger version of \Cref{th:cov-bounds} and prove that for any constant $a > 0$:
    \[
        \Pr_{\bW,\bx}\bracks*{ \Pr_{(\mu_t)_{t\in[0,T]}\sim\mathrm{SL}}\bracks*{ \sup_{0\le t \le T} \left\| \Cov\parens*{\mu_t} \right\|_{\mathrm{op}} \le K } \ge 1-O(N^{-a}) } \ge 1-O(N^{-a})\mper
        \numberthis \label{eq:full-path}
    \]
    The proof of this closely follows that of \cite[Lemma 6.7]{HMRW24}, which is based on using \Cref{th:cov-bounds} to obtain a high probability bound for a given time, and then taking a union bound over a net of times.

    With \eqref{eq:full-path} in hand, we can use \Cref{lem:sl-wpi} to obtain a weak \Poincare inequality for each temperature, take a union bound over temperatures, and apply \Cref{lem:wpi-annealing} to complete the proof.
\end{proof}


\section{Warmup: High-probability covariance bounds for the zero-field SK model}
\label{sec:warmup}

	In this section, we sketch some ideas in the proof of \Cref{th:cov-bounds} in the setting where $h = 0$.

	\zerofieldcovbound*

    Since the diagonal entries of $\bW$ do not affect the covariance, we may set these entries to $0$ throughout this section.
	Define
	\[ K = (1+\beta^2) \Id - \beta \bW \mper \]
    Note that $K$ is PSD and bounded away from $0$ with high probability, as $\lambda_{\max}(\bW)$ converges in probability to $2$.

	We start by using the ``cavity formula'' to express the covariance. Given an external field $h \in \R^n$, denote by $m(h)$ the mean of the SK model with interaction matrix $\bW$ and external field $h$. Similarly, given $i \in [N]$, denote by $m^{(i)}(h)$ the mean of the SK model (on $N-1$ spins) with the $i$th spin removed, with interaction matrix $\bW_{-i,-i}$ and external field $h$. Similarly denote by $\Cov^{(i)}(h) \in \R^{n \times n}$ the covariance matrix of the cavity model, with the $i$th row and column equal to $0$. In particular, we denote $\Cov^{(i)} = \Cov^{(i)}(\mathbf{0})$. For ease of notation, let us also define
    \[ d^i = m^{(i)}(\beta\bW_i) \text{ and } D^i = \Cov^{(i)}(\beta\bW_i) \mper \]
    These are respectively the mean and covariance in the cavity model where the $i$th spin is pinned to $+1$. The law of total covariance then yields
    \[ \Cov = \begin{pmatrix} 1 & (d^i)^\top \\ d^i & D^i + d^i (d^i)^\top \end{pmatrix} \mper \]
    We may expand out the matrix $K \Cov - \Id$ using the above cavity expansion---see \Cref{sec:cov-bound} for more details, where the analogue of this decomposition is described for nonzero localization time.

	\begin{lemma}\label{lem:warmup-cov}
		For each row $i$, we may write
		\[ (K \Cov - \Id)_{i} = \alpha^{i} \Cov_{i} + Y_{i} - Z_{i} \mcom \]
		where $Y_{i,i} = Z_{i,i} = 0$,  $\Cov^{(i)} = \Cov^{(i)}(0)$, and
		\begin{align*}
			\alpha^i &= \beta^2 - \beta \langle \bW_i , d^i \rangle \\
			Y_{i,-i} &= (d^i - \beta \Cov^{(i)} \bW_{i})^\top \\
			Z_{i,-i} &= \beta \bW_{i}^\top \left( D^i - \Cov^{(i)} \right) \mper
		\end{align*}
	\end{lemma}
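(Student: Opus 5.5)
The identity is purely algebraic. I would prove it by computing the $i$th row of $K\Cov$ directly, using the block form of $\Cov$ given just before the statement, and then matching terms. No probabilistic input is needed. The only structural facts used are that $\bW_{ii}=0$ and that the zero-field measure is invariant under the global spin flip $\sigma\mapsto-\sigma$.

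First I would justify the block form. By flip symmetry $\langle\sigma\rangle=0$, so $\Cov=\langle\sigma\sigma^\top\rangle$ and $\Cov_{ii}=1$. Conditioning on $\sigma_i=+1$ gives the cavity model on the remaining $N-1$ spins with external field $\beta\bW_i$, whose mean is $d^i$ and covariance is $D^i$. Conditioning on $\sigma_i=-1$ gives, again by flip symmetry, conditional mean $-d^i$ and the same covariance $D^i$. Hence:
\begin{itemize}
\item $\Cov_{ij}=\langle\sigma_i\sigma_j\rangle=d^i_j$ for $j\neq i$;
\item by the law of total covariance, $\Cov_{jk}=D^i_{jk}+d^i_jd^i_k$ for $j,k\neq i$.
\end{itemize}

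Next, since $\bW_{ii}=0$,
\[ (K\Cov)_{i\cdot}=(1+\beta^2)\Cov_{i\cdot}-\beta\sum_{j\neq i}\bW_{ij}\Cov_{j\cdot}\mper \]

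For the diagonal entry, $\Cov_{ji}=d^i_j$ gives
\[ (K\Cov-\Id)_{ii}=(1+\beta^2)-\beta\langle\bW_i,d^i\rangle-1=\alpha^i=\alpha^i\Cov_{ii}\mcom \]
which is consistent with $Y_{ii}=Z_{ii}=0$.

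For an off-diagonal entry $k\neq i$, the block form gives
\[ (K\Cov)_{ik}=(1+\beta^2)d^i_k-\beta\sum_{j\neq i}\bW_{ij}\bigl(D^i_{jk}+d^i_jd^i_k\bigr)=(1+\beta^2)d^i_k-\beta\langle\bW_i,d^i\rangle d^i_k-\beta(\bW_i^\top D^i)_k\mper \]
Subtracting $\alpha^i\Cov_{ik}=(\beta^2-\beta\langle\bW_i,d^i\rangle)d^i_k$ leaves
\[ d^i_k-\beta(\bW_i^\top D^i)_k\mper \]
Adding and subtracting $\beta(\bW_i^\top\Cov^{(i)})_k$, and using that $\Cov^{(i)}$ is symmetric, this equals
\[ \bigl(d^i-\beta\Cov^{(i)}\bW_i\bigr)_k-\beta\bigl(\bW_i^\top(D^i-\Cov^{(i)})\bigr)_k=Y_{ik}-Z_{ik}\mper \]
This is the claimed decomposition.

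The only step needing care is the block form, specifically that both conditional laws of $\sigma_{-i}$ given $\sigma_i=\pm1$ share the covariance $D^i$ and have means $\pm d^i$. This is exactly where the zero-field flip symmetry enters, and it is the step that will change in the nonzero-field setting of \Cref{sec:cov-bound}. The remaining bookkeeping is routine; the one point to watch is that the convention that row and column $i$ of $\Cov^{(i)}$ and $D^i$ vanish makes the sums over $j\neq i$ coincide with the full matrix-vector products.
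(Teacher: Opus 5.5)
Your proof is correct and takes essentially the paper's route. The paper derives the block form \eqref{eq:organized-blocks} of $\Cov$ from the law of total covariance and then reads off the $i$th row of $K\Cov$ entrywise, as in \eqref{eq:organized-four-terms}; your computation is the zero-field specialization of that expansion, with $v_i=1$, $m=0$, the averaged cavity covariance reducing to $D^i$ by flip symmetry, and $\lambda_i=\alpha^i$.
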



	Our next goal will be to show that $\alpha^{i} = o(1)$, $\|Y\|_{F}^2 = O(1)$, and $\|Z\|_{F}^2 = O(1)$ with high probability. This would complete the proof, since we then have
	\[ \cdot \Cov =  (K - \diag(\alpha))^{-1} \left( \Id + Y - Z \right) \psdle O(1) \mper \]

	We first control $\|Y\|_F^2$.
    The idea is to Taylor expand the function $m^{(i)}$ defining $d^i$, which yields an accurate approximation because the external field $\beta\bW_i$ is small (with entries of order $N^{-1/2}$).
    In particular, since $\Cov^{(i)}(h) = \grad_h m^{(i)}(h)$, the first order Taylor approximation around $h=0$ is
    \[
        d^i \approx \beta \Cov^{(i)} \bW_i,
    \]
    which is exactly the other term in $Y_{i,-i}$.
    We can control the error in this approximation by Taylor expanding further; the error depends on the next nonzero term of the expansion.
    Before carrying out the expansion, we note that the terms in the Taylor expansion take the form of cumulants of the partition function.
    Let $Z^{(i)}(h)$ be the SK partition function at external field $h$, where spin $i$ and its interactions are removed.
    Let $\kappa_r^{(i)}(h) = \grad_h^r \log Z^{(i)}(h)$.
    It is standard that $\kappa_1^{(i)}(h) = m^{(i)}(h)$.
    If we write $\kappa_r^{(i)} = \kappa_r^{(i)}(0)$, then (because $\kappa_1^{(i)} = \kappa_3^{(i)} = 0$)
    \[ 
        d^i \approx \sum_{r=1}^{3} \frac{\beta^{r-1}}{(r-1)!} \kappa_r^{(i)}[\bW_i^{\otimes (r-1)}] 
        = \beta \Cov^{(i)} \bW_i\mcom 
    \]
    with squared $\ell^2$ error of order $\|\kappa_4^{(i)}(h)[\bW_i^{\otimes 3}]\|_2^2$. We discuss the bounding of this error shortly---this will be the complicated part of the proof.




    We can bound $\|Z\|_F^2$ near-identically, comparing $D^i = \Cov^{(i)}(\beta\bW_i)$ to $\Cov^{(i)} = \Cov^{(i)}(0)$ by Taylor expansion of $\Cov^{(i)} = \kappa_2^{(i)}$.
    We similarly find that each row has norm $\|Z_i\|^2 = O(1/N)$ with very high probability, and so $\|Z\|_F^2 = O(1)$.


	The fact that $\alpha^i = o(1)$ also follows by Taylor expanding $d^i$ as above.
    We have
	\[ \langle \bW_i , d^i\rangle \approx \beta \Cov^{(i)}[\bW_i^{\otimes 2}] \mcom \]
    with error of order $|\kappa_4^{(i)} [\bW_i^{\otimes 4}]|$.
    We finally have that with high probability, 
    \[ \Cov^{(i)}[\bW_i^{\otimes 2}] = \frac{\Tr \Cov^{(i)}}{N} + o(1) = 1 + o(1), \]
    because, by overlap concentration, the conditional variance of this random variable is bounded by
    \[
        \Var\left[ \Cov^{(i)}[\bW_{i}^{\otimes 2}] \Big| \bW_{-i,-i} \right] \le \frac{2\|\Cov^{(i)}\|_{F}^2}{N^2} \le O\left( \frac{1}{N} \right)
    \]
    with high probability, and its conditional expectation is exactly $\frac{1}{N}\Tr \Cov^{(i)}$.
    Gaussian concentration can then be used to obtain an error probability bound of $e^{-c\sqrt{N}}$: we have for a sufficiently small constant $\eps$ that
    \[ \Pr\left[ \left| \Cov^{(i)}[W_i^{\otimes 2}] - \frac{\Tr \Cov^{(i)}}{N} \right| > \eps \right] \le 2 \exp\left( - c \min\left\{ \frac{\eps^2 N^2}{\|M\|_{\mathrm F}^2} , \frac{\eps N}{\opnorm{M}} \right\} \right) \le e^{-c\sqrt{N}} \mcom \]


    Now, how do we show thaat the error $\|\kappa_4[\bW_i^{\otimes 3}] \|_2^2$ to be small.
    We may establish this by controlling its high moments using Wick's formula, which various produces terms involving trace bounds and other cumulants. For example, one has
    \[ \E_{\bW_i} \left\| \kappa_{4}[\bW_i^{\otimes 3}] \right\|^2 = \frac{1}{N^3} \cdot \left( 6 \|\kappa_4\|_{\mathrm F}^2 + 9 \| \underbrace{\kappa_4[\Id]}_{= - 2 \Cov^2} \|_{\mathrm F}^2 \right) \mcom \]
    and overlap concentration allows us to bound both of these terms by $O(N^2)$ (in general $\|\kappa_{r}\|_{\mathrm F}^2 \le N^{r/2}$), and thus bound the expectation by $O(1/N)$. However, bounding constant moments in this way and using overlap concentration will only allow us to establish that for constant $p$, $\opnorm{\Cov} = O_{L^p}(1)$: this only yields a high-probability bound on the norm of the covariance by $O(N^{\delta})$.

    Nevertheless, this already suffices to get better bounds on the second term in the expectation. Indeed, earlier we bounded $\| \Cov^2 \|_{\mathrm F} \le \| \Cov \|_{\mathrm F}^2 = O_{L^p}(N)$, but we may now bound  $\| \Cov^2 \|_{\mathrm F} \le \| \Cov \|_{\mathrm F} \cdot \opnorm{\Cov} = O_{L^p}(\sqrt{N})$, which yields a high-probability bound for the second term. Controlling the other term requires us to bound $\| \kappa_4 \|_{\mathrm F}^2$ by something better than $O(N^2)$; an improved bound of this form will also suffice to obtain the necessary better bounds on higher moments. This will require some more ideas, and we defer a detailed discussion to the actual proof, which is presented in \Cref{subsec:proof-of-improved-cumulant}.


\section{High-probability covariance bounds at nonzero localization time}
\label{sec:cov-bound}

	In this section, we establish \Cref{th:cov-bounds}, restated for convenience.

	\thcovbounds*

	We will use the convention that the diagonal of $\bM$ is zeroed out.

	We now establish some notation.
	Set:
	\[
		m = \langle \sigma \rangle \mcom \qquad v_i = 1-m_i^2 \mcom \qquad q_N = \frac{\norm*{m}^2}{N}\mcom
	\]
	and define the TAP Hessian
	\begin{equation}
		\label{eq:inv-cov-pred}
		K \coloneqq \diag(v_i^{-1}) - \beta \bM + \beta^2(1-q_N)\Id - \frac{2\beta^2}{N}mm^{\top} \mper
	\end{equation}
	We will bound $K\Cov-\Id$ with high probability and combine this with a lower bound on the smallest eigenvalue of $K$.

	\begin{lemma}[Operator norm of the residual]
	\label{lem:organized-residual}
	For any constant $a > 0$, with probability $1 - O(N^{-a})$, we have
	\[
		\|K\Cov-\Id\|_{\mathrm{op}}\le O(1).
	\]
	\end{lemma}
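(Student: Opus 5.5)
We follow the template of \Cref{lem:warmup-cov}, now at nonzero localization time. Write $h = t\bx + \bg\sqrt{t}$ for the external field. For each site $i$, let $m^{(i)}(u)$ and $\Cov^{(i)}(u)$ be the mean and covariance of the cavity model on $[N]\setminus\{i\}$. This model has interaction $\beta\bM_{-i,-i}$ and field $h_{-i}+u$. Set $d^{i,\pm} = m^{(i)}(\pm\beta\bM_i)$ and $D^{i,\pm} = \Cov^{(i)}(\pm\beta\bM_i)$ for the cavity means and covariances with spin $i$ pinned to $\pm 1$. The law of total covariance expresses row $i$ of $\Cov$ through $m_i$, $v_i$, $d^{i,\pm}$ and $D^{i,\pm}$. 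Writing $d^{i,\pm} = m^{(i)} \pm \beta\Cov^{(i)}\bM_i + \frac{\beta^2}{2}\kappa_3^{(i)}[\bM_i^{\otimes 2}] + \cdots$, with $m^{(i)}$, $\Cov^{(i)}$ and $\kappa_r^{(i)}$ evaluated at $u=0$, we decompose
\[ (K\Cov-\Id)_i = \alpha^i \Cov_i + Y_i - Z_i + E_i \mcom \]
with the following pieces.
\begin{itemize}
\item $\alpha^i$ is a scalar. It collects $v_i^{-1}$ times the discrepancy between the TAP cavity relation $m_i \approx \tanh(h_i + \beta\langle \bM_i, m\rangle - \beta^2(1-q_N)m_i)$ and the exact relation, together with the Onsager correction $\beta^2(1-q_N) - \beta^2 \langle \bM_i, \Cov^{(i)}\bM_i\rangle$.
\item $Y_i$ collects the Taylor remainders of $d^{i,\pm}$ beyond first order.
\item $Z_i$ collects $\beta\bM_i^\top(D^{i,\pm}-\Cov^{(i)})$.
\item $E_i$ is the contribution of the rank-one terms $\frac{\beta^2}{N}\bx\bx^\top$ inside $\bM$ and $-\frac{2\beta^2}{N}mm^\top$ in $K$. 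The second of these exactly cancels the leading part of $\beta^2\langle \bM_i,\Cov^{(i)}\bM_i\rangle$ that is correlated with $m$.
\end{itemize}
The goal is to show that, with probability $1-O(N^{-a})$, $\max_i|\alpha^i| = o(1)$ (in fact $O(N^{-c})$), $\|Y\|_F, \|Z\|_F = O(1)$, and $\|E\|_{\mathrm{op}} = O(1)$. Since $\|\Cov\|_{\mathrm{op}} \le \|K^{-1}\|\,\|K\Cov\|$, the term $\diag(\alpha)\Cov$ can then be absorbed: from $(K-\diag(\alpha))\Cov = \Id + Y - Z + E$ we get $\|K\Cov-\Id\| \le \|\diag(\alpha)\|\,\|\Cov\| + O(1)$. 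This is $O(1)$ once we use the a priori bound $\|\Cov\| = O(N^{\delta})$ together with $|\alpha^i| = O(N^{-c})$ for $c>\delta$, or alternatively the lower bound on $K$ from \Cref{sec:lower-bd-inv-cov-pred}.

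The key structural point is that the planted part $\frac{\beta}{N}\bx\bx^\top$ of $\bM_i$ is a rank-one, $O(1/\sqrt N)$-in-$\ell^2$ perturbation. Its contribution to any cumulant contraction is therefore handled deterministically by Cauchy--Schwarz against $\|\Cov^{(i)}\bx\|/N$ and similar quantities. The remaining part $\bW_i$ is Gaussian and independent of the cavity measure, which depends only on $(\bW_{-i,-i},\bx,\bg)$. Conditional on the cavity data, each of the following is a Gaussian polynomial of fixed degree in $\bW_i$:
\begin{itemize}
\item $\langle \bW_i, \Cov^{(i)}\bW_i\rangle - \frac{1}{N}\Tr\Cov^{(i)}$;
\item $\|\kappa_3^{(i)}[\bW_i^{\otimes 2}]\|^2$;
\item $\|\kappa_4^{(i)}(u)[\bW_i^{\otimes 3}]\|^2$, taken uniformly over intermediate fields $u$ on the segment.
\end{itemize}
We bound each by hypercontractivity, or by Hanson--Wright for the quadratic form, in terms of Frobenius norms of the cumulants $\|\kappa_r^{(i)}\|_F$ and of contractions such as $\kappa_4^{(i)}[\Id]$. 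We then need $1 - \frac{1}{N}\Tr \Cov^{(i)} \approx q_N$, i.e. $\frac1N\sum_j(1-m_j^2) = 1-q_N + o(1)$. This follows from removing one spin changing $m$ by $O(1/\sqrt N)$ in norm. The uniformity over $u$ is handled by a net over the segment plus Lipschitz bounds, since derivatives of $\kappa_4$ are higher cumulants.

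The main obstacle is the input to these moment bounds: high-probability bounds $\|\kappa_r^{(i)}\|_F^2 \lesssim N^{r/2-\eta}$ (improving on the overlap-concentration bound $N^{r/2}$) for $r=3,4$, for a cavity model with the random field $h$ rather than zero field. We plan to proceed in two stages. First, use overlap concentration at nonzero field, i.e. concentration of $\frac1N\langle \sigma-m,\sigma'-m\rangle$ with polynomial tails, to obtain the crude bound. Plugging it into the expansion above yields $\|\Cov\| = O_{L^p}(1)$ and hence $\|\Cov\| \le N^{\delta}$ with probability $1-N^{-a}$. Second, bootstrap. Write $\kappa_4$ via two cavity steps as contractions of $\Cov$ and $\kappa_3$ against Gaussian rows, which gives bounds of the form $\|\kappa_4\|_F^2 \lesssim N\|\Cov\|^2\,\mathrm{polylog} + $ (lower order). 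Iterate until the bounds suffice for Markov at moment $p \approx a/\eta$. The random field only enters through $\bg\sqrt{t}$, which is independent of $\bW_i$, and through $t\bx$, which is aligned with the plant and treated like the rank-one term. So we expect the zero-field arguments of \Cref{sec:warmup} to carry over, at the cost of tracking the extra $\bx$-directions in $E$.
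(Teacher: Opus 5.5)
There is a genuine gap, and it sits exactly where nonzero localization time departs from the zero-field warmup. At $t>0$ one has $m_i\neq0$ and $\kappa_3^{(i)}\neq0$, so expanding $\ol{\Cov_i}=\frac{1+m_i}{2}\Cov^{(i,+)}+\frac{1-m_i}{2}\Cov^{(i,-)}$ puts the term $\beta^2 m_i\kappa_3^{(i)}[\bM_{i,-i}^{\otimes2}]$ into your $Z_i$.

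Its trace part is $\frac{\beta^2m_i}{N}\kappa_3^{(i)}[\Id]=-\frac{2\beta^2m_i}{N}\Cov^{(i)}m^{(i)}$. This is what the $-\frac{2\beta^2}{N}mm^\top$ term of $K$ cancels. It does not cancel anything in the Onsager scalar $\beta^2\langle\bM_i,\Cov^{(i)}\bM_i\rangle$, as you claim.
\begin{itemize}
\item If you bound these two near-rank-one pieces separately, each is only of size about $2\beta^2q_N\opnorm{\Cov}$. That constant need not lie below the gap of $K$, so the pieces cannot be absorbed, and the argument becomes circular.
\item Even after the cancellation, the leftover $\frac{2\beta^2m_i}{N}(\Cov^{(i)}m^{(i)}-(\Cov m)_{-i})$ needs real work. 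It requires control of $\kappa_4^{(i)}[\Id]=-2(\Cov^{(i)})^2-2\kappa_3^{(i)}[m^{(i)}]$, which uses $\E\opnorm{\Cov^{(i)}}^p\lesssim1$.
\end{itemize}

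The fluctuating part $\beta^2m_i\kappa_3^{(i)}[\bM_{i,-i}^{\otimes2}-\frac1N\Id]$ has, summed over rows, conditional second moment about $\frac{2\beta^4q_N}{N}\|\kappa_3\|_F^2$. So $\|Z\|_F=O(1)$ forces the sharp bound $\|\kappa_3^{(i)}\|_F=O(\sqrt N)$ with high probability. Overlap concentration gives only $\|\kappa_3\|_F^2\lesssim N^{3/2}$, hence $\|Z\|_F\approx N^{1/4}$. Your Stage~1 claim, that the crude bounds already give $\opnorm{\Cov}=O_{L^p}(1)$, therefore fails for $t>0$. A bootstrap gaining $N^{-\eta}$ for small $\eta$, followed by Markov at moment $a/\eta$, cannot close this either: for $r=3$ you need the full $\eta=1/2$.

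The missing idea is the mechanism that produces these sharp bounds. Your ``two cavity steps'' bootstrap is unspecified and addresses only $\kappa_4$. The paper proceeds as follows.
\begin{itemize}
\item It differentiates the TAP residual $r(u)=T(m(u))-u$, giving $K\Cov-\Id=\grad r$, $K\kappa_3=\grad^2r-D^2T(m)\{\Cov,\Cov\}$, and an analogous identity for $K\kappa_4$ (\Cref{fac:improved-cumulant-start}).
\item It controls $\grad^jr$ through the exact cavity identity $r_i=\zeta_i+m_i\lambda_i$, using only crude cumulant-contraction estimates.
\item It then inverts $K$ on the event $\{K\psdge\gamma\}$ from \Cref{lem:organized-gap}.
\end{itemize}
This yields $\E\|\kappa_3\|_F^p\lesssim N^{p/2}$, $\E\|\kappa_4\|_F^p\lesssim N^{3p/4}$ and $\E\opnorm{\Cov}^p\lesssim1$. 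It also yields the high-probability bounds $\|\Cov^{(i)}\|_F,\|\kappa_3^{(i)}\|_F\le C\sqrt N$ of \Cref{lem:improved-cumulant}, which feed into \Cref{lem:error-bounds}. So the spectral gap of $K$ is needed inside the cumulant estimates themselves, not only in your final absorption step.

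A smaller slip: removing one spin moves $m$ by $O(1)$ in $\ell^2$, not $O(1/\sqrt N)$. The conclusion $\frac1N|\Tr\Cov-\Tr\Cov^{(i)}|=O(N^{-1/2})$ still follows by Cauchy--Schwarz.
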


	\begin{lemma}[Smallest eigenvalue of the TAP Hessian]
	\label{lem:organized-gap}
	There is a constant $\gamma>0$, depending only on
	$\beta$ and $T$, such that for any constant $a > 0$,
	\[
		\Pr\left[ K \not\psdge \gamma \right] \le N^{-a} \mper
	\]
	\end{lemma}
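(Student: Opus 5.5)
The plan is to follow the strategy of \cite{Cel24}: compare $K$ to the same matrix evaluated at a late AMP iterate, rather than at the true Gibbs mean $m$. Write $K = K(m)$, where for $u \in (-1,1)^N$
\[ K(u) \coloneqq \diag\left( (1-u_i^2)^{-1} \right) - \beta \bM + \beta^2\left(1 - \tfrac{\|u\|^2}{N}\right)\Id - \tfrac{2\beta^2}{N} uu^\top \mper \]
This is, up to the rank-one correction, the Hessian of the TAP free energy at $u$. The proof has three steps.

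\textbf{Step 1: the Gibbs mean is close to an AMP iterate.} Run the Bayes-optimal AMP iteration $m^{k}$ for the planted model with side information $t\bx + \sqrt{t}\bg$. Show that for $k$ large enough (depending on $\delta$),
\[ \tfrac{1}{N}\|m - m^{k}\|^2 \le \delta \]
with probability $1 - O(N^{-a})$. This is the TAP approximation of the mean, as in \cite{Cel24}, but with a polynomial rather than merely $o(1)$ failure probability. The natural route is to combine the cavity and Taylor-expansion machinery of \Cref{sec:cov-bound} with the high-probability overlap concentration already in hand. I expect this step to be the main obstacle. Existing results only give convergence in probability, so I would first try to upgrade them by proving an $L^p$ bound on $\frac1N\|m - m^k\|^2$ via moment computations, and then use Markov's inequality.

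\textbf{Step 2: local convexity at the AMP iterate.} Show that $K(m^k) \psdge 2\gamma$ with probability $1 - e^{-cN}$. Condition on the first $k$ AMP iterates. By the Gaussian conditioning lemma, $\bW$ is then a rank-$O(k)$ deterministic part plus an independent GOE matrix compressed to the orthogonal complement of the span of the iterates. The state evolution gives the empirical law of $(m^k_i)$, and hence of the diagonal $D^k = \diag(1/v^k_i)$. On the complement, the smallest eigenvalue of $D^k - \beta\,\GOE$ is determined by free additive convolution. It is bounded below exactly when the replica-symmetric stability condition holds, namely
\[ \beta^2\,\E\left[(1-\mathsf m^2)^2\right] < 1 \mcom \]
where $\mathsf m$ has the state-evolution law. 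This condition holds for $\beta < 1$. The finite-dimensional block on the span of the iterates is then checked by explicit computation, as in \cite{Cel24}. Since $\lambda_{\min}$ is $O(N^{-1/2})$-Lipschitz in the Gaussian disorder, the bound holds with probability $1 - e^{-cN}$.

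\textbf{Step 3: transfer from $m^k$ to $m$.} The diagonal map $u \mapsto \diag(1/(1-u_i^2))$ is not operator-norm continuous in normalized $\ell_2$, so this transfer needs care. Since we only need a lower bound, it suffices to compare diagonal entries from below.
\begin{itemize}
    \item The low-rank terms $\beta^2(1 - q_N)\Id$ and $\frac{2\beta^2}{N}mm^\top$ are $O(\sqrt\delta)$-close in operator norm to their counterparts at $m^k$.
    \item Let $S = \{ i : |m_i - m^k_i| > \eps \}$. Then $|S| \le \delta N/\eps^2 \eqqcolon \eta N$. Off $S$, a truncation argument gives $1/v_i \ge 1/v_i^k - C\eps$; on $S$ we only use $1/v_i \ge 1$.
\end{itemize}
It therefore suffices to prove a robust version of Step 2: with probability $1 - e^{-cN}$, simultaneously for all sets $S$ with $|S| \le \eta N$, the matrix obtained from $K(m^k)$ by replacing the diagonal entries in $S$ with $1$ is $\psdge \gamma$. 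For a fixed $S$ this follows from the free-convolution computation, since the diagonal perturbation on $S$ changes the limiting spectrum by an amount that vanishes as $\eta \to 0$. The fixed-$S$ bound holds with probability $1 - e^{-cN}$ by concentration. This beats the union bound over the $\binom{N}{\eta N} \le e^{H(\eta)N}$ possible sets once $\eta$ is small enough. Choosing $\eps$, then $\delta$, then $k$ appropriately completes the proof, with the overall failure probability dominated by Step 1.
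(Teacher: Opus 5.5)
Your architecture (local convexity of the TAP Hessian on an $\ell_2$-ball around a late AMP iterate, then transfer to $m$) matches the paper's for $t$ bounded away from $0$. Your robust reformulation in Step 3 is also the right kind of statement; it is the analogue of \Cref{lem:mid-probability-pred-bound}(b). The arguments you give for Steps 2--3, however, do not work. The probability boost rests on $\lambda_{\min}(K(m^k))$ being $O(N^{-1/2})$-Lipschitz in the disorder, and this fails. In the underlying standard Gaussian coordinates, $m^k$ is only $O(1)$-Lipschitz in $\ell_2$, so an $O(1)$ perturbation supported on one row and column moves a single $m^k_i$ by $O(1)$. $K$ feels this through $\diag(1/(1-(m^k_i)^2))$, whose operator-norm change is an $\ell_\infty$ quantity, and which is not Lipschitz at all near $|m^k_i|=1$. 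So you have neither the $e^{-cN}$ bound for $K(m^k)$ itself nor the fixed-$S$ bound needed to beat the $e^{H(\eta)N}$ union bound. Besides, \cite{Cel24} only gives convexity with probability $1-o(1)$. The paper never concentrates through the diagonal. It proves convexity simultaneously for all $u$ with $\|u-\wh m\|\le\eps\sqrt N$, with probability merely $\ge 1/2$; uniformity in $u$ comes free from the supremum over $(u,v)$ in Celentano's post-AMP Sudakov--Fernique comparison, so no union bound over coordinate sets is needed. It then boosts by Gaussian isoperimetry. At \emph{fixed} $u$, $K_{\bW}(u)$ is Lipschitz in $\bW$ in operator norm, and $\wh m$ is $O(\sqrt N)$-Lipschitz. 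Hence with probability $1-e^{-cN}$ the true disorder lies within small operator-norm distance of a good disorder whose ball still contains $m$. Your off-$S$ bound $1/v_i\ge 1/v_i^k-C\eps$ is false when $|m^k_i|$ is close to $1$. The paper absorbs this discontinuity by the AM--GM step: after maximizing over $V$, $U$ enters only through $(1-U^2)/(1-\alpha(1-U^2))$, which is bounded and Lipschitz for $0\le\alpha\le 1/2$.

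The spectral content of Step 2 is also incomplete. Free convolution gives only the bulk edge, and your condition $\beta^2\E[(1-\mathsf m^2)^2]<1$ is correct for that. But $\lambda_{\min}$ may be an outlier created by finite-rank structure: the spike inside $\beta\bM$, the deterministic part left after conditioning on the iterates, and above all the negative term $-\frac{2\beta^2}{N}uu^\top$, whose norm $2\beta^2\|u\|^2/N$ is of order one. There is no block splitting along the span of the iterates, because the diagonal does not preserve it. Also, \cite[Theorem 2(i)]{Cel24} controls $K(\wh m)+\frac{2\beta^2}{N}\wh m\wh m^\top$, i.e.\ without that term, so citing it does not cover your $K$. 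Showing this rank-one term is harmless is the genuinely new part of the paper's argument. Celentano's variational problem acquires an extra $+2\beta^2q\rho^2$, and the choice $a=2\beta^2(2q\rho-\sqrt q\,\E[V])$, $b=2\beta^2\E[V]$ yields $\mathcal{L}_0\le-\beta^2(1-\beta^2\nu)\bigl(q\rho^2+(\E[V]-\rho\sqrt q)^2\bigr)\le 0$, since $\nu\le 1$.

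In Step 1 the missing idea is the posterior-mean identity $m=\E[\bx\mid\bM,\bh]$. With it, $\frac1N\langle m,\wh m\rangle$ is the conditional expectation of $\frac1N\langle\bx,\wh m\rangle$, which concentrates at rate $e^{-cN}$ by Lipschitzness of $\wh m$ plus a Kirszbraun extension. Meanwhile $\|m\|^2/N$ and $\|\wh m\|^2/N$ are controlled by overlap concentration and state evolution, and expanding $\|m-\wh m\|^2$ finishes. The cavity estimates of \Cref{sec:cov-bound} would at best make $m$ an approximate TAP fixed point, which does not place it near $m^k$, and their sharp forms invoke the very lemma being proved.

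Finally, the lemma is needed uniformly for $t\in[0,T]$, including $t\to 0$, which you do not address. The paper handles $t<t_0$ separately, using the subcritical spiked-Wigner edge at $t=0$ \cite{Maida07} together with $K_t-K_0\psdge-3\beta^2q_N$ and $q_N$ small.
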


	\begin{corollary}[Covariance bound]
	\label{cor:organized-covariance}
	For some constant $L$, for any constant $a > 0$, we have
	\[ \Pr\left[ \opnorm{\Cov} \ge L \right] \le N^{-a} \mper \]
	\end{corollary}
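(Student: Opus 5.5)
The plan is to combine \Cref{lem:organized-residual} and \Cref{lem:organized-gap} on the intersection of their good events, which by a union bound has probability at least $1 - O(N^{-a})$. Rescaling $a$ slightly absorbs the constant and gives the stated $N^{-a}$. On this event we have $K \psdge \gamma \Id$ with $\gamma = \gamma_{\beta,T} > 0$. In particular $K$ is invertible with $\opnorm{K^{-1}} \le \gamma^{-1}$. We also have $\opnorm{K\Cov - \Id} \le C_a$ for a constant $C_a$ depending on $a$ and $\beta$.

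The key step is the algebraic identity
\[ \Cov = K^{-1}\left( \Id + (K\Cov - \Id) \right), \]
which holds because $K$ is invertible. Submultiplicativity of the operator norm then gives
\[ \opnorm{\Cov} \le \opnorm{K^{-1}} \cdot \left( 1 + \opnorm{K\Cov - \Id} \right) \le \gamma^{-1}(1 + C_a). \]
So we can take $L = \gamma^{-1}(1 + C_a) + 1$. This $L$ is independent of $N$, which is exactly the conclusion. It does depend on $a$, matching the $L_{\beta,a}$ of \Cref{th:cov-bounds}.

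One point needs care. $K\Cov$ is not symmetric, so we cannot argue through the PSD order directly. That is why we use the operator-norm factorization, which needs no symmetry, and not a statement like $\Cov \psdle K^{-1}(\ldots)$. The quantifiers also matter: $\gamma$ must not depend on $a$, while the residual constant may. Both lemmas are stated in that form, so there is no real obstacle here. All the difficulty has been pushed into the two lemmas. Of these, \Cref{lem:organized-residual} is the hard one: it needs the cavity and Taylor-expansion control of the cumulant error terms.
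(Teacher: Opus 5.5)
Taken as a black-box deduction from the two stated lemmas, your computation is valid, and it follows the plan the paper announces (bound $K\Cov-\Id$, then combine with the lower bound on $K$). Within the paper, however, it is circular, because \Cref{lem:organized-residual} is not proved independently of the covariance bound.

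The paper's proof of \Cref{lem:organized-residual} sets $D=\diag(\lambda_i)$ and first controls $R=(K-D)\Cov-\Id$, not $K\Cov-\Id$. The off-diagonal part satisfies $\|R-\diag(R)\|_F^2=\sum_i\|f_i-\lambda_i\Cov_{-i,i}\|_2^2$, which is bounded by \Cref{lem:organized-off-diagonal}. The diagonal is $R_{ii}=-\frac{2\beta^2 m_i}{N}(\Cov m)_i$, so $|R_{ii}|\le 2\beta^2$ by \Cref{lem:organized-diagonal}. Since $\max_i|\lambda_i|\le N^{-1/4}$, the symmetric matrix $K-D$ satisfies $K-D\psdge\gamma/2$ for large $N$. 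Then $\Cov=(K-D)^{-1}(\Id+R)$ gives $\opnorm{\Cov}\le\frac{2}{\gamma}(1+\opnorm{R})$, and this is the corollary. Only afterwards does the paper deduce $\opnorm{K\Cov-\Id}\le\opnorm{R}+\max_i|\lambda_i|\,\opnorm{\Cov}$. The term $D\Cov=(K\Cov-\Id)-R$ is controlled only through the covariance bound; a priori $\opnorm{\Cov}\le N$ gives just $N^{3/4}$. The repair is to run your identity with $K-D$ and $R$ in place of $K$ and $K\Cov-\Id$. The rest of your argument then goes through unchanged.

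Separately, the corollary fixes $L$ before $a$, so your remark that the residual constant may depend on $a$ is not correct for this statement. Your $L=\gamma^{-1}(1+C_a)+1$ gives only the weaker $L_{\beta,a}$ form. That suffices for \Cref{th:cov-bounds} but is not what is claimed here. For the stated form you need $\opnorm{R}$ bounded by a constant uniform in $a$. The paper's estimates allow this, even though \Cref{lem:organized-off-diagonal} is written with $C_a$: for example, \Cref{lem:improved-cumulant}(c),(d) come with an absolute constant, and all other error terms are of order $N^{-\Omega(1)}$ with high probability.
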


	The proof of \Cref{lem:organized-residual} depends on the following estimates.
	Let $f_i \coloneqq (K\Cov-\Id)_{i,-i}$, and let $\lambda_i\coloneqq \beta^2(1-q_N) - \beta d_i^\top \bM_{i,-i}$ where $d_i \coloneqq \frac{\Cov_{i,-i}}{v_i}$.
	\begin{lemma}	\label{lem:organized-off-diagonal}
		For every constant $a > 0$, there is a constant $C_a$ such that
		\[
			\Pr\bracks*{ \sum_{i\in[N]} \norm*{ f_i - \lambda_i \Cov_{-i,i} }_2^2  > C_a } \le N^{-a}
		\]
	\end{lemma}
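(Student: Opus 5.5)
We plan to mirror the zero-field argument of \Cref{lem:warmup-cov}, now with the localization field $h \coloneqq t\bx + \sqrt{t}\bg$ and the planted matrix $\bM$ in place of $\bW$. Throughout we work conditionally on $\bx$. We also use that, by the planted/Nishimori structure, the pair $(\bM,\bx)$ may be resampled as a Gibbs sample. This lets us treat the cavity system on $[N]\setminus\{i\}$ as independent of the row $\bW_{i,-i}$.

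\textbf{Cavity representation.} For each $i$, write $m^{(i)}(\cdot)$ and $\Cov^{(i)}(\cdot)$ for the mean and covariance of the cavity model on $N-1$ spins, with interaction $\beta\bM_{-i,-i}$ and base field $h_{-i}$, as functions of an additional field. Let $d^{\pm} \coloneqq m^{(i)}(\pm\beta\bM_{i,-i})$ be the cavity means with spin $i$ pinned to $\pm1$. Then $\Cov_{-i,i} = v_i (d^+ - d^-)/2$, and the total covariance formula expresses $\Cov_{-i,-i}$ through $\Cov^{(i)}(\pm\beta\bM_{i,-i})$ and $d^\pm$. Substituting into the $i$th row of $K\Cov-\Id$ with $K$ from \eqref{eq:inv-cov-pred}, we expect
\[
f_i - \lambda_i\Cov_{-i,i} = Y_i - Z_i + (\text{TAP correction terms}),
\]
with the following pieces:
\begin{itemize}
\item $Y_i$ compares $\tfrac12(d^+-d^-)$ with its linearization $\beta\Cov^{(i)}\bM_{i,-i}$;
\item $Z_i = \beta \bM_{i,-i}^\top\bigl(\text{averaged pinned covariance} - \Cov^{(i)}\bigr)$;
\item the correction terms come from $\beta^2(1-q_N)\Id - \tfrac{2\beta^2}{N}mm^\top$.
\end{itemize}
The Onsager factor $(1-q_N)$ absorbs the second-order cavity contribution. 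The rank-one piece absorbs the contribution of the spike $\tfrac{\beta}{N}\bx\bx^\top$ inside $\bM_{i,-i}$, together with the quadratic mean correction. The first step is this purely algebraic bookkeeping, and its check is that every remaining term is either a Taylor remainder or a mean-zero fluctuation.

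\textbf{Taylor expansion and concentration.} We expand $m^{(i)}$ around the unperturbed cavity field to third order in the small field $\beta\bM_{i,-i}$, whose entries are $O(N^{-1/2})$. The cumulants are $\kappa_r^{(i)} = \nabla^r\log Z^{(i)}$. Unlike the zero-field case, $\kappa_3^{(i)}\neq 0$, but this is harmless: odd terms cancel in $d^+-d^-$, while even terms survive in the averaged covariance and produce the $\lambda_i$ and $mm^\top$ corrections.

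The remainders are of the form $\kappa_4^{(i)}(\xi)[\bW_i^{\otimes 3}]$ at an intermediate field $\xi$. Spike contributions $\tfrac{\beta}{N}x_i\bx$ are deterministic in direction, and we control them via $\langle \bx, \cdot\rangle$ overlap bounds. For the Gaussian part, we condition on the cavity system and use Wick's formula for high moments. This gives $\E\|\kappa_4[\bW_i^{\otimes 3}]\|^{2p} \lesssim N^{-p}$, provided $\|\kappa_4^{(i)}\|_{\mathrm F}^2 = O(N^{2-c})$ and $\|\Cov^{(i)}\|$ is at most $N^{\delta}$. Quadratic forms such as $\Cov^{(i)}[\bW_i^{\otimes2}]$ are handled by Hanson--Wright, exactly as in \Cref{sec:warmup}. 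Markov's inequality with $p = p(a)$, followed by a union bound over $i$, then gives $\sum_i \|\cdot\|^2 \le C_a$ with probability $1-N^{-a}$.

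\textbf{Main obstacle.} We expect the hard step to be the a priori cumulant estimates at nonzero field: a bootstrap bound $\|\Cov^{(i)}\| = O(N^\delta)$ in $L^p$, and an improved bound $\|\kappa_4^{(i)}(\xi)\|_{\mathrm F}^2 = o(N^2)$ that holds uniformly over intermediate fields $\xi$. We would first derive crude bounds from overlap concentration for the planted model with field $t\bx+\sqrt t\bg$, using contiguity via \Cref{lem:planted-contiguity}. We would then feed the crude operator-norm bound back into the Wick expansion, writing $\kappa_4[\Id]$ in terms of $\Cov^2$ and $\diag$ terms so that $\|\Cov^2\|_{\mathrm F} \le \|\Cov\|_{\mathrm F}\|\Cov\|$. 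Finally, we would bound $\|\kappa_4\|_{\mathrm F}$ by a second cavity step, expressing $\kappa_4$ rows via differences of pinned covariances, as in \Cref{subsec:proof-of-improved-cumulant}.
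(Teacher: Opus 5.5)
Your algebraic decomposition $f_i-\lambda_i\Cov_{-i,i}=Y_i-Z_i-\frac{2\beta^2m_i}{N}(\Cov m)_{-i}$ matches the paper's. The gap is that you call the $\kappa_3^{(i)}$ contribution harmless, and you misidentify what the rank-one part of $K$ does. The $m_i$-weighted odd part of the averaged pinned covariance contributes $-\beta^2 m_i\kappa_3^{(i)}[\bM_{i,-i}^{\otimes 2}]$. The rank-one term cancels only its Wick-diagonal part $\frac1N\kappa_3^{(i)}[\Id]=-\frac2N\Cov^{(i)}m^{(i)}$, which follows from $\Tr\Cov^{(i)}=N-1-\|m^{(i)}\|^2$. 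It does not absorb the spike inside $\bM_{i,-i}$; that spike is just an error term.

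Two terms then survive that your check (``every remaining term is a Taylor remainder or a mean-zero fluctuation'') never handles.
\begin{itemize}
\item The fluctuation $\beta^2m_i\kappa_3^{(i)}[\bM_{i,-i}^{\otimes2}-\Id/N]$ has squared norm about $\frac{2}{N^2}\|\kappa_3^{(i)}\|_F^2$, which is exactly the critical scale. Summing over $i$ requires $\|\kappa_3^{(i)}\|_F\le C\sqrt N$ with probability $1-N^{-a}$ and an absolute $C$. Overlap concentration only gives $N^{3/4}$, which makes the sum of order $\sqrt N$, and this estimate is missing from your list of a priori bounds.
\item The cavity mismatch $\frac{2\beta^2m_i}{N}\bigl(\Cov^{(i)}m^{(i)}-(\Cov m)_{-i}\bigr)$ is only trivially bounded by $(\opnorm{\Cov}+\opnorm{\Cov^{(i)}})/\sqrt N$. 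Summing squares then returns $\opnorm{\Cov}^2$, the very quantity being bounded, so this is circular. The paper writes the term as $\frac12\grad(\Tr\Cov-\Tr\Cov^{(i)})$ and expands it through the block decomposition. It then uses $\kappa_4^{(i)}[\Id]=-2(\Cov^{(i)})^2-2\kappa_3^{(i)}[m^{(i)}]$ to get a bound strictly below $N^{-1/2}$, about $N^{-5/8}$. At nonzero field the $\kappa_3^{(i)}[m^{(i)}]$ piece is not a ``diag term'' and needs its own Cauchy--Schwarz bound.
\end{itemize}

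Two further steps fail as written. First, your remainder $\kappa_4^{(i)}(\xi)[\bW_i^{\otimes3}]$ sits at $\xi=u_i+s\beta\bM_{i,-i}$, which depends on $\bW_{i,-i}$. Conditioning on the cavity system therefore does not make the tensor independent of the vector it is contracted with, so Wick's formula is unavailable. Improved Frobenius bounds at intermediate fields do not fix this by themselves. The paper instead keeps $\kappa_3^{(i)},\kappa_4^{(i)}$ at the unperturbed field, where they are independent of the row. It pushes the Taylor remainder to $\kappa_5,\kappa_6$, where crude overlap-based bounds already give $N^{-3/4}$ and $N^{-1}$. The field dependence is then handled by the integration-by-parts and Gaussian--Poincar\'e induction of \Cref{lem:cumulant-contraction-estimates}.

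Second, a critical-scale bound $\E\|X_i\|^{2p}\lesssim N^{-p}$ plus Markov and a union bound over $i$ only yields $\sum_i\|X_i\|^2\le N^{(a+1)/p}$, not a constant $C_a$. Every term except the $\kappa_3^{(i)}$ fluctuation needs strictly subcritical moments; for example, the paper gets $N^{-5/8}$ for $\kappa_4^{(i)}[\bM_{i,-i}^{\otimes3}]$ using $\E\|\kappa_4^{(i)}\|_F^p\lesssim N^{3p/4}$. The $\kappa_3^{(i)}$ fluctuation needs a genuine high-probability bound. In the paper these improved estimates come from the TAP-residual identities of \Cref{fac:improved-cumulant-start}, such as $K\kappa_3=\grad^2 r-D^2T\{\Cov,\Cov\}$, inverted using $K\psdge\gamma$ from \Cref{lem:organized-gap}. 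Your ``second cavity step'' has no such contraction, so as described it is self-referential.
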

	We prove the following estimate for the diagonal part.
	\begin{lemma}[Diagonal terms]
	\label{lem:organized-diagonal}
	We have
	\(
		(K\Cov-\Id)_{ii}
		=\lambda_i v_i-\frac{2\beta^2m_i}{N}(\Cov m)_i\mper
	\)
	Further, for every $a>0$ we have:
	\[
		\Pr\left[\max_{i\in[N]}|\lambda_i|>N^{-1/4}\right]
		\le O(N^{-a}).
	\]
	Hence,
	\[
		\max_{i\in[N]}
		\left|(K\Cov-\Id)_{ii}-\lambda_i v_i\right|
		\le 2\beta^2.
	\]
	\end{lemma}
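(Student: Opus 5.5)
The plan is to prove the lemma in three parts: an exact algebraic identity, a high-probability cavity estimate for $\lambda_i$, and a deterministic bound on the remainder. Only the second part is probabilistic.

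\textbf{Step 1 (the identity).} Since the diagonal of $\bM$ is zeroed out, \eqref{eq:inv-cov-pred} gives
\[ K_{ii} = v_i^{-1} + \beta^2(1-q_N) - \tfrac{2\beta^2}{N}m_i^2, \qquad K_{ij} = -\beta \bM_{ij} - \tfrac{2\beta^2}{N} m_i m_j \ (j\neq i). \]
We also have $\Cov_{ii}=v_i$. Expanding $(K\Cov)_{ii}=\sum_j K_{ij}\Cov_{ji}$ then yields
\[ (K\Cov)_{ii} = 1 + \beta^2(1-q_N)v_i - \beta \sum_{j\ne i}\bM_{ij}\Cov_{ji} - \tfrac{2\beta^2}{N} m_i (\Cov m)_i . \]
Writing $\Cov_{-i,i}=v_i d_i$, the middle sum equals $v_i\, d_i^\top \bM_{i,-i}$. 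The first three terms therefore combine to $1+\lambda_i v_i$, which is the claimed identity.

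\textbf{Step 3 (the deterministic remainder).} Entries of $\Cov$ are bounded by $1$ in absolute value and $|m_j|\le 1$. Hence $|(\Cov m)_i|\le N$, and so $\bigl|\tfrac{2\beta^2 m_i}{N}(\Cov m)_i\bigr|\le 2\beta^2$. This gives the ``hence'' statement once the identity is in hand, and it holds deterministically.

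\textbf{Step 2 (the bound $\max_i|\lambda_i|\le N^{-1/4}$), the main obstacle.} I will use the cavity method as in \Cref{sec:warmup}.

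First, express $d_i$ through the cavity model. Conditioning on $\sigma_i=s$ gives the model on $N-1$ spins with field $h_{-i}+s\beta \bM_{i,-i}$, where $h=t\bx+\sqrt t\bg$. Write $a^\pm=m^{(i)}(h_{-i}\pm\beta\bM_{i,-i})$. A short computation gives $\Cov_{-i,i}=v_i(a^+-a^-)/2$, so $d_i=(a^+-a^-)/2$. This is an odd function of the perturbation, so its Taylor expansion around $h_{-i}$ has the form
\[ d_i=\beta\Cov^{(i)}(h_{-i})\bM_{i,-i}+\tfrac{\beta^3}{6}\kappa_4^{(i)}[\bM_{i,-i}^{\otimes 3}]+\cdots, \]
with the error controlled by a fifth-order term evaluated at an intermediate field.

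Next, compute $d_i^\top\bM_{i,-i}$. Its leading term is $\beta\Cov^{(i)}[\bM_{i,-i}^{\otimes 2}]$. Split $\bM_{i,-i}=\bW_{i,-i}+\beta x_i\bx_{-i}/N$, and note that $\bW_{i,-i}$ is independent of the cavity measure, which depends only on $\bW_{-i,-i}$, $\bx$ and $\bg$.
\begin{itemize}
\item For the pure $\bW$ part, Hanson--Wright conditional on the cavity gives $\Cov^{(i)}[\bW_{i}^{\otimes 2}]=\Tr\Cov^{(i)}/N+O(N^{-1/2+\delta})$, with failure probability $e^{-cN^{c}}$. This uses only the crude bound $\|\Cov^{(i)}\|_{\mathrm{op}}\le N^{\delta}$ from overlap concentration.
\item The cross and spike terms are $\beta x_i\,\bx^\top\Cov^{(i)}\bW_i/N$ and $\beta^2\bx^\top\Cov^{(i)}\bx/N^2$. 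These are $O(N^{-1/2+\delta})$ by Gaussian concentration together with the same crude operator norm bound.
\end{itemize}
Then $\Tr\Cov^{(i)}/N=1-\|m^{(i)}\|^2/N$. I would compare $m^{(i)}$ with $m$ via the same cavity expansion, using $m_{-i}=(1+m_i)a^+/2+(1-m_i)a^-/2$. This gives $q_N-\|m^{(i)}\|^2/N=O(N^{-1/2+\delta})$.

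The hardest part is the higher-order remainder: $\kappa_4^{(i)}[\bM_i^{\otimes 4}]$ and the intermediate-field terms. I would bound them by high conditional moments via Wick's formula. This reduces the task to Frobenius norms of contracted cumulants, which overlap concentration bounds crudely by $\|\kappa_r\|_{\mathrm F}^2\lesssim N^{r/2+\delta}$. That gives a size of $N^{-1/2+\delta}$ with probability $1-N^{-a-1}$ after taking $p$ large, which comfortably beats $N^{-1/4}$; the intermediate fields are handled uniformly because the cumulant bounds hold for all small fields. A union bound over $i\in[N]$ then finishes the proof.
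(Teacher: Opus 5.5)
Your Steps 1 and 3 are correct: the identity is just row $i$ of $K\Cov$ with $\Cov_{-i,i}=v_id_i$, and $|(\Cov m)_i|\le N$ gives the deterministic $2\beta^2$ bound. Your split of $\lambda_i$ is also the paper's, namely
\begin{itemize}
\item the trace difference $\frac{\beta^2}{N}(\Tr\Cov-\Tr\Cov^{(i)})$;
\item the centered quadratic form $\Cov^{(i)}[\bM_{i,-i}^{\otimes2}]-\frac1N\Tr\Cov^{(i)}$;
\item the term $\bM_{i,-i}^\top(d_i-\beta\Cov^{(i)}\bM_{i,-i})$.
\end{itemize}
Your treatment of the quadratic form works, though not for the reason you give. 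Overlap concentration does not yield $\|\Cov^{(i)}\|_{\mathrm{op}}\le N^{\delta}$. It only gives $\|\Cov^{(i)}\|_{\mathrm{op}}\le\|\Cov^{(i)}\|_F\lesssim N^{1/2+\delta}$; the $O(1)$ bound of \Cref{lem:improved-cumulant} needs the TAP machinery. Fortunately the Frobenius bound already suffices for Hanson--Wright and for the cross and spike terms.

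The genuine gap is the Taylor remainders. Two of your steps involve $\kappa^{(i)}_{r,s}$, the cumulant of the cavity model at field $h_{-i}+s\beta\bM_{i,-i}$:
\begin{itemize}
\item the comparison $\|m_{-i}-m^{(i)}\|_2=O(N^{\delta})$, which requires controlling $\int_0^1(1-s)\kappa^{(i)}_{3,\pm s}[\bM_{i,-i}^{\otimes2}]\,ds$;
\item the fifth-order remainder of $d_i$.
\end{itemize}
This tensor is a function of $\bW_{i,-i}$, so you cannot condition on the cavity and apply Wick's formula to its contraction with $\bM_{i,-i}$. The only bound left is $\|\kappa^{(i)}_{r,s}[\bM_{i,-i}^{\otimes(r-1)}]\|_2\le\|\kappa^{(i)}_{r,s}\|_F\|\bM_{i,-i}\|_2^{r-1}\approx N^{r/4}$. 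This grows with $r$, so expanding further does not help, and it already makes the trace-difference term of size $N^{1/4}$ instead of $N^{-1/2}$. Nor is it true that the cumulant bounds ``hold for all small fields''. Overlap concentration is an averaged statement for a random field of a specific law. The paper transfers it to $h_{-i}+s\beta\bM_{i,-i}$, for each fixed $s$, only through the contiguity argument of \Cref{lem:external-field-contiguity,lem:shifted-model-overlap-conc}.

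The missing ingredient is \Cref{lem:cumulant-contraction-estimates}: $\E\|\kappa^{(i)}_{r,s}[\Id^{\otimes k}\otimes\bM_{i,-i}^{\otimes\ell}]\|_F^p\le c_pN^{p(r/4-\ell/2)}$ for each fixed $s\in[-1,1]$. It decouples the contracting vector from the field-shifted cumulant in three pieces:
\begin{itemize}
\item Gaussian integration by parts controls the conditional mean; differentiating $\kappa^{(i)}_{r,s}$ in $\bW_{i,-i}$ produces $s\beta\kappa^{(i)}_{r+1,s}$, which forces an induction.
\item The $L^p$ Gaussian Poincar\'e inequality controls the fluctuation.
\item The base case is the overlap representation of $\Id$- and $\bx$-contractions in \Cref{lem:plant-cumulant}.
\end{itemize}
You need that base case even at the cavity field. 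The Wick mean $3\kappa_4^{(i)}[\Id\otimes\Id]/N^2$ is small only because $|\kappa_4^{(i)}[\Id^{\otimes2}]|\lesssim N$; Cauchy--Schwarz from $\|\kappa_4^{(i)}\|_F$ would lose a factor of $N$. With this lemma the paper writes the third term exactly as $\frac{\beta^4}{4}\int_0^1(1-s)^2(\kappa^{(i)}_{4,s}+\kappa^{(i)}_{4,-s})[\bM_{i,-i}^{\otimes4}]\,ds=O_{L^p}(N^{-1})$, without expanding $d_i$ to fifth order. It bounds the other two terms by $O_{L^p}(N^{-1/2})$ and concludes with Markov's inequality and a union bound over $i$.
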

	We defer the proofs of \Cref{lem:organized-off-diagonal,lem:organized-diagonal} to \Cref{sec:proof-cov-bound}, and prove \Cref{lem:organized-residual} below.
	\begin{proof}[Proof of \Cref{lem:organized-residual}]
		We begin by conditioning on the high probability events from
		\Cref{lem:organized-off-diagonal,lem:organized-diagonal,lem:organized-gap}, which simultaneously hold with probability $1-O(N^{-a})$.
		Now define
		\[
			D=\diag(\lambda_i),
			\qquad R=(K-D)\Cov-\Id.
		\]
		Then we have
		\[
			\left\|R-\diag(R)\right\|_F^2 = \sum_{i=1}^N \|f_i - \lambda_i \Cov_{-i,i}\|_2^2 \le C\mper
		\]
		By the identity in \Cref{lem:organized-diagonal} and the fact that $\Cov_{ii}=v_i$, we have for all $i$ that
		\[ |R_{ii}| = \left| (K \Cov - \Id)_{ii} - (D \Cov)_{ii} \right| \le \left| (\lambda_i v_i + 2\beta^2) - (\lambda_i (1-m_i^2)) \right| = 2\beta^2 \mcom \]
		and consequently,
		\[
			\|R\|_{\mathrm{op}}
			\le
			\|R-\diag(R)\|_F
			+\max_i|R_{ii}|
			\le O(1)\mper
		\]
		Noting now that $K \psdge \gamma$ for some constant $\gamma$, and $D \psdle o(1)$, this implies that $\opnorm{\Cov} = O(1)$, and thus \Cref{lem:organized-residual,cor:organized-covariance}. 
	\end{proof}
	
	\subsection{Proof of \Cref{lem:organized-off-diagonal}}
    \label{sec:proof-cov-bound}
    We begin by establishing some notational conventions.
    \begin{itemize}
        \item The gradient $\grad$ is always taken exclusively with respect to the external field.
        In particular, for a function $F$, we will use $\grad F(g)$ to denote $\grad_h F(h)|_{h=g}$.
        \item We will use the symbol $\Delta$ exclusively to denote remainder terms arising from Taylor expansions of various functions, with the subscript denoting the function that the term comes from.
    \end{itemize}

		Fix a site $i$, and consider the distribution $\mu^{(i)}$ obtained by removing its field and all its interactions with the other sites.
		Let $u_i\in\R^{N-1}$ be the field on the remaining sites
        , and let $F^{(i)}(u_i)$ be its free energy.
		Write
		\[
			m^{(i)}=\grad F^{(i)}(u_i) \mcom \qquad
			\Cov^{(i)}=\grad^2F^{(i)}(u_i) \mcom \qquad
			\kappa_r^{(i)}=\grad^rF^{(i)}(u_i) \mcom
		\]
		and
		\[
			m^{(i,\pm)} = \grad F^{(i)}(u_i \pm \beta\bM_{i,-i}), \qquad \Cov^{(i,\pm)} = \grad^2 F^{(i)}(u_i \pm \beta \bM_{i,-i})\mper
		\]
		Denote
		\begin{align*}
			\ol{\Cov_i}
			&\coloneqq\frac{1+m_i}{2}\Cov^{(i,+)}
				+\frac{1-m_i}{2}\Cov^{(i,-)}\mcom
        \end{align*}
        and observe that
        \begin{align*}
			d_i &=\frac{\Cov_{i,-i}}{v_i}= \frac{m^{(i,+)}-m^{(i,-)}}{2}\mper
		\end{align*}
		By the law of total covariance we obtain the block decomposition
		\begin{equation}
			\Cov=
			\begin{pmatrix}
			v_i & v_i d_i^\top\\
			v_i d_i & \ol{\Cov_i}+v_i d_i d_i^\top
			\end{pmatrix},
			\label{eq:organized-blocks}
		\end{equation}
		where the $i$th coordinate is written first.
		Denoting $\alpha = \beta^2 (1-q_N)$, observe that
		\begin{equation}
		\begin{split}
		f_i={}&\underbrace{d_i}_{(\operatorname{diag}(v^{-1})\Cov)_{i,-i} }
		-\underbrace{\beta\bigl(\ol{\Cov_i}+v_id_id_i^\top\bigr)\bM_{i,-i} }
					_{(\beta \bM\Cov)_{i,-i}}\\
		&+\underbrace{\alpha v_i d_i}_{(\alpha\Cov)_{i,-i}}
		-\underbrace{\frac{2\beta^2m_i}{N}(\Cov \cdot m)_{-i}}
					_{((2\beta^2/N)mm^\top\Cov)_{i,-i}}.
		\end{split}
		\label{eq:organized-four-terms}
		\end{equation}
		Rearranging the above using \eqref{eq:organized-blocks} and plugging in the definition of $\lambda_i$,
		\[
			f_i - \lambda_i \Cov_{i,-i} = d_i - \beta \ol{\Cov_i}\bM_{i,-i} - \frac{2\beta^2 m_i}{N}(\Cov\cdot m)_{-i}
		\]
        By the Taylor series expansion for $m^{(i,\pm)}$ and $\Cov^{(i,\pm)}$, we have:
        \begin{align*}
            m^{(i,\pm)}&=\grad F^{(i)}(u_i\pm \beta \bM_{i,-i})=m^{(i)}\pm \beta\Cov^{(i)}\bM_{i,-i}+\frac{\beta^2}{2}\kappa_3^{(i)}[\bM_{i,-i}^{\otimes 2}]\pm \frac{\beta^3}{6}\kappa_4^{(i)}[\bM_{i,-i}^{\otimes 3}]+\Delta_{m,i,\pm},\\
            \Cov^{(i,\pm)}&=\grad^2 F^{(i)}(u_i\pm \beta \bM_{i,-i})=\Cov^{(i)}\pm \beta\kappa_3^{(i)}[\bM_{i,-i}]+\frac{\beta^2}{2}\kappa_4^{(i)}[\bM_{i,-i}^{\otimes 2}]+\Delta_{\Cov,i,\pm},
        \end{align*}
        we obtain the corresponding Taylor expansion of $\ol{\Cov_i}$ and $d_i$ as 
		\begin{align*}
			d_i
			=\frac{m^{(i,+)}-m^{(i,-)}}{2}&=\beta\Cov^{(i)}\bM_{i,-i}
				+\frac{\beta^3}{6}\kappa_4^{(i)}[\bM_{i,-i}^{\otimes 3}]+\Delta_{d,i}\mcom\label{eq:taylor-m} \\
			\ol{\Cov_i}
			=\frac{1+m_i}{2}\Cov^{(i,+)}
				+\frac{1-m_i}{2}\Cov^{(i,-)}&=\Cov^{(i)}
				+\beta m_i\kappa_3^{(i)}[\bM_{i,-i}]
				+\frac{\beta^2}{2}\kappa_4^{(i)}[\bM_{i,-i}^{\otimes 2}]
				+\Delta_{\Cov,i}\mper
		\end{align*}
    
		By the above,
		\begin{align*}
			d_i-\beta\ol{\Cov_i}\bM_{i,-i}
			&=\left(
				\beta\Cov^{(i)}\bM_{i,-i}
				+\frac{\beta^3}{6}\kappa_4^{(i)}\left[\bM_{i,-i}^{\otimes 3}\right]
			\right)\\
			&\quad-\left(
				\beta\Cov^{(i)}\bM_{i,-i}
				+\beta^2m_i\kappa_3^{(i)}\left[\bM_{i,-i}^{\otimes 2}\right]
				+\frac{\beta^3}{2}\kappa_4^{(i)}\left[\bM_{i,-i}^{\otimes 3}\right]
			\right)+\Delta_{f,i}\\
			&=-\beta^2m_i\kappa_3^{(i)}\left[\bM_{i,-i}^{\otimes 2}\right]
				-\frac{\beta^3}{3}\kappa_4^{(i)}\left[\bM_{i,-i}^{\otimes 3}\right]
				+\Delta_{f,i}\mcom
		\end{align*}
        where $\Delta_{f,i}=\Delta_{d,i}-\beta\Delta_{\Cov,i}\bM_{i,-i}$. 
		Therefore
		\begin{equation}
		\begin{split}
		f_i={}&\lambda_i\Cov_{-i,i}
			-\frac{\beta^3}{3}\kappa_4^{(i)}\left[\bM_{i,-i}^{\otimes 3}\right]
			-\beta^2m_i\kappa_3^{(i)}\left[\bM_{i,-i}^{\otimes 2}\right]-\frac{2\beta^2m_i}{N} (\Cov\cdot m)_{-i}+\Delta_{f,i}.
		\end{split}
		\label{eq:organized-cumulant-row}
		\end{equation}
		We may write:
		\[
			\kappa_3^{(i)}\bracks*{ \bM_{i,-i}^{\otimes 2} } = \frac{1}{N}\kappa_3^{(i)}[\Id] + \kappa_3^{(i)}\bracks*{ \bM_{i,-i}^{\otimes 2} - \frac{1}{N}\Id }\mper
		\]
		We will show that the first term of the above nearly cancels the fourth term of \eqref{eq:organized-cumulant-row}, and that the second term is negligible.

		Observe that
		\begin{align*}
			\frac{1}{N}\kappa_3^{(i)}[\Id] &= \frac{1}{N} \grad \Tr\Cov^{(i)} 
			= \frac{1}{N}\grad \parens*{ N-1 - \norm*{m^{(i)}}_2^2 } 
			= -\frac{2}{N}\Cov^{(i)}\cdot m^{(i)}\mper
		\end{align*}
		Plugging the above into \eqref{eq:organized-cumulant-row} gives
		\begin{align*}
			f_i-\lambda_i \Cov_{-i,i} = &\frac{2\beta^2 m_i}{N} \parens*{ \Cov^{(i)}\cdot m^{(i)} - (\Cov\cdot m)_{-i} } \\
			&-\beta^2 m_i \kappa_3^{(i)}\bracks*{ \bM_{i,-i}^{\otimes 2} - \frac{1}{N}\Id } - \frac{\beta^3}{3}\kappa_4^{(i)}\bracks*{\bM_{i,-i}^{\otimes 3}} + \Delta_{f,i}\mper
		\end{align*}
		The below estimates \Cref{lem:error-bounds,lem:organized-taylor-errors}, whose proofs we defer to \Cref{app:technical-estimates}, complete the proof.

		\begin{restatable}{lemma}{errorboundsone}
			\label{lem:error-bounds}
			For every constant $a > 0$, the following bounds hold for all $i\in[N]$ with probability $1-O_{\eps}(N^{-a})$:
			\begin{align*}
				\left\|{ \frac{2\beta^2 m_i}{N} \parens*{ \Cov^{(i)}\cdot m^{(i)} - (\Cov\cdot m)_{-i} } }\right\|_2 &\le \frac{C}{\sqrt{N}} \\
				\left\|{ \frac{\beta^3}{3}\kappa_4^{(i)}\bracks*{\bM_{i,-i}^{\otimes 3}} }\right\|_2 &\le \frac{C}{\sqrt{N}} \\
				\left\|{ \beta^2 m_i \kappa_3^{(i)}\bracks*{ \bM_{i,-i}^{\otimes 2} - \frac{1}{N}\Id } }\right\|_2 &\le \frac{C}{\sqrt{N}} \\
				\max_{i\in[N]}|\lambda_i|&\le N^{-1/4}.
			\end{align*}
		\end{restatable}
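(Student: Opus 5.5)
The core device is the cavity decoupling. Fix a site $i$ and condition on everything except the row $\bM_{i,-i}$. In the planted model, $\bM_{i,-i} = \bW_{i,-i} + \frac{\beta}{N}\bx_i\bx_{-i}$. The cavity measure $\mu^{(i)}$ depends on $\bW_{-i,-i}$, $\bx_{-i}$ and the external field $u_i = (t\bx+\sqrt t\bg)_{-i}$, and it is independent of $\bW_{i,-i}$. Hence every cavity cumulant $\kappa_r^{(i)}$ is a fixed tensor when $\bM_{i,-i}$ is integrated out, and $\bM_{i,-i}$ is a Gaussian vector with covariance $\frac1N\Id$ plus a deterministic shift of norm $O(N^{-1/2})$. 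All four bounds become statements about Gaussian chaoses of degree at most $3$ (and $4$ for $\lambda_i$) with coefficients given by cavity cumulants. I would control each one by a Gaussian hypercontractivity tail bound and then take a union bound over $i\in[N]$. This requires each per-site failure probability to be $O(N^{-a-1})$, which costs only polylogarithmic factors. The one genuinely new input is a high-probability bound on the cavity cumulant norms. I would bootstrap it: first obtain a crude bound $\opnorm{\Cov^{(i)}}\le N^{\delta}$ from overlap concentration and $L^p$ moments, as in \Cref{sec:warmup}, and then use the improved cumulant estimate of \Cref{subsec:proof-of-improved-cumulant}, namely $\|\kappa_4^{(i)}\|_{\mathrm F}^2\le N^{2-c}$, together with $\|\kappa_3^{(i)}[v]\|_{\mathrm F}\lesssim N^{\delta}\|v\|$.

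For the $\kappa_4$ term, Wick's formula as displayed in \Cref{sec:warmup} gives a conditional second moment of order $N^{-3}\bigl(\|\kappa_4^{(i)}\|_{\mathrm F}^2+\|\kappa_4^{(i)}[\Id]\|_{\mathrm F}^2\bigr)$. Here $\kappa_4^{(i)}[\Id]$ is a derivative of $\Tr\Cov^{(i)}$ and is therefore expressible through $\Cov^{(i)}$, $\kappa_3^{(i)}$ and $m^{(i)}$, so its Frobenius norm is $O(N^{1/2+\delta})$. The shift part of $\bM_{i,-i}$ contributes lower-order terms bounded in the same way. Hypercontractivity then upgrades the second-moment bound to a tail bound $N^{-1/2+\delta}$ at probability $N^{-a-1}$. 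To reach the stated $C/\sqrt N$ one needs the sharp bounds $\|\kappa_4^{(i)}\|_{\mathrm F}^2=O(N)$ and $\opnorm{\Cov^{(i)}}=O(1)$. I would obtain these through a self-improving loop: a weaker form of \Cref{lem:organized-residual}, run with these crude bounds, already gives $\opnorm{\Cov}=O(N^{\delta'})$ with $\delta'<\delta$. Applying it to the cavity models, which are models of the same type on $N-1$ spins, and iterating finitely many times gives $O(1)$.

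The $\kappa_3$ term $\kappa_3^{(i)}[\bM_{i,-i}^{\otimes2}-\frac1N\Id]$ is a centered degree-$2$ chaos plus shift terms. Its conditional second moment is $\lesssim N^{-2}\sum_j\|\kappa_3^{(i)}[e_j]\|_{\mathrm F}^2 = N^{-2}\|\kappa_3^{(i)}\|_{\mathrm F}^2$. I would bound $\|\kappa_3^{(i)}\|_{\mathrm F}^2=O(N)$ by the same route, or trivially by $N^{3/2}$ through overlap concentration, and then improve it. The first estimate, on $\Cov^{(i)}m^{(i)}-(\Cov m)_{-i}$, is a cavity comparison. By the block decomposition \eqref{eq:organized-blocks}, $(\Cov m)_{-i}=v_id_im_i+(\ol{\Cov_i}+v_id_id_i^\top)m_{-i}$ and $m_{-i}=m_i d_i\cdot(\text{const})+\overline{m^{(i,\pm)}}$. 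Each difference is a first-order Taylor term such as $\beta\Cov^{(i)}\bM_{i,-i}$ or $\kappa_3^{(i)}[\bM_{i,-i}]m^{(i)}$, multiplied by bounded vectors of norm $O(\sqrt N)$. After dividing by $N$, each is $O(N^{-1/2})$ given the operator-norm bounds on $\Cov^{(i)}$ and on $\kappa_3^{(i)}[v]$.

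For $\lambda_i=\beta^2(1-q_N)-\beta d_i^\top\bM_{i,-i}$, I would expand $d_i^\top\bM_{i,-i}=\beta\Cov^{(i)}[\bM_{i,-i}^{\otimes2}]+O(|\kappa_4^{(i)}[\bM_{i,-i}^{\otimes4}]|)$. The quadratic form concentrates around $\frac1N\Tr\Cov^{(i)}$ plus a shift term $\frac{\beta^2}{N^2}\bx_{-i}^\top\Cov^{(i)}\bx_{-i}$, which is $O(N^{-1})$. The fluctuations are of order $N^{-1}\|\Cov^{(i)}\|_{\mathrm F}$, by the Hanson--Wright estimate used in \Cref{sec:warmup}. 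Finally, $\frac1N\Tr\Cov^{(i)}=1-q^{(i)}$, and $q^{(i)}=q_N+O(N^{-1/2})$ by the mean comparison above. Together these give $|\lambda_i|\le N^{-1/4}$ at probability $1-N^{-a-1}$. The main obstacle is the improved bound on $\|\kappa_4^{(i)}\|_{\mathrm F}$ and $\|\kappa_3^{(i)}\|_{\mathrm F}$ beyond what overlap concentration gives. I would first try to express these norms as multi-replica overlap moments and apply the inductive covariance bound to the $(N-1)$-spin cavity models.
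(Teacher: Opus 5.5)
\textbf{The gap.} Your plan of hypercontractive tails plus a union bound, accepting polylogarithmic (or $N^{\delta}$) losses, cannot give the stated $C/\sqrt N$ for terms whose natural size is exactly $N^{-1/2}$. The constant matters: \Cref{lem:organized-off-diagonal} sums the squares of these bounds over $N$ sites and needs an $O(1)$ total to get a bounded $\opnorm{\Cov}$.

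For the $\kappa_3$ term, the conditional second moment is $\frac{2}{N^2}\|\kappa_3^{(i)}\|_{\mathrm F}^2$, which is generically of order $N^{-1}$. A tail bound at level $N^{-a-1}$ therefore yields only $N^{-1/2}\log N$. The missing idea is that the squared norm \emph{concentrates} around its conditional mean. Set $V=\sum_a(\kappa_3^{(i)}[e_a])^2$. By Wick, the fluctuation $\|\kappa_3^{(i)}[\bW_{i,-i}^{\otimes 2}-\frac1N\Id]\|_2^2-\frac{2}{N^2}\Tr V$ has variance $\lesssim N^{-4}\Tr V^2\le N^{-4}\Tr V\,\opnorm{V}$. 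By \eqref{eq:third-cumulant-bound}, $\opnorm{V}\le\opnorm{\Cov^{(i)}}(\|\kappa_4^{(i)}\|_{\mathrm F}+2\opnorm{\Cov^{(i)}}^2)\lesssim N^{3/4}$. So the fluctuation is $O(N^{-9/8})$ in $L^p$, a polynomial margin that absorbs the union bound. The high-probability bound $\Tr V=\|\kappa_3^{(i)}\|_{\mathrm F}^2\le CN$ then finishes.

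Your treatment of the first bound has the same defect. Bounding $\kappa_3^{(i)}[\bM_{i,-i}]m_{-i}$ or $v_id_i(d_i^\top m_{-i})$ by an operator norm times $\|m_{-i}\|_2\le\sqrt N$ lands exactly at $N^{-1/2}$ after dividing by $N$. The $N^{\delta}$ you carry on $\kappa_3^{(i)}[v]$ then pushes it over. The paper instead writes $2(\Cov^{(i)}m^{(i)}-(\Cov m)_{-i})=\nabla_{u_i}(\Tr\Cov-\Tr\Cov^{(i)})$ and differentiates the law-of-total-covariance expansion of $\Tr\Cov-\Tr\Cov^{(i)}$. This shows the vector is $O(N^{3/8+\varepsilon})$, so the term is $O(N^{-5/8+\varepsilon})$. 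Alternatively, you could exploit that $\bW_{i,-i}$ is independent of $\kappa_3^{(i)}[m^{(i)}]$.

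\textbf{The cumulant inputs.} Your route to these does not work, and the bounds you ask for are stronger than needed.

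The bootstrap through a weaker \Cref{lem:organized-residual} does not close. That lemma is deduced from the present one, and it does not self-improve. The $\kappa_3$ contribution to $\|R\|_{\mathrm F}^2$ is about $\sum_i N^{-2}\|\kappa_3^{(i)}\|_{\mathrm F}^2$. With the overlap bound $\|\kappa_3^{(i)}\|_{\mathrm F}^2\lesssim N^{3/2}$, this gives $\|R\|_{\mathrm F}\lesssim N^{1/4}$, which is worse than your starting point. Rewriting cumulant norms as multi-replica overlap moments only reproduces \Cref{lem:cumulant-bounds}.

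The improvement comes from a different mechanism. The TAP identities of \Cref{fac:improved-cumulant-start} ($K\Cov-\Id=\nabla r$, $K\kappa_3=\nabla^2 r-D^2T\{\Cov,\Cov\}$, and so on) are combined with $K\succeq\gamma$ from \Cref{lem:organized-gap}. This is exactly \Cref{lem:improved-cumulant}, which you may simply invoke. It gives $\opnorm{\Cov^{(i)}}=O(1)$ in $L^p$, $\|\kappa_4^{(i)}\|_{\mathrm F}\lesssim N^{3/4}$, and $\|\kappa_3^{(i)}\|_{\mathrm F}\le C\sqrt N$ with high probability.

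These inputs suffice. You need neither $\|\kappa_4^{(i)}\|_{\mathrm F}^2=O(N)$ nor $\|\kappa_3^{(i)}[v]\|_{\mathrm F}\lesssim N^{\delta}\|v\|$. Indeed, $\kappa_4^{(i)}[\Id]=-2(\Cov^{(i)})^2-2\kappa_3^{(i)}[m^{(i)}]$ together with \eqref{eq:third-cumulant-bound} gives $\|\kappa_4^{(i)}[\Id]\|_{\mathrm F}\lesssim N^{7/8}$. Your own Wick computation then yields $\|\kappa_4^{(i)}[\bM_{i,-i}^{\otimes 3}]\|_2=O(N^{-5/8})$. Your treatment of $\lambda_i$ matches the paper's three-term decomposition and is fine, since $N^{-1/4}$ leaves room.
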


		\begin{restatable}{lemma}{errorboundstwo}
			\label{lem:organized-taylor-errors}
			For any $a,\eps > 0$, with probability $1-O(N^{-a})$, we have
			\[
				\norm*{\Delta_{d,i}}_2 \le O\parens*{N^{-1+\eps}},\qquad
				\norm*{\Delta_{\Cov,i}\bM_{i,-i}}_2 \le O\parens*{N^{-3/4+\eps}}\mper
			\]
            and consequently, $\|\Delta_{f,i}\|_2\le \|\Delta_{d,i}\|_2+\beta\|\Delta_{\Cov,i}\bM_{i,-i}\|_2\le O(N^{-3/4+\varepsilon})$.
		\end{restatable}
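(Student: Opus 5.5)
The plan is to bound the Taylor remainders in integral form and then control the resulting fifth- and fourth-order cumulant contractions with the high-moment and Gaussian-concentration machinery of the warmup section. Write $w = \beta\bM_{i,-i}$ and note that $u_i$, and hence all cavity quantities $\kappa_r^{(i)}(\cdot)$, are independent of $\bW_{i,-i}$. The only dependence on the $i$th row comes through the rank-one term $\frac{\beta}{N}\bx_i\bx_{-i}$, whose effect we absorb into the field; it has size $O(N^{-1/2})$ in $\ell^2$ per row, which is harmless.

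\emph{The remainder $\Delta_{d,i}$.} Since $d_i$ is the odd part $\frac12(m^{(i,+)}-m^{(i,-)})$ and we have expanded through third order, the remainder is
\[ \Delta_{d,i} = \int_0^1 \frac{(1-s)^4}{4!}\,\frac{1}{2}\left(\kappa_6^{(i)}(u_i+sw)[w^{\otimes 5}] + \kappa_6^{(i)}(u_i-sw)[w^{\otimes 5}]\right)\dif s \mcom \]
so it suffices to show $\sup_{|s|\le 1}\|\kappa_6^{(i)}(u_i+sw)[w^{\otimes 5}]\|_2 \le N^{-1+\eps}$. Similarly, $\Delta_{\Cov,i}$ is an average of third-order remainders of $\grad^2 F^{(i)}$, so $\Delta_{\Cov,i} w$ is controlled by $\sup_s\|\kappa_5^{(i)}(u_i+sw)[w^{\otimes 4}]\|_2$.

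\emph{Heuristic size of the contractions.} The natural heuristic is that a contraction $\kappa_r[w^{\otimes(r-1)}]$ with independent $N^{-1/2}$-Gaussian entries has squared norm of order $N^{-(r-1)}\|\kappa_r\|_{\mathrm F}^2$ plus partial-trace terms. With the crude overlap-concentration bound $\|\kappa_r\|_{\mathrm F}^2\lesssim N^{r/2}$, this gives $\|\kappa_r[w^{\otimes(r-1)}]\|_2 \lesssim N^{-(r-2)/4}$, which is $N^{-3/4}$ for $r=5$ and $N^{-1}$ for $r=6$. This matches the claimed rates up to $N^\eps$.

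\emph{Order of steps.} (1) At the fixed point $s=0$, compute $\E_{w}\|\kappa_r^{(i)}[w^{\otimes(r-1)}]\|_2^{2p}$ for large constant $p$ by Wick's formula, conditional on $\bW_{-i,-i}$. Each pairing produces a contraction of copies of $\kappa_r^{(i)}$, possibly with self-traces $\kappa_r[\Id]$, which are lower cumulants by the identity $\kappa_{r}[\Id]=\grad^{r-2}\Tr\Cov$ used above. Every such network is bounded by products of $\|\kappa_j\|_{\mathrm F}$ and operator norms. (2) Invoke the overlap-concentration bounds $\|\kappa_j^{(i)}\|_{\mathrm F}^2 = O_{L^p}(N^{j/2})$ for the cavity model, which is again a planted model at nonzero localization time on $N-1$ spins. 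Markov's inequality with $p$ large then gives the bound with probability $1-N^{-a-1}$, and a union bound over $i$ covers all sites. (3) Handle $s\neq 0$ by a net over $s\in[-1,1]$ with spacing $N^{-10}$, together with Lipschitz continuity in $s$. The derivative in $s$ is $\kappa_{r+1}[w^{\otimes r}]$, which is trivially $\mathrm{poly}(N)$ since $\|w\|_2=O(1)$ w.h.p. and cumulants are entrywise bounded. At each net point the field $u_i+sw$ is \emph{not} independent of $w$, so instead use Gaussian concentration of $w\mapsto\kappa_r^{(i)}(u_i+sw)[w^{\otimes(r-1)}]$, or equivalently expand once more around $s=0$ to the order where the trivial bound suffices.

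\emph{Main obstacle.} The obstacle is step (3) combined with the need for $\|\kappa_r\|_{\mathrm F}$ bounds holding at the \emph{shifted} field $u_i+sw$. The cleanest route I would try first is to Taylor expand $\kappa_r^{(i)}(u_i+sw)$ around $u_i$ to a large fixed order $k$. Every term is then a contraction of a cumulant evaluated at the independent field $u_i$ against $w^{\otimes(r-1+j)}$; these are handled by step (1), and each extra $w$ gains roughly $N^{-1/4}$. The final remainder at order $k$ is bounded trivially: $\|\kappa_{r+k}\|_{\mathrm{entrywise}}\le C_k$ gives a bound of $N^{O(1)}N^{-k/2}\cdot\|w\|_\infty$-type factors, which is negligible once $k$ is large enough. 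This avoids any dependence issue and reduces the whole lemma to moment bounds at the cavity field.
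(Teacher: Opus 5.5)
Your reduction matches the paper's. You write $\Delta_{d,i}$ and $\Delta_{\Cov,i}\bM_{i,-i}$ as integral Taylor remainders, so everything comes down to bounding $\kappa^{(i)}_6(u_i+sw)[w^{\otimes 5}]$ and $\kappa^{(i)}_5(u_i+sw)[w^{\otimes 4}]$ for $s\in[-1,1]$. Your heuristic rates $N^{-(r-2)/4}$ are also correct.

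\textbf{The gap is in step (3),} exactly at the obstacle you identified. The ``trivial'' bound on the order-$k$ remainder $\kappa^{(i)}_{r+k}(u_i+s'w)[w^{\otimes(r+k-1)}]$ does not decay in $k$. Each extra contraction adds a sum over $N$ indices, while the extra factor of $w$ only contributes $N^{-1/2}$. So entrywise bounds give about $\|w\|_1^{r+k-1}\approx N^{(r+k-1)/2}$, not $N^{O(1)}N^{-k/2}$. Even granting Frobenius bounds at the shifted field, Cauchy--Schwarz only gives $\|\kappa^{(i)}_{r+k}(u_i+s'w)\|_{\mathrm F}\|w\|_2^{r+k-1}\approx N^{(r+k)/4}$, which also grows. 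The decay you obtain at $s=0$ comes from Wick cancellations for a tensor independent of $w$. The remainder's tensor depends on $w$ through the field, so it never enjoys them, and expanding to higher order only moves the problem. Your fallback (Gaussian concentration in $w$) hits the same wall, since it needs moment bounds on $\kappa^{(i)}_{r+1}$ at the shifted field, which you do not have.

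\textbf{The missing ingredients} are the two that make up \Cref{lem:cumulant-contraction-estimates}.

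\emph{Overlap concentration at the shifted field.} Since $\bW_{i,-i}$ is independent of $(\bW_{-i,-i},\bx,\bg)$, the shifted cavity field $u_i+s\beta\bM_{i,-i}=(t+s\beta^2\bx_i/N)\bx_{-i}+\sqrt{t}\,\bg_{-i}+s\beta\bW_{i,-i}$ is marginally a planted-type field. Its variance is $t+s^2\beta^2/N$ and its mean is perturbed by $O(1/N)$ per coordinate. A second-moment change of measure (\Cref{lem:external-field-contiguity}) transfers the overlap concentration of \Cref{lem:ideal-model-overlap-conc} to this field (\Cref{lem:shifted-model-overlap-conc}). Hence cumulant and self-trace bounds hold at the shifted field for each fixed $s$.

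\emph{Decoupling from $\bW_{i,-i}$.} The correlation between $\kappa^{(i)}_{r,s}$ and $\bW_{i,-i}$ is handled not by Wick's formula but by two tools. Gaussian integration by parts controls the conditional mean over $\bW_{i,-i}$, and the $L^p$ Gaussian Poincar\'e inequality controls the fluctuation. A derivative in $\bW_{i,-i}$ either removes one copy of $\bW_{i,-i}$, which is handled by induction on the number of copies, or raises the cumulant order by one. The latter costs $N^{1/4}$ against the $N^{-1/2}$ gained from the Poincar\'e constant $1/N$. Iterating and then applying Cauchy--Schwarz gives $\E\|\kappa^{(i)}_{r,s}[\bM_{i,-i}^{\otimes \ell}]\|_{\mathrm F}^p\lesssim N^{p(r/4-\ell/2)}$ at each fixed $s$.

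With that bound, Minkowski's inequality over $s$ in the remainder integrals plus Markov's inequality finishes the proof, and no net over $s$ is needed.

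\textbf{A smaller point in step (1).} The self-traces $\kappa_r[\Id^{\otimes j}]$ should be bounded directly through their overlap-polynomial representation, as in \Cref{lem:plant-cumulant}. Rewriting them through lower cumulants at nonzero field produces terms such as $\kappa_{r-1}[m]$, whose naive bound $\|\kappa_{r-1}\|_{\mathrm F}\|m\|_2$ is too large by $N^{1/4}$.
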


	\subsection{Overlap concentration and crude bounds on the cumulants}

		In this section, we establish overlap concentration for the tilted model, and use this to establish some crude bounds on the Frobenius norms of the cumulants.

		\begin{lemma}
			\label{lem:shifted-model-overlap-conc}
			Let $-1 \le s \le 1$ fixed, and $t > 0$ bounded. Consider the Ising model $\mu$ with interaction matrix $\beta\bM$, for $\bM = \bW + \frac{\beta}{N} \bx\bx^\top$ (with $\bx \sim \{\pm 1\}^{N}$), and external field
			\begin{equation}
				\label{eq:real-ext-field-law-cavity}
				\bh = \left( t + \frac{s\beta^2\bs}{N} \right) \cdot \bx + \bg \sqrt{t + \frac{s^2\beta^2}{N}} \mcom
			\end{equation}
			with $\bs \sim \{\pm 1\}$ and $\bg \sim \calN\left( 0 , \Id \right)$. Then, for some explicit deterministic constant $q$, we have
			\[ \E \left\langle \left| R_{12} - q \right|^{2k} \right\rangle \lesssim N^{-k} \mper \]
			The same concentration holds for overlaps of a sample with the spike $\bx$.
		\end{lemma}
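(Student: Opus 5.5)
Since the statement is about a single fixed $(s,t)$ and gives a moment bound averaged over everything, the plan is to reduce it to a Nishimori-type identity in a planted model and then run the standard replica-symmetric overlap concentration argument (Talagrand/Guerra interpolation) in that model.

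\emph{Step 1: identify the model as a planted posterior.} Combine the coefficient of $\bx$ and the Gaussian part of \eqref{eq:real-ext-field-law-cavity}. Conditionally on $\bs$, the field is $\tau\bx+\sqrt{\tau'}\bg$ with $\tau=t+s\beta^2\bs/N$ and $\tau'=t+s^2\beta^2/N$. These agree up to an $O(1/N)$ mismatch, both in the sign of the $\bs$ term and in $s$ versus $s^2$. When $\tau=\tau'$, the measure $\mu$ is exactly the posterior of $\bx$ given $(\bM,\tau\bx+\sqrt{\tau}\bg)$, and the Nishimori property applies: $\bx$ has the law of an independent replica $\sigma^{2}$. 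Consequently overlaps with the spike have the same joint law as replica overlaps, which gives the final sentence of the lemma for free.

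\emph{Step 2: handle the mismatch.} To cover general $s,\bs$, I would compare the law of $(\bW,\bx,\bg)$ with the exactly planted law at parameter $\tau'$ through a change of measure. The extra term $(\tau-\tau')\langle\bx,\cdot\rangle$ is a rank-one field of size $O(1/N)$ per coordinate. The likelihood ratio of the two Gaussian observation laws therefore has bounded $L^2$ norm, with $\chi^2$ divergence of order $N\cdot N^{-2}$. By Cauchy--Schwarz, moment bounds of order $N^{-k}$ transfer between the two laws up to constants. Alternatively, one can note that tilting the Gibbs measure by $\exp(O(1/N)\langle\bx,\sigma\rangle)$ changes the Gibbs averages of bounded functions only by a factor $e^{O(1)}$.

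\emph{Step 3: overlap concentration in the planted model.} This is the main step. Let $q$ be the fixed point of $q=\E\tanh^2(\beta^2 q+\tau+\sqrt{\beta^2q+\tau}\,Z)$; this is unique for $\beta<1$ in the planted setting, where $q\ge0$ and the map is a contraction. I would then follow Talagrand's proof for the SK model with Gaussian field (\cite[Ch.~1]{Tal11}). Use Guerra's interpolation $H_u=\sqrt{u}\,\beta\cdot\mathrm{SK}+\sqrt{1-u}\,\beta\sqrt{q}\,\langle\bz,\sigma\rangle$ applied to $\nu(|R_{12}-q|^{2k})$. The derivative produces terms of the form $\beta^2\nu((R_{12}-q)^{2}\cdot\ldots)$, which are absorbed into the left side because $\beta^2<1$. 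The spike term $\frac{\beta^2}{2N}\langle\bx,\sigma\rangle^2$ is handled because, by Nishimori, $\langle\bx,\sigma\rangle/N$ behaves like $R_{12}$, so it interpolates consistently with its mean-field replacement $\beta^2q$. Induction on $k$ then gives $N^{-k}$.

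The main obstacle is the contraction at all $\beta<1$. Talagrand's elementary argument needs $\beta$ small. Reaching all $\beta<1$ requires either the second-moment/latala argument (\cite[Sec.~11]{Tal11}) or, in the planted setting, the simpler fact that the free energy concentrates. A Franz--Parisi or pinning perturbation would then let me use the convexity of the replica-symmetric functional in $q$ for $\beta<1$. I would first try adapting Latala's cavity-based argument, with the Nishimori identity supplying the cross-replica terms.
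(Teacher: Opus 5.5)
\textbf{Gap: Step 3 is not a proof.} Your reduction in Steps 1--2 matches the paper's, but Step 3 is the whole content of the lemma, and you do not carry it out. The paper does not derive overlap concentration for the exactly planted model. It imports $\E\langle|R_{12}-q|^{2k}\rangle\lesssim N^{-k}$ for the field $\tau\bx+\sqrt{\tau}\bg$ from \cite{EKJ17}, and remarks that the constants there stay uniform as $\tau\downarrow0$.

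You instead sketch an interpolation, concede that the absorption step only works for small $\beta$, and defer the full range $\beta<1$ to routes you ``would try.'' None of these is executed, and it is not clear they give what the statement asserts:
\begin{itemize}
\item Free-energy concentration plus convexity of the replica-symmetric functional, or a Franz--Parisi/pinning perturbation, generally yields only $o(1)$ overlap concentration, often averaged over a perturbation parameter. That is not the sharp rate $N^{-k}$ at a fixed $\tau$.
\item The claim that the spike term ``interpolates consistently'' by Nishimori is a heuristic, not an estimate.
\item Since $t$ may decay with $N$ in \Cref{th:cov-bounds}, you need bounds uniform as $\tau\downarrow0$, where $q(\tau)\to0$. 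Your sketch does not address this.
\end{itemize}
The paper flags exactly this uniformity issue. It points to three ingredients: $q(\tau)=\Theta(\tau)$, uniform constants in the preliminary concentration estimate of \cite{EKJ17}, and uniform invertibility of the linear system solved there. To close the gap, either invoke such a planted-model result and check this uniformity, or give a complete argument valid for every $\beta<1$.

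\textbf{Remarks on Steps 1--2.} The paper bounds the second moment of the likelihood ratio between the two field laws. It then applies Cauchy--Schwarz and Jensen, which is why it needs the $4k$-th moment in the planted model. Your claim that the $\chi^2$ divergence is of order $N\cdot N^{-2}$ holds only when $t$ is bounded below. If $t\lesssim 1/N$, then $\tau'=t+s^2\beta^2/N$ is itself of order $1/N$, and the divergence is only $O(1)$. The paper gets $e^{4\beta^2}$, which still suffices.

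Your alternative tilting argument is correct, and cleaner than the paper's. Couple both models through the same $(\bW,\bx,\bg)$. The Gibbs weights then differ by $\exp((\tau-\tau')\langle\bx,\sigma\rangle)$, with $|\tau-\tau'|\,|\langle\bx,\sigma\rangle|\le 2\beta^2$. So two-replica Gibbs averages of nonnegative functions change by at most a factor $e^{O(\beta^2)}$, uniformly in $t$ and with no loss in the moment order.
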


		We bound the desired cumulants in the following corollary.

		\begin{lemma}
			\label{lem:cumulant-bounds}
			Let $-1 \le s \le 1$ fixed, and $t > 0$ bounded. Let $a$ be a (large) constant. Consider the Ising model $\mu$ with interaction matrix $\beta\bM$, for $\bM = \bW + \frac{\beta}{N} \bx\bx^\top$ (with $\bx \sim \{\pm 1\}^{N}$), and external field
			\[
				\bh = \left( t + \frac{s\beta^2\bs}{N} \right) \cdot \bx + \bg \sqrt{t + \frac{s^2\beta^2}{N}} \mcom
			\]
			with $\bs \sim \{\pm 1\}$ and $\bg \sim \calN\left( 0 , \Id \right)$. Then, for any constants $r\ge 2,a > 0$,
			\[ \E \|\kappa_{r}(\mu)\|_{F}^{2a} \lesssim N^{ra/2} \mper \]
		\end{lemma}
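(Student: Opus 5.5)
We will prove \Cref{lem:cumulant-bounds} by expressing $\|\kappa_r(\mu)\|_F^2$ as a Gibbs average of a polynomial in overlaps, and then applying the overlap concentration of \Cref{lem:shifted-model-overlap-conc}.

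The starting point is the standard replica representation of cumulants. The $r$-th cumulant tensor is
\[
	\kappa_r(\mu)_{j_1,\ldots,j_r} = \sum_{\pi} c_\pi \prod_{B\in\pi}\Big\langle \prod_{j\in B}\sigma_j\Big\rangle,
\]
a sum over set partitions $\pi$ of $[r]$ with combinatorial coefficients $c_\pi$. Equivalently, we take $r$ independent replicas $\sigma^1,\ldots,\sigma^r$ and use the centered product $\langle \prod_{\ell}(\sigma^{\ell}_{j_\ell} - \cdots)\rangle$ built from the Möbius inversion. It is convenient to write $\kappa_r$ as a Gibbs average over $r$ replicas of a tensor $T(\sigma^1,\ldots,\sigma^r)_{j_1\ldots j_r}$. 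Each entry of $T$ is a signed combination of monomials $\prod_{\ell}\sigma^{a_\ell}_{j_\ell}$, and the combination is chosen so that $\kappa_r = \langle T\rangle$ with the Gibbs measure taken over sufficiently many replicas.

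Next we square and sum over indices. This gives
\[
	\|\kappa_r\|_F^2 = \Big\langle \sum_{j_1,\ldots,j_r} T(\sigma)_{j}\,T(\tau)_{j}\Big\rangle,
\]
where $\sigma$ and $\tau$ are two independent families of replicas. The index sum factorizes coordinatewise into a product of overlaps $N R_{ab} = \langle \sigma^a,\sigma^b\rangle$. The result is a polynomial in the quantities $N R_{ab}$, and it is homogeneous of degree $r$ in the factors of $N$.

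The key structural fact is cancellation. Because $T$ is the centered (cumulant) combination, this polynomial vanishes identically when every overlap is replaced by its constant value $q$; for overlaps of a replica with itself the value is $1$. More precisely, it is a polynomial in the centered variables $N(R_{ab}-q)$ in which every monomial contains at least $\lceil r/2\rceil$ centered factors. We expect this to be the main obstacle: it is the combinatorial step showing that the cumulant structure kills all low-order terms. We would handle it by the standard argument that joint cumulants of sums vanish unless the indices are "connected". Concretely, we expand $R_{ab} = q + (R_{ab}-q)$ and use the fact that the Möbius combination annihilates every term in which some replica index of $T(\sigma)$ is disconnected from the others through centered overlap factors. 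Connecting $r$ indices on each side, with $2r$ legs in total and each centered factor $N(R_{ab}-q)$ joining two legs, then forces at least $r/2$ centered factors, each carrying $N$. If this exact count is awkward to verify, a crude alternative is Wick- or Taylor-type bookkeeping showing that each monomial has total $N$-degree $r$ with at least half of its factors centered.

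Given this, each monomial is bounded by $N^{r}\prod|R_{ab}-q|^{e_{ab}}$ with $\sum_{ab} e_{ab}\ge r/2$. We then take the $a$-th power and expectation, using $\E\langle\cdot\rangle^a\le \E\langle(\cdot)^a\rangle$ by Jensen, and enlarge the family of replicas accordingly. Hölder's inequality, combined with \Cref{lem:shifted-model-overlap-conc} in the form $\E\langle|R_{12}-q|^{2k}\rangle\lesssim N^{-k}$ for all $k$, shows that each centered factor contributes $N^{-1/2}$ in $L^{p}$ for every $p$. This yields
\[
	\E\|\kappa_r\|_F^{2a}\lesssim \big(N^{r}\cdot N^{-r/2}\big)^a = N^{ra/2}.
\]
Since \Cref{lem:shifted-model-overlap-conc} also covers overlaps with the spike $\bx$, we may use it whenever the centering value $q$ depends on the field; all replica overlaps share the same constant $q$, so no extra terms arise.
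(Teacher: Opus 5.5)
Your architecture is the paper's: write $\|\kappa_r\|_F^2$ as a Gibbs average of a polynomial in cross-replica overlaps, then apply H\"older and \Cref{lem:shifted-model-overlap-conc}. But there is a genuine gap at the step you flag as the main obstacle. The degree count you claim is too weak, and your final line does not follow from it. Write each of the $r$ overlap factors as $NR_{ab} = Nq + N(R_{ab}-q)$. A monomial with $e$ centered factors then has size $N^{r-e}\cdot N^{e/2} = N^{r-e/2}$. With only $e \ge \lceil r/2\rceil$, this gives $\|\kappa_r\|_F^2 \lesssim N^{3r/4}$, not $N^{r/2}$. Your closing computation $N^r\cdot N^{-r/2}$ silently uses $e=r$. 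What is needed, and what the paper asserts, is that $\|\kappa_r\|_F^2 = N^r\langle P(R_{ab}-q)\rangle$ with $P$ homogeneous of degree $r$, i.e.\ every one of the $r$ factors is centered. Your leg-counting heuristic does not give this: $2r$ legs at two legs per factor merely counts the $r$ factors, centered or not. The ``connectivity'' argument is not made precise enough to show that each of them is centered.

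Here is one clean way to get full centering. Differentiate Gibbs averages, using the rule from the proof of \Cref{lem:plant-cumulant}. This gives $\kappa_r = \nabla^{r-1}\langle \sigma^1\rangle = \langle \sigma^1\otimes L_1\otimes\cdots\otimes L_{r-1}\rangle$ with $L_a = \sum_{b\le a}\sigma^b - a\sigma^{a+1}$. With an independent replica family $\tilde\sigma$, this yields $\|\kappa_r\|_F^2 = N^r\langle R_{1\tilde 1}\prod_{a=1}^{r-1}A_a\rangle$, where $A_a = N^{-1}\langle L_a(\sigma),L_a(\tilde\sigma)\rangle$. The coefficients of each $L_a$ sum to zero, so each $A_a$ is a linear form in the centered cross overlaps $R-q$. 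All overlaps pair distinct replicas, so no self-overlaps or spike overlaps arise. The only possibly uncentered contribution is $q\langle\prod_a A_a\rangle = qN^{-(r-1)}\|\langle L_1\otimes\cdots\otimes L_{r-1}\rangle\|_F^2 = qN^{-(r-1)}\|\nabla^{r-2}\langle\sigma^1-\sigma^2\rangle\|_F^2$. This vanishes identically because $\langle \sigma^1-\sigma^2\rangle = 0$ for every field. Hence $P$ is homogeneous of degree $r$ in $R-q$. Your Jensen/H\"older step then gives $\E\|\kappa_r\|_F^{2a}\lesssim N^{ra}\cdot N^{-ra/2}$ as desired.
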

		\begin{proof}
			For $2r$ replicas $\bsigma_1,\ldots,\bsigma_{2r}$ from the Gibbs measure, and denoting $R_{ab} = \frac{1}{N} \langle \bsigma_a , \bsigma_b\rangle$ for shorthand, because $\kappa_r$ is a cumulant, we may express $\|\kappa_{r}(\mu)\|_{F}^2 = N^{r} \cdot \langle P(R_{ab} - q) \rangle$ for some fixed homogeneous degree-$r$ polynomial $P$ depending on $r$ but not $N$.
            \Cref{lem:shifted-model-overlap-conc} then completes the proof, since we may bound
			\[ \E \|\kappa_{r}(\mu)\|_{F}^{2a} = N^{ra} \cdot \E \left\langle P\left( R_{ab} - q \right) \right\rangle^{a} \lesssim N^{ra} \cdot N^{-ra/2} \mcom \]
			where in the final step we use H\"{o}lder's inequality on $P$ to reduce it to bounding $\E \left\langle |R_{12} - q|^{ra} \right\rangle \le N^{-ra/2}$.
		\end{proof}


		The $q = q\left( t + \frac{s^2\beta^2}{N} \right)$ above is defined as the unique solution to the recursion
		\begin{equation}
			\label{eq:planted-overlap-const}
			q(r) = \E_{\bZ \sim \calN(0,1)} \tanh^2 \left( r + \beta^2 q(r) + \bZ \sqrt{ r + \beta^2 q(r) } \right) \mper
		\end{equation}
		In the next section, we will have to bound the above for small $r$.

		The hardest parts of the proof can be black-boxed away to the following lemma due to El Alaoui--Krzkala--Jordan.

		\begin{lemma}[{\cite[Remark after Theorem 8]{EKJ17}}]
			\label{lem:ideal-model-overlap-conc}
			Let $\tau > 0$ bounded. Consider the Ising model $\mu$ with interaction matrix $\beta \bM$ for $\bM = \bW + \frac{\beta}{N} \bx \bx^\top$ and external field
			\begin{equation}
				\label{eq:ideal-ext-field-law-cavity}
				\bh = \tau \bx + \sqrt{\tau} \bg \mper
			\end{equation}
			Then, for some explicit deterministic constant $q = q(\tau)$, we have
			\[ \E \left\langle \left| R_{12} - q \right|^{2k} \right\rangle \lesssim N^{-k} \mper \]
		\end{lemma}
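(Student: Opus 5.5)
The plan is to work on the Nishimori line and reduce replica overlaps to overlaps with the planted spike. The model in \Cref{lem:ideal-model-overlap-conc} is exactly the posterior of $\bx$ given the two observations $\bM = \bW + \frac{\beta}{N}\bx\bx^\top$ and $\bh = \tau\bx + \sqrt{\tau}\bg$. The second is a scalar Gaussian channel at signal-to-noise ratio $\tau$. By the Nishimori identity, $(\bsigma^1,\bsigma^2)$ under $\E\langle\cdot\rangle$ has the same joint law as $(\bsigma^1,\bx)$. Hence
\[
\E\langle |R_{12}-q|^{2k}\rangle = \E\langle |R_{1x}-q|^{2k}\rangle,
\]
so it suffices to prove concentration of the magnetization-type quantity $R_{1x} = \frac1N\langle\bsigma,\bx\rangle$. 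This also gives the final sentence of \Cref{lem:shifted-model-overlap-conc}. By the gauge symmetry $\bsigma\mapsto\bsigma\odot\bx$, we may further take $\bx=\mathbf 1$.

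\textbf{Step 1: a priori concentration.} First I would show that $\E\langle (R_{1x}-q)^2\rangle\to 0$, and in fact that $\langle \bone\{|R_{1x}-q|>\eps\}\rangle$ is exponentially small with high probability. The free energy $\frac1N\log Z(\tau)$ is Lipschitz in the Gaussian disorder, so it concentrates at rate $e^{-cN\eps^2}$. The I-MMSE relation identifies $\partial_\tau$ of the free energy with $\frac12\E\langle R_{1x}\rangle$, up to constants. Convexity in $\tau$ then converts free-energy concentration into concentration of $R_{1x}$ for almost every $\tau$. To get it at every bounded $\tau$ with a quantitative rate, I would instead use a constrained (Franz--Parisi) first-moment bound. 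For $\beta<1$ and $\bx=\mathbf 1$, the annealed free energy restricted to $\{|R_{1x}-m|\ge\eps\}$ is strictly below the replica-symmetric value $\Phi_{\mathrm{RS}}(q)$. This uses that the fixed point \eqref{eq:planted-overlap-const} is unique and stable when $\beta<1$. The unconstrained free energy concentrates at $\Phi_{\mathrm{RS}}$, by the planted (Bayes-optimal) free energy formula, e.g.\ via a Guerra interpolation that is tight on the Nishimori line.

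\textbf{Step 2: sharp moments by the cavity method.} Next I would upgrade Step 1 to $N^{-k}$ moments by Talagrand's cavity induction. By site symmetry,
\[
\E\langle (R_{1x}-q)^{2k}\rangle = \E\langle (\sigma_N x_N - q)(R_{1x}-q)^{2k-1}\rangle.
\]
I would then interpolate out the $N$th spin, $t\mapsto$ coupling $\sqrt t\,\beta\bM_{N,-N}$, and write $R_{1x} = R^-_{1x} + \frac{\sigma_N x_N}{N}$. At $t=0$, the Gaussian-channel cavity field makes $\langle\sigma_N x_N\rangle$ average to exactly $q$, by the defining equation \eqref{eq:planted-overlap-const}. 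The $t$-derivative produces terms of the form $\beta^2\,\E\langle (R^-_{ab}-q)(R^-_{1x}-q)^{2k-1}\cdots\rangle$, and Nishimori collapses all of them to overlaps with $\bx$. This yields a recursive inequality
\[
A_{2k} \le \beta^2 c(\tau)\, A_{2k} + C N^{-1} A_{2k-2} + \text{(higher-order terms)},
\]
with $A_{2k} = \E\langle (R_{1x}-q)^{2k}\rangle$. The higher-order terms are controlled using Step 1 to localize and Hölder's inequality. For $\beta<1$ the contraction coefficient satisfies $\beta^2 c(\tau) = \beta^2\E\,\mathrm{sech}^4(\cdot)<1$, which closes the induction and gives $A_{2k}\lesssim N^{-k}$.

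\textbf{Main obstacle.} I expect the hard step to be Step 1 at every $\tau$, including $\tau\to0$, uniformly. That is where the high-temperature condition $\beta<1$ must be used in a sharp way, namely through stability of the replica-symmetric fixed point, whereas the cavity induction is routine once localization is in hand. For this step I would first try the Franz--Parisi/Guerra bound, with the quadratic (Hessian) estimate of $\Phi_{\mathrm{RS}}$ at $q$ controlled by $1-\beta^2\E\,\mathrm{sech}^4>0$.
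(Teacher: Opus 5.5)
Your two-stage plan (crude concentration, then a cavity bootstrap to $N^{-k}$ moments) has the same shape as \cite{EKJ17}, which the paper simply cites, remarking only that its ingredients stay uniform as $\tau\downarrow 0$. The main gap is Step~2, which is not routine. Nishimori makes $\bx$ exchangeable with the replicas, but it does not collapse the mixed terms of $\nu_t'$ into $A_{2k}$. Write $\nu=\E\langle\cdot\rangle$, $\bar R=R^- - q$, $\hat q=\E\tanh^4$ of the cavity field, and take $f=(\sigma^1_Nx_N-q)\bar R_{1x}^{2k-1}$. Then the $t=0$ derivative is
\[
\beta^2\Big[(1-q^2)\,\nu\big(\bar R_{1x}^{2k}\big)-(q-q^2)\,\nu\big(\bar R_{12}\bar R_{1x}^{2k-1}\big)-(q-q^2)\,\nu\big(\bar R_{2x}\bar R_{1x}^{2k-1}\big)+(\hat q-q^2)\,\nu\big(\bar R_{23}\bar R_{1x}^{2k-1}\big)\Big].
\]
The coefficients sum to $1-2q+\hat q=\E\,\mathrm{sech}^4$ only if every mixed moment equals $A_{2k}$, and they do not. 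For example, at $k=1$ with $R_{\ell\ell'}-q=T_{\ell\ell'}+T_\ell+T_{\ell'}+T$, one has $\nu(\bar R_{12}\bar R_{1x})=\nu(T_1^2)+\nu(T^2)$, whereas $A_2=\nu(T_{1x}^2)+2\nu(T_1^2)+\nu(T^2)$. If you bound the mixed moments by H\"older instead, the coefficient becomes $\beta^2(1+2q-4q^2+\hat q)$. This exceeds $1$ for $\beta$ near $1$ whenever $0<q<2/3$, so the induction does not close on all of $\beta<1$. You must keep the overlap types (or the components $T_{\ell\ell'},T_\ell,T$) as separate unknowns, derive the coupled recursion, and show its matrix is invertible uniformly in $\tau\in(0,T]$. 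This is where $\beta<1$ enters sharply, and it is the point of item (iii) in the paper's remark (invertibility of $\Id-t\bA$). Relatedly, the interpolation must scale the spike part of $\bM_{N,-N}$ by $t$ and the Gaussian part by $\sqrt t$ (and the replacement field by $1-t$ and $\sqrt{1-t}$). With $\sqrt t\,\beta\bM_{N,-N}$ the intermediate models are not posteriors, and you lose Nishimori along the path.

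Step~1 also fails as worded: a literal annealed first-moment bound cannot localize $R_{1x}$ at $q$ once $\tau>0$. Annealing over $(\bW,\bg)$ gives the potential $\beta^2/4+\beta^2m^2/2+\tau m+\tau/2$ plus the binary entropy of magnetization $m$. It is maximized at $m^\ast=\tanh(\tau+\beta^2m^\ast)\neq q$, with value at least the quenched free energy. Annealing over $\bW$ alone puts the maximizer at $m=\E\tanh(\tau+\beta^2m+\sqrt\tau Z)$, still $\neq q$, because the Onsager contribution $\beta^2 q$ to the cavity-field variance is lost. So the constrained annealed free energy is not below $\Phi_{\mathrm{RS}}$ near these points. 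You need a quenched Guerra-type interpolation upper bound for the free energy restricted to $\{|R_{1x}-m|\le\delta\}$, with the constraint on the overlap with the fixed spike handled by a Lagrange multiplier. Compare it against the exact Nishimori-line free energy, with a gap that does not degenerate as $\tau\downarrow0$. This uniformity is exactly what items (i)--(ii) of the paper's remark flag.
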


		\begin{remark}
			Although \cite{EKJ17} states their estimate for $\tau$ bounded away from zero, its proof is uniform as $\tau\downarrow0$. The primary ingredients that require verification are (i) $q(\tau) = \Theta(\tau)$ as $\tau \downarrow 0$, (ii) that the preliminary concentration estimate of \cite[Proposition 13]{EKJ17} holds with uniform constants as $\tau \downarrow 0$, and (iii) the matrix $(\Id - t\bA)$ in Eq. (47) is uniformly invertible. We omit the details thereof.
		\end{remark}

		We shall transfer this statement to the model of interest using a contiguity statement, much like how we transitioned to the planted model earlier in \Cref{lem:planted-contiguity}.

		\begin{lemma}
			\label{lem:external-field-contiguity}
			Define by $\nu_{1}$ the law over external fields $\bh_1$ defined by \eqref{eq:real-ext-field-law-cavity} and by $\nu_{2}$ the law over $\bh_2$ defined by \eqref{eq:ideal-ext-field-law-cavity}, for $\tau = t + \frac{s^2\beta^2}{N}$. Then, for any constant $p$ and non-negative measurable function $F$,
			\[ \E_{\nu_1}[F] \lesssim \sqrt{\E_{\nu_2}[F^2]} \mper \]
		\end{lemma}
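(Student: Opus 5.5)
The plan is the same Cauchy--Schwarz change of measure used in \Cref{lem:planted-contiguity}. Here the change of measure is applied only to the external-field coordinate, with $(\bW,\bx)$ held fixed. Both laws are defined conditionally on $\bx$, and $\bs$ is an independent sign.

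Under $\nu_1$, given $\bx$ and $\bs$, the field is $\bh_1 \sim \calN\big((\tau + \delta)\bx, \tau \Id\big)$, where $\delta \defeq \frac{s\beta^2\bs - s^2\beta^2}{N}$. Note $|\delta| \le 2\beta^2/N$. Under $\nu_2$, $\bh_2 \sim \calN(\tau\bx, \tau\Id)$, with the same variance $\tau = t + \frac{s^2\beta^2}{N}$. Writing $L = \frac{\dif\nu_1}{\dif\nu_2}$, we get
\[ \E_{\nu_1}[F] = \E_{\nu_2}[F L] \le \sqrt{\E_{\nu_2}[F^2]}\cdot\sqrt{\E_{\nu_2}[L^2]} \mcom \]
so it suffices to show $\E_{\nu_2}[L^2] = O(1)$ uniformly in $t$.

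First condition on $\bx$ and $\bs$. The two laws are Gaussians with a common covariance $\tau\Id$ and means differing by $\delta\bx$. The standard identity gives
\[ \E_{\nu_2}\big[L_{\bs}^2\big] = \exp\left(\frac{\|\delta\bx\|^2}{\tau}\right) = \exp\left(\frac{\delta^2 N}{\tau}\right) \mper \]
The mixture over $\bs$ is handled by convexity, since $L = \E_{\bs} L_{\bs}$ implies $L^2 \le \E_{\bs} L_{\bs}^2$ by Jensen. Finally average over $\bx$, which enters identically in both laws.

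The main obstacle is uniformity as $t \downarrow 0$, since the exponent $\delta^2 N/\tau \le \frac{4\beta^4}{N\tau}$ blows up when $\tau$ is small. This is exactly where the variance inflation $s^2\beta^2/N$ in $\tau$ helps. If $s$ is not too small, then $\tau \ge s^2\beta^2/N$, and $\delta^2 N/\tau \lesssim (s\beta^2 + s^2\beta^2)^2/(s^2\beta^2 N^{-1}\cdot N) \cdot \tfrac1N$. Indeed $\delta = \frac{s\beta^2(\bs - s)}{N}$, so
\[ \frac{\delta^2 N}{\tau} \le \frac{s^2\beta^4(\bs-s)^2/N}{s^2\beta^2/N} = \beta^2(\bs - s)^2 \le 4\beta^2 \mcom \]
which is bounded for every $s$, including $s = 0$, where $\delta = 0$ and $L \equiv 1$. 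Therefore $\E_{\nu_2}[L^2] \le e^{4\beta^2}$ uniformly in $t$, $s$ and $N$, and this gives $\E_{\nu_1}[F] \lesssim \sqrt{\E_{\nu_2}[F^2]}$.

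The only point needing care is that $F$ may depend on $(\bW,\bx,\bh)$ jointly. This is fine: apply the Cauchy--Schwarz step conditionally on $(\bW,\bx)$, then use Cauchy--Schwarz once more (or Jensen) when averaging over $(\bW,\bx)$.
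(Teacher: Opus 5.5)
Your proof is correct and follows the paper's route: Cauchy--Schwarz against the likelihood ratio $\dif\nu_1/\dif\nu_2$, conditioning on $\bx$ and $\bs$, and the Gaussian chi-square identity for a common covariance $\tau\Id$, with the exponent bounded by $\beta^2(\bs-s)^2\le 4\beta^2$ via $\tau \ge s^2\beta^2/N$. Your handling of the mixture over $\bs$ by Jensen, of the degenerate case $s=0$, and of the joint dependence of $F$ on $(\bW,\bx,\bh)$ is slightly more careful than the paper, which simply conditions on $\bs$ and $\bx$.
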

		\begin{proof}
			It suffices to prove that $\E_{\bh \sim \nu_2} \big[\frac{\dif \nu_1}{\dif \nu_2}(\bh)^{2}\big] \lesssim 1$. Indeed, we then have
			\[ \E_{\nu_1}[F] = \E_{\bh \sim \nu_2} \left[ F(\bh) \cdot \frac{\dif \nu_1}{\dif \nu_2}(\bh) \right] \le \sqrt{ \E_{\nu_2}[F^2] } \cdot \sqrt{ \E_{\bh \sim \nu_2}\left[ \left(\frac{\dif \nu_1}{\dif \nu_2}\right)^{2} \right] } \]
			by the Cauchy--Schwarz inequality. To prove this, we can establish this bound after conditioning on $\bx$ and $\bs$. Assume we fix $\bs = +1$ (the proof when $\bs = -1$ is identical), so we have
			\begin{align*}
				\bh_1 &\sim \calN\left( \bx \cdot \left(t + \frac{s\beta^2}{N}\right) , t + \frac{s^2\beta^2}{N} \right) \text{ and} \\
				\bh_2 &\sim \calN\left( \bx \cdot \left(t + \frac{s^2\beta^2}{N}\right) , t + \frac{s^2\beta^2}{N} \right) \mper
			\end{align*}
			A straightforward calculation on these Gaussians, then shows that
			\begin{align*}
            \E_{\bh \sim \nu_2} \left[\frac{\dif \nu_1}{\dif \nu_2}(\bh)^{2}\right] &\lesssim \exp\left(\frac{1}{t + \frac{s^2\beta^2}{N}} \cdot \left\| \bx \cdot \left(\left(t + \frac{s\beta^2}{N}\right) - \left(t + \frac{s^2\beta^2}{N}\right) \right) \right\|^2\right)\\ &\lesssim \exp\left(\frac{4N}{s^2\beta^2} \cdot \frac{\|\bx\|^2 \cdot s^2\beta^4}{N^2}\right)= e^{4\beta^2} \mper
            \end{align*}
			The lemma follows.
		\end{proof}

		We now complete the proof as follows.

		\begin{proof}[Proof of \Cref{lem:shifted-model-overlap-conc}]
			Let $\mu_1$ be the (random) model defined in \Cref{lem:shifted-model-overlap-conc}, and $\mu_2$ the (random) model defined in \Cref{lem:ideal-model-overlap-conc}. We then have by \Cref{lem:external-field-contiguity} that
			\begin{align*}
				\E_{\nu_1} \left\langle \left| R_{12} - q \right|^{2k} \right\rangle &\lesssim \sqrt{ \left(\E_{\nu_2} \left\langle \left| R_{12} - q \right|^{2k} \right\rangle^{2} \right)  } \le \sqrt{ \left(\E_{\nu_2} \left\langle \left| R_{12} - q \right|^{4k} \right\rangle \right) } 
					\lesssim \frac{1}{N^{k}} \mcom
			\end{align*}
			as desired, where the middle inequality is Jensen's. For the overlap concentration with a spike, we note that by exchangeability and the posterior structure of the distribution, \Cref{lem:ideal-model-overlap-conc} holds if one of the samples is replaced with the spike.
		\end{proof}


\section{Lower bounds for the prediction of the inverse covariance}
\label{sec:lower-bd-inv-cov-pred}

	Recall the definition of the matrix $K$ from \eqref{eq:inv-cov-pred},
	\[ K = \diag\left( \frac{1}{1 - m_i^2} \right) - \beta \bM + \beta^2(1-q_N)\Id - \frac{2\beta^2}{N} mm^{\top} \]
	The goal of this subsection will be to prove the following more precise version of \Cref{lem:organized-gap}.

	\begin{lemma}
		\label{lem:cov-pred-bound}
		Let $T > 0$ large. There exists a constant $\gamma = \gamma_{\beta,T}$ such that for any constant $a$ and $t \in [0,T]$,
		\begin{equation}
			\label{eq:cov-pred-bound}
			\Pr\left[ K_{t} \not\psdge \gamma \right] \le N^{-a} \mper
		\end{equation}
	\end{lemma}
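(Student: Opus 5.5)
Following the approach of \cite{Cel24}, the plan is to replace the Gibbs mean $m$ in $K_t$ by a late AMP iterate, prove strict positivity of the resulting AMP-based TAP Hessian, and then transfer the bound back to $K_t$.

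\emph{Step 1: an AMP surrogate for the mean.} In the planted model with field $t\bx+\sqrt t\bg$, run Bayes-AMP on $\bM$ with denoiser $\tanh$ and Onsager correction. Its state evolution is exactly the recursion \eqref{eq:planted-overlap-const}, which has a unique fixed point $q=q(t)$. Let $\hat m=m^{k}$ be the iterate after $k$ steps, $k$ a large constant. From state evolution, $\|\hat m\|^2/N\to q$ and the empirical law of $\hat m$ converges to that of $\tanh(\,t+\beta^2q+\sqrt{t+\beta^2q}\,Z\,)$. Next I would show $\|m-\hat m\|^2/N\le \eps_k$ with probability $1-N^{-a}$, where $\eps_k\to0$ as $k\to\infty$. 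This would combine the overlap concentration of \Cref{lem:shifted-model-overlap-conc}, including concentration of the overlap with the spike $\bx$, with the argument of \cite{Cel24}/\cite{EMS22}. That argument shows the posterior mean in the planted model equals the Bayes-optimal estimator, whose mean-squared error is achieved by AMP because $\beta<1$ places us in the replica-symmetric regime. To get polynomial rather than $o(1)$ failure probability, I would use the nonasymptotic AMP concentration bounds, which give $e^{-cN^{c}}$ tails for constant $k$.

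\emph{Step 2: positivity of the AMP-based Hessian.} Set $\hat K=\diag((1-\hat m_i^2)^{-1})-\beta\bM+\beta^2(1-\hat q)\Id-\frac{2\beta^2}{N}\hat m\hat m^\top$. The rank-one terms $\frac{\beta}{N}\bx\bx^\top$ and $-\frac{2\beta^2}{N}\hat m\hat m^\top$ are handled separately, so it suffices to bound $\lambda_{\min}$ of $D-\beta\bW$ by $2\gamma$, where $D=\diag((1-\hat m_i^2)^{-1})+\beta^2(1-\hat q)$, and then control the two rank-one directions. For $D-\beta\bW$, I would use the argument of \cite{Cel24} based on conditioning on the AMP iterates (Gaussian conditioning / Bolthausen), which reduces the problem to the spectrum of $D-\beta\bW$ with $\bW$ replaced by an independent GOE plus a finite-rank correction. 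The free-probability computation for $D-\beta\bW'$, with $D$ independent of $\bW'$, gives a positive edge precisely when $\beta^2\E[(1-\hat m^2)^2]<1$, which holds since $\beta<1$. The finite-rank part then needs an explicit check in the span of the AMP iterates and of $\bx$. There the Hessian acts like the second derivative of the one-dimensional replica-symmetric TAP potential, and this is strictly positive at the stable fixed point $q(t)$. Uniformity as $t\downarrow0$ holds because $q(t)=\Theta(t)$.

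\emph{Step 3: transfer to $K_t$.} This is the main obstacle. A bound on $\|m-\hat m\|^2/N$ only controls the perturbation $\diag((1-m_i^2)^{-1})-\diag((1-\hat m_i^2)^{-1})$ in an averaged sense, while an operator-norm statement is needed. Here the diagonal dependence on $m$ helps. The map $x\mapsto(1-x^2)^{-1}$ is convex and at least $1$, so entries with $m_i$ near $\pm1$ only increase $K$. I would therefore truncate: let $\phi$ be a Lipschitz minorant equal to $(1-x^2)^{-1}$ on $|x|\le1-\delta$. Then $\diag((1-m_i^2)^{-1})\psdge\diag(\phi(m_i))$. The replacement $\phi(m_i)\to\phi(\hat m_i)$ changes the matrix by a diagonal matrix whose entries have small $\ell^2$ average but could be large at a few coordinates. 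To handle this, I would split off a set $S$ of at most $\eps N$ bad coordinates. Off $S$ the perturbation is uniformly small. On $S$, I would use that a GOE restricted to any $\eps N$ coordinates has norm $O(\sqrt\eps)$, uniformly over $S$, with exponentially small failure probability. Adding $\delta'$-dependent slack from Step 2 then absorbs both errors. Finally, the rank-one terms and the $\beta^2(q_N-\hat q)\Id$ term are $O(\sqrt{\eps_k})$ perturbations. Choosing $k$ large gives $K_t\psdge\gamma$ for fixed $t$ with probability $1-N^{-a}$, uniformly in $t\in[0,T]$.
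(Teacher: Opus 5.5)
Your Step 3, which you rightly flag as the crux, does not go through as sketched. After truncation, replacing $\phi(m_i)$ by $\phi(\wh{m}_i)$ can lower the diagonal by an amount of order $\|\phi\|_\infty\approx 1/\delta$ on the bad set $S$. The uniform smallness of $\|\bW_{SS}\|$ only controls the $S\times S$ block of the quadratic form. The cross term $-2\beta\, v_S^\top \bW_{S,S^c} v_{S^c}$ is not small, since $\|\bW_{S,S^c}\|\approx 1$ even when $|S|=\eps N$. Equivalently, the Schur complement onto $S^c$ picks up $\beta^2 \bW_{S^c,S}(K_{SS})^{-1}\bW_{S,S^c}$, a PSD matrix of rank $\eps N$ but operator norm of order one, which a constant slack from Step 2 cannot absorb. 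A pointwise gap does not survive such perturbations in general: already for $a\Id-\beta\bW$, lowering the diagonal by $b>\beta$ on a fixed set of $\eps N$ coordinates lowers $\lambda_{\min}$ by about $(b-\beta)^2/b$, however small $\eps$ is. You also cannot union bound over $S$. The set $S$ depends on $\bW$ through $m$, while your Step 2 gives positivity only at the single point $\wh{m}$ and only in probability. What is needed is positivity of $K(u)$ \emph{uniformly} over the ball $\{u : \|u-\wh{m}\|\le \eps\sqrt N\}$, which is \Cref{lem:mid-probability-pred-bound}(b). The paper obtains this from Celentano's post-AMP Sudakov--Fernique comparison, which controls $\sup_{u,v}$ jointly. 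After the AM--GM step, $u$ enters only through $\bigl(\frac{1}{1-U^2}-\alpha\bigr)^{-1}$, which is Lipschitz for $0\le\alpha\le 1/2$, so an $L^2$-small change of $u$ costs only $O(\eps+\delta_k)$.

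Two further steps are asserted rather than proved.

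First, in Step 2 the terms $-\frac{\beta^2}{N}\bx\bx^\top$ and $-\frac{2\beta^2}{N}\wh{m}\wh{m}^\top$ are negative with norms $\beta^2$ and $2\beta^2\hat q$. So they cannot be dealt with after bounding $\lambda_{\min}(D-\beta\bW)$ by $2\gamma$. Nor can the finite-rank check be done by restricting the quadratic form to the span of the AMP iterates and $\bx$: an outlier analysis needs resolvent quadratic forms of the bulk in those directions. This is exactly the new content in the paper. There, the extra term $2\beta^2 q\rho^2$ in Celentano's variational problem is compensated by the choice $a=2\beta^2(2q\rho-\eta\sqrt q)$, $b=2\beta^2\eta$, giving $\calL_0\le-\beta^2(1-\beta^2\nu)\bigl(q\rho^2+(\eta-\rho\sqrt q)^2\bigr)\le 0$. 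Your one-dimensional TAP-potential heuristic is the intuition behind this, not an argument.

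Second, you never explain how positivity of the AMP Hessian reaches probability $1-N^{-a}$. The paper starts from $1-o(1)$ and boosts it using Lipschitzness of $\wh{m}$ in $(\bW,\bg)$ and Gaussian isoperimetry around a good set of measure at least $1/2$. That step again needs the ball-uniform form of (b), since it evaluates $K_{W',g'}$ at $m(W,g)$, which is not the mean for $(W',g')$.

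For $t<t_0$ the paper sidesteps all of this: it uses $K_t\psdge K_0-3\beta^2 q_N$, with $K_0$ handled by Ma\"ida's large-deviation bound and $q_N$ made small by overlap concentration.
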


	\subsection{The case when $t$ is small}

		We start with the regime where $t$ is small.

		\begin{lemma}[{\cite[Corollary 1.2]{Maida07}}]
			\label{lem:cov-pred-bound-zero}
			\eqref{eq:cov-pred-bound} holds for $t = 0$.
		\end{lemma}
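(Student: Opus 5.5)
At $t=0$ there is no external field, so the measure $\mu_0$ with interaction matrix $\beta\bM$ is invariant under the global flip $\sigma\mapsto-\sigma$. Hence $m=\langle\sigma\rangle=0$ deterministically, $v_i=1$ for all $i$, and $q_N=0$. The TAP Hessian \eqref{eq:inv-cov-pred} therefore collapses to
\[
	K_0 = (1+\beta^2)\Id - \beta\bM, \qquad \bM = \bW + \frac{\beta}{N}\bx\bx^\top \text{ (diagonal zeroed)} .
\]
The claim thus reduces to a purely spectral statement. We will show that $\lambda_{\max}(\bM)\le 2+\eps$ with probability $1-N^{-a}$ for some small constant $\eps=\eps_\beta>0$. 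On that event,
\[
	K_0 \psdge (1+\beta^2-2\beta-\beta\eps)\Id = \bigl((1-\beta)^2-\beta\eps\bigr)\Id ,
\]
and choosing $\eps=(1-\beta)^2/(2\beta)$ gives $\gamma=(1-\beta)^2/2$.

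For the spectral bound we first deal with the zeroed diagonal. Write $\bM = \bW' + \frac{\beta}{N}\bx\bx^\top - E$, where $\bW'$ is the GOE matrix with its diagonal kept and $E$ is diagonal with entries $\bW'_{ii}+\beta/N$. The diagonal entries of $\bW'$ are Gaussians of variance $O(1/N)$. By a union bound, $\|E\|_{\mathrm{op}}=O(\sqrt{\log N/N})$ except with probability $N^{-a}$, which is negligible against $\eps$.

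It remains to bound the top eigenvalue of the rank-one perturbation $\bW'+\theta\, uu^\top$ of the GOE, where $u=\bx/\sqrt N$ is a unit vector and $\theta=\beta<1$. By rotation invariance of the GOE, the law of this matrix does not depend on $\bx$, so we may condition on $\bx$ freely. Since $\theta<1$ is below the BBP threshold, the top eigenvalue sticks to the bulk edge $2$. \cite[Corollary 1.2]{Maida07} gives a large deviation principle for this top eigenvalue whose rate function is strictly positive at $2+\eps$. Consequently
\[
	\Pr\bigl[\lambda_{\max}(\bW'+\theta uu^\top)>2+\eps\bigr]\le e^{-c_\eps N},
\]
which is far smaller than $N^{-a}$.

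The only point needing care is checking that \cite{Maida07} applies in exactly our normalization. That result concerns the GOE with entry variance $1/N$ and a fixed rank-one deformation of strength $\theta$, and its subcritical case $\theta\le 1$ has rate function vanishing only at $2$. Beyond this, the argument is immediate; the genuine difficulty in \Cref{lem:cov-pred-bound} lies in the regime $t>0$, where $m\neq 0$ and AMP-based arguments are needed.
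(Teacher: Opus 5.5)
Your proposal is correct and follows the paper's route. The paper simply invokes \cite[Corollary 1.2]{Maida07}, implicitly using your reduction: at $t=0$ the measure is flip-symmetric, so $m=0$ and $K_0=(1+\beta^2)\Id-\beta\bM$, and positivity of $K_0$ then follows from the large-deviation bound for the top eigenvalue of the subcritical ($\theta=\beta<1$) rank-one deformed GOE. Your handling of the zeroed diagonal (via Weyl's inequality) and of the $\bx$-dependence (via orthogonal invariance) just makes explicit details the paper leaves unstated.
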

		
		\begin{lemma}
			\label{lem:cov-pred-bound-small}
			There is some small constant $t_0 = t_0(\beta)$ such that \eqref{eq:cov-pred-bound} holds for all $0 \le t < t_0$.
		\end{lemma}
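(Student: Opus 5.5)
The plan is a perturbation argument around $t=0$, using \Cref{lem:cov-pred-bound-zero} as the base case. Write
\[ K_t = \underbrace{(1+\beta^2)\Id - \beta\bW}_{K_0'} + E_t \mcom \]
where the remainder is
\[ E_t = \diag\left( \frac{m_i^2}{1-m_i^2} \right) - \beta^2 q_N \Id - \frac{\beta^2}{N}\bx\bx^\top - \frac{2\beta^2}{N} mm^\top \mper \]
The matrix $\bM = \bW + \frac{\beta}{N}\bx\bx^\top$ only adds a PSD rank-one term with a minus sign and operator norm $\beta^2$. That term is dangerous, so it must be kept, not discarded.

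\emph{Step 1: the base matrix.} By \Cref{lem:cov-pred-bound-zero}, or directly since $\lambda_{\max}(\bW)\le 2+\eps$ with probability $1-e^{-cN}$, we have $(1+\beta^2)\Id - \beta\bW \psdge (1-\beta)^2 - o(1)$. One more fact is needed to absorb $-\frac{\beta^2}{N}\bx\bx^\top$ without losing the constant. The vector $\bx$ is independent of $\bW$, so $\bx^\top ((1+\beta^2)\Id-\beta\bW)^{-1}\bx/N$ concentrates around the Stieltjes-type value $\int (1+\beta^2-\beta\lambda)^{-1}\,\dif\rho_{sc}(\lambda) = 1$. More precisely, the resolvent $G(z)=\int(z-\lambda)^{-1}\dif\rho_{sc}$ at $z=\beta+1/\beta$ equals $\beta$. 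This concentration holds with probability $1-e^{-cN}$ by Hanson--Wright-type concentration for Rademacher vectors.

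The Schur complement criterion then says $A - \frac{\beta^2}{N}\bx\bx^\top \psdge \gamma$ iff $\frac{\beta^2}{N}\bx^\top (A-\gamma)^{-1}\bx < 1$. With $A = K_0'$, the leading value is $\beta^2\cdot 1 <1$. For small $\gamma$ the perturbation is continuous, so $K_0' - \frac{\beta^2}{N}\bx\bx^\top \psdge 2\gamma_0$ for some $\gamma_0=\gamma_0(\beta)>0$, with overwhelming probability. This is the key quantitative input and I expect it to be the main obstacle, since the spike sits at exactly the BBP-type balance and one must verify the margin $1-\beta^2$ survives.

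\emph{Step 2: the remaining terms are small for small $t$.} The diagonal term $\diag(m_i^2/(1-m_i^2))$ is PSD and can be dropped. The remaining negative terms are $-\beta^2 q_N\Id$ and $-\frac{2\beta^2}{N}mm^\top$, whose operator norms are $\beta^2 q_N$ and $2\beta^2 q_N$. So it suffices to show $q_N \le \gamma_0/(3\beta^2)$ with probability $1-N^{-a}$. Since $q_N = \frac1N\|m\|^2 = \langle R_{12}\rangle$, \Cref{lem:shifted-model-overlap-conc} (with $s=0$) combined with Markov's inequality at a high moment gives $|q_N - q(t)| \le N^{-1/2+\eps}$ with probability $1-O(N^{-a})$. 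Here $q(t)$ solves \eqref{eq:planted-overlap-const}. Transferring from the planted model with field $t\bx+\sqrt t\bg$ is immediate, as this is exactly the model of \Cref{lem:ideal-model-overlap-conc}.

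\emph{Step 3: smallness of $q(t)$.} From \eqref{eq:planted-overlap-const} and $\tanh^2(x)\le x^2$, we get $q\le \E(r+\beta^2 q+\bZ\sqrt{r+\beta^2q})^2 \le (r+\beta^2 q)(1+r+\beta^2 q)$. For $r$ small, and using $q(r)=\Theta(r)$ as in the remark after \Cref{lem:ideal-model-overlap-conc}, this gives $q \le \frac{r(1+o(1))}{1-\beta^2}$, so $q(t)\to0$ as $t\to0$. Choose $t_0$ with $3\beta^2 q(t)\le \gamma_0/2$ for $t<t_0$.

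Combining the three steps gives $K_t \psdge 2\gamma_0 - \gamma_0/2 - o(1) \psdge \gamma_0$ on an event of probability $1-O(N^{-a})$, which proves \eqref{eq:cov-pred-bound} with $\gamma=\gamma_0$ for all $0\le t<t_0$.
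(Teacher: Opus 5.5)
Your Steps 1 and 2 follow the paper's route: compare $K_t$ with $K_0=(1+\beta^2)\Id-\beta\bM$, discard the PSD diagonal, bound the two negative terms by $3\beta^2 q_N$ in operator norm, and control $q_N$ by overlap concentration. Step 1 is redundant, though. At $t=0$ one has $m=0$ by symmetry, so \Cref{lem:cov-pred-bound-zero} is already the bound for $(1+\beta^2)\Id-\beta\bM$ with the spike included. Your Schur-complement computation is correct, but there is no delicate balance to verify. The spike in $\bM$ has strength $\beta<1$, strictly below the BBP threshold, so all that is used is $\lambda_{\max}(\bM)\le 2+\eps$ with probability $1-e^{-cN}$.

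The genuine gap is in Step 3. The bound $\tanh^2 x\le x^2$ only gives $q\le u(1+u)$ with $u=t+\beta^2 q$, and this does not force $q$ to be small. For every $t\ge 0$ it is satisfied by all $q\ge (1-\beta^2)/\beta^4$, which is a nonempty part of $[0,1]$ as soon as $\beta^4+\beta^2\ge 1$ (i.e.\ $\beta\gtrsim 0.79$). You close this by importing $q(r)=\Theta(r)$ from the remark after \Cref{lem:ideal-model-overlap-conc}. But that is exactly the statement $q(t)\to 0$ that Step 3 is meant to establish, and the remark lists it as an ingredient still to be verified. So the step is circular, and once that fact is assumed your inequality adds nothing.

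The missing idea is to bound $f(r)=\E\tanh^2(r+\sqrt r\,\bZ)$ by $r$ itself, rather than by the second moment $r+r^2$ of its argument; the $u^2$ term is what destroys the contraction. Note that $f(0)=0$ and $f'(r)=\E\sech^4(r+\sqrt r\,\bZ)\le 1$. The derivative formula follows from Gaussian integration by parts together with the Nishimori identities $\E T^{2k-1}=\E T^{2k}$ for $T=\tanh(r+\sqrt r\,\bZ)$. Hence $f(r)\le r$ for all $r\ge 0$. Since $q(t)=f(t+\beta^2 q(t))$, this gives $q(t)\le t+\beta^2 q(t)$, i.e.\ $q(t)\le t/(1-\beta^2)$, with no a priori smallness needed. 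With this replacement your argument is complete and coincides with the paper's.
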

		\begin{proof}
			We have
			\begin{align*}
						K_{t} - K_{0} &= \left(\diag\left( \frac{1}{1-m_i^2} \right) - \Id\right) - \beta^2 q_{N} \Id - \frac{2\beta^2}{N} mm^\top \\
							&\psdge - 3 \beta^2 q_{N} \mper
			\end{align*}
			It suffices to show that for any $\eps > 0$, for sufficiently small $t$, $q_{N} = \frac{\|m\|^2}{N} \le \eps$ with high probability.
			This would allow us to choose $\eps = \frac{\gamma_0}{12\beta^2}$, for $\gamma_0$ the lower bound yielded by \Cref{lem:cov-pred-bound-zero}. 
			We have
			\[ \frac{\|m\|^2}{N} = \langle R_{12}\rangle = q + \langle R_{12} - q \rangle \mper \]
			By the overlap concentration of \Cref{lem:shifted-model-overlap-conc}, we have that with probability $1 - N^{-a}$, $\langle R_{12} - q\rangle \le \eps$.
			It remains to establish that $q = q(t)$ is bounded, for $q(t)$ defined as in \eqref{eq:planted-overlap-const}. Consider $f$ defined by $f(r) = \E \tanh^2( r + \sqrt{r} \bZ )$, so $q(t) = f\left( t + \beta^2 q(t) \right)$. We may show that $f'(r) = \E \sech^4\left( r + \sqrt{r} \bZ \right) \le 1$, so $q(t) \le t + \beta^2 q(t)$. Rearranging proves the desideratum for $t$ sufficiently small.
		\end{proof}

	\subsection{The case when $t$ is large}

		The bulk of the proof requires understanding the mean when $t$ is bounded away from $0$.
		
		\begin{restatable}{lemma}{positivetpredictionlb}
			\label{lem:cov-pred-bound-large}
			For $t$ bounded away from $0$, \eqref{eq:cov-pred-bound} holds.
		\end{restatable}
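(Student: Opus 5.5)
Throughout, fix $t \in [t_0, T]$, with $t_0$ from \Cref{lem:cov-pred-bound-small}. The approach follows Celentano \cite{Cel24}: replace the true mean $m$ by a late approximate message passing (AMP) iterate $m^{k}$, and establish the lower bound for the TAP Hessian evaluated at $m^{k}$.

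\textbf{Step 1: TAP Hessian at an AMP iterate.} Run the Bayes-AMP iteration for the planted model with observation $t\bx + \sqrt{t}\bg$ and matrix $\bM$, with Onsager correction, for a large constant number $k$ of steps. State evolution identifies the empirical law of $(\bx, m^{k})$. The Hessian
\[ K(m^k) = \diag\bigl((1-(m^k_i)^2)^{-1}\bigr) - \beta\bM + \beta^2(1-q^k)\Id - \tfrac{2\beta^2}{N}m^k (m^k)^\top \]
is then a diagonal matrix depending on the AMP iterates, minus $\beta\bM$, plus a rank-one term. I would lower bound its smallest eigenvalue as in \cite{Cel24}. The key input is the conditional law of $\bW$ given the AMP iterates, obtained via the Gaussian conditioning technique: it is an independent GOE matrix projected away from a $k$-dimensional subspace, plus a low-rank part. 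For the independent part, $\lambda_{\min}(D - \beta\bW^{\perp})$ is governed by a Stieltjes-transform criterion. This criterion reduces to $\beta^2 \E[(1-\tanh^2(\cdot))^2] < 1$, which holds under the replica-symmetric and AT condition at $\beta<1$. The low-rank directions (the spike $\bx$ and the span of the iterates) are then handled separately using the state evolution overlaps. The spike direction is controlled because $\tfrac{\beta}{N}\bx\bx^\top$ enters with a negative sign. On the resulting high probability event one gets $K(m^k)\psdge 2\gamma$. To reach error probability $N^{-a}$, I would use concentration for AMP at finite $k$, which gives stretched-exponential deviation bounds for Lipschitz observables, together with \Cref{lem:cov-pred-bound-zero}-type spectral concentration for the GOE part.

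\textbf{Step 2: $m$ is close to $m^{k}$.} I would show $\|m - m^{k}\|^2/N \le \eps(k) + o(1)$ with probability $1-N^{-a}$, where $\eps(k)\to 0$. The plan is to use the overlap concentration from \Cref{lem:shifted-model-overlap-conc} and its version with the spike. Together with Bayes-optimality in the planted model, these give $\langle R_{12}\rangle \approx q(t)$ and $\langle \sigma,\bx\rangle/N \approx q(t)$. State evolution gives $\langle m^k,\bx\rangle/N \to q_k \to q(t)$ and $\|m^k\|^2/N \to q_k$. Expanding $\|m-m^k\|^2 = \|m\|^2 - 2\langle m, m^k\rangle + \|m^k\|^2$, the remaining cross term $\langle m, m^k\rangle$ is handled by the Nishimori identity. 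That identity gives $\E\langle \sigma, m^k\rangle = \E\langle \bx, m^k\rangle$ in the planted model, and concentration upgrades this to high probability. Here the failure probability is inherited from the polynomial moments in \Cref{lem:shifted-model-overlap-conc}, which gives $N^{-a}$ after taking $2k$ large.

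\textbf{Step 3: perturbation and the main obstacle.} An $\ell^2$ closeness of $m$ and $m^{k}$ does not directly control $\diag((1-m_i^2)^{-1})$ in operator norm, since coordinates with $|m_i|$ near $1$ can blow up. However, this term is monotone: $x\mapsto (1-x^2)^{-1}$ is at least $1$, and it is increasing in $|x|$. So I would write
\[ K(m) - K(m^k) \psdge \diag\bigl((1-m_i^2)^{-1} - (1-(m^k_i)^2)^{-1}\bigr) - C\bigl(|q_N - q^k| + \|m-m^k\|/\sqrt N\bigr). \]
The diagonal difference is negative only on coordinates where $|m_i| < |m^k_i|$. There I would truncate $m^{k}$ by replacing the denoiser with a clipped $\tanh$ so that $|m^k_i|\le 1-\delta$. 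This makes the map Lipschitz, so the negative part is at most $C_\delta |m_i - m^k_i|$ coordinatewise. That is still not an operator-norm bound, because it is a diagonal matrix with small $\ell^2$ mass but possibly large entries. To handle it, I would use the fact that $K(m^k)\psdge 2\gamma$ holds robustly: it remains true after subtracting any diagonal matrix whose entries exceed $\gamma/2$ on at most $\eta N$ coordinates. I would prove this robustness as an extension of Step 1, since $\eta N$ coordinates carry little weight in the low eigenvectors of $D-\beta\bW^{\perp}$, which are delocalized. I expect this delocalization and robustness step, carried out at polynomially small failure probability, to be the main obstacle. Combining the three steps gives $K \psdge \gamma$ with probability $1-N^{-a}$.
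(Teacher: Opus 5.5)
There is a genuine gap, and it is the step you flag as the main obstacle. Because $m$ depends on $\bW$ in an uncontrolled way, knowing $K(m^k)\psdge 2\gamma$ and $\|m-m^k\|\le\eps\sqrt N$ is not enough. What you need is the uniform statement that $K(u)\psdge\gamma$ for \emph{every} $u\in(-1,1)^N$ with $\|u-m^k\|\le\eps\sqrt N$.

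Your delocalization route to this does not go through. The eigenvectors of $D-\beta\bW^\perp$ in any constant-width window at the bottom of the spectrum span a $\Theta(N)$-dimensional subspace. Eigenvector delocalization says nothing about how much mass vectors in that span can put on an adversarially chosen set $S$ with $|S|=\eta N$. Meanwhile the diagonal you subtract on $S$ is of size up to $C_\delta$, which is not small, and the block $\bW_{S,S^c}$ has operator norm of order one. So naive quadratic-form or Schur-complement bounds do not close; controlling the supremum jointly over $(u,v)$ needs a Gaussian comparison argument.

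In the paper this is \Cref{lem:mid-probability-pred-bound}(b). The ball constraint is built into Celentano's post-AMP Sudakov--Fernique variational problem, where it becomes $\|U-S\|_{L^2}\le\eps+\delta_k$. After maximizing pointwise in $V$ via $cv-dv^2\le c^2/(4d)$ with $d=(1-U^2)^{-1}-\alpha\ge 1/2$, the variable $U$ enters only through the bounded Lipschitz function $(1-U^2)/(1-\alpha(1-U^2))$. Hence $U$ can be replaced by $S$ at cost $O(\eps)$ plus an error vanishing in the number of AMP iterations. No clipping or delocalization is needed.

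The same missing uniformity breaks your route to failure probability $N^{-a}$. $\lambda_{\min}(K(m^k))$ is not a Lipschitz functional of the AMP iterates: an $\ell_2$-small change in $m^k$ can move $\diag((1-(m^k_i)^2)^{-1})$ a lot in operator norm. So AMP concentration plus GOE spectral concentration does not upgrade Celentano's $1-o(1)$ statement. The paper instead uses three facts:
\begin{itemize}
\item for fixed $u$, $K_{W,g}(u)$ is $1$-Lipschitz in $W$;
\item the set $\Egood$ of disorders on which the ball-uniform bound holds has probability at least $1/2$;
\item Gaussian isoperimetry, Lipschitzness of $\wh m$, and part (a) transfer the bound to every disorder within a small constant distance of $\Egood$, an event of probability $1-e^{-cN}$.
\end{itemize}
This transfer works only because the good event is uniform over a ball in $u$.

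Finally, your claim that the spike is controlled because it enters with a negative sign is backwards. $-\beta\bM$ contributes $-\frac{\beta^2}{N}\bx\bx^\top$ to $K$, and this lowers $K$, as does the correction $-\frac{2\beta^2}{N}mm^\top$ (absent from Celentano's Hessian). In the limiting problem these appear as $\beta^2(\E[V])^2$ and $2\beta^2q\rho^2$. They are absorbed only by the specific multipliers $a=2\beta^2(2q\rho-\sqrt q\,\E[V])$ and $b=2\beta^2\E[V]$, which give $\calL_0\le-\beta^2(1-\beta^2\nu)\bigl(q\rho^2+(\E[V]-\rho\sqrt q)^2\bigr)\le 0$ with $\nu=\E[1-S^2]$. ``Handling the low-rank directions separately'' does not replace this computation. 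Your Step 2 is fine and essentially matches part (a).
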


		Fortunately, much of the hard work may be black-boxed away to pre-existing papers by Celentano \cite{Cel24} and El Alaoui--Montanari--Sellke \cite{EMS22}. Let us do some setup before describing exactly what these results are.

		Recall
		\[ K(m) = \diag\left( \frac{1}{1 - m_i^2} \right) - \beta \bM + \beta^2(1-q_N)\Id - \frac{2\beta^2}{N} mm^{\top} \mper  \]

		Throughout this section, let $\eps > 0$ be a small constant and $t$ bounded away from $0$. The goal of this section will be to prove the following.

		\begin{restatable}{lemma}{midprobpredbound}
			\label{lem:mid-probability-pred-bound}
			Let $\eps > 0$ a small constant, $a > 0$ a large constant, and $t$ be bounded away from $0$. There exists an approximation $\wh{m} = \wh{m}(\bW,\bg)$ of the mean of $\mu_t$ such that
			\begin{enumerate}[label=(\alph*)]
				\item $\Pr\left[\|\wh{m} - m\|_{2} \ge \eps \sqrt{N}\right] \le N^{-a}$,
				\item for some constant $\gamma$, with probability $1-o(1)$, for all $u\in(-1,1)^N$ with $\|\wh{m} - u\| \le \eps\sqrt{N}$, $K(u) \psdge \gamma$, and
				\item $\wh{m}$ is a $O_{\eps}(\sqrt{N})$-Lipschitz function of $W$ and an $O(1)$-Lipschitz function of $g$ with high probability. More quantitatively, if $\|W\|,\|W'\|$ are bounded by some constant, we have $\|\wh{m}(W,g) - \wh{m}(W',g')\| \le L\sqrt{N}\|W - W'\|  + L \|g - g'\|$ for some constant $L$ depending on $\eps$ but not $N$.
			\end{enumerate}
		\end{restatable}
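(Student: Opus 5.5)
We take $\wh{m}$ to be a late iterate of the approximate message passing (AMP) algorithm for the planted model, following El Alaoui--Montanari--Sellke \cite{EMS22} and Celentano \cite{Cel24}. Concretely, we run the Bayes-AMP iteration
\[ m^{k+1} = \tanh\left( \beta \bM m^{k} - \beta^2(1-q_k)m^{k-1} + t\bx + \sqrt{t}\,\bg \right) \]
for a constant number $k = k(\eps)$ of steps. Here the Onsager coefficient is evaluated at its deterministic state-evolution value $q_k$. We then set $\wh{m} = m^{k}$, and the three properties are checked separately.

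For (c), each iteration is a composition of the $1$-Lipschitz map $\tanh$ with multiplication by $\beta\bM$ and affine terms in $\bg$. On the event $\|\bW\|,\|\bW'\| \le 3$, induction on $k$ gives $\|m^{k}(W,g) - m^{k}(W',g')\| \le L_k(\sqrt N \|W-W'\| + \|g-g'\|)$. The factor $\sqrt N$ comes from $\|(W-W')m^{k-1}\| \le \|W-W'\|\sqrt N$. Since $k$ is a constant depending only on $\eps$, so is $L = L_k$.

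For (b), we invoke Celentano's local convexity result for the TAP free energy. Let $\mathcal F_{\mathrm{TAP}}$ denote the TAP free energy, whose Hessian at $u$ is $K(u)$ up to lower-order terms. Celentano \cite{Cel24} shows that with probability $1-o(1)$ it is $\gamma$-strongly convex on the ball of radius $\eps\sqrt N$ around a late AMP iterate in the high-temperature regime. That result is stated for the spiked/planted model with Gaussian side information $t\bx+\sqrt t\bg$, so it should apply directly. The correction term $-\frac{2\beta^2}{N}uu^\top$ is part of the exact TAP Hessian, so no extra work is needed beyond matching normalizations. Only probability $1-o(1)$ is required here, so Celentano's guarantee suffices as stated.

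For (a), we need $\|\wh m - m\| \le \eps\sqrt N$ with probability $1-N^{-a}$, which is stronger than the $1-o(1)$ in the literature. This is the main obstacle. The plan is to combine the in-expectation statement with concentration.
\begin{enumerate}
\item By \cite{EMS22,Cel24}, $\E\|m^{k}-m\|^2/N \le \eps^2/4$ for $k$ large enough. This uses the Bayes-optimality of AMP together with the overlap concentration of \Cref{lem:ideal-model-overlap-conc}, transferred to our external field law via \Cref{lem:external-field-contiguity}.
\item To upgrade to high probability, we use the moment bounds from overlap concentration. Observe that
\[ \frac{\|m^{k}-m\|^2}{N} = \frac{\|m^k\|^2}{N} - \frac{2}{N}\left\langle \langle m^k,\sigma\rangle \right\rangle + \langle R_{12}\rangle. \]
The first term concentrates by AMP state evolution with finite-sample bounds, which have $e^{-cN}$ tails after a constant number of iterations. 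The last term concentrates around $q$ with all moments of order $N^{-k}$ by \Cref{lem:shifted-model-overlap-conc}.
\item For the cross term, we use the posterior (Nishimori) structure: $\sigma$ may be replaced by the spike $\bx$, whose overlap with $m^k$ concentrates by state evolution.
\item Markov's inequality on the $2a$-th moments then yields failure probability $N^{-a}$.
\end{enumerate}
If the cross-term replacement in step 3 is not clean, a fallback is to apply Gaussian concentration to the Lipschitz function $\bg \mapsto \langle R_{12}\rangle$ conditionally on $\bW$. Alternatively, one can use the Lipschitz property (c) in $\bW$ together with Gaussian concentration of $\bW$, restricted to the event $\|\bW\|\le 3$.
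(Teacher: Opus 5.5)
\textbf{Parts (a) and (c).} These are essentially the paper's argument. You use the same three-term expansion $\|m-\wh m\|^2/N = \|m\|^2/N + \|\wh m\|^2/N - 2\wh m^\top m/N$. You control $\|m\|^2/N$ by overlap concentration, and you handle the cross term with the Nishimori identity $\wh m^\top m = \E[\wh m^\top \bx \mid \calD]$. The AMP-side overlaps concentrate by state evolution. The paper gets this from convergence in probability plus Lipschitzness and Gaussian concentration, which is your fallback. Your step 1 is not needed. Freezing the Onsager coefficient at $q_k$ is harmless.

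\textbf{Part (b): there is a genuine gap.} The result the paper uses for this planted model with side information $t\bx+\sqrt t\,\bg$ is \cite[Theorem 2(i)]{Cel24}. It concerns the TAP free energy of \cite{EMS22}, whose Onsager coefficient is frozen. Its Hessian is $\diag(1/(1-u_i^2)) - \beta\bM + \beta^2(1-q)\Id$, so what it gives is $K(\wh m) + \frac{2\beta^2}{N}\wh m\wh m^\top \psdge \gamma$, not $K(\wh m)\psdge\gamma$.

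The missing term $-\frac{2\beta^2}{N}uu^\top$ is not lower order in the sense that matters. It is negative semidefinite with operator norm $2\beta^2\|u\|^2/N\approx 2\beta^2 q$, a constant for $t$ bounded away from $0$. Nothing guarantees that Celentano's margin $\gamma$ exceeds it. Saying the term is ``part of the exact TAP Hessian'' does not help, since that is not the functional whose local convexity was proved.

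What you must do, as the paper does, is rerun Celentano's argument with the extra term. After the post-AMP Sudakov--Fernique step and passage to the limiting Gaussian variables, the term contributes $2\beta^2\E[MV]^2\approx 2\beta^2\E[SV]^2 = 2\beta^2 q\rho^2$ to the variational functional $\calJ(U,V)$, where $\rho = \E[SV]/\sqrt q$. You then check that the Lagrangian/AM--GM bound still closes. After replacing $U$ by $S$, the choice $a = 2\beta^2(2q\rho - \eta\sqrt q)$, $b = 2\beta^2\eta$ gives
\[ \calL_0 \le -\beta^2(1-\beta^2\nu)\left(q\rho^2 + (\eta-\rho\sqrt q)^2\right) \le 0, \qquad \nu = \E[1-S^2] = 1-q. \]
So $\beta<1$ is exactly what keeps the new term harmless. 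Celentano's continuity argument in $\alpha$ then upgrades this to a strictly negative margin, i.e.\ $K(u)\psdge\gamma$ on the $\eps\sqrt N$-ball.

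Without this computation, (b) is established only for $K + \frac{2\beta^2}{N}uu^\top$. That is not the matrix needed in \Cref{lem:organized-gap} and in the appendix's inversion of $K$.
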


		Before proving the above, let us describe how to complete the proof---this requires us to boost the probability in (b) from $1-o(1)$ to $1 - N^{-a}$. We will do this using Gaussian concentration.

		\positivetpredictionlb*

		\begin{proof}
			Define the set of ``good'' disorder $\Egood$ by
			\[ \Egood = \left\{ (W,g) : \text{for all $u$ with $\|\wh{m}(W,g) - u\| \le 2 \eps \sqrt{N}$, $K_{W,g}(u) \psdge 2\gamma$} \right\} \mcom \]
			so $\Pr\left[ \Egood \right] \ge 1/2$ by \Cref{lem:mid-probability-pred-bound}(b). First, observe that by definition,
			\begin{equation}
				\label{eq:silly-bound-on-K-lipschitzness}
				\|K_{W,g}(u) - K_{W',g'}(u) \|_2 \le \| W - W' \|_2 \mper
			\end{equation}
			Suppose that we want to establish that $K_{W,g}(m)$ is bounded away from $0$. Further suppose that we had, for some $(W',g') \in \Egood$, that
			\begin{enumerate}[label=(\roman*)]
				\item $\|W - W'\|_{2} + \frac{1}{\sqrt{N}} \|g - g'\|_2 \le \gamma$,
				\item $\|m(W,g) - \wh{m}(W,g)\| \le \eps\sqrt{N}$, and
				\item $\|\wh{m}(W,g) - \wh{m}(W',g')\| \le \eps\sqrt{N}$.
			\end{enumerate}
			This would complete the proof. Indeed, (ii) and (iii) imply that $\|m(W,g) - \wh{m}(W',g')\| \le 2\eps\sqrt{N}$, so the definition of $\Egood$ gives that $K_{W',g'}(m(W,g)) \psdge 2\gamma$. \eqref{eq:silly-bound-on-K-lipschitzness} then implies that $K_{W,g}(m(W,g)) \psdge K_{W',g'}(m(W,g)) - \| W - W' \|_{2} - \frac{1}{\sqrt{N}} \|g - g'\|_2 \psdge \gamma$, as desired.

			It remains to show that the three conditions hold with high probability.
			(ii) holds with probability $1 - N^{-a}$ by \Cref{lem:mid-probability-pred-bound}(a), and the Lipschitzness of \Cref{lem:mid-probability-pred-bound}(c) reduces (iii) to a statement like (i). To conclude, it suffices to establish that for any small constant $\eta$, with high probability over $\bW$, there exists $(W',g') \in \Egood$ such that $\|W - W'\| + \frac{1}{\sqrt{N}} \|g-g'\| \le \eta$. This in fact occurs with probability $1 - e^{-cN}$ using Gaussian isoperimetry, e.g. \cite[Theorem 10.15]{BLM13}---this implies that for any set $S$ of standard Gaussian measure at least $1/2$, $\Pr_{\bg \sim \calN\left(0,1\right)}\left[ \inf_{x \in S} \|\bg - x\| \ge r \right] \le e^{-r^2/2}$, completing the proof.
		\end{proof}

	\subsection{The proof of \Cref{lem:mid-probability-pred-bound}}

		In this section, we prove the primary technical input to the covariance bound when $t$ is bounded away from $0$.

		\midprobpredbound*

		Towards transparency, let us explain how the approximation $\wh{m}$ is defined---it will not be too important to understand the details of all the terms here. Set $\bh = t\bx + \bB_{t}$ for the external field. Initialize
		\[ \wh{m}^{(-1)} = 0 \text{ and } \wh{m}^{(0)} = \tanh(\bh) \mper \]
		For $k \ge 0$, define
		\[ \wh{m}^{(k+1)} = \tanh\left( \beta \bM \wh{m}^{(k)} + \bh - \beta^2 \left( 1 - \frac{\| \wh{m}^{(k)} \|^2}{N} \right) \wh{m}^{(k-1)} \right) \mcom \]
		where $\tanh$ is applied entrywise.
		We will choose $\wh{m} = \wh{m}^{(k_*)}$ for some large constant $k_*$ depending on $\eps$ but not $N$.

		\begin{proof}[Proof of \Cref{lem:mid-probability-pred-bound}(a,c)]
			(c) is immediate from the definition of $\wh{m}$, since we are choosing a large constant iterate of this process. For (a), we wish to show that $m^\top \wh{m}$ concentrates. Let $\calD = (\bM , \bh)$, so $m = \E[ \bx \mid \calD ]$. That is,
			\[ \frac{m^\top \wh{m}}{N} = \E\left[ \frac{\bx^\top \wh{m}}{N} \mid \calD \right] \mper \]
			For any fixed iterate $k$, \cite[Proposition 3]{Cel24} establishes that $\frac{\bx^\top \wh{m}}{N} \to q_{k}$ in probability, for some explicit constant $q_{k}$. Consider the random variable $\rho = \frac{\bx^\top \wh{m}}{N}$, conditioned on $\bx$, defined in terms of the disorder $\bW$ and the noise in the external field $\bg$. Using the Kirszbraun extension theorem and the Lipschitzness of (c), we may define a function $\wt{m}$ that is $O(\sqrt{N})$-Lipschitz in $W$ everywhere, and $\wt{m} = \wh{m}$ with probability $1 - e^{-cN}$. Define $\wt{\rho} = \frac{\bx^\top \wt{m}}{N}$. Gaussian concentration yields that
			\[ \Pr\left[ | \wt{\rho} - \E \wt{\rho} | > s \right] \le e^{-cNs^2} \mcom \]
			and $\rho \to q_{k}$ in probability also implies that for sufficiently large $k$, for constant $\delta$,
			\[ \Pr\left[ | \rho - q_k | > \delta \right] \le e^{-cN} \mper \]
			Therefore,
			\[ \Pr\left[ \left| \frac{m^\top \wh{m}}{N} - q_{k} \right| \ge \delta \right] \le e^{-cN} \]
			To conclude, we write
			\[ \frac{\| m - \wh{m} \|^2}{N} = \frac{\|m\|^2}{N} + \frac{\|\wh{m}\|^2}{N} - 2 \cdot \frac{\wh{m}^\top m}{N} \mper \]
			Overlap concentration (\Cref{lem:shifted-model-overlap-conc}) implies that $\left| \frac{\|m\|^2}{N} - q \right| \le \delta$ with probability $1 - N^{-a}$, and we have just shown that $\left| \frac{\wh{m}^\top m}{N} - q_{k} \right| \le \delta$ with probability $1 - N^{-a}$. An argument very similar to the Lipschitzness one above also yields that $\left| \frac{\|\wh{m}\|^2}{N} - q_{k} \right| \le \delta$, using \cite[Proposition 3]{Cel24}, with probability $1 - N^{-a}$. Taking $k$ large enough that $|q_k - q| \le \delta$, and adjusting the constants establishes (a).
		\end{proof}

		The remainder of this section will be spent on (b). In fact, \cite[Theorem 2(i)]{Cel24} establishes that $K(\wh{m}) + \frac{2\beta^2}{N} \wh{m} \wh{m}^\top \psdge \gamma$ with high probability, and our proof will be spent showing that the extra rank-one term is innocuous.

		The proof will follow the same blueprint as \cite{Cel24}, but there will be an extra term (corresponding to the rank-one term) that will slightly influence the final step of the proof.

		\begin{proof}[Proof of \Cref{lem:mid-probability-pred-bound}(b)]
			Throughout this proof, we denote by $\delta_{k}$ a small constant which goes to $0$ as the iterate $k_*$ defining $\wh{m}$ goes to $\infty$.
			First off, the fact that $\|u - \wh{m}\|$ is small means that we only need to argue about the ``mixed'' matrix,
			\[ \wt{K}(u) \defeq \diag\left( \frac{1}{1 - u_i^2} \right) - \beta \bM + \beta^2\left( 1 - \frac{\|u\|^2}{N} \right)\Id - \frac{2\beta^2}{N} \wh{m}\wh{m}^{\top} \mper \]
			Let $v \in \bbS^{N-1}$ be an arbitrary vector (possibly depending on the disorder in some uncontrolled way), so we are interested in showing the (strict) negativity of
			\[ - v^\top \wt{K}(u) v = \beta v^\top \bW v + R(u,v) \mcom \]
			for some function $R = R_{\bW}$. \cite[Corollary 1]{Cel24}, the ``post-AMP Sudakov--Fernique inequality,'' establishes that the first term may be replaced as
			\[ \operatorname{plimsup}_{N \to \infty} \sup_{u,v} \left\{ \beta v^\top \bW v + R(u,v) \right\} \le \operatorname{plimsup}_{N \to \infty} \sup_{u,v} \left\{ \frac{2\beta}{\sqrt{N}} \left\langle g_{\mathrm{AMP}}(v) , v \right\rangle + R(u,v)\right\} \]
			for some explicit function $g_{\mathrm{AMP}}$ defined by the disorder, whose definition we will refrain from introducing. Let $\bI \sim [N]$ be a random index, and define $M = \wh{m}_{\bI}$, $U = u_{\bI}$, and $V = \sqrt{N} v_{\bI}$. The extra rank-one term $\frac{2\beta^2}{N} \langle v , \wh{m} \rangle^2$ is then precisely $2\beta^2 \E[MV]^2$. 
			Assume wlog that $\bx = \bone$.

			The next step is to move to appropriate \emph{limiting} distributions for all these random variables. At the cost of some small error $\delta_{k}$ that goes to $0$ as the iterate $k$ (defining $\wh{m}$), this allows us to work with idealized distributions that are explicitly defined in terms of Gaussians. For example, we can pretend that $M$ is distributed as $\tanh\left( t+\beta^2q + \bz \sqrt{t+\beta^2 q} \right)$ for $\bz \sim \calN(0,1)$, up to some small error.

			The upshot, which is the content of \cite[Sections 4.2--4.4]{Cel24}, is essentially \cite[Eq. (26)]{Cel24} with an extra term corresponding to the rank-one part. This is as follows.
			\[ \calJ(U,V) = 2\beta\E[B_{\rho} V] - \E\left[ \frac{V^2}{1 - U^2} \right] + \beta^2 (\E[V])^2 - \beta^2 (1 - q) + \underbrace{2\beta^2 q \rho^2}_{\text{new term}} \mcom \]
			where we have the constraints
			\[ \E[V^2] = 1 \mcom \qquad |U| \le 1 \mcom \qquad \E[SV] = \rho\sqrt{q} \mcom \qquad \|U-S\|_{2} \le \eps + \delta_{k} \]
			and for some choice of $\bz_1,\bz_2,\bz_3 \sim \calN(0,1)$ that are independent from each other,
			\begin{equation}
			\begin{aligned}
				S &= \tanh\left( t+\beta^2q + \bz_1 \beta\sqrt{q} + \bz_2 \sqrt{t} \right) \mcom \\
				B_{\rho} &= \rho \bz_1 +  \bz_3 \sqrt{1 - \rho^2} \mper
			\end{aligned}
			\label{eq:s-brho-definitions}
			\end{equation}
			In particular, $\E[S^2] = q$, and $\rho = \frac{\E[SV]}{\sqrt{q}} \in [-1,1]$. Here, $\bS$ plays the role of the typical coordinate of $\wh{m}$. The next step is to also replace the $\bU$ with $\bS$, suffering a small constant loss.

			Towards this, we have for any choice of $a,b,\alpha$ that
			\begin{align*}
				\calJ &= 2\beta\E[B_{\rho} V] - \E\left[ \frac{V^2}{1 - U^2} \right] + \beta^2 (\E[V])^2 - \beta^2 (1 - q) + 2\beta^2 q \rho^2 \\
				&\qquad\qquad + a \left( \frac{\E[SV]}{\sqrt{q}} - \rho \right) + b\left( \E[V] - \E[V] \right) + \alpha\left( \E[V^2] - 1 \right) \\
					&= \E\left[ V \left( 2\beta B_{\rho} + \frac{aS}{\sqrt{q}} + b \right) - V^2 \left(\frac{1}{1 - U^2} -  \alpha \right) \right] \\
					&\qquad\qquad + \beta^2 (\E[V])^2 - \beta^2 (1 - q) + 2\beta^2 q \rho^2 - a\rho - b\E[V] - \alpha \\
					&\le \frac{1}{4} \E\left[ \frac{ \left( 2\beta B_{\rho} + \frac{aS}{\sqrt{q}} + b \right)^2 }{ \frac{1}{1-U^2} - \alpha } \right] + \beta^2 (\E[V])^2 - \beta^2 (1 - q) + 2\beta^2 q \rho^2 - a\rho - b\E[V] - \alpha \mper
			\end{align*}
			The final inequality here is by the AM-GM inequality: we have $cv - dv^2 \le \frac{c^2}{4d}$. Note now that if $0 \le \alpha \le 1/2$, we may replace $\bU$ with $\bS$ in the first expression, at the cost of a small loss. After doing this, we are left with an extremely explicit calculation in terms of Gaussians, since essentially all dependence on $\bV$ has been removed. Now denote $\eta = \E[V]$.

			The result is a modification \cite[Eq. (27)]{Cel24}, which asks to show that
			\[ \inf_{0 \le \alpha \le 1/2} \calL_\alpha \defeq \inf_{\substack{a,b \in \R \\ 0 \le \alpha \le 1/2}} \frac{1}{4} \E\left[ \frac{ \left( 2\beta B_{\rho} + \frac{aS}{\sqrt{q}} + b \right)^2 }{ \frac{1}{1-S^2} - \alpha } \right] + \beta^2 \eta^2 - \beta^2 (1 - q) + 2\beta^2 q \rho^2 - a\rho - b\eta - \alpha \]
			is negatively bounded away from $0$. In fact, it will suffice to show that $\calL_0 \le 0$---the remainder of the proof follows \cite[Section 4.4]{Cel24}, which uses a simple continuity argument to transform this to a strict negative margin by setting $\alpha$ to be a small positive constant.
			To do this, we set $a = 2\beta^2(2q\rho - \eta\sqrt{q})$ and $b = 2\beta^2\eta$. This then simplifies to
			\[ \calL_0 \le - \beta^2 (1 - \beta^2 \nu) \left( q\rho^2 + (\eta - \rho\sqrt{q})^2 \right) \mcom \]
			for $\nu = \E[1-S^2]$. We omit the details of this calculation---it amounts to integrating out $\bz_3$, using Gaussian integration by parts to explicitly compute terms like $\E[(1-S^2) \bz_1^2]$ (which show up when expanding out the parenthesised terms in the expectation), and completing appropriate squares---such calculations are very similar to those done in \cite[pages 16 and 17]{Cel24}.
		\end{proof}

\section*{Acknowledgments}
We would like to thank David X.\ Wu and Mark Sellke for many technical discussions during earlier stages of this project.
S.M.\ and A.R.\ would like to thank the Simons Institute for the Theory of Computing at Berkeley for their hospitability and hosting them at the ``Spectral Theory Beyond Graphs'' program, where part of the work was conducted.
H.D.\ was partially supported by an NSF-Simons research collaboration grant (award number 2031883).
B.H.\ was supported by a Stanford Science Fellowship and NSF Mathematical Sciences Postdoctoral Fellowship.

\addcontentsline{toc}{section}{References}

\bibliographystyle{alpha}
\bibliography{main}

\crefalias{section}{appendix}

\appendix

\section{Tying up loose ends}   \label{app:technical-estimates}

We start by proving \Cref{lem:large-t-poinc}, which is essentially an immediate consequence of the techniques of \cite{BER26}.

\largetpoinc*

\begin{proof}
    Assume without loss of generality that $\bx = \bone$. We start by noting that \cite[Lemma 2.3]{BER26} holds for sufficiently large localization time. Indeed, choose $\delta$ such that the first condition of \cite[Theorem 1.1]{BER26} is satisfied, and every small submatrix has sufficiently small operator norm, which occurs with probability $1 - e^{-cN}$. The external field is of the form $T\bx + \bg\sqrt{T}$, For sufficiently large $T$, with probability $1 - e^{-cN}$, we have $\|\bg\sqrt{T}\| \ll T\sqrt{N}$. Consequently, a sample from this measure has at least $\left( 1 - \frac{\delta}{2} \right)N$ positive spins, which by \cite[Lemma 2.2]{BER26} yields a correlation bound (for the base model as well as the pinned models).

    To conclude, we use this correlation bound until we have pinned $(1-\delta)N$ of the coordinates, and we are left with an Ising model (with some external field) whose interaction matrix has bounded spectral radius; \cite[Corollary 51]{CE22} yields a \Poincare inequality for these measures, and the spectral independence framework allows us to lift this back to a \Poincare inequality for $\mu_{T}$.
\end{proof}

\subsection{Controlling error terms}

Over the remainder of this section, we prove \Cref{lem:error-bounds,lem:organized-taylor-errors}.
For $s\in[-1,1]$, let $\kappa_{r,s}^{(i)}$ denote $\grad^r F^{(i)}(u_i+s\beta\bM_{i,-i})$---in words, the $r$th cumulant of the model where the $i$th site is removed, and the field is shifted by $s\beta\bM_{i,-i}$.
More generally for $B\subseteq[N]$, we will use $\kappa_{r,s}^{(B)}$ to denote $\grad^r F^{(i)}\parens*{u_B + s \beta \sum_{i\in B} \bM_{i,-i} }$ where $u_B$ refers to the external field on the remaining sites.

We will rely on the following estimates for the cumulant, whose proof we defer to \Cref{subsec:proof-of-cumulant-contraction}.


\begin{lemma}   \label{lem:cumulant-contraction-estimates}
    For any fixed integers $r\ge3$ and $k,\ell,p\ge0$ satisfying $2k+\ell\le r$, and fixed $s\in[-1,1]$,
    \[
        \E\left\|
            \kappa_{r,s}^{(i)}
            \left[
                \Id^{\otimes k}\otimes
                \bM_{i,-i}^{\otimes\ell}
            \right]
        \right\|_F^p
        \le c_p N^{p(r/4-\ell/2)}.
    \]
    Further, the same bound continues to hold if $\kappa_{r,s}^{(i)}$ is replaced with $\kappa_{r,s}^{(B)}$ where $|B|$ is a constant independent of $N$.
\end{lemma}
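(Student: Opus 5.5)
The plan has two stages: first decouple the cumulant from the vector $\bM_{i,-i}$ that it is contracted against, then compute moments by Wick's formula and reduce everything to crude Frobenius bounds coming from overlap concentration.

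\emph{Decoupling.} The cumulant $\kappa_{r,s}^{(i)}$ is a function of $\bW_{-i,-i}$, $\bx$, $\bg_{-i}$, and of the shift $s\beta\bM_{i,-i}$. Since $\bM_{i,-i} = \bW_{i,-i} + \frac{\beta}{N}x_i\bx_{-i}$, the shifted field on the cavity system is
\[ u_i + s\beta\bM_{i,-i} = \Bigl(t + \tfrac{s\beta^2 x_i}{N}\Bigr)\bx_{-i} + \sqrt{t}\,\bg_{-i} + s\beta\bW_{i,-i}. \]
Conditionally on $\bx$ and $x_i$, the Gaussian part $\sqrt{t}\,\bg_{-i} + s\beta\bW_{i,-i}$ has law $\calN\bigl(0,(t+\frac{s^2\beta^2}{N})\Id\bigr)$. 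This is exactly the field law \eqref{eq:real-ext-field-law-cavity} on $N-1$ spins, with $\bs = x_i$. So I will not treat $\kappa_{r,s}^{(i)}$ and $\bM_{i,-i}$ as independent. Instead I write $\bM_{i,-i} = \bW_{i,-i} + O(N^{-1/2})$ in $\ell^2$, and express $\bW_{i,-i}$ as a linear combination of the combined Gaussian field $\bG$ and an independent Gaussian $\bG'$. Concretely, $\bW_{i,-i} = a\bG + b\bG'$ with $a = \frac{s\beta/N}{t+s^2\beta^2/N}$ and $b^2 = \frac1N - a^2\bigl(t+\frac{s^2\beta^2}{N}\bigr)$, so that $\bW_{i,-i}$ is $\bW_{i,-i}$ in law and
\[ \|a\bG\|_2 \lesssim \frac{|s|\beta/\sqrt N}{\sqrt{t+s^2\beta^2/N}} \cdot \frac1{\sqrt N}\sqrt N \lesssim 1. \]
This correlation is the \textbf{main obstacle}. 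I would handle it by expanding $\bM_{i,-i}^{\otimes \ell}$ multinomially into pieces in $\bG'$, $\bG$, and the spike direction $\frac{\beta}{N}x_i\bx_{-i}$, all of which lie in the field space. The pieces along $\bG$ and $\bx$ are $\ell^2$-bounded by $O(1)$ rather than $O(1)$ times fresh randomness. Their contractions with $\kappa_r$ are then handled by Gaussian integration by parts in $\bG$ (which raises the cumulant order, $\E[\bG\, F(\bG)] \propto \E[\grad F]$) and by the spike-overlap concentration of \Cref{lem:shifted-model-overlap-conc}. Tracking exponents, each such piece should gain a factor of at least $N^{-1/2}$ relative to a fresh-Gaussian contraction.

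\emph{Wick step.} For the fresh part, condition on everything except $\bG'$, which has covariance of order $\frac1N\Id$. Write $T = \kappa_{r,s}^{(i)}[\Id^{\otimes k}]$, a tensor of order $r-2k$ whose law is covered by \Cref{lem:cumulant-bounds}. For even $p$, $\E_{\bG'}\|T[\bG'^{\otimes \ell}]\|_F^p$ is, by Wick's formula, a sum over pairings of the $p\ell$ Gaussian slots, each weighted by $N^{-p\ell/2}$. Each pairing gives a contraction network made of $p$ copies of $T$ (and $p$ copies of $\bar T$) glued along their free indices, with some $\Id$-self-contractions of $T$ and some cross-contractions. By Cauchy--Schwarz over the network, each term is bounded by products of $\|\kappa_{r'}[\Id^{\otimes k'}]\|_F$ with $2k' \le r'$. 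I bound these in turn by rewriting $\kappa_{r'}[\Id^{\otimes k'}]$ via the identity $\kappa_{r}[\Id]=\grad \Tr \kappa_{r-2}$, which reduces it to lower cumulants, together with the multilinear expression of $\|\kappa_{r'}[\Id^{\otimes k'}]\|_F^2$ in centered overlaps. This should give $N^{r'/2 - k'}\cdot\langle\text{degree-}(r'-2k')\text{ centered overlap polynomial}\rangle$, hence by \Cref{lem:shifted-model-overlap-conc}
\[ \E\|\kappa_{r'}[\Id^{\otimes k'}]\|_F^{2q} \lesssim N^{q(r'-2k')/2} \le N^{q r'/2}. \]
Counting exponents with \Cref{lem:cumulant-bounds} then gives the total $N^{p(r/4-\ell/2)}$. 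The worst pairing pairs each copy of $T$ with its own $\bar T$, giving roughly $\|T\|_F^p N^{-p\ell/2}$, and $\E\|T\|_F^p \lesssim N^{pr/4}$.

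Finally, take the outer expectation using \Cref{lem:external-field-contiguity}: this transfers moments from the law $\nu_1$ to the ideal field law, at the cost of a square root on the $2p$-th moment, which the moment bounds absorb. The extension to $\kappa_{r,s}^{(B)}$ with $|B|=O(1)$ is identical: the shift is a sum of $O(1)$ rows, so the combined field is again of the form \eqref{eq:real-ext-field-law-cavity} up to $O(1/N)$ shifts in mean and variance, and the contiguity lemma still yields an $O(1)$ density ratio.
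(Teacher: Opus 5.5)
There is a genuine gap, and it sits in the step you yourself flag as the main obstacle: contracting $\kappa_{r,s}^{(i)}$ against the component $a\boldsymbol{G}$ of $\bW_{i,-i}$ that is correlated with the cavity field. Your own formulas give $b^2=\frac{t/N}{t+s^2\beta^2/N}$ and $\|a\boldsymbol{G}\|_2\approx \frac{|s|\beta}{\sqrt{N}\sqrt{t+s^2\beta^2/N}}$. So for $t=o(1)$ the Cauchy--Schwarz bound $\|\kappa_r[v^{\otimes \ell}]\|_F\le\|\kappa_r\|_F\|v\|_2^{\ell}$ falls short of $N^{r/4-\ell/2}$, by up to $N^{\ell/2}$. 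For $t\lesssim 1/N$, which the lemma must cover, the fresh part $b\boldsymbol{G}'$ is negligible and $a\boldsymbol{G}\approx\bW_{i,-i}$. In that regime your Wick computation, the only step you actually carry out, controls essentially nothing, and all of the decay $N^{-\ell/2}$ has to come from the correlated contractions.

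Your treatment of those contractions is not correct. A vector with nearly the law of $\bW_{i,-i}$ cannot gain $N^{-1/2}$ over a fresh Gaussian contraction. Also, integration by parts does more than ``raise the cumulant order.'' Expand $\|\cdot\|_F^p$ and integrate by parts repeatedly in $\boldsymbol{G}$. Each step does one of two things:
\begin{itemize}
\item it differentiates one of the $p$ cumulant factors, with weight $a\,\mathrm{Var}(\boldsymbol{G}_j)=s\beta/N$, raising that factor's order by one; these terms turn out to be lower order;
\item it pairs two $\boldsymbol{G}$-slots, with weight $a^2\,\mathrm{Var}(\boldsymbol{G}_j)=\frac{s^2\beta^2/N^2}{t+s^2\beta^2/N}\approx 1/N$.
\end{itemize}
It is these pairing terms, at exactly the fresh-Gaussian rate, that produce $N^{-\ell/2}$. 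An argument controlling both kinds of terms for the $p$-th moment, not just the mean, is the real content of the lemma, and it is missing.

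The paper supplies this without any $\boldsymbol{G}/\boldsymbol{G}'$ split. It works directly with $\bW_{i,-i}$, using that $\kappa_{r,s}^{(i)}$ depends on $\bW_{i,-i}$ only through the field, so its derivative in $\bW_{i,-i}$ is $s\beta\,\kappa_{r+1,s}^{(i)}$. After separating off the spike contractions (base case \Cref{lem:plant-cumulant}), it inducts on the number $\ell_1$ of $\bW$-contractions. The conditional mean is computed by integration by parts, $\E\kappa_r(k,\ell_1)=\frac{\ell_1-1}{N}\E\kappa_r(k+1,\ell_1-2)+\frac{s\beta}{N}\E\kappa_{r+1}(k+1,\ell_1-1)$. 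The fluctuation is bounded by the $L^p$ Gaussian Poincar\'e inequality through $\grad\kappa_r(k,\ell_1)=\ell_1\kappa_r(k,\ell_1-1)+s\beta\,\kappa_{r+1}(k,\ell_1)$. This handles the pairings and the field-dependence at once, which makes your Wick step unnecessary.

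Separately, your intermediate bound $\E\|\kappa_{r'}[\Id^{\otimes k'}]\|_F^{2q}\lesssim N^{q(r'-2k')/2}$ is false. For example, $\kappa_3[\Id]=\grad\Tr\Cov=-2\Cov\,m$ has norm of order $\sqrt N$ when $t$ is of order one (already for a product measure), whereas your bound would give $N^{1/4}$. Each Laplacian contributes a factor $n$ together with one extra centered overlap per copy, so the correct count is $N^{qr'/2}$, independent of $k'$. That is \Cref{lem:plant-cumulant} with $\ell=0$, and it is the only version you actually use, so this error does not affect your final exponent count.
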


We are now ready to prove \Cref{lem:organized-taylor-errors}, restated for convenience.

\errorboundstwo*

\begin{proof}
    By Taylor's theorem, the remainder term admits the following description:
    \begin{align*}
        \Delta_{d,i}
        &=
        \frac{\beta^5}{48}\int_0^1(1-s)^4
        \bigl(\kappa_{6,s}^{(i)}+\kappa_{6,-s}^{(i)}\bigr)
        [\bM_{i,-i}^{\otimes5}]\,ds,\\
        \Delta_{\Cov,i}\bM_{i,-i}
        &=
        \frac{\beta^3m_i}{4}\int_0^1(1-s)^2
        \bigl(\kappa_{5,s}^{(i)}+\kappa_{5,-s}^{(i)}\bigr)
        [\bM_{i,-i}^{\otimes4}]\,ds\\
        &\quad+
        \frac{\beta^4}{12}\int_0^1(1-s)^3
        \bigl(\kappa_{6,s}^{(i)}+\kappa_{6,-s}^{(i)}\bigr)
        [\bM_{i,-i}^{\otimes5}]\,ds\mper
    \end{align*}
    By \Cref{lem:cumulant-contraction-estimates} and triangle inequality, we have:
    \[
        \E\norm*{ \Delta_{d,i} }_F^p \le C N^{-p},\qquad \E\norm*{ \Delta_{\Cov,i} \bM_{i,-i} }_F^p \le C N^{-3p/4}\mper
    \]
    The conclusion follows by applying Markov's inequality to the above for an appropriately large choice of $p$ depending on $\eps$.
\end{proof}

We now turn our attention to proving \Cref{lem:error-bounds}.
A key ingredient is the following set of improved cumulant bounds for particular special cases.
We defer the proof to \Cref{subsec:proof-of-improved-cumulant}.
\begin{lemma}[Improved cumulant bounds]
    \label{lem:improved-cumulant}
    There exists an absolute constant $C>0$ such that the following holds.
    \begin{enumerate}[label=(\alph*)]
        \item \label{it:improved-cumulant-kappa34}
        For all $p\ge 2$, there exists $C_p > 0$ such that
        \[
            \max\left\{
                \E \|\kappa_3\|_F^p,
                \max_{i\in [N]} \E \|\kappa_3^{(i)}\|_F^p
            \right\} \le C_p N^{p/2}\mcom \qquad
            \max\left\{
                \E \|\kappa_4\|_F^p,
                \max_{i\in [N]} \E \|\kappa_4^{(i)}\|_F^p
            \right\} \le C_p N^{3p/4}\mper
        \]
        \item \label{it:improved-cumulant-cov}
        For all $p\ge 2$, there exists $C_p > 0$ such that
        \[
            \max\left\{
                \E \opnorm{\Cov}^p,
                \max_{i\in [N]} \E \opnorm{\Cov^{(i)}}^p
            \right\} \le C_p \mper
        \]        
        \item \label{it:improved-cumulant-cov-high-prob}
        For all $a>0$, there exists $C_a > 0$ such that
        \[
            \Pr\left[
                \max\left\{
                    \|\Cov\|_F,\,
                    \max_{i\in[N]}\|\Cov^{(i)}\|_F
                \right\}
                > C\sqrt N
            \right]
            \le C_a N^{-a} \mper
        \]
        \item \label{it:improved-cumulant-kappa3-high-prob}
        For all $a>0$, there exists $C_a > 0$ such that
        \[
            \Pr\left[
                \max\left\{
                    \|\kappa_3\|_F,\,
                    \max_{i\in[N]}\|\kappa_3^{(i)}\|_F
                \right\}
                > C\sqrt N
            \right]
            \le C_a N^{-a} \mper
        \]
    \end{enumerate}
\end{lemma}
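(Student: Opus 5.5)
The plan is to bootstrap from the crude bounds of \Cref{lem:cumulant-bounds}, which give $\|\kappa_r\|_F = O_{L^p}(N^{r/4})$, to the improved ones, using the cavity decomposition of \Cref{sec:cov-bound} as a self-consistent recursion. The factor $\beta^2<1$ should make that recursion contractive.

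\textbf{Part (b) first.} Here the crude bounds should already suffice, as the warmup in \Cref{sec:warmup} indicates.
\begin{enumerate}
\item Redo the proofs of \Cref{lem:organized-off-diagonal,lem:organized-diagonal} in $L^p$ rather than with high probability. That is, bound $\E\bigl(\sum_i\|f_i-\lambda_i\Cov_{-i,i}\|_2^2\bigr)^{p}$ and $\E\max_i|\lambda_i|^{p}$.
\item Each error term is a contraction $\kappa^{(i)}_{r,s}[\Id^{\otimes k}\otimes\bM_{i,-i}^{\otimes \ell}]$, so \Cref{lem:cumulant-contraction-estimates} bounds its moments. For example, $\kappa_4^{(i)}[\bM_{i,-i}^{\otimes3}]$ has squared norm $O_{L^p}(N^{-1})$, and summing over $i$ gives $O_{L^p}(1)$.
\item This yields $\|K\Cov-\Id\|_{\mathrm{op}}=O_{L^p}(1)$.
\item The lower bound on $K$ in \Cref{lem:cov-pred-bound} has failure probability $N^{-a}$. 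On that bad event we use the trivial bound $\|\Cov\|_{\mathrm{op}}\le N$; choosing $a$ large relative to $p$ makes its contribution negligible. This gives $\E\|\Cov\|_{\mathrm{op}}^p\le C_p$.
\item The same argument applies verbatim to the cavity models $\Cov^{(i)}$, which are models of the same type.
\end{enumerate}

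\textbf{Part (a) for $\kappa_3$.} Differentiate the block decomposition \eqref{eq:organized-blocks} in $h_i$.
\begin{enumerate}
\item The $i$-th slice $(\kappa_3)_{i,\cdot,\cdot}=\partial_{h_i}\Cov$ equals $v_i\cdot\frac{1}{2}(\Cov^{(i,+)}-\Cov^{(i,-)})$, plus terms built from $d_i$, $m_i$ and $\ol{\Cov_i}$.
\item Taylor expanding as in \Cref{sec:cov-bound} gives
\[ \tfrac12(\Cov^{(i,+)}-\Cov^{(i,-)}) = \beta\kappa_3^{(i)}[\bM_{i,-i}] + \text{remainder}. \]
\item The remaining terms have Frobenius norm $O(\|d_i\|_2\|\Cov\|_{\mathrm{op}} + \|d_i\|_2^2)$. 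Their squares sum to $O(\|\Cov\|_F^2\|\Cov\|^2_{\mathrm{op}})=O_{L^p}(N)$ by (b).
\item Conditionally on $\bM_{-i,-i}$, Wick's formula gives
\[ \E\bigl\|\kappa_3^{(i)}[\bM_{i,-i}]\bigr\|_F^2=\tfrac1N\|\kappa_3^{(i)}\|_F^2 . \]
\item The cavity comparison $\|\kappa_3^{(i)}\|_F^2\le\|\kappa_3\|_F^2+\text{error}$ should come from the same expansion one level down, via the $\kappa^{(B)}$ version of \Cref{lem:cumulant-contraction-estimates} with $|B|=2$.
\item Summing over $i$ gives
\[ \|\kappa_3\|_F^2\le \beta^2(1+o(1))\|\kappa_3\|_F^2 + O_{L^p}(N). \]
\item Since $\beta<1$, rearranging gives $\|\kappa_3\|_F^2=O_{L^p}(N)$. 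The crude a priori bound guarantees that the quantity being absorbed is finite and that all error terms are lower order.
\end{enumerate}

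\textbf{Part (a) for $\kappa_4$.} Differentiate once more and proceed identically.
\begin{enumerate}
\item The leading term is $\beta\kappa_4^{(i)}[\bM_{i,-i}]$, which produces $\beta^2\|\kappa_4\|_F^2$ on the right-hand side.
\item The source terms are products such as $d_i\otimes\kappa_3^{(i)}$ and $\kappa_3^{(i)}[\bM_{i,-i}]\otimes d_i$. Using the $\kappa_3$ bound just proved and (b), these contribute $O_{L^p}(N^{3/2})$.
\item Together with the contraction this gives $\|\kappa_4\|_F=O_{L^p}(N^{3/4})$.
\end{enumerate}

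\textbf{High-probability statements (c) and (d).} Markov's inequality at a fixed threshold $C\sqrt N$ only gives constant failure probability, so these need a different route.
\begin{enumerate}
\item Run the recursions above as high-probability inequalities. Conditionally on $\bM_{-i,-i}$, the quantity $\|\kappa_3^{(i)}[\bM_{i,-i}]\|_F^2$ is a Gaussian quadratic form in $\bM_{i,-i}$. Hanson--Wright, exactly as in the warmup, bounds its deviation from $\frac1N\|\kappa_3^{(i)}\|_F^2$ by $o(1)\cdot\frac1N\|\kappa_3^{(i)}\|_F^2+N^{-1/2+\eps}$ with probability $1-e^{-cN^{c}}$.
\item For this to be small we need $\|\kappa_3^{(i)}\|_{\mathrm{op}}$ small relative to $\|\kappa_3^{(i)}\|_F$. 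That follows from an a priori $N^{\eps}$-loss bound obtained by Markov on the crude moments.
\item The Taylor remainders are controlled with probability $1-N^{-a}$ by \Cref{lem:organized-taylor-errors}.
\item A union bound over $i$ then gives the deterministic inequality $\|\kappa_3\|_F^2\le\beta^2(1+o(1))\|\kappa_3\|_F^2+CN$ on an event of probability $1-O(N^{-a})$. Hence $\|\kappa_3\|_F\le C\sqrt N$ there, with $C$ depending only on $\beta$.
\item For $\Cov$ the analogous recursion is simpler: $\|\Cov\|_F^2=\sum_i\|\Cov_i\|^2$, and each row is $\approx\beta\Cov^{(i)}\bM_{i,-i}$ plus diagonal terms. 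This gives $\|\Cov\|_F^2\le\beta^2(1+o(1))\|\Cov\|_F^2+CN$ with high probability.
\end{enumerate}

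\textbf{Main obstacle.} I expect the hardest step to be the cavity comparison $\|\kappa_r^{(i)}\|_F\approx\|\kappa_r\|_F$ with a $(1+o(1))$ factor, uniformly in $i$. Without it the contraction constant exceeds $1$. I would try expressing the difference through the same pinning identity with two removed sites and bounding it with the $\kappa^{(B)}$ estimates of \Cref{lem:cumulant-contraction-estimates}.
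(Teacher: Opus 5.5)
Your ordering is circular, and its first link fails: part (b) does not follow from the crude bounds once $t>0$.

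The row residual of Section~4 contains the term $\beta^2 m_i\kappa_3^{(i)}[\bM_{i,-i}^{\otimes 2}-\frac1N\Id]$. Conditionally on the cavity, $\|\kappa_3^{(i)}[\bW_{i,-i}^{\otimes 2}-\frac1N\Id]\|_F^2$ has mean $\frac{2}{N^2}\|\kappa_3^{(i)}\|_F^2$. So these rows contribute about $N^{-2}\sum_i m_i^2\|\kappa_3^{(i)}\|_F^2$ to $\sum_i\|f_i-\lambda_i\Cov_{-i,i}\|_2^2$, and here $\sum_i m_i^2\approx qN$ with $q>0$. \Cref{lem:cumulant-contraction-estimates} (with $r=3$, $\ell=2$) and the crude bound $\|\kappa_3^{(i)}\|_F=O(N^{3/4})$ give only $N^{-1/4}$ per row. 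That yields a residual of Frobenius norm $O(N^{1/4})$ and hence only $\|\Cov\|_{\mathrm{op}}=O_{L^p}(N^{1/4})$.

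Making this residual $O(1)$ amounts to proving the $\kappa_3$ half of (a). That is why \Cref{lem:error-bounds} invokes $\Tr V=\|\kappa_3^{(i)}\|_F^2=O(N)$. It is also why \Cref{cor:r-grad12} comes after \Cref{lem:improved-cumulant-kappa34}: $\grad\lambda_i$ carries the term $N^{-p}\E\|\kappa_3^{(i)}\|_F^p$. The warmup remark you rely on is a zero-field statement, where $m=0$ and $\kappa_3=0$.

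Conversely, your $\kappa_3$ recursion needs (b), and not only for the source terms; those are fine using $\E\|d_i\|_2^p=O(1)$. You want to upgrade the Wick identity $\E_i\|\kappa_3^{(i)}[\bW_{i,-i}]\|_F^2=\frac1N\|\kappa_3^{(i)}\|_F^2$ to an $L^p$ or high-probability inequality that keeps the factor $\beta^2(1+o(1))$. For that, the quadratic form with matrix $V_{ab}=\langle\kappa_3^{(i)}[e_a],\kappa_3^{(i)}[e_b]\rangle$ must concentrate, which requires $\|V\|_{\mathrm{op}}=\sup_{\|u\|=1}\|\kappa_3^{(i)}[u]\|_F^2=O(N)$. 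The available bound \eqref{eq:third-cumulant-bound}, $\|\Cov\|_{\mathrm{op}}(\|\kappa_4\|_F+2\|\Cov\|_{\mathrm{op}}^2)$, is only $O(N^{3/2})$ without (b). So the ``$N^{\eps}$-loss'' input you cite is not available. Hypercontractivity alone costs a factor $C_p$, which destroys the $\beta^2$ contraction.

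As written, your recursion closes only for $\E\|\kappa_3\|_F^2$, where the conditional mean is exact. The cavity comparison you flag as the main obstacle is actually harmless: $\|\kappa_3^{(i)}\|_F\le\|\kappa_3\|_F+O_{L^p}(\sqrt N)$, because $\|\kappa_4^{(i)}[\bM_{i,-i}]\|_F=O_{L^p}(\sqrt N)$.

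The missing idea is to avoid a norm recursion altogether.
\begin{enumerate}
\item Differentiate the TAP residual $r(u)=T(m(u))-u$. This gives $K\kappa_3=\grad^2 r-D^2T(m)\{\Cov,\Cov\}$ and its $\kappa_4$ analogue (\Cref{fac:improved-cumulant-start}).
\item Via $r_i=\zeta_i+m_i\lambda_i$ and \Cref{lem:tap-derivative-estimates-new}, the right-hand sides are $O_{L^p}(\sqrt N)$ and $O_{L^p}(N^{3/4})$ from the crude bounds alone. For example, $(D^2A(m)\{\Cov,\Cov\})_i=2m_iy_i^{\otimes 2}$ with $\E\|y_i\|_2^p=O(1)$. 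The target $\sqrt N$ is lenient enough to absorb the $\kappa_3^{(i)}[\bM_{i,-i}^{\otimes 2}-\frac1N\Id]$ contributions.
\item Invert $K\succeq\gamma$ on an event of probability $1-N^{-a}$; this is where $\beta<1$ enters. This yields (a).
\item Only then does the improved $\kappa_3$ bound give $\E\|\grad r\|_F^p\le C_p$. This yields (b) from $K\Cov=\Id+\grad r$ and (c) by Markov on $\|\grad r\|_F$.
\item Finally, (d) follows from the $K\kappa_3$ identity together with concentration of $\|d_i\|_2^2$ around $\beta^2\|\Cov^{(i)}\|_F^2/N$ (\Cref{lem:di-concentration}).
\end{enumerate}
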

We now prove \Cref{lem:error-bounds}.

\subsection{Proof of \Cref{lem:error-bounds}}

\errorboundsone*

\textbf{Bound on $\left\|{\frac{2\beta^2 m_i}{N} \parens*{ \Cov^{(i)}\cdot m^{(i)} - (\Cov\cdot m)_{-i} } }\right\|$.}
First, observe that:
\[
    \grad_{u_i} \bracks*{\Tr\Cov - \Tr\Cov^{(i)}} = {2} \left( \Cov^{(i)}m^{(i)} - (\Cov\cdot m)_{-i} \right)\mper
\]
By \eqref{eq:organized-blocks}, we have:
\begin{align}
    \Tr\Cov - \Tr\Cov^{(i)} &= \frac{1+m_i}{2} \Tr\kappa_{2,+}^{(i)} + \frac{1-m_i}{2} \Tr\kappa_{2,-}^{(i)} - \Tr\kappa_{2}^{(i)} + v_i\parens*{ 1+ \|d_i\|_2^2 }\nonumber \\
    &= \frac{\Tr\parens*{ \kappa_{2,+}^{(i)} + \kappa_{2,-}^{(i)} }}{2} + m_i \frac{\Tr\parens*{ \kappa_{2,+}^{(i)} - \kappa_{2,-}^{(i)} }}{2} - \Tr\kappa_2^{(i)} + v_i(1+\|d_i\|_2^2)
    \mper\label{eq:cov-decomposition}
\end{align}
In preparation to compute the gradient of the above, observe that:
\[
    \nabla_{u_i}m_i=v_id_i,
    \qquad
    \nabla_{u_i}v_i=-2m_iv_id_i,
    \qquad
    \nabla_{u_i}d_i
       =\frac{\kappa_{2,+}^{(i)}-\kappa_{2,-}^{(i)}}2,
    \qquad
    \nabla_{u_i}\operatorname{Tr}\kappa_{2,\pm}^{(i)}
    =\kappa_{3,\pm}^{(i)}[\Id]
\]
We differentiate the three terms in
\[
    \operatorname{Tr}\Cov-\operatorname{Tr}\Cov^{(i)}
    =
    \left(
        \frac{\operatorname{Tr}
            (\kappa_{2,+}^{(i)}+\kappa_{2,-}^{(i)})}{2}
        -\operatorname{Tr}\kappa_2^{(i)}
    \right)
    +\frac{m_i}{2}\operatorname{Tr}
        (\kappa_{2,+}^{(i)}-\kappa_{2,-}^{(i)})
    +v_i(1+\|d_i\|_2^2)
\]
separately. First,
\[
    \nabla_{u_i}\left(
        \frac{\operatorname{Tr}
            (\kappa_{2,+}^{(i)}+\kappa_{2,-}^{(i)})}{2}
        -\operatorname{Tr}\kappa_2^{(i)}
    \right)
    =
    \frac12
        \bigl(\kappa_{3,+}^{(i)}+\kappa_{3,-}^{(i)}
              -2\kappa_3^{(i)}\bigr)[\Id].
\]
Next, by the product rule,
\begin{align*}
    \nabla_{u_i}\left(
        \frac{m_i}{2}\operatorname{Tr}
            (\kappa_{2,+}^{(i)}-\kappa_{2,-}^{(i)})
    \right)
    ={}&
    \frac{v_i}{2}
        \operatorname{Tr}
            (\kappa_{2,+}^{(i)}-\kappa_{2,-}^{(i)})\,d_i
        +\frac{m_i}{2}
        \bigl(\kappa_{3,+}^{(i)}-\kappa_{3,-}^{(i)}\bigr)[\Id].
\end{align*}
Finally,
\begin{align*}
    \nabla_{u_i}\bigl(v_i(1+\|d_i\|_2^2)\bigr)
    &=(1+\|d_i\|_2^2)\nabla_{u_i}v_i
        +2v_i(\nabla_{u_i}d_i)^\top d_i\\
    &=-2m_iv_i(1+\|d_i\|_2^2)d_i
        +v_i\bigl(\kappa_{2,+}^{(i)}
                    -\kappa_{2,-}^{(i)}\bigr)d_i\mper
\end{align*}
From the above, we obtain:
\begin{align*}
    \left(
        \Cov^{(i)}m^{(i)}-(\Cov m)_{-i}
    \right)
    &=
    \frac1{4}
        \bigl(\kappa_{3,+}^{(i)}+\kappa_{3,-}^{(i)}
              -2\kappa_3^{(i)}\bigr)[\Id]
        +\frac{m_i}{4}
        \bigl(\kappa_{3,+}^{(i)}
              -\kappa_{3,-}^{(i)}\bigr)[\Id]\\
    &+\frac{v_i}{4}
        \operatorname{Tr}
            (\kappa_{2,+}^{(i)}-\kappa_{2,-}^{(i)})\,d_i
        - m_iv_i(1+\|d_i\|_2^2)d_i
    +\frac{v_i}{2}
        \bigl(\kappa_{2,+}^{(i)}
              -\kappa_{2,-}^{(i)}\bigr)d_i.
\end{align*}
We bound the five terms on the right-hand side in order.

\smallskip

\noindent \emph{The first term.}
By Taylor's formula, we have:
\begin{align*}
    &\frac1{4}
      \bigl(\kappa_{3,+}^{(i)}+\kappa_{3,-}^{(i)}
             -2\kappa_3^{(i)}\bigr)[\Id]=
    \frac{\beta^2}{4}\int_0^1(1-s)
        \bigl(\kappa_{5,s}^{(i)}+\kappa_{5,-s}^{(i)}\bigr)
        [\Id\otimes\bM_{i,-i}^{\otimes2}]\,ds\mper
\end{align*}
By \Cref{lem:cumulant-contraction-estimates}, the above is at most $O\parens*{N^{1/4+\eps}}$ with probability $1-O(N^{-a})$.

\smallskip

\noindent \emph{The second term.}
By Taylor's theorem:
\begin{align*}
    &\frac{m_i}{4}
        \bigl(\kappa_{3,+}^{(i)}-\kappa_{3,-}^{(i)}\bigr)[\Id]=
    \frac{\beta m_i}{2}
        \kappa_4^{(i)}[\Id\otimes\bM_{i,-i}]+
    \frac{\beta^3m_i}{8}\int_0^1(1-s)^2
        \bigl(\kappa_{6,s}^{(i)}+\kappa_{6,-s}^{(i)}\bigr)
        [\Id\otimes\bM_{i,-i}^{\otimes3}]\,ds.
\end{align*}
By \Cref{lem:cumulant-contraction-estimates}, the second term is $O\parens*{N^{\eps}}$ with probability $1-O(N^{-a})$.

To control the first term, observe that we may write:
\[
    \kappa_4^{(i)}[\Id\otimes \bM_{i,-i}] = \kappa_4^{(i)}[\Id] \bM_{i,-i}\mper
\]
Since $\kappa_4^{(i)}[\Id]$ is also the second derivative of $\Tr\Cov^{(i)}$, we see that it is equal to $-2(\Cov^{(i)})^2 - 2\kappa_3^{(i)}[m^{(i)}]$.
First, notice that $\norm*{(\Cov^{(i)})^2}_F \le \sqrt{N}\norm*{\Cov^{(i)}}_{\mathrm{op}}^2$.
By \Cref{lem:improved-cumulant}, we have $\E\left\|{(\Cov^{(i)})^2}\right\|_F^{p} \le c_p N^{p/2}$.

For every symmetric matrix $B$ with $\|B\|_F=1$, we have:
\begin{align*}
    \bigl|\langle\kappa_3^{(i)}[m^{(i)}],B\rangle\bigr|^2
    &\le
    \left((m^{(i)})^\top\Cov^{(i)}m^{(i)}\right) \cdot
       \Var_{\bsigma \sim \mu_{-i}}\left[ (\bsigma-m^{(i)})^\top B(\bsigma-m^{(i)}) \right]\\
    &=
    \bigl((m^{(i)})^\top\Cov^{(i)}m^{(i)}\bigr)
    \left(
        \kappa_4^{(i)}[B\otimes B]
        +2\operatorname{Tr}(\Cov^{(i)}B\Cov^{(i)}B)
    \right)\\
    &\le
    N\|\Cov^{(i)}\|_{\mathrm{op}}
    \left(
        \|\kappa_4^{(i)}\|_F
        +2\|\Cov^{(i)}\|_{\mathrm{op}}^2
    \right)\mcom    \numberthis \label{eq:third-cumulant-bound}
\end{align*}
where both inequalities are by Cauchy-Schwarz, and the fact that $\norm*{m_i}^2 \le N$.
By choosing the extremal $B$ and applying
\Cref{lem:improved-cumulant}, we obtain
\[
    \E\left\|\kappa_3^{(i)}[m^{(i)}]\right\|_F^p=O(N^{7p/8})\mper
\]
Also,
$\|(\Cov^{(i)})^2\|_F
\le\sqrt N\|\Cov^{(i)}\|_{\mathrm{op}}^2$.
Consequently,
\[
    \E\left\|\kappa_4^{(i)}[\Id]\right\|_F^p \le O\parens*{N^{7p/8}}\mper
\]
Returning to the goal of obtaining a bound on $\left\|{\kappa_4^{(i)}[\Id]\bM_{i,-i}}\right\|$, we get:
\begin{align*}
    \left\|\kappa_4^{(i)}[\Id]\bM_{i,-i}\right\|_2 &\le \left\|\kappa_4^{(i)}[\Id]\bW_{i,-i}\right\|_2 + \frac{1}{N}\left\|\kappa_4^{(i)}[\Id]\bx_{i,-i}\right\|_2
\end{align*}
By Gaussian concentration, we have:
\[
    \E\left\|\kappa_4^{(i)}[\Id]\bW_{i,-i}\right\|_2^p \le c_p N^{3p/8},
\]
and by using $\|M\|_{\mathrm{op}} \le \|M\|_F$ for any matrix $M$ combined with $\norm*{\bx_{i,-i}} \le \sqrt{N}$, we have:
\[
    \frac{1}{N^p}\E\left\| \kappa_4^{(i)}[\Id] \bx_{i,-i} \right\|_2^p \le O\parens*{N^{3p/8}}\mper
\]
Putting the above together gives
\[
    \E\left\|\kappa_4^{(i)}[\Id]\bM_{i,-i}\right\|_2^p \le O\parens*{ N^{3p/8} }\mper
\]

\smallskip
\noindent \emph{The third, fourth, and fifth terms.}
We have:
\begin{align*}
    \operatorname{Tr}
        \bigl(\kappa_{2,+}^{(i)}-\kappa_{2,-}^{(i)}\bigr)
    &=\beta\int_{-1}^1
        \kappa_{3,s}^{(i)}
            [\Id\otimes\bM_{i,-i}]\,ds\mcom\\
    \kappa_{2,+}^{(i)}-\kappa_{2,-}^{(i)}
    &=\beta\int_{-1}^1
        \kappa_{3,s}^{(i)}[\bM_{i,-i}]\,ds\mcom
\end{align*}
where both quantities on the right-hand side have the $p$-th moment of their Frobenius norm bounded by $O\parens*{N^{p/4}}$ by \Cref{lem:cumulant-contraction-estimates}.
Using $|m_i|,v_i\le1$, and $\E\|d_i\|_2^p\le c_p$, we obtain:
\begin{align*}
    \E\left\| v_i \cdot
        \operatorname{Tr}
            \bigl(\kappa_{2,+}^{(i)}-\kappa_{2,-}^{(i)}\bigr)d_i\right\|_2^p
       &\le O(N^{p/4}),\\
    \E\left\|\frac{2m_iv_i}{N}(1+\|d_i\|_2^2)d_i\right\|_2^p
       &\le O(1),\\
    \E \left\|v_i
        \bigl(\kappa_{2,+}^{(i)}-\kappa_{2,-}^{(i)}\bigr)d_i\right\|_2^p
       &=O(N^{p/4}).
\end{align*}

\smallskip
\emph{Final bound.}
Combining the five estimates along with Markov's inequality on the $p$-th moment for a constant $p$ depending on $a$ proves
\[
    \frac2N
       \left\|\Cov^{(i)}m^{(i)}-(\Cov m)_{-i}\right\|_2
       =O\parens*{\frac{1}{\sqrt{N}}}
\]
with probability $1-O(N^{-a})$.
Since $\beta^2 m_i \le 1$, the desired bound follows.

\smallskip

\noindent \textbf{Bound on $\left\|{ \frac{\beta^3}{3}\kappa_4^{(i)}\bracks*{\bM_{i,-i}^{\otimes 3}} }\right\|_2$.}
We can write:
\begin{align*}
    \kappa_4^{(i)}\bracks*{ \bM_{i,-i}^{\otimes 3} } =& \frac{3}{N} \kappa_4^{(i)}[\Id]\bM_{i,-i} + \kappa_4^{(i)}\bracks*{ \bW_{i,-i}^{\otimes 3} } - \frac{3}{N} \kappa_4^{(i)}[\Id]\bW_{i,-i} \\
    &+ \frac{3\beta\bx_i}{N}\kappa_4^{(i)}\bracks*{(\bW_{i,-i}^{\otimes 2} - N^{-1}\Id)\otimes\bx_{-i}} + \frac{3\beta^2}{N^2} \kappa_4^{(i)}\bracks*{ \bW_{i,-i} \otimes \bx_{-i}^{\otimes 2} } + \frac{\beta^3}{N^3} \kappa_4^{(i)}\bracks*{ \bx_{-i}^{\otimes 3} }
\end{align*}
Using \Cref{lem:improved-cumulant}, our bound on moments of $\kappa_4^{(i)}[\Id]\bM_{i,-i}$, and moment bounds for Gaussians, one may see that:
\[
    \E \left\| \kappa_4^{(i)}\bracks*{ \bM_{i,-i}^{\otimes 3} } \right\|_2^p \le O\parens*{ N^{-5p/8} }\mper
\]
An application of Markov's inequality for an appropriate choice of $p$ implies the desired bound.

\smallskip

\noindent \textbf{Bound on $\left\| \beta^2 m_i \kappa_3^{(i)}\bracks*{ \bM_{i,-i}^{\otimes 2} - \frac{1}{N}\Id } \right\|_2$.}
First, observe that we may write:
\[
    \bM_{i,-i}^{\otimes 2} - \frac{1}{N}\Id = \parens*{\bW_{i,-i}^{\otimes 2} - \frac{1}{N}\Id} + \frac{\beta\bx_i}{N}(\bx_{-i}\otimes \bW_{i,-i} + \bW_{i,-i}\otimes \bx_{-i}) + \frac{\beta^2}{N^2} \bx_{-i}^{\otimes 2}\mper
\]
To analyze contracting $\kappa_3^{(i)}$ via each of the above terms, we find it convenient to define the following matrix $V$ constructed by choosing
\[
    V_{a,b}\coloneqq \angles*{ \kappa_3^{(i)}[e_a], \kappa_3^{(i)}[e_b] }\mper
\]
By symmetry of the cumulant tensor, one can also see that $V = \sum_a \left(\kappa_3^{(i)}[e_a]\right)^2$.
A key identity related to $V$ that can be checked by direct computation is that for any vector $u$, $\left\|\kappa_3^{(i)}[u]\right\|_F = \sqrt{u^{\top}Vu}$.

\noindent \emph{First term.}
A standard computation proves that $\left\|\kappa_3^{(i)}\bracks*{ \bW_{i,-i}^{\otimes 2} - \frac{1}{N}\Id }\right\|_F^2$ is equal to $\frac{2}{N^2}\Tr V$.
To get a handle on this random variable, we calculate moments of its fluctuation, i.e. moments of
\[
    \left\|\kappa_3^{(i)}\bracks*{ \bW_{i,-i}^{\otimes 2} - \frac{1}{N}\Id }\right\|_F^2 - \frac{2}{N^2}\Tr V\mper
\]
Since the above is a degree-$4$ polynomial of gaussian random variables, we may use hypercontractivity to bound the $p$-th moment in terms of its variance.
We may compute its variance using Wick's formula and obtain:
\begin{align*}
    \E&\parens*{ \left\|\kappa_3^{(i)}\bracks*{ \bW_{i,-i}^{\otimes 2} - \frac{1}{N}\Id }\right\|_F^2 - \frac{2}{N^2}\Tr V }^2\\
    &= \frac{1}{N^4}\parens*{ 8 \sum_{a,b} \angles*{\kappa_3^{(i)}[e_a], \kappa_3^{(i)}[e_b] }^2 + 16 \sum_{a,b} \Tr\parens*{ \kappa_3^{(i)}[e_a] \kappa_3^{(i)}[e_b] \kappa_3^{(i)}[e_a] \kappa_3^{(i)}[e_b] } + 32 \Tr V^2 } \\
    &\le \frac{56}{N^4} \Tr V^2 \le O\parens*{ \frac{\Tr V\cdot \|V\|_{\mathrm{op}}}{N^4} }\mper
\end{align*}
where in the first inequality we used that for any pair of symmetric matrices $X,Y$ we have $\Tr XYXY \le \Tr X^2 Y^2$.
By a similar argument to the proof of \eqref{eq:third-cumulant-bound} and \Cref{lem:improved-cumulant} respectively, we see that:
\[
    \E\|V\|_{\mathrm{op}}^p \le O(N^{3p/4}), \qquad \E (\Tr V)^p \le O(N^p)\mper
\]
By the above, we see that the $p$-th moment of the fluctuation for fixed constant $p$ is $O(N^{-9p/8})$.

Observe that $\Tr V = \left\| \kappa_3^{(i)} \right\|_F^2$, which is $O(N)$ with probability probability $1-O\parens*{N^{-a}}$ for any constant $a$, and so we have:
\[
    \left\|\kappa_3^{(i)}\bracks*{ \bW_{i,-i}^{\otimes 2} - \frac{1}{N}\Id }\right\|_F \le O\parens*{\frac{1}{\sqrt{N}}}\mper
\]

\smallskip

\noindent \emph{Second and third terms.}
We now treat the second and third terms.
\begin{align*}
    \E_{\bW}\left\|\frac{\beta\bx_i}{N}\kappa_3^{(i)}\bracks*{ \bx_{-i}\otimes \bW_{i,-i} }\right\|_F^{p} \le \frac{c_p}{N^{3p/2}} \| \kappa_3^{(i)}[\bx_{-i}] \|_F^p = \frac{c_p}{N^{3p/2}} \parens*{ \bx_{-i}^{\top} V \bx_{-i} }^{p/2} \le \frac{c_p}{N^p} \|V\|_{\mathrm{op}}^{p/2}\mper \\
    \left\| \frac{\beta^2}{N^2} \kappa_3^{(i)}\left[\bx_{-i}^{\otimes 2}\right] \right\|_F = \frac{\beta^2}{N^2} \left\|\kappa_3^{(i)}[\bx_{-i}]\bx_{-i}\right\|_F \le \frac{\beta^2}{N^{3/2}} \left\|\kappa_3^{(i)}[\bx_{-i}]\right\|_F = \frac{\beta^2}{N^{3/2}} \sqrt{ \bx_{-i}^{\top} V \bx_{-i} } \le \frac{\beta^2}{N} \|V\|_{\mathrm{op}}^{1/2}\mper   
\end{align*}
The desired bound follows from aforementioned bounds on moments of $\|V\|_{\mathrm{op}}$.

\smallskip

\noindent \textbf{Bound on $\lambda_i$.}
We may express $\lambda_i$ exactly as:
\[
    \lambda_i = \frac{\beta^2}{N}\parens*{ \Tr\Cov - \Tr\Cov^{(i)} } - \beta^2 \parens*{ \bM_{i,-i}^{\top} \Cov^{(i)} \bM_{i,-i} - \frac{1}{N}\Tr\Cov^{(i)} } - \beta \bM_{i,-i}^{\top} (d_i - \beta \Cov^{(i)} \bM_{i,-i})\mper
\]
We now bound each of the above three terms.

Observe that the first term is:
\begin{align*}
    |\Tr\Cov - \Tr \Cov^{(i)}| &= \left| 1 - m_i^2 + \left\|m^{(i)}\right\|^2 - \left\| m_{-i} \right\|^2 \right| \\
    &\le 1 + \left| \angles*{ m^{(i)} - m_{-i}, m^{(i)} + m_{-i} } \right| \\
    &\le 1 + 2\sqrt{N} \left\| m^{(i)} - m_{-i} \right\|\mper
\end{align*}
We have that:
\[
    m_{-i} - m^{(i)} = \frac{1+m_i}{2}\parens*{ m^{(i,+)} - m^{(i)} } + \frac{1-m_i}{2}\parens*{ m^{(i,-)} - m^{(i)} }\mcom
\]
By a Taylor expansion, we see:
\[
    m^{(i,\pm)} - m^{(i)} = \pm\beta\Cov^{(i)} \bM_{i,-i} + \beta^2 \int_0^1 (1-s) \kappa_{3,\pm s}^{(i)}\bracks*{ \bM_{i,-i}^{\otimes 2} }\, ds\mper
\]
By \Cref{lem:cumulant-contraction-estimates}, the $p$-th moment of the above for fixed $p$ is $O(1)$.
Thus,
\[
    \E \parens*{ \frac{\beta^2}{N} \parens*{\Tr\Cov - \Tr\Cov^{(i)}} }^p \le O\parens*{N^{-p/2}}\mper
\]

For the second term, observe that:
\begin{align*}
    &\bM_{i,-i}^{\top}\Cov^{(i)}\bM_{i,-i}
        -\frac1N\Tr\Cov^{(i)}\\
    &\qquad=
    \left(
        \bW_{i,-i}^{\top}\Cov^{(i)}\bW_{i,-i}
        -\frac1N\Tr\Cov^{(i)}
    \right)
    +\frac{2\beta\bx_i}{N}
        \bW_{i,-i}^{\top}\Cov^{(i)}\bx_{-i}
    +\frac{\beta^2}{N^2}
        \bx_{-i}^{\top}\Cov^{(i)}\bx_{-i}.
\end{align*}
By bounds on gaussian moments, we have:
\begin{align*}
    \left(
        \E_{\bW}\left|
            \bW_{i,-i}^{\top}\Cov^{(i)}\bW_{i,-i}
            -\frac1N\Tr\Cov^{(i)}
        \right|^p
    \right)^{1/p}
    &\le \frac{C_p}{N}\|\Cov^{(i)}\|_F,\\
    \left(
        \E_{\bW}\left|
            \frac{2\beta\bx_i}{N}
            \bW_{i,-i}^{\top}\Cov^{(i)}\bx_{-i}
        \right|^p
    \right)^{1/p}
    &\le
    \frac{C_p}{N^{3/2}}\|\Cov^{(i)}\bx_{-i}\|_2
    \le\frac{C_p}{N}\|\Cov^{(i)}\|_F.
\end{align*}
By Cauchy--Schwarz, we have:
\[
    \frac{\beta^2}{N^2}
    \left|\bx_{-i}^{\top}\Cov^{(i)}\bx_{-i}\right|
    \le \frac{\beta^2}{N}\|\Cov^{(i)}\|_F.
\]
Thus, by \Cref{lem:cumulant-bounds} we may conclude that:
\[
    \E\left|
        \beta^2\left(
            \bM_{i,-i}^{\top}\Cov^{(i)}\bM_{i,-i}
            -\frac1N\Tr\Cov^{(i)}
        \right)
    \right|^p
    \le C_pN^{-p/2}.
\]
For the third term, first observe that by performing a Taylor expansion we get:
\[
    \beta \bM_{i,-i}^{\top} \parens*{d_i - \beta\Cov^{(i)}\bM_{i,-i}} = \frac{\beta^4}{4}\int_0^1 (1-s)^2 \parens*{ \kappa_{4,s}^{(i)} + \kappa_{4,-s}^{(i)} }\bracks*{ \bM_{i,-i}^{\otimes 4} }ds\mper
\]
By \Cref{lem:cumulant-contraction-estimates}, we get:
\begin{equation}\label{eq:lambda_i-Lp-bound}
    \E\left|\beta \bM_{i,-i}^{\top} \parens*{d_i - \beta\Cov^{(i)}\bM_{i,-i}}\right|^p \le O\parens*{N^{-p}}\mper
\end{equation}
Combining the estimates for the three terms gives:
\[
    \E|\lambda_i|^p \le O\parens*{N^{-p/2}}\mcom
\]
and Markov's inequality along with a union bound for appropriate choice of $p$ proves the desired statement.

\subsection{Proof of \Cref{lem:cumulant-contraction-estimates}}\label{subsec:proof-of-cumulant-contraction}
We will prove the statement for $\kappa_{r,s}^{(i)}$.
The analogous statement for $\kappa_{r,s}^{(B)}$ follows an identical proof.

Fix $s\in [-1,1]$. Recall that the goal is to prove:
\[
    \E \left\| \kappa_{r,s}^{(i)}\bracks*{ \Id^{\otimes k} \otimes \bM_{i,-i}^{\otimes \ell} } \right\|_F^{p} \le C_p N^{p(r/4 - \ell/2)}\mper
\]
First, observe that we may write:
\[
    \left\| \kappa_{r,s}^{(i)}\bracks*{ \Id^{\otimes k} \otimes \bM_{i,-i}^{\otimes \ell} } \right\|_F \le C_{\ell} \sum_{\ell_1+\ell_2=\ell} \left\| \kappa_{r,s}^{(i)}\bracks*{ \Id^{\otimes k} \otimes \bW_{i,-i}^{\otimes \ell_1} \otimes \frac{\bx_{-i}^{\otimes \ell_2}}{N^{\ell_2}} } \right\|_F
\]
for some $C_{\ell}$ that only depends on $\ell$.
To obtain a handle on the above (up to a loss of a constant depending on $p$ and $\ell$), it suffices to study a single summand.

An ingredient in obtaining a bound on a single summand is the following bound in the special case when $\ell_1 = 0$, whose proof we defer until the end of this proof.
\begin{restatable}{lemma}{lemplantcumulant}
    \label{lem:plant-cumulant}
    For every fixed $r\ge 3$, $p\ge 2$, and nonnegative integers $k$ and $\ell$ such that $2k + \ell\le r$, we have:
    \[
        \left( \E\left\| \kappa_{r,s}^{(i)}\bracks*{ \Id^{\otimes k} \otimes \bx_{-i}^{\otimes \ell} } \right\|_{\mathrm F}^p \right)^{1/p} \le C N^{(r+\ell)/4}\mper
    \]
\end{restatable}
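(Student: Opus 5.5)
We will express the contraction as a Gibbs average of a polynomial in overlaps, then use overlap concentration to extract the $N^{-1/2}$ gains. Write $\kappa=\kappa_{r,s}^{(i)}$ for the cavity model with field $u_i+s\beta\bM_{i,-i}$. Conditionally on $\bx$ and $\bM_{i,i}$-independent data, this is a model of the form treated in \Cref{lem:shifted-model-overlap-conc}, and the overlap concentration there holds both for replica overlaps $R_{ab}$ and for overlaps $R_{a\bx}=\frac1N\langle\bsigma_a,\bx\rangle$ with the spike. The field $s\beta\bM_{i,-i}$ is exactly what \eqref{eq:real-ext-field-law-cavity} accounts for: it has a $\frac{s\beta^2\bx_i}{N}\bx$ part and a Gaussian part of variance $s^2\beta^2/N$ independent of the cavity disorder.

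\emph{Step 1: the tensor as a replica average.} By multilinearity of joint cumulants, $\kappa[\Id^{\otimes k}\otimes\bx^{\otimes\ell}]$ is an $(r-2k-\ell)$-tensor. We compute its squared Frobenius norm using $2r$ replicas, as in the proof of \Cref{lem:cumulant-bounds}. The cumulant is a signed sum over set partitions of products of centered moments. Contracting a pair of indices with $\Id$ gives a factor $\frac1N\cdot N$ inner product. Contracting an index with $\bx$ gives a spike overlap. Hence
\[ \|\kappa[\Id^{\otimes k}\otimes\bx^{\otimes\ell}]\|_F^2 = N^{r-2k-\ell}\cdot N^{2k}\cdot N^{2\ell}\langle P(R_{ab}-q,R_{a\bx}-q')\rangle \]
for a fixed polynomial $P$. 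The bookkeeping needs care: each free index contributes $N$ through its overlap, each $\Id$-pair contributes $N$ per copy, and each $\bx$-contraction contributes $N$ per copy. The normalisation in the overall exponent must therefore come out as $N^{r+\ell}$ times a polynomial in centered overlaps.

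\emph{Step 2: cumulant cancellation.} This is the key step. Because $\kappa$ is a joint cumulant of order $r$, every monomial of $P$ involving fewer than $r$ centered overlap factors cancels. Concretely, one substitutes $\bsigma_a = m+(\bsigma_a-m)$ and notes that connectedness of the cumulant forces each of the $r$ "legs" in both copies to be linked. I would verify this by the standard argument that cumulants are connected sums: after centering, each of the $2r$ legs must be paired with another leg, which gives $r$ centered factors. The point to check is that $\Id$- and $\bx$-contractions do not reduce the connectivity count. An $\Id$-contraction joins two legs within one copy into one overlap $R_{ab}$ with $a\ne b$ inside the cumulant, and this is still a centered factor. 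Once the count is confirmed, $P$ has degree at least $r/2$ in centered overlaps in the relevant sense, giving the target
\[ \E\|\cdot\|_F^p\lesssim N^{p(r+\ell)/2}\,\E\langle|R-q|^{rp/2}\rangle\lesssim N^{p(r+\ell)/2}N^{-rp/4}. \]
That yields $N^{p(r/4+\ell/2)}$, which is too large: the claim is $N^{(r+\ell)/4}$ per root.

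I therefore expect the counting to actually give $N^{(r+\ell)/2}$ for the squared norm with $(r+\ell)/2$-fold centering. The $\bx$-contracted legs count as extra overlap factors $R_{a\bx}-q'$, and each of these also contributes a $N^{-1/2}$ gain. This is the main obstacle: arranging the centering so that each spike contraction produces a centered spike overlap, rather than the uncentered $\langle\bx,m\rangle/N\approx q'$. I would handle it by replacing $\bx$ with $\bx-\frac{q'}{q}m$-type corrections, or equivalently by noting that the cumulant annihilates constant shifts in each leg beyond first order.

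\emph{Step 3: moments.} With $P$ a sum of monomials of total centered degree $(r+\ell)/2$, apply Hölder to reduce to $\E\langle|R_{12}-q|^{m}\rangle$ and $\E\langle|R_{1\bx}-q'|^m\rangle$. Then apply \Cref{lem:shifted-model-overlap-conc} together with \Cref{lem:external-field-contiguity} to transfer from the ideal field to the shifted field $s$. Finally take $p$-th roots.
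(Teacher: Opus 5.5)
Your skeleton is the paper's: a prefactor $N^{r+\ell}$ times a Gibbs average of a polynomial in centered overlaps, then H\"older and \Cref{lem:shifted-model-overlap-conc}. The step that carries the content of the lemma, however, is missing. Write $Y=\kappa_{r,s}^{(i)}[\Id^{\otimes k}\otimes\bx_{-i}^{\otimes\ell}]$. To get $\|Y\|_{\mathrm F}\lesssim N^{(r+\ell)/4}$ you need $\|Y\|_{\mathrm F}^2=N^{r+\ell}\langle P\rangle$ with $P$ homogeneous of degree $r+\ell$ in centered overlaps. That is, each of the $d=r-2k-\ell$ free legs (paired across the two copies), and each of the $k$ $\Id$-pairs and $\ell$ $\bx$-legs in \emph{each} copy, must contribute one centered factor, for $d+2k+2\ell=r+\ell$ in total. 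The ``total centered degree $(r+\ell)/2$'' of your Step 3 only gives $\|Y\|_{\mathrm F}\lesssim N^{3(r+\ell)/8}$.

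More seriously, you never show that the contractions produce centered factors. For the $\bx$-legs you explicitly leave this open. For the $\Id$-pairs your reason is incorrect: in the moment expansion of the cumulant, some partitions put both contracted legs in the same block, producing $R_{aa}=1$, which is not centered. For instance, in $\kappa_3[\Id]_a=\sum_b\kappa_3(\sigma_a,\sigma_b,\sigma_b)$ the terms $\langle\sigma_a\|\sigma\|^2\rangle-\langle\sigma_a\rangle\langle\|\sigma\|^2\rangle=nm_a-nm_a$ appear and only cancel against each other. So the centering is a global cancellation that needs proof; connectedness alone does not give it.

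The missing idea, which is the paper's, is to trade contractions for differential operators. Write $Y=\grad^{r-2k-\ell}\Lapl^k(\bx\cdot\grad)^\ell F$ and use $\partial_{h_j}\langle P(\sigma^1,\ldots,\sigma^v)\rangle=\langle P\cdot\sum_{a\le v}(\sigma^a_j-\sigma^{v+1}_j)\rangle$. The new factor has replica coefficients summing to zero, so $\bx\cdot\grad$ yields $n\sum_{a\le v}(\overline{R}_{a\star}-\overline{R}_{(v+1)\star})$. For $\Lapl$, the second factor $\sum_{b\le v+1}(\sigma^b-\sigma^{v+2})$ has constant coefficient on every replica supporting the first. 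Hence the $R_{aa}=1$ contributions sum to zero, and again one gets $n$ times a linear form in centered overlaps.

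The only constants, $n(1-q)$ in $\Lapl F=\Tr\Cov$ and $nq'$ in $(\bx\cdot\grad)F=\langle\bx,m\rangle$, are killed by the further derivatives that $r\ge 3$ guarantees. The $d$ free gradients produce zero-sum replica combinations whose cross-copy inner products in $\|Y\|_{\mathrm F}^2$ are centered. This gives $n^{2(k+\ell)+d}=n^{r+\ell}$ times a degree-$(r+\ell)$ polynomial.

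Your remark that cumulants of order $\ge 2$ are invariant under constant shifts of one argument would indeed handle the $\bx$-legs: replace $\langle\bx,\sigma\rangle$ by $n(R_{\sigma\star}-q')$. It does not apply to an $\Id$-pair, which is not a single argument of the cumulant; that is precisely where the Laplacian representation is needed. The ``$\bx-\frac{q'}{q}m$'' correction is not needed, and \Cref{lem:external-field-contiguity} is already built into \Cref{lem:shifted-model-overlap-conc}.
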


In particular, we perform a proof by induction, where the above is the base case.
Henceforth, for the induction hypothesis, assume that there is some choice of $\ell_1\ge 0$ for which the desired statement holds.

In the rest of this argument $\ell_2$ is fixed, and  we denote the cumulant tensor of interest by
\[
    \kappa_r(k,\ell_1) \coloneqq \kappa_{r,s}^{(i)}\bracks*{ \Id^{\otimes k} \otimes \bW_{i,-i}^{\otimes \ell_1} \otimes \frac{\bx_{-i}^{\otimes \ell_2}}{N^{\ell_2}} }\mper
\]
We split the term of interest into a sum of its mean and a fluctuation term, and control each term separately.
In particular, we have:
\[
    \kappa_r(k,\ell_1) = \E_{\bW_{i,-i}} \kappa_r(k,\ell_1) + \parens*{ \kappa_r(k,\ell_1) - \E_{\bW_{i,-i}} \kappa_r(k,\ell_1) }\mper
\]

\noindent \textbf{ Controlling $\E \kappa_r(k,\ell_1)$. }
Observe that by gaussian integration by parts, we have:
\[
    \E \kappa_r(k,\ell_1) = \frac{\ell_1-1}{N}\E \kappa_r(k+1,\ell_1-2) + \frac{s\beta}{N}\E\kappa_{r+1}(k+1,\ell_1-1)\mcom
\]
following the convention that $\kappa_r(k,-1) = 0$ for any $r,k$.
By treating \Cref{lem:plant-cumulant} as a base case for an induction argument, we obtain:
\[
    \E_{\bW_{-i,-i},\bx} \left\| \E_{\bW_{i,-i}} \kappa_{r,s}^{(i)}\bracks*{ \Id^{\otimes k} \otimes \bW_{i,-i}^{\otimes \ell_1} \otimes \frac{\bx_{-i}^{\otimes \ell_2}}{N^{\ell_2}} } \right\|_F^p \le C_p N^{p(r/4-3\ell_2/4-\ell_1/2)}\mper \numberthis \label{eq:mean-term}
\]
It remains to control the fluctuation term.

\smallskip

\noindent \textbf{Controlling $\kappa_r(k,\ell_1) - \E \kappa_r(k,\ell_1)$.}
To control the $p$-th moment of the fluctuation, we treat it as a function of $\bW_{i,-i}$, and apply the $L^p$ Gaussian \Poincare inequality, which says that for any smooth function $F:\R^m\to\R^n$, we have:
\[
    \E_{\bg}\left\|F(\bg)-\E F(\bg)\right\|_F^p \le C_p \E_{\bg} \left\|\grad F(\bg)\right\|_F^p\mper
\]
As a result, we have:
\[
    \E_{\bW_{i,-i}} \left\| \kappa_r(k,\ell_1) - \E \kappa_r(k,\ell_1) \right\|_F^p \le \frac{C_p}{\sqrt{N}^p} \E_{\bW_{i,-i}} \left\| \grad \kappa_r(k,\ell_1) \right\|_F^p\mper \numberthis \label{eq:gaussian-poincare-ellp}
\]
Now, we may compute the gradient as:
\[
    \grad \kappa_r(k,\ell_1) = \ell_1 \kappa_r(k,\ell_1-1) + s\beta \kappa_{r+1}(k,\ell_1)    \numberthis \label{eq:gradient-recursion}
\]
By \Cref{eq:mean-term,eq:gaussian-poincare-ellp,eq:gradient-recursion}, along with the induction hypothesis, we have:
\[
    \E \left\| \kappa_r(k,\ell_1) - \E \kappa_r(k,\ell_1) \right\|_F^p \le C_p N^{p(r/4-3\ell_2/4-\ell_1/2)} + \frac{C_p}{\sqrt{N}^p} \E \left\|\kappa_{r+1}(k,\ell_1)\right\|_F^p
\]
Iterating the above inequality $2\ell_1$ times, we obtain:
\[
    \E \left\| \kappa_r(k,\ell_1) - \E \kappa_r(k,\ell_1) \right\|_F^p \le C_p N^{p(r/4 - 3\ell_2/4 - \ell_1/2)} + \frac{C_p}{N^{p\ell_1}} \E\left\| \kappa_{r+2\ell_1}(k,\ell_1) \right\|_F^p
\]
By Cauchy-Schwarz, we may bound the second term as:
\[
    \E\left\| \kappa_{r+2\ell_1}(k,\ell_1) \right\|_F^p \le \E\left\| \kappa_{r+2\ell_1}\left[\Id^{\otimes k}\otimes \frac{\bx_{-i}^{\otimes \ell_2}}{N^{\ell_2}} \right] \right\|_F^p \cdot \left\| \bW_{i,-i} \right\|_F^{\ell_1 p}\mper
\]
By another application of Cauchy-Schwarz, \Cref{lem:plant-cumulant}, and Gaussian concentration, we may bound the above by $C_p N^{p(r/4-\ell_1/2-3\ell_2/4)}$, which completes the proof.

We are now ready to prove \Cref{lem:plant-cumulant}.

\lemplantcumulant*

\begin{proof}
For simplicity, assume that $k+\ell > 0$; the same ideas extend to the setting where $k = \ell = 0$. We write
$Y\coloneqq
\kappa_{r,s}^{(i)}
[\Id^{\otimes k}\otimes\bx_{-i}^{\otimes \ell}]$. For ease of notation, set $d = r - 2k-\ell$.
We drop the cavity and shift notation, denoting by $F$ the corresponding free energy and by $\bx$ the cavity vector $\bx_{-i}$. Denote $n = N-1$. We now use the definition of $\kappa_r$ as an order-$r$ gradient of $F$ in order to simplify the expression for $Y$. We have, for example, that $(\kappa_3[\Id])_{a} = \sum_{b} \partial_{h_a} \partial_{h_b}^2 F = \partial_{h_{a}} \Lapl F$, so $\kappa_3[\Id] = \grad \Lapl F$. Similarly, contracting by $\bx$ can be expressed in terms of the gradient---for example, $(\kappa_3[\bx])_{ab} = \sum_{c} \partial_{h_{a}} \partial_{h_{b}} \cdot \bx_c \partial_{h_{c}} F$, so $\kappa_3[\bx] = \grad^2 (\bx \cdot \grad) F$. More generally, we have
\[ Y = \grad^{r-2k-\ell} \Lapl^{k} (\bx \cdot \grad)^{\ell} F \mper \]
We will start by establishing that the $k$ Laplacians and $\ell$ inner gradients will eventually result in a homogenous degree-$k+\ell$ polynomial $P$ in centered overlaps, with $Y = N^{k+\ell} \grad^{d} \langle P\rangle$.

Suppose that $k+\ell > 0$. Let us start by understanding how a Laplacian/gradient affects the free energy. For starters, we have
\begin{align*}
    \Lapl F &= \Tr \Cov = n (1 - \langle R_{12} \rangle) = \text{constant} - n \langle \ol{R}_{12} \rangle \mcom \\
    (\bx \cdot \grad) F &= \langle \bx , m \rangle = n \langle R_{1\star} \rangle = \text{constant} + n \langle R_{1\star} \rangle \mper
\end{align*}
Because $r \ge 3$, the constant terms will ultimately vanish.
Suppose that after $m$ contraction operations, we are at some polynomial $n^{m} \langle P_{m} \rangle$. A routine calculation shows that taking the Laplacian/gradient results in the Gibbs average of a polynomial of degree larger by $1$. We execute this calculation for the gradient; the calculation for the Laplacian is similar. Suppose that $P_m$ is some degree-$m$ polynomial in centered overlaps among $v$ replicas (and the plant).
\begin{align*}
    (\bx \cdot \grad) \langle P_{m}(\sigma^1, \ldots, \sigma^v) \rangle &= \left\langle \bx , \left\langle P_{m} \cdot \sum_{a=1}^{v} \left( \sigma^j - \sigma^{v+1} \right) \right\rangle \right\rangle \\
        &= n \cdot \left\langle P_m \cdot \sum_{a=1}^{v} \left( R_{j\star} - R_{(v+1)\star} \right) \right\rangle \\
        &= n \cdot \left\langle P_m \cdot \sum_{a=1}^{v} \left( \ol{R}_{j\star} - \ol{R}_{(v+1)\star} \right) \right\rangle \mper
\end{align*}
This is the Gibbs average of a homogenous degree-$(m+1)$ polynomial in centered overlaps! It follows that when $k+\ell > 0$,
\[ Y = n^{k+\ell} \grad^{r - 2k - \ell} \langle P \rangle \]
for some polynomial $P$ in centered overlaps. We may now use the same rules to expand out the gradient:
\begin{align*}
    \left(\grad^{d} \langle P(\sigma^1,\ldots,\sigma^v) \rangle\right) = \left\langle P(\sigma^1,\ldots,\sigma^v) \cdot \bigotimes_{a=v}^{v+d-1} \left( \sum_{b=1}^{a} \left( \sigma_b - \sigma_{a+1} \right) \right) \right\rangle \mcom
\end{align*}
and thus, using the replica trick to write $\langle G(\sigma)\rangle^2 = \langle G(\sigma) G(\wt{\sigma}) \rangle$ for a fresh replica $\wt{\sigma}$, we have
\begin{align*}
    \left\| Y \right\|_{\mathrm F}^2 &= n^{2(k+\ell)} \sum_{i_1,\ldots,i_d} \left\langle P(\sigma^1,\ldots,\sigma^v) \cdot \prod_{a=v}^{v+d-1} \left( \sum_{b=1}^{a} \left( \sigma_b - \sigma_{a+1} \right) \right)_{i_{a-v+1}} \right\rangle^2 \\
        &= n^{2(k+\ell)} \sum_{i_1,\ldots,i_d} \left\langle P(\sigma^1,\ldots,\sigma^v) \cdot P(\wt{\sigma}^1,\ldots,\wt{\sigma}^v) \cdot \prod_{a=v}^{v+d-1} \left( \sum_{b=1}^{a} \left( \sigma_b - \sigma_{a+1} \right) \right)_{i_{a-v+1}} \left( \sum_{b=1}^{a} \left( \wt{\sigma}_b - \wt{\sigma}_{a+1} \right) \right)_{i_{a-v+1}} \right\rangle \\
        &= n^{2(k+\ell)} \left\langle P(\sigma^1,\ldots,\sigma^v) \cdot P(\wt{\sigma}^1,\ldots,\wt{\sigma}^v) \cdot \prod_{a=v}^{v+d-1} \left\langle \sum_{b=1}^{a} \left( \sigma_b - \sigma_{a+1} \right) , \sum_{b=1}^{a} \left( \wt{\sigma}_b - \wt{\sigma}_{a+1} \right) \right\rangle \right\rangle \mper
\end{align*}
Note now that this final product term may be expressed as a degree-$d$ product of \emph{centered} replicas---this again uses the fact that the sum of the coefficients is $0$. Summarizing, we have
\[ \|Y\|_{F}^2 = n^{2(k+\ell)+d} \cdot \langle P \rangle = n^{r+\ell} \langle P \rangle \mper \]
for some homogenous degree-$2(k+\ell)+d$ polynomial $P$ in centered overlaps. Noting $2k+\ell+d = r$ and using replicas again, we have
\[ \|Y\|_{\mathrm F}^{2p} = n^{p(r+\ell)} \cdot \langle Q \rangle \]
for some homogenous degree-$p(r+\ell)$ polynomial $Q$. To complete the proof, we simply use H\"{o}lder's inequality on each term of the polynomial to reduce bounding it to bounding large moments of centered replicas---this is the content of \Cref{lem:shifted-model-overlap-conc}, which yields that $\E\langle Q\rangle \le C n^{-p(r+\ell)/2}$, and thus that $\E\|Y\|_{\mathrm F}^{2p} \le C N^{p(r+\ell)/2}$.
\end{proof}

\subsection{Proof of \Cref{lem:improved-cumulant}}\label{subsec:proof-of-improved-cumulant}

Throughout this subsection, $B \subseteq [N]$ is a set with $|B| \le 4$; no intuition is lost by considering $B = \emptyset$.
Define the TAP map $T^{(B)}:(-1,1)^{[N] \setminus B}\to \R^{[N] \setminus B}$ by
\[
T^{(B)}_i(m)=\arctanh(m_i)
-\beta\bM_{i,-(B \cup \{i\})}^{\mathsf T}m_{-(B \cup \{i\})}
+\frac{\beta^2}{N}(N-|B|-\|m\|_2^2)m_i.
\]
Let $F^{(B)}(u)$ be the free energy on spins $[N] \setminus B$ with interaction $\beta \bM_{-B,-B}$ and external field $u \in \R^{[N] \setminus B}$.
For $q\ge 1$, let $\kappa^{(B)}_q(u) = \grad^q F^{(B)}(u)$; in particular $\kappa^{(B)}_1(u) = m^{(B)}(u)$ and $\kappa^{(B)}_2(u) = \Cov^{(B)}(u)$ are the mean and covariance of this Gibbs measure with external field $u$.

Recall that the planted model in \Cref{th:cov-bounds} has external field $h = t\bx + \sqrt{t}\bg$.
Whenever we omit the argument $u$ by writing e.g. $\Cov^{(B)}$, we evaluate at $u = h_{-B}$.

Throughout we use $\grad$ to denote derivatives with respect to $u$, while $D$ denotes derivatives with respect to the input $m$ of $T^{(B)}$, evaluated at $m = m^{(B)}(h_{-B})$.
In particular, for $K^{(B)}$ defined in \eqref{eq:inv-cov-pred}, we have $DT^{(B)}(m) = K^{(B)}$.

The TAP heuristic predicts that $T^{(B)}(m^{(B)}(u))\approx u$, which motivates the definition of the residual
\begin{equation}\label{eq:tap-residual}
    r^{(B)}(u)=T^{(B)}(m^{(B)}(u)) - u\mper
\end{equation}
The starting point of the proof of \Cref{lem:improved-cumulant} is the following set of identities, which are proved directly by differentiating \Cref{eq:tap-residual} with respect to $u$.
\begin{fact}\label{fac:improved-cumulant-start}
We have
    \begin{align*}
    K^{(B)}\Cov^{(B)} - \Id
    &=\grad r^{(B)},\\
    K^{(B)}\kappa^{(B)}_3
    &=\grad ^2r^{(B)}-D^2T^{(B)}(m)\{\Cov^{(B)},\Cov^{(B)}\},\\
    K^{(B)}\kappa^{(B)}_4
    &=\grad^3r^{(B)}
    -\sum_{\Sym} D^2T^{(B)}(m)\{\kappa^{(B)}_3,\Cov^{(B)}\}
    -D^3T^{(B)}(m)\{\Cov^{(B)},\Cov^{(B)},\Cov^{(B)}\}.
    \end{align*}    
\end{fact}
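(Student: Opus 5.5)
The identities follow from differentiating the definition \eqref{eq:tap-residual} with respect to $u$ and applying the chain rule. The facts we use are $\grad m^{(B)}(u) = \kappa_2^{(B)}(u) = \Cov^{(B)}(u)$ and, more generally, $\grad \kappa_q^{(B)} = \kappa_{q+1}^{(B)}$, since these are successive derivatives of the free energy $F^{(B)}$. We also use $DT^{(B)}(m) = K^{(B)}$ evaluated at $m = m^{(B)}(u)$. This last identity should be checked once by direct computation from the definition of $T^{(B)}$. The derivative of $\arctanh(m_i)$ gives $\diag(v_i^{-1})$, and the interaction term gives $-\beta \bM_{-B,-B}$ with zero diagonal. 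The term $\frac{\beta^2}{N}(N-|B|-\|m\|^2)m_i$ gives $\beta^2\frac{N-|B|-\|m\|^2}{N}\Id - \frac{2\beta^2}{N}mm^\top$. This matches \eqref{eq:inv-cov-pred} for the reduced system, where the normalization $1-q_N$ is read with $N-|B|$ spins; I would take this as the definition of $K^{(B)}$.

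Next write $r^{(B)}(u) = T^{(B)}(m^{(B)}(u)) - u$ and differentiate once:
\[ \grad r^{(B)} = DT^{(B)}(m)\,\grad m^{(B)} - \Id = K^{(B)}\Cov^{(B)} - \Id. \]
This is the first identity. Differentiating again and using the multivariate chain rule (Faà di Bruno to second order) gives
\[ \grad^2 r^{(B)} = D^2T^{(B)}(m)\{\Cov^{(B)},\Cov^{(B)}\} + DT^{(B)}(m)\,\kappa_3^{(B)}. \]
Rearranging gives the second identity. Here $D^2T\{A,B\}$ denotes the bilinear contraction $\sum_{a,b} \partial_a\partial_b T\, A_{a\cdot}B_{b\cdot}$, i.e.\ each slot of the Hessian of $T$ is contracted against one index of a copy of $\Cov^{(B)}$.

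Differentiating a third time, Faà di Bruno gives three kinds of terms. The first is $DT\,\kappa_4$. The second is $D^2T$ applied to one $\kappa_3$ and one $\Cov$; there are three such terms, indexed by which of the three output indices sits on the $\Cov$ factor. The third is $D^3T\{\Cov,\Cov,\Cov\}$. The three middle terms are exactly the symmetrization $\sum_{\Sym} D^2T\{\kappa_3,\Cov\}$. Solving for $K^{(B)}\kappa_4^{(B)}$ yields the third identity.

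The only point needing care is the index bookkeeping in the third-order chain rule. Each term must be symmetrized over output slots, and the sign in front of the $D^3T$ term must be correct. I would verify this by checking the formula on a scalar composition $g(f(u))$, where $(g\circ f)''' = g'''f'^3 + 3g''f'f'' + g'f'''$. Smoothness is automatic because $m^{(B)}$ takes values in $(-1,1)^{[N]\setminus B}$, where $T^{(B)}$ is real-analytic, and $F^{(B)}$ is smooth in $u$.
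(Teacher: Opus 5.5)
Your proposal is correct and follows the paper's route. The paper also proves this Fact by differentiating the residual $r^{(B)}(u)=T^{(B)}(m^{(B)}(u))-u$ up to three times with the chain rule, using $\grad m^{(B)}=\Cov^{(B)}$, $\grad\kappa_q^{(B)}=\kappa_{q+1}^{(B)}$ and $DT^{(B)}(m)=K^{(B)}$, and your signs and your count of the three symmetrized $D^2T^{(B)}\{\kappa_3^{(B)},\Cov^{(B)}\}$ terms match the statement.
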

\begin{remark}
To clarify the notation, for $q=2,3,4$, $\kappa^{(B)}_q$ is a tensor of order $q$, and $K^{(B)}$ acts by matrix multiplication on the first coordinate of $\kappa^{(B)}_q$: 
\[
(K^{(B)}\kappa^{(B)}_q)_{i j_1\cdots j_{q-1}}
= \sum_{a\notin B} K^{(B)}_{ia}(\kappa^{(B)}_q)_{a j_1\cdots j_{q-1}}.
\]
For $q = 1,2,3$, $\grad^q r^{(B)}$ is a tensor of order $q+1$, where the first coordinate indexes the entries of $r^{(B)}$ while the remaining $q$ coordinates index the $q$ derivatives.
Similarly, $D^q T^{(B)}(m)$ is a tensor of order $q+1$ where the first coordinate indexes the entries of $T^{(B)}$.
In the notation $D^2 T^{(B)}(m)\{\Cov^{(B)},\Cov^{(B)}\}$, the matrices $\Cov^{(B)}$ act by matrix multiplication on all but the first coordinate of $D^2 T^{(B)}(m)$:
\[
\left(D^2T^{(B)}(m)\{\Cov^{(B)},\Cov^{(B)}\}\right)_{ijk}
=
\sum_{a,b\notin B}
\frac{\partial^2T_i^{(B)}}{\partial z_a\partial z_b}(m)
\,\Cov^{(B)}_{aj}\Cov^{(B)}_{bk},
\]
We define $D^3 T^{(B)}(m) \{\Cov^{(B)},\Cov^{(B)},\Cov^{(B)}\}$ similarly.
In $D^2T^{(B)}(m)\{\kappa^{(B)}_3,\Cov^{(B)}\}$, $\kappa^{(B)}_3$ and $\Cov^{(B)}$ contract with all but the first coordinates of $D^2 T^{(B)}(m)$:
\[
    \left(D^2T^{(B)}(m)\{\kappa^{(B)}_3,\Cov^{(B)}\}\right)_{ijk\ell}
    = \sum_{a,b\notin B}
    \frac{\partial^2T_i^{(B)}}{\partial z_a\partial z_b}(m)
    (\kappa^{(B)}_3)_{ajk}\Cov^{(B)}_{b\ell}\mper
\]
Finally, $\sum_{\Sym}$ sums cyclically over the last three coordinates of the resulting $4$-tensor:
\[
    \left(\sum_{\Sym}D^2T^{(B)}(m)\{\kappa^{(B)}_3,\Cov^{(B)}\}\right)_{ijk\ell}
    = \sum_{abc \in \{jk\ell, k\ell j, \ell jk\}}
    \left(D^2T^{(B)}(m)\{\kappa^{(B)}_3,\Cov^{(B)}\}\right)_{iabc}
\]
\end{remark}
The main step in the proof of \Cref{lem:improved-cumulant} is to obtain suitably sharp $L^p$ bounds on the Frobenius norms of all the right-hand sides in \Cref{fac:improved-cumulant-start}.
In particular, for $|B|\le 3$, this can be translated into $L^p$ bounds on $\|\kappa^{(B)}_3\|_F$, $\|\kappa^{(B)}_4\|_F$, and $\|\Cov^{(B)}\|_{\mrop}$: on the event $K^{(B)} \succeq \gamma$, which holds with overwhelming probability by \Cref{lem:organized-gap}, these are bounded by $\gamma^{-1} \|K^{(B)}\kappa^{(B)}_3\|_F$, $\gamma^{-1} \|K^{(B)}\kappa^{(B)}_4\|_F$, and $\gamma^{-1} \|K^{(B)}\Cov^{(B)}\|_{\mrop}$, while the $L^p$ contribution from the event $K^{(B)} \not\succeq \gamma$ can be controlled by crude polynomial bounds on $\|\kappa^{(B)}_3\|_F$, $\|\kappa^{(B)}_4\|_F$, and $\|\Cov^{(B)}\|_{\mrop}$.

For $|B|\le 3$ and $i\in[N]\setminus B$, recall from \Cref{sec:cov-bound} that $v_i^{(B)} = 1 - (m_i^{(B)})^2$, $q_N^{(B)} = \|m^{(B)}\|^2/N$, and
\[
    d_i^{(B)} = \frac{\Cov^{(B)}_{i,-(B\cup\{i\})}}{v_i^{(B)}} = \frac{m^{(B,i,+)} - m^{(B,i,-)}}{2}, \qquad
    \lambda_i^{(B)} = \beta^2\left(1-\frac{|B|}{N}-q_N^{(B)}\right) - \beta \bM_{i,-(B\cup\{i\})}^\top d_i^{(B)}.
\]
Introduce the notation
\begin{equation}\label{e:phi-i}
    \phi_i^{(B)}(s) = F^{(B\cup\{i\})}(u^{(B\cup\{i\})} + s\beta \bM_{i,-(B\cup\{i\})})\mper
\end{equation}
Also for $s\in \{\pm 1\}$, define
\[
    m^{(B,i,s)} = \grad F^{(B\cup\{i\})}(u^{(B\cup\{i\})} + s\beta \bM_{i,-(B\cup\{i\})})\mcom
\]
which is the mean of the Gibbs measure on spins $[N] \setminus (B \cup \{i\})$, where spins $\{\sigma_j : j\in B\}$ are removed and $\sigma_i$ is set to $s$.
The following lemma provides a description of $r^{(B)}$, which will be useful for controlling its derivatives appearing in \Cref{fac:improved-cumulant-start}.
\begin{lemma}\label{lem:r-decomp}
    For $|B|\le 3$ and $i\in[N]\setminus B$, we have $r_i^{(B)} = \zeta_i^{(B)} + m_i^{(B)} \lambda_i^{(B)}$, where
    \begin{equation}\label{eq:zeta}
        \zeta_i^{(B)}
        = \frac{\phi_i^{(B)}(1) - \phi_i^{(B)}(-1) - (\phi_i^{(B)})'(1) - (\phi_i^{(B)})'(-1)}{2}
        = -\frac14 \int_{-1}^1 (1-s^2) (\phi_i^{(B)})^{(3)}(s) \dif s
    \end{equation}
\end{lemma}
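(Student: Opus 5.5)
The plan is to verify the identity directly from the cavity (single-site) representation of the marginal of $\sigma_i$, and then obtain the integral form of $\zeta_i^{(B)}$ by integrating by parts twice. Throughout, fix $B$ with $|B|\le 3$ and $i\notin B$, and write $\phi=\phi_i^{(B)}$, $m=m^{(B)}(u)$, and $\bM_i=\bM_{i,-(B\cup\{i\})}$. Recall that the diagonal of $\bM$ is zero.

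\emph{Step 1: the marginal of $\sigma_i$.} Summing out the spins in $[N]\setminus(B\cup\{i\})$ shows that, under the Gibbs measure on $[N]\setminus B$ with field $u$, the marginal of $\sigma_i$ is proportional to $\exp(u_i s+\phi(s))$ for $s\in\{\pm1\}$. This uses the definition \eqref{e:phi-i}: the interaction of $\sigma_i=s$ with the rest is exactly a field shift of $s\beta\bM_i$. Hence $m_i=\tanh\bigl(u_i+\tfrac{\phi(1)-\phi(-1)}{2}\bigr)$, which gives
\[
\arctanh(m_i)-u_i=\frac{\phi(1)-\phi(-1)}{2}.
\]

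\emph{Step 2: the linear term.} Conditioning on $\sigma_i$ gives
\[
m_{-(B\cup\{i\})}=\frac{1+m_i}{2}\,m^{(B,i,+)}+\frac{1-m_i}{2}\,m^{(B,i,-)}.
\]
By the chain rule, $\phi'(s)=\beta\bM_i^\top m^{(B,i,s)}$. Therefore
\[
\beta\bM_i^\top m_{-(B\cup\{i\})}=\frac{\phi'(1)+\phi'(-1)}{2}+m_i\,\frac{\phi'(1)-\phi'(-1)}{2}.
\]
Since $d_i^{(B)}=\tfrac{m^{(B,i,+)}-m^{(B,i,-)}}{2}$, the last fraction equals $\beta\bM_i^\top d_i^{(B)}$.

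\emph{Step 3: assembling.} Substitute Steps 1 and 2 into $r_i^{(B)}=T_i^{(B)}(m)-u_i$. For the Onsager term, use
\[
\frac{\beta^2}{N}\bigl(N-|B|-\|m\|_2^2\bigr)m_i=\beta^2\Bigl(1-\frac{|B|}{N}-q_N^{(B)}\Bigr)m_i.
\]
This yields
\[
r_i^{(B)}=\frac{\phi(1)-\phi(-1)-\phi'(1)-\phi'(-1)}{2}+m_i\Bigl[\beta^2\Bigl(1-\frac{|B|}{N}-q_N^{(B)}\Bigr)-\beta\bM_i^\top d_i^{(B)}\Bigr],
\]
and the bracket is exactly $\lambda_i^{(B)}$. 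This gives $r_i^{(B)}=\zeta_i^{(B)}+m_i\lambda_i^{(B)}$ with the first expression in \eqref{eq:zeta}.

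\emph{Step 4: the integral form.} The boundary term vanishes in the first integration by parts because $1-s^2$ vanishes at $s=\pm1$:
\[
\int_{-1}^1(1-s^2)\phi'''(s)\dif s=\int_{-1}^1 2s\,\phi''(s)\dif s.
\]
A second integration by parts gives
\[
\int_{-1}^1 2s\,\phi''(s)\dif s=2\bigl(\phi'(1)+\phi'(-1)\bigr)-2\bigl(\phi(1)-\phi(-1)\bigr).
\]
Multiplying by $-\tfrac14$ recovers the first expression in \eqref{eq:zeta}. Here $\phi$ is smooth, since it is a log-partition function of a field that is affine in $s$.

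Every step is a direct computation. The only real care needed is in the bookkeeping: consistently using the normalisation $q_N^{(B)}=\|m^{(B)}\|^2/N$ and the $|B|/N$ term, and checking that the zero diagonal of $\bM$ means $\bM_i$ omits the $i$th coordinate exactly as in $T^{(B)}$.
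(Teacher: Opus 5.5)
Your proof is correct and follows the paper's argument step for step. You use the single-site marginal identity $\operatorname{arctanh}(m_i)-u_i=\frac{\phi(1)-\phi(-1)}{2}$, then the conditioning identity for $m_{-(B\cup\{i\})}$ together with $\phi'(s)=\beta\bM_{i,-(B\cup\{i\})}^\top m^{(B,i,s)}$, and finally substitute into $r_i^{(B)}=T_i^{(B)}(m)-u_i$. Beyond the paper, you write out the double integration by parts (which the paper leaves as a one-line remark) and track the $|B|/N$ bookkeeping for general $B$ (where the paper treats only $B=\emptyset$); both are correct.
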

\begin{proof}
    We provide the proof for $B = \emptyset$ and omit the superscript $(\emptyset)$; the case of general $B$ is analogous.
    For $s\in \{\pm 1\}$, the contribution $Z_i(s)$ to the partition function from $\sigma_i = s$ is $\exp(h_i s + \phi_i(s))$.
    Thus,
    \begin{equation}\label{eq:one-step-conditioning-new}
        \arctanh(m_i) - h_i
        = \frac12 \log \frac{1+m_i}{1-m_i} - h_i
        = \frac12 \log \frac{Z_i(+1)}{Z_i(-1)} - h_i
        = \frac{\phi_i(+1) - \phi_i(-1)}{2}.
    \end{equation}
    Differentiating \Cref{e:phi-i} yields, for $s\in \{\pm 1\}$,
    \[
        \phi'_i(s) = \beta \bM_{i,-i}^\top m^{(i,s)}\mper
    \]
    Combining with $m_{-i} = \frac{1+m_i}{2} m^{(i,+)} + \frac{1-m_i}{2} m^{(i,-)}$ yields
    \[
        \beta \bM_{i,-i}^\top m_{-i} = \frac{1+m_i}{2} \phi'_i(1) + \frac{1-m_i}{2} \phi'_i(-1)
        = \frac{\phi'_i(1) + \phi'_i(-1)}{2} + m_i \cdot \beta \bM^\top_{i,-i} d_i\mper
    \]
    Substituting the last display and \Cref{eq:one-step-conditioning-new} into $r_i = T_i(m) - h_i$ yields $r_i = \zeta_i + m_i \lambda_i$.
    The second equality in \Cref{eq:zeta} follows from integration by parts.
\end{proof}
Using \cref{lem:r-decomp}, the right-hand sides in \cref{fac:improved-cumulant-start} can be expanded as a combination of $d_i^{(B)}$ and derivatives in $m_i^{(B)}$, $\zeta_i^{(B)}$, $\lambda_i^{(B)}$.
In the following lemma, we collect the $L^p$ estimates we need for these quantities.
The proof is largely by Taylor expanding these quantities in terms of contractions of the $\kappa_{r,s}^{(i)}$ and applying \cref{lem:cumulant-contraction-estimates}; we defer the proof to the end of this subsection.
\begin{lemma}\label{lem:tap-derivative-estimates-new}
    For every fixed $p\ge 2$, there exists $C_p$ such that for all $|B|\le 3$ and $i\in[N]\setminus B$,
    \[
        \E \|d_i^{(B)} - \beta \Cov^{(B \cup \{i\})} \bM_{i, -(B\cup\{i\})}\|_2^p
        \le C_p N^{-p/2}\mcom
    \]
    and for $j=1,2,3$,
    \begin{align*}
        \E \|(v_i^{(B)})^{-1} \grad^j m_i^{(B)}\|_F^p &\le C_p N^{p(j-1)/4}, \\
        \E \|\grad^j \zeta_i^{(B)}\|_F^p &\le C_p N^{p(j-3)/4}.
    \end{align*}
    Finally, for $j=0,1,2,3$,
    \[
        \E \|\grad^j \lambda_i^{(B)}\|_F^p \le C_p \left(
            N^{p(j-3)/4} 
            + N^{-p} \E \|\kappa^{(B\cup\{i\})}_{j+2}\|_F^p 
        \right)\mper
    \]
    and in particular $\E |\lambda_i^{(B)}|^p \le C_p N^{-p/2}$.
\end{lemma}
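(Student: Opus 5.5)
Throughout, I will work with $B=\emptyset$ and suppress superscripts; general $|B|\le 3$ is identical with $\bM_{i,-i}$ replaced by $\bM_{i,-(B\cup\{i\})}$ and $\kappa^{(i)}_{r,s}$ replaced by $\kappa^{(B\cup\{i\})}_{r,s}$, which \Cref{lem:cumulant-contraction-estimates} explicitly allows. The plan is to express each quantity as a Taylor integral in $s$ of contractions $\kappa^{(i)}_{r,s}[\Id^{\otimes k}\otimes\bM_{i,-i}^{\otimes \ell}]$, then read off $L^p$ bounds from \Cref{lem:cumulant-contraction-estimates}, using $\E\|\int_{-1}^1 X_s\,ds\|^p\le 2^{p-1}\int\E\|X_s\|^p ds$.

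\emph{Step 1 (the $d_i$ estimate).} Since $d_i=\frac{1}{2}(m^{(i,+)}-m^{(i,-)})$ and $m^{(i,\pm)}=\grad F^{(i)}(u_i\pm\beta\bM_{i,-i})$, Taylor's formula gives $d_i-\beta\Cov^{(i)}\bM_{i,-i}=\frac{\beta^2}{4}\int_0^1(1-s)\,(\kappa^{(i)}_{3,s}-\kappa^{(i)}_{3,-s})[\bM_{i,-i}^{\otimes2}]\,ds$. With $r=3,\ell=2,k=0$, \Cref{lem:cumulant-contraction-estimates} gives $p$-th moment $N^{p(3/4-1)}=N^{-p/4}$. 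That is not enough, so I expand one order further, using that the odd part kills $\kappa_3$ at $s=0$. The leading term is then $\frac{\beta^3}{6}\kappa_4^{(i)}[\bM^{\otimes3}]$, which has $N^{p(1-3/2)}=N^{-p/2}$, and the remainder $\kappa_{5,s}[\bM^{\otimes 4}]$ has $N^{p(5/4-2)}$. This gives $C_pN^{-p/2}$.

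\emph{Step 2 (derivatives of $m_i$, $\zeta_i$).} By the cavity identity $\arctanh m_i=h_i+\frac12(\phi_i(1)-\phi_i(-1))$, and because $\grad_{u_{-i}}\phi_i(s)=m^{(i,s)}$, every $u$-derivative of $\phi_i(s)$ is a cumulant $\kappa^{(i)}_{j,s}$. Hence $\grad^j \arctanh m_i=\frac12(\kappa^{(i)}_{j,+}-\kappa^{(i)}_{j,-})$ on the $-i$ block, which equals $\beta\int_{-1}^1\kappa^{(i)}_{j+1,s}[\bM_{i,-i}]ds$ and has moment $N^{p((j+1)/4-1/2)}=N^{p(j-1)/4}$. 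The $u_i$-direction derivatives are handled by the chain rule; they are lower-order polynomials in $m_i$ and the previous quantities. Converting from $\arctanh m_i$ to $v_i^{-1}\grad^j m_i$ via the chain rule produces products of lower derivatives with bounded coefficients in $m_i$. Hölder then gives the same exponents, since $(j_1-1)+(j_2-1)\le j-1$ when $j_1+j_2=j$. For $\zeta_i$, I differentiate under the integral in \eqref{eq:zeta}. Here $(\phi_i)^{(3)}(s)=\beta^3\kappa^{(i)}_{3,s}[\bM^{\otimes3}]$, and $j$ further $u$-derivatives give $\kappa^{(i)}_{3+j,s}[\bM^{\otimes3}]$, with moment $N^{p((3+j)/4-3/2)}=N^{p(j-3)/4}$.

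\emph{Step 3 (the $\lambda_i$ bounds).} I will write $\lambda_i$ as in the proof of \Cref{lem:error-bounds}: $\frac{\beta^2}{N}(\Tr\Cov-\Tr\Cov^{(i)})$, minus $\beta^2(\bM^\top\Cov^{(i)}\bM-\frac1N\Tr\Cov^{(i)})$, minus a $\kappa_4[\bM^{\otimes4}]$ Taylor term. Taking $j$ derivatives, the Taylor term gives $\kappa_{4+j,s}[\bM^{\otimes 4}]$ and hence $N^{p((4+j)/4-2)}$, which is even better than needed. The fluctuation term gives $\kappa^{(i)}_{2+j}[\bM^{\otimes 2}-\frac1N\Id]$. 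I will bound its conditional moments over $\bW_{i,-i}$ by Gaussian hypercontractivity, as in the first-term computation of that proof, producing $N^{-1}\|\kappa^{(i)}_{j+2}\|_F$; this is where the $N^{-p}\E\|\kappa_{j+2}\|_F^p$ term comes from. The trace-difference term is rewritten as $\frac{\beta^2}{N}(1-m_i^2+\|m^{(i)}\|^2-\|m_{-i}\|^2)$ and differentiated. Its derivatives are controlled by Steps 1--2 and by $\grad^j$ of $m_{-i}-m^{(i)}$, which is a Taylor integral in $\kappa_{j+2,s}[\bM]$, giving the $N^{p(j-3)/4}$ term. The case $j=0$ was already shown to be $N^{-p/2}$.

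\emph{Main obstacle.} I expect the main difficulty to be the fluctuation term in $\grad^j\lambda_i$. Crude use of \Cref{lem:cumulant-contraction-estimates} with $\ell=2$ loses exactly the cancellation against $\frac1N\Id$. It is therefore necessary to isolate $\E_{\bW_{i,-i}}$, which cancels, and to bound the remaining chaos by Frobenius norms of the full cumulant. This is why the statement leaves $\E\|\kappa_{j+2}\|_F^p$ explicit rather than bounding it.
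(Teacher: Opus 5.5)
Most of your plan agrees with the paper. This includes the antisymmetric Taylor expansion for $d_i$; the paper simply stops at the $\kappa_4$ integral remainder, since the $\kappa^{(i)}_3[\bM_{i,-i}^{\otimes 2}]$ terms cancel between $\pm$. It also includes the $\arctanh$/chain-rule route to $v_i^{-1}\grad^j m_i$, the integral formula for $\grad^j\zeta_i$, and, in Step 3, both the $\kappa_{j+4,s}[\bM_{i,-i}^{\otimes 4}]$ remainder and the conditional-Gaussian bound on $\kappa^{(i)}_{j+2}[\bM_{i,-i}^{\otimes2}-\Id/N]$.

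One small repair is needed. At $j=1$ in Step 2 you need $\E\|d_i\|_2^p\le C_p$, and you read it off from $\kappa^{(i)}_{2,s}[\bM_{i,-i}]$. That is $r=2$, which lies outside the range $r\ge 3$ of \Cref{lem:cumulant-contraction-estimates}. The paper instead combines Step 1 with conditional Gaussian moments of $\Cov^{(i)}\bM_{i,-i}$ (with $\Cov^{(i)}$ independent of $\bW_{i,-i}$) and \Cref{lem:cumulant-bounds}.

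\textbf{The gap: the trace-difference term in Step 3 for $j\ge 1$.} Write $\|m^{(i)}\|_2^2-\|m_{-i}\|_2^2=-\langle m_{-i}-m^{(i)},\,m_{-i}+m^{(i)}\rangle$ and differentiate. Since $m_{-i}-m^{(i)}=\bigl(\tfrac{m^{(i,+)}+m^{(i,-)}}{2}-m^{(i)}\bigr)+m_id_i$, the term where all $j$ derivatives land on $m_{-i}-m^{(i)}$ contains $m_i\langle\grad^j d_i,\,m_{-i}+m^{(i)}\rangle$. Here $\grad^j d_i=\frac{\beta}{2}\int_{-1}^1\kappa^{(i)}_{j+2,s}[\bM_{i,-i}]\,ds$ has Frobenius norm of order $N^{j/4}$. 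As written, your only control on the contraction is $\|m_{-i}+m^{(i)}\|_2\le 2\sqrt N$. That gives $\frac{\beta^2}{N}N^{j/4+1/2}=N^{(j-2)/4}$, a factor $N^{1/4}$ worse than the claimed $N^{(j-3)/4}$.

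At $j=0$ this costs only $N^{-1/2}$, which is all the final assertion $\E|\lambda_i|^p\le C_pN^{-p/2}$ needs; that is why the crude argument in \Cref{lem:error-bounds} suffices there. For $j\ge 1$, however, the loss is not absorbed by the $N^{-p}\E\|\kappa^{(i)}_{j+2}\|_F^p$ term once the improved cumulant bounds are plugged in. That is exactly how the lemma is used: \Cref{cor:r-grad12} needs $\E\|\grad\lambda_i\|_F^p\le C_pN^{-p/2}$ to get $\E\|\grad r\|_F^p\le C_p$, and hence $\E\|\Cov\|_{\mathrm{op}}^p\le C_p$. With your bound you would only get $\|\grad r\|_F$ of order $N^{1/4}$.

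\textbf{The missing idea.} Keep this antisymmetric piece as a trace rather than as an inner product with the mean. Since $\Tr\Cov^{(i,\pm)}=N-1-\|m^{(i,\pm)}\|_2^2$, one has $\langle d_i,\,m^{(i,+)}+m^{(i,-)}\rangle=-\frac12\Tr\bigl(\Cov^{(i,+)}-\Cov^{(i,-)}\bigr)=-\frac{\beta}{2}\int_{-1}^1\kappa^{(i)}_{3,s}[\Id\otimes\bM_{i,-i}]\,ds$.

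The paper organizes this via the law of total covariance, $\Tr\Cov-\Tr\Cov^{(i)}=\frac12\Xi_{i,1}+\frac{m_i}{2}\Xi_{i,2}+v_i(1+\|d_i\|_2^2)$. It then applies \Cref{lem:cumulant-contraction-estimates} with $k=1$. Through the centered-overlap structure of \Cref{lem:plant-cumulant}, the $\Id$-contraction gains exactly the $N^{1/4}$ that Cauchy--Schwarz against $m$ throws away. This gives $\E\|\grad^j(\Tr\Cov-\Tr\Cov^{(i)})\|_F^p\le C_pN^{p(j+1)/4}$, and hence the $N^{p(j-3)/4}$ term.

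Relatedly, your ``main obstacle'' is misplaced. The fluctuation term is routine; the trace term is where the sharp exponent is decided.
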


\begin{corollary}\label{cor:r-grad23}
    For all $p\ge 2$, there exists $C_p > 0$ such that for all $|B|\le 3$,
    \[
        \E \|\grad^2 r^{(B)}\|_F^p \le C_p N^{p/2}, \qquad
        \E \|\grad^3 r^{(B)}\|_F^p \le C_p N^{3p/4}.
    \]
\end{corollary}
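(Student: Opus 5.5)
The plan is to differentiate the decomposition of \Cref{lem:r-decomp} coordinatewise and then feed each resulting term into the moment estimates of \Cref{lem:tap-derivative-estimates-new}. Since $r^{(B)}_i = \zeta^{(B)}_i + m^{(B)}_i\lambda^{(B)}_i$, the Leibniz rule gives, for $j=2,3$,
\[
    \grad^j r^{(B)}_i = \grad^j \zeta^{(B)}_i + \sum_{a=0}^{j} \binom{j}{a}\, \grad^a m^{(B)}_i \otimes \grad^{j-a}\lambda^{(B)}_i \mcom
\]
with the tensor factors suitably symmetrized. We then use $\|\grad^j r^{(B)}\|_F^2 = \sum_i \|\grad^j r^{(B)}_i\|_F^2$, together with the inequality $\E(\sum_{i} X_i)^{p/2} \le N^{p/2-1}\sum_i \E X_i^{p/2}$ (H\"older). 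This reduces the claim to the per-coordinate bounds
\[
    \E\|\grad^2 r^{(B)}_i\|_F^p \le C_p, \qquad \E\|\grad^3 r^{(B)}_i\|_F^p \le C_p N^{p/4},
\]
uniformly in $i$. Indeed, these give $\E\|\grad^j r^{(B)}\|_F^p \le N^{p/2}\max_i \E\|\grad^j r^{(B)}_i\|_F^p$, which yields $N^{p/2}$ and $N^{3p/4}$ respectively.

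The individual terms are handled as follows.
\begin{itemize}
    \item The $\zeta$ term contributes $C_pN^{p(j-3)/4}$, which is $O(1)$ for both $j=2$ and $j=3$.
    \item For the cross terms with $a\ge1$, write $\grad^a m_i = v_i\cdot (v_i^{-1}\grad^a m_i)$ and use $v_i\le 1$. By Cauchy--Schwarz,
    \[
        \E\|\grad^a m_i\otimes\grad^{j-a}\lambda_i\|^p \le \bigl(\E\|v_i^{-1}\grad^a m_i\|^{2p}\bigr)^{1/2}\bigl(\E\|\grad^{j-a}\lambda_i\|^{2p}\bigr)^{1/2}.
    \]
    This gives the exponent $N^{p(a-1)/4}\cdot N^{p(j-a-3)/4} = N^{p(j-4)/4}$ from the first part of the $\lambda$ bound, which is harmless.
    \item The $a=0$ term $m_i\grad^j\lambda_i$ is bounded using only $|m_i|\le1$.
\end{itemize}

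The remaining contribution in every $\lambda$ term is the piece $N^{-p}\E\|\kappa^{(B\cup\{i\})}_{j-a+2}\|_F^p$ from \Cref{lem:tap-derivative-estimates-new}. Here we invoke the crude bounds of \Cref{lem:cumulant-bounds}, transferred to the cavity model via \Cref{lem:cumulant-contraction-estimates} with $k=\ell=0$ (legitimate since $|B\cup\{i\}|\le 4$ is constant). These give $\E\|\kappa_q\|_F^p\lesssim N^{pq/4}$. So for $j-a=3$ the term is $N^{-p}N^{5p/4}=N^{p/4}$, which is exactly the allowance for $j=3$. For $j-a=2$ it is $N^{-p}N^{p}=O(1)$, which is the allowance for $j=2$. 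For smaller orders it is smaller still. For the order-zero case $a=j$ we use $\E|\lambda_i|^p\le C_pN^{-p/2}$ in place of the general bound.

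The main point to check carefully is that no cross term or $\kappa$ remainder exceeds the budget. The worst cases are the $N^{-p}\E\|\kappa_5\|_F^p$ term when $j=3$ and the $\kappa_4$ term when $j=2$. Both sit exactly at the target exponent using only the crude Frobenius bound $N^{q/4}$, so no improved cumulant bounds are needed and there is no circularity with \Cref{lem:improved-cumulant}.

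A secondary issue is making the mixed $L^p$ products rigorous: the terms are not independent, so each product must be split by Cauchy--Schwarz or H\"older at moment $2p$. This only changes the constants, since \Cref{lem:tap-derivative-estimates-new} holds for every fixed $p$.
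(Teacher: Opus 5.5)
Your proposal is correct and is essentially the paper's proof: you differentiate $r_i^{(B)}=\zeta_i^{(B)}+m_i^{(B)}\lambda_i^{(B)}$ from \Cref{lem:r-decomp}, use only the crude bound $\E\|\kappa_{j+2}^{(B\cup\{i\})}\|_F^p\lesssim N^{p(j+2)/4}$ to turn the estimate of \Cref{lem:tap-derivative-estimates-new} into $\E\|\grad^j\lambda_i^{(B)}\|_F^p\lesssim N^{p(j-2)/4}$, and combine this with the remaining estimates there. You additionally make explicit the H\"older splitting of the product terms and the $N^{p/2-1}$ aggregation over coordinates, both of which the paper leaves implicit. One small bookkeeping note: each cross term with $a\ge1$ comes out to $N^{p(j-3)/4}$ once the growth of $\grad^a m_i^{(B)}$ is included. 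It is not ``smaller still'' as you say, but it stays within budget.
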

\begin{proof}
    By \cref{lem:cumulant-bounds}, for each $j=1,2,3$,
    \[
        \E \|\kappa^{(B\cup\{i\})}_{j+2}\|_F^p \le C_p N^{p(j+2)/4}\mcom
    \]
    and so the estimate on $\grad^j \lambda_i^{(B)}$ in \cref{lem:tap-derivative-estimates-new} gives
    \[
        \E \|\grad^j \lambda_i^{(B)}\|_F^p \le C_p N^{p(j-2)/4}
    \]
    after adjusting $C_p$.
    Recall from \cref{lem:r-decomp} that $r_i^{(B)} = \zeta_i^{(B)} + m_i^{(B)} \lambda_i^{(B)}$.
    The result follows by differentiating this identity and applying the last display and the remaining estimates from \cref{lem:tap-derivative-estimates-new}.
\end{proof}
The following lemma directly implies \cref{lem:improved-cumulant}\ref{it:improved-cumulant-kappa34}, which corresponds to the case $|B| \le 1$; we allow more general $B$ for use in the rest of the proof of \cref{lem:improved-cumulant}.
It also provides stronger bounds on $\|\kappa_3^{(B)}\|_F$ and $\|\kappa_4^{(B)}\|_F$ than obtained in \cref{lem:cumulant-bounds}, which strengthens the estimate on $\grad^j \lambda_i^{(B)}$ in \cref{lem:tap-derivative-estimates-new}; this will be useful in \cref{cor:r-grad12} below.
\begin{lemma}\label{lem:improved-cumulant-kappa34}
    For all $p\ge 2$, there exists $C_p > 0$ such that for all $|B| \le 3$,
    \[
        \E \|\kappa_3^{(B)}\|_F^p
        \le C_p N^{p/2}\mcom \qquad
        \E \|\kappa_4^{(B)}\|_F^p
        \le C_p N^{3p/4}\mper
    \]
    In particular, \cref{lem:improved-cumulant}\ref{it:improved-cumulant-kappa34} follows as the case $|B| \le 1$.
\end{lemma}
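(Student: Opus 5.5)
We will use the identities of \Cref{fac:improved-cumulant-start} for $q=3,4$, invert $K^{(B)}$ on the good event from \Cref{lem:organized-gap}, and control the right-hand sides in $L^p$.

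\textbf{Splitting by the good event.} Write $\calG = \{K^{(B)}\psdge\gamma\}$. On $\calG$ we have $\|\kappa_3^{(B)}\|_F\le\gamma^{-1}\|K^{(B)}\kappa_3^{(B)}\|_F$, and similarly for $\kappa_4^{(B)}$. Off $\calG$ we apply Cauchy--Schwarz,
\[
\E\|\kappa_q^{(B)}\|_F^p\bone_{\calG^c}\le \sqrt{\E\|\kappa_q^{(B)}\|_F^{2p}}\sqrt{\Pr[\calG^c]}\le C N^{pq/4}N^{-a/2}.
\]
Here the moment bound is the crude one from \Cref{lem:cumulant-bounds}, and we take $a$ large. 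Thus it suffices to bound $\E\|K^{(B)}\kappa_q^{(B)}\|_F^p$.

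\textbf{Bound for $\kappa_3$.} The term $\grad^2 r^{(B)}$ contributes $O(N^{p/2})$ by \Cref{cor:r-grad23}. Since $T^{(B)}$ has the explicit form given above, $D^2T^{(B)}(m)$ consists of two pieces:
\begin{itemize}
\item a diagonal piece $\partial_a^2\arctanh(m_a)=2m_a/v_a^2$, supported on $i=a=b$;
\item a piece from $\frac{\beta^2}{N}(N-|B|-\|m\|^2)m_i$, made of terms like $-\frac{2\beta^2}{N}(\delta_{ia}m_b+\delta_{ib}m_a+\delta_{ab}m_i)$.
\end{itemize}
The diagonal piece gives the tensor $\sum_a \frac{2m_a}{v_a^2}e_a\otimes\Cov_{a\cdot}\otimes\Cov_{a\cdot}$. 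Its Frobenius norm squared is $\sum_a (4m_a^2/v_a^4)\|\Cov_{a\cdot}\|^4$, which is at most $\|\Cov\|_{\mathrm{op}}^2\sum_a v_a^{-4}\|\Cov_{a\cdot}\|^2$. Note that $\Cov_{aa}=v_a$ and $\Cov_{a,-a}=v_ad_a$. Hence $\|\Cov_{a\cdot}\|^2/v_a^2=1+\|d_a\|^2$, and $\|d_a\|_p=O(1)$ by \Cref{lem:tap-derivative-estimates-new}. The remaining factor $1/v_a^2$ would be controlled by the field; this is delicate, and I expect it needs more care. One option is to bound $\|\Cov_{a\cdot}\|\le v_a^{1/2}\|\Cov\|^{1/2}_{\mathrm{op}}$ and use moments of $1/v_a$ under Gaussian fields. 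Either way, the diagonal piece gives $O(N^{p/2})$. The rank-type pieces involve $\frac{1}{N}\Cov m\otimes\Cov$ and $\frac1N m\otimes\Cov^2$. Their Frobenius norms are $\le N^{-1}\sqrt N\|\Cov\|_{\mathrm{op}}\|\Cov\|_F$ and $N^{-1/2}\|\Cov^2\|_F$, which are $O(N^{1/2})$ times polynomial moments of $\|\Cov\|_{\mathrm{op}}$.

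\textbf{The circularity in $\|\Cov\|_{\mathrm{op}}$, and how to break it.} The previous step is circular, because $\|\Cov\|_{\mathrm{op}}$ in $L^p$ is part (b), which is proved later. To avoid this, I would use only the crude bounds $\|\Cov\|_{\mathrm{op}}\le\|\Cov\|_F=O_{L^p}(\sqrt N)$ where harmless. Where they are not harmless, I would first run the $\kappa_2$ identity $K\Cov-\Id=\grad r$, together with \Cref{lem:tap-derivative-estimates-new}, to obtain $\E\|\Cov^{(B)}\|_{\mathrm{op}}^p=O(1)$ for $|B|\le 3$ as a preliminary step. This uses $\|\grad r\|_{\mathrm{op}}\le\|\grad\zeta\|_F+\|\diag(\lambda)\|+\|m\otimes\grad\lambda\|_F$, each of which is $O(1)$ by that lemma. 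Given this, the $\kappa_3$ bound $C_pN^{p/2}$ follows.

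\textbf{Bound for $\kappa_4$.} The term $\grad^3r^{(B)}$ is $O(N^{3p/4})$ by \Cref{cor:r-grad23}. The term $D^3T\{\Cov,\Cov,\Cov\}$ is treated exactly as above, with diagonal piece $\partial^3\arctanh\sim v_a^{-3}$, and gives $O(N^{1/2})$. The main obstacle is the cross term $\sum_{\Sym}D^2T\{\kappa_3,\Cov\}$. Its diagonal part has entries $\frac{2m_a}{v_a^2}(\kappa_3)_{ajk}\Cov_{a\ell}$, so its Frobenius norm is at most $\max_a\frac{\|\Cov_{a\cdot}\|}{v_a^2}\|\kappa_3\|_F$. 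Using the $\kappa_3$ bound just proved, this is $O(N^{1/2}\cdot\text{polylog moments})\le O(N^{3p/4})$ in $L^p$, with room to spare from $N^{1/2}$ versus $N^{3/4}$. The rank pieces give $\frac1N\|m\|\|\kappa_3\|_F\|\Cov\|_{\mathrm{op}}=O(N^{1/2})$. Combining everything with Hölder gives $\E\|\kappa_4^{(B)}\|_F^p\le C_pN^{3p/4}$.
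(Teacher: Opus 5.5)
Your architecture matches the paper's. You invert $K^{(B)}$ on $\{K^{(B)}\psdge\gamma\}$ using \Cref{fac:improved-cumulant-start}, control the complement with crude moments and \Cref{lem:organized-gap}, take $\grad^2 r^{(B)},\grad^3 r^{(B)}$ from \Cref{cor:r-grad23}, and split $T^{(B)}$ into the $\arctanh$ part and the cubic part.

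\textbf{First gap: the $\arctanh$ piece.} When you pull $\|\Cov\|_{\mathrm{op}}^2$ out of one row factor, you are left with $\sum_a v_a^{-2}(1+\|d_a\|^2)$. In the $\kappa_4$ cross term you are likewise left with $\max_a \sqrt{1+\|d_a\|^2}/v_a$. Nothing available controls moments of $1/v_a=\cosh^2(\arctanh m_a)$; that would need exponential-moment control of the cavity field, which none of the $L^p$ estimates give. So ``either way, the diagonal piece gives $O(N^{p/2})$'' is unjustified.

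The missing observation is that every power of $v_a^{-1}$ produced by $\partial^k\arctanh$ is absorbed exactly by a row of a derivative of $m_a$. Set $y_a=v_a^{-1}\grad m_a=v_a^{-1}\Cov_{a\cdot}=(1,d_a)$ and $z_a=v_a^{-1}\grad^2 m_a=v_a^{-1}(\kappa_3)_{a\cdot\cdot}$. With $A=\arctanh$ entrywise,
\[
(D^2A\{\Cov,\Cov\})_a=2m_a y_a^{\otimes 2},\qquad (D^2A\{\kappa_3,\Cov\})_a=2m_a z_a\otimes y_a,\qquad (D^3A\{\Cov,\Cov,\Cov\})_a=2(1+3m_a^2)y_a^{\otimes 3}.
\]
\Cref{lem:tap-derivative-estimates-new} with $j=1,2$ gives $\E\|y_a\|_2^p\le C_p$ and $\E\|z_a\|_F^p\le C_pN^{p/4}$. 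Summing slices via $\E\|H\|_F^p\le N^{p/2-1}\sum_a\E\|H_a\|_F^p$ then yields $N^{p/2}$, $N^{3p/4}$ and $N^{p/2}$ respectively. You already wrote $\|\Cov_{a\cdot}\|^2/v_a^2=1+\|d_a\|^2$; applying it to both factors gives $\sum_a 4m_a^2(1+\|d_a\|^2)^2$ at once. Note that the cross term then needs no improved $\kappa_3$ bound at all.

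\textbf{Second gap: the circularity fix is itself circular.} To get $\E\|\Cov^{(B)}\|_{\mathrm{op}}^p=O(1)$ from $K\Cov-\Id=\grad r$, you need the matrix with rows $m_i\grad\lambda_i$ to be $O(1)$. But \Cref{lem:tap-derivative-estimates-new} only gives
\[
\E\|\grad\lambda_i\|^p\le C_p\bigl(N^{-p/2}+N^{-p}\E\|\kappa_3^{(B\cup\{i\})}\|_F^p\bigr).
\]
With the crude bound $\E\|\kappa_3\|_F^p\lesssim N^{3p/4}$, this is $N^{-p/4}$ per row, so the Frobenius norm is of order $N^{1/4}$. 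It becomes $O(1)$ only once you have the improved $\kappa_3$ bound you are trying to prove. (Also, the $\lambda$-term in $\grad r$ is $\diag(\lambda)\Cov$, not $\diag(\lambda)$.) This is why the paper proves the present lemma first, then \Cref{cor:r-grad12}, then \Cref{lem:improved-cumulant-cov}.

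No operator-norm input is needed anyway. For the cubic part $U(z)=-\frac{\beta^2}{N}\|z\|^2 z$, the crude Frobenius bounds of \Cref{lem:cumulant-bounds} suffice, in $L^p$:
\[
\|D^2U\{\Cov,\Cov\}\|_F\le \tfrac{6\beta^2}{N}\|m\|_2\|\Cov\|_F^2=O(\sqrt N),
\]
\[
\|D^2U\{\kappa_3,\Cov\}\|_F\le\tfrac{6\beta^2}{N}\|m\|_2\|\kappa_3\|_F\|\Cov\|_F=O(N^{3/4}),
\]
\[
\|D^3U\{\Cov,\Cov,\Cov\}\|_F\le\tfrac{6\beta^2}{N}\|\Cov\|_F^3=O(\sqrt N).
\]
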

\begin{proof}
    We provide the proof for $B = \emptyset$ and omit the superscript $(\emptyset)$; the case of general $B$ is analogous.
    Most of the proof will be devoted to the estimates
    \begin{align}
        \label{eq:D2-cov-cov}
        \E \|D^2T(m)\{\Cov,\Cov\}\|_F^p &\le C_p N^{p/2}\mcom \\
        \label{eq:D2-k3-cov}
        \E \|D^2T(m)\{\kappa_3,\Cov\}\|_F^p &\le C_p N^{3p/4}\mcom \\
        \label{eq:D3-cov-cov-cov}
        \E \|D^3T(m)\{\Cov,\Cov,\Cov\}\|_F^p &\le C_p N^{p/2}\mper
    \end{align}
    We first explain why they imply the conclusion.
    Together with \cref{cor:r-grad23}, these imply
    \[
        \E \|K\kappa_3\|_F^p
        \le C_p N^{p/2}\mcom \qquad
        \E \|K\kappa_4\|_F^p
        \le C_p N^{3p/4}\mper
    \]
    Let $\calE = \{K\succeq\gamma\Id\}$, where $\gamma>0$ is given by \cref{lem:organized-gap}.
    On $\calE$, for each $q\in \{3,4\}$,
    \[
        \|\kappa_q\|_F
        \le \|K^{-1}\|_{\mrop} \|K\kappa_q\|_F
        \le \gamma^{-1} \|K\kappa_q\|_F\mper
    \]
    while on the complement of $\calE$, we can use the crude bound $\|\kappa_q\|_F \le CN^{q/2}$.
    By the above and \cref{lem:organized-gap},
    \[
        \E \|\kappa_3\|_F^p
        \le \gamma^{-p} \E \|K\kappa_3\|_F^p
        + \Pr[\calE^c] C_p N^{3p/2}
        \le C_p N^{p/2}\mcom
    \]
    and similarly $\E \|\kappa_4\|_F^p \le C_p N^{3p/4}$.
    We now turn to the proofs of \cref{eq:D2-cov-cov,eq:D2-k3-cov,eq:D3-cov-cov-cov}.
    Write
    \begin{equation}\label{eq:AU}
        A(z) = \arctanh(z)\mcom \qquad
        U(z) = - \frac{\beta^2}{N} \|z\|^2 z\mcom
    \end{equation}
    where $\arctanh$ is applied entry-wise.
    Note that $T - A - U$ is linear in $m$.
    It therefore suffices to establish \cref{eq:D2-cov-cov,eq:D2-k3-cov,eq:D3-cov-cov-cov} with $A$ and $U$ in place of $T$.

    We first treat $A$.
    Set
    \[
        y_i=v_i^{-1}\grad m_i\mcom\qquad
        z_i=v_i^{-1}\grad^2m_i\mper
    \]
    Since $\grad m = \Cov$ and $\grad^2m = \kappa_3$, direct differentiation gives
    \begin{align*}
        \left( D^2A(m)\{\Cov,\Cov\} \right)_i
        &= 2m_i y_i^{\otimes2}\mcom \\
        \left( D^2A(m)\{\kappa_3,\Cov\} \right)_i
        &= 2m_i z_i\otimes y_i\mcom \\
        \left( D^3A(m)\{\Cov,\Cov,\Cov\} \right)_i
        &= 2(1+3m_i^2)y_i^{\otimes3}\mper
    \end{align*}
    By \cref{lem:tap-derivative-estimates-new} with $j=1,2$, for every fixed $p\ge 2$,
    \[
        \E\|y_i\|_2^p\le C_p\mcom \qquad
        \E\|z_i\|_F^p\le C_pN^{p/4} \mper
    \]
    Together with the trivial estimate $|m_i| \le 1$ and H\"older's inequality, we obtain for all $i$
    \begin{align*}
        \E \left\|\left( D^2A(m)\{\Cov,\Cov\} \right)_i\right\|_F^p
        &\le C_p, \\
        \E \left\|\left( D^2A(m)\{\kappa_3,\Cov\} \right)_i\right\|_F^p
        &\le C_p N^{p/4}\mcom \\
        \E \left\|\left( D^3A(m)\{\Cov,\Cov,\Cov\} \right)_i\right\|_F^p
        &\le C_p\mper
    \end{align*}
    For any tensor $H \in (\R^N)^{\otimes q}$ with slices $H_i \in (\R^N)^{\otimes (q-1)}$ along the first index, the inequality
    \[
        \E\|H\|_F^p
        =\E\left(\sum_i\|H_i\|_F^2\right)^{p/2}
        \le N^{p/2-1}\sum_i\E\|H_i\|_F^p
    \]
    therefore yields \cref{eq:D2-cov-cov,eq:D2-k3-cov,eq:D3-cov-cov-cov} with $A$ in place of $T$.
    We next treat $U$.
    Direct differentiation gives
    \begin{align}
        \label{eq:D2U}
        \left(D^2U(m)\{\Cov,\Cov\}\right)_{ijk}
        &=-\frac{2\beta^2}{N}\left(
            m_i(\Cov^2)_{jk}+\Cov_{ij}(\Cov m)_k+\Cov_{ik}(\Cov m)_j
        \right),\\
        \nonumber
        \left(D^2U(m)\{\kappa_3,\Cov\}\right)_{ijk\ell}
        &=-\frac{2\beta^2}{N}\left(
            m_i\sum_a(\kappa_3)_{ajk}\Cov_{a\ell}
            +(\kappa_3)_{ijk}(\Cov m)_\ell
            +\Cov_{i\ell}\sum_a m_a(\kappa_3)_{ajk}
        \right),\\
        \nonumber
        \left(D^3U(m)\{\Cov,\Cov,\Cov\}\right)_{ijk\ell}
        &=-\frac{2\beta^2}{N}\left(
            \Cov_{ij}(\Cov^2)_{k\ell}
            +\Cov_{ik}(\Cov^2)_{j\ell}
            +\Cov_{i\ell}(\Cov^2)_{jk}
        \right)\mper
    \end{align}
    The triangle inequality and Cauchy--Schwarz imply
    \begin{align*}
        \|D^2U(m)\{\Cov,\Cov\}\|_F
        &\le\frac{6\beta^2}{N}\|m\|_2\|\Cov\|_F^2,\\
        \|D^2U(m)\{\kappa_3,\Cov\}\|_F
        &\le\frac{6\beta^2}{N}\|m\|_2\|\kappa_3\|_F\|\Cov\|_F,\\
        \|D^3U(m)\{\Cov,\Cov,\Cov\}\|_F
        &\le\frac{6\beta^2}{N}\|\Cov\|_F^3\mper
    \end{align*}
    By \cref{lem:cumulant-bounds},
    \[
        \E\|\Cov\|_F^p\le C_pN^{p/2}\mcom \qquad
        \E\|\kappa_3\|_F^p\le C_pN^{3p/4}\mper
    \]
    Combining these with $\|m\|_2\le\sqrt N$ and H\"older's inequality gives \cref{eq:D2-cov-cov,eq:D2-k3-cov,eq:D3-cov-cov-cov} with $U$ in place of $T$.
\end{proof}
\begin{corollary}\label{cor:r-grad12}
    For all $p\ge 2$, there exists $C_p > 0$ such that for all $|B|\le 2$, 
    \[
        \E \|\grad r^{(B)}\|_F^p
        \le C_p\mcom \qquad
        \E \|\grad^2 r^{(B)}\|_F^p
        \le C_p N^{p/4}\mper
    \]
\end{corollary}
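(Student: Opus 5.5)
The plan is to repeat the argument of \Cref{cor:r-grad23}, now feeding in the improved cumulant bounds of \Cref{lem:improved-cumulant-kappa34}; the restriction $|B|\le 2$ is exactly what makes those bounds available. We start from the decomposition $r_i^{(B)} = \zeta_i^{(B)} + m_i^{(B)}\lambda_i^{(B)}$ of \Cref{lem:r-decomp}. Differentiating once gives
\[
  \grad r_i^{(B)} = \grad\zeta_i^{(B)} + \lambda_i^{(B)}\grad m_i^{(B)} + m_i^{(B)}\grad\lambda_i^{(B)}\mcom
\]
and differentiating twice gives
\[
  \grad^2 r_i^{(B)} = \grad^2\zeta_i^{(B)} + \lambda_i^{(B)}\grad^2 m_i^{(B)} + 2\,\mathrm{Sym}\bigl(\grad m_i^{(B)}\otimes\grad\lambda_i^{(B)}\bigr) + m_i^{(B)}\grad^2\lambda_i^{(B)}\mper
\]
We then bound the $p$-th moment of the Frobenius norm of each row. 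At the end we assemble rows using $\E\|H\|_F^p\le N^{p/2-1}\sum_i\E\|H_i\|_F^p$. For this to give the claimed bounds, we need per-row moments of order $C_pN^{-p/2}$ for $\grad r$ and $C_pN^{-p/4}$ for $\grad^2 r$.

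\textbf{Step 1: the $\lambda$ bounds.} The key input is the estimate on $\grad^j\lambda_i^{(B)}$ in \Cref{lem:tap-derivative-estimates-new}, namely $N^{p(j-3)/4}+N^{-p}\E\|\kappa^{(B\cup\{i\})}_{j+2}\|_F^p$. Since $|B\cup\{i\}|\le 3$, \Cref{lem:improved-cumulant-kappa34} gives $\E\|\kappa_3^{(B\cup\{i\})}\|_F^p\le C_pN^{p/2}$ and $\E\|\kappa_4^{(B\cup\{i\})}\|_F^p\le C_pN^{3p/4}$. With these we obtain
\[
  \E\|\grad\lambda_i^{(B)}\|_F^p\le C_pN^{-p/2}\mcom\qquad \E\|\grad^2\lambda_i^{(B)}\|_F^p\le C_pN^{-p/4}\mper
\]
These replace the weaker bounds $N^{-p/4}$ and $1$ that were used in \Cref{cor:r-grad23}.

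\textbf{Step 2: bounding each term.} For the remaining factors, \Cref{lem:tap-derivative-estimates-new} supplies $\E\|\grad^j\zeta_i^{(B)}\|_F^p\le C_pN^{p(j-3)/4}$ and $\E|\lambda_i^{(B)}|^p\le C_pN^{-p/2}$. We also use $\|\grad^j m_i^{(B)}\|\le\|v_i^{-1}\grad^j m_i^{(B)}\|$, which is bounded by $C_pN^{p(j-1)/4}$ in $L^p$, together with $|m_i|\le1$.

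For $\grad r_i$, the three terms have $L^p$ sizes $N^{-1/2}$, $N^{-1/2}\cdot 1$, and $N^{-1/2}$; Hölder handles the products. This yields the per-row bound $C_pN^{-p/2}$ and hence $\E\|\grad r^{(B)}\|_F^p\le C_p$.

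For $\grad^2 r_i$, the sizes are as follows. The $\zeta$ term is $N^{-1/4}$. The term $\lambda\grad^2 m$ is $N^{-1/2}N^{1/4}=N^{-1/4}$. The cross term is $1\cdot N^{-1/2}$. The last term is $N^{-1/4}$. So each row is $C_pN^{-p/4}$ in $L^p$, and summing over $N$ rows gives $\E\|\grad^2 r^{(B)}\|_F^p\le C_pN^{p/4}$.

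\textbf{Main obstacle.} The delicate point is Step 1. The stated estimate for $\grad^j\lambda$ must genuinely carry the cumulant term with only the prefactor $N^{-p}$. We must also apply \Cref{lem:improved-cumulant-kappa34} at level $B\cup\{i\}$, which has size at most $3$; this is precisely why the corollary is restricted to $|B|\le2$. If the row-summing step turns out to be too lossy for the cross term, the fallback is to bound $\sum_i\|\grad m_i\otimes\grad\lambda_i\|_F^2$ directly. For this we would write $\grad m_i=\Cov_{i,\cdot}$ and use $\opnorm{\Cov}$ together with $\max_i\|\grad\lambda_i\|$, both controlled with overwhelming probability.
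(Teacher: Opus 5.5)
Your proposal is correct and follows the same route as the paper. You plug the bounds of \Cref{lem:improved-cumulant-kappa34} at level $B\cup\{i\}$ (size at most $3$) into the $\grad^j\lambda_i^{(B)}$ estimate of \Cref{lem:tap-derivative-estimates-new} to get $N^{-p/2}$ and $N^{-p/4}$, then differentiate $r_i^{(B)}=\zeta_i^{(B)}+m_i^{(B)}\lambda_i^{(B)}$ and sum over rows; your exponent bookkeeping already yields the claimed bounds, so the fallback you mention is not needed.
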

\begin{proof}
    Combining the estimate on $\grad^j \lambda_i^{(B)}$ in \cref{lem:tap-derivative-estimates-new} (with $j=1,2$) with \cref{lem:improved-cumulant-kappa34} yields
    \[
        \E \|\grad \lambda_i^{(B)}\|_F^p
        \le C_p N^{-p/2} \mcom \qquad
        \E \|\grad^2 \lambda_i^{(B)}\|_F^p
        \le C_p N^{-p/4} \mper
    \]
    The result follows by differentiating $r_i^{(B)} = \zeta_i^{(B)} + m_i^{(B)} \lambda_i^{(B)}$ and applying the last display and the remaining estimates from \cref{lem:tap-derivative-estimates-new}.
\end{proof}

\begin{lemma}\label{lem:improved-cumulant-cov}
    For all $p\ge 2$, there exists $C_p > 0$ such that for all $|B| \le 2$,
    \[
        \E \|\Cov^{(B)}\|_{\mrop}^p
        \le C_p \mper
    \]
    In particular, \cref{lem:improved-cumulant}\ref{it:improved-cumulant-cov} follows as the case $|B| \le 1$.
\end{lemma}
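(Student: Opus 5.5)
We will use the first identity of \Cref{fac:improved-cumulant-start}, $K^{(B)}\Cov^{(B)} - \Id = \grad r^{(B)}$, exactly as $\kappa_3,\kappa_4$ were handled in \Cref{lem:improved-cumulant-kappa34}. By \Cref{cor:r-grad12}, for $|B|\le 2$ we have $\E\|\grad r^{(B)}\|_F^p \le C_p$. Consequently
\[
    \E\|K^{(B)}\Cov^{(B)}\|_{\mrop}^p \le 2^{p}\left(1 + \E\|\grad r^{(B)}\|_{\mrop}^p\right) \le 2^p\left(1+\E\|\grad r^{(B)}\|_F^p\right) \le C_p \mper
\]

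To invert $K^{(B)}$, let $\calE = \{K^{(B)}\succeq \gamma\Id\}$, with $\gamma$ from \Cref{lem:organized-gap}. That lemma is applied to the cavity model on $[N]\setminus B$, which is the same planted model on $N-|B|$ spins up to an $O(1/N)$ rescaling of the disorder, with the spike contribution slightly adjusted. The lower bound argument of \Cref{sec:lower-bd-inv-cov-pred} is insensitive to such perturbations, so $\Pr[\calE^c]\le N^{-a}$ for any constant $a$, uniformly over the $O(N^2)$ choices of $B$. On $\calE$ we have $\|\Cov^{(B)}\|_{\mrop}\le \gamma^{-1}\|K^{(B)}\Cov^{(B)}\|_{\mrop}$. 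On $\calE^c$ we use the deterministic bound $\|\Cov^{(B)}\|_{\mrop}\le \Tr\Cov^{(B)} \le N$. Putting these together,
\[
    \E\|\Cov^{(B)}\|_{\mrop}^p \le \gamma^{-p}\,\E\|K^{(B)}\Cov^{(B)}\|_{\mrop}^p + N^p\Pr[\calE^c] \le C_p \mcom
\]
where we choose $a=p$ in \Cref{lem:organized-gap}. The case $|B|\le 1$ is precisely \Cref{lem:improved-cumulant}\ref{it:improved-cumulant-cov}.

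\textbf{Main obstacle: the circularity.} The substance of this argument lies in \Cref{cor:r-grad12}, which rests on the bound $\E\|\grad\lambda_i^{(B)}\|_F^p\le C_pN^{-p/2}$. That bound needs the improved $\kappa_3^{(B\cup\{i\})}$ estimate from \Cref{lem:improved-cumulant-kappa34}, and this is why $|B|$ drops from $3$ to $2$.

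We also need to confirm that \Cref{lem:organized-gap} was proved without assuming any bound on $\|\Cov\|_{\mrop}$. This holds: its proof uses only overlap concentration (\Cref{lem:shifted-model-overlap-conc}) and AMP inputs from \cite{Cel24}. Hence the argument is not circular.
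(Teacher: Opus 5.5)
Your proof is correct and matches the paper's argument. The paper also splits on $\{K^{(B)}\succeq\gamma\Id\}$ from \Cref{lem:organized-gap}. On that event it bounds $\|\Cov^{(B)}\|_{\mathrm{op}}\le\gamma^{-1}(1+\|\grad r^{(B)}\|_F)$ using $K^{(B)}\Cov^{(B)}=\Id+\grad r^{(B)}$ and \Cref{cor:r-grad12}, and on the complement it uses $\|\Cov^{(B)}\|_{\mathrm{op}}\le\Tr\Cov^{(B)}\le N$ together with $N^p\Pr[\calE^c]=O(1)$.
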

\begin{proof} 
    We provide the proof for $B = \emptyset$ and omit the superscript $(\emptyset)$; the case of general $B$ is analogous.
    Let $\calE = \{K\succeq\gamma\Id\}$, where $\gamma>0$ is given by \cref{lem:organized-gap}.
    On $\calE$, the identity $K\Cov=\Id+\grad r$ from \cref{fac:improved-cumulant-start} gives
    \[
        \|\Cov\|_{\mrop}
        \le \|K^{-1}\|_{\mrop}\|\Id+\grad r\|_{\mrop}
        \le \gamma^{-1}\left(1+\|\grad r\|_F\right)\mper
    \]
    On the complement of $\calE$, we use the crude bound
    \[
        \|\Cov\|_{\mrop}\le\Tr\Cov=\sum_i(1-m_i^2)\le N\mper
    \]
    Thus,
    \begin{align*}
        \E\|\Cov\|_{\mrop}^p
        &\le \gamma^{-p}\E \left[\left(1 + \|\grad r\|_F\right)^p\right]
        +N^p\Pr[\calE^c] \\
        &\le 2^{p-1} \gamma^{-p} \left(1 + \E \|\grad r\|_F^p \right)
        +N^p\Pr[\calE^c]\mper
    \end{align*}
    By \cref{cor:r-grad12} and \cref{lem:organized-gap}, this is bounded by $C_p$.
\end{proof}

\begin{lemma}\label{lem:improved-cumulant-cov-high-prob}
    There exists a universal $C>0$ such that for all $a>0$, there exists $C_a > 0$ such that
    \[
        \Pr\left[
            \max_{|B| \le 2}
            \|\Cov^{(B)}\|_F
            > C\sqrt N
        \right]
        \le C_a N^{-a} \mper
    \]
    In particular, \cref{lem:improved-cumulant}\ref{it:improved-cumulant-cov-high-prob} follows by restricting the maximum to $|B| \le 1$.
\end{lemma}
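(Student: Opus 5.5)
We prove the high-probability Frobenius bound on $\Cov^{(B)}$ by combining the operator-norm moment bound of \Cref{lem:improved-cumulant-cov} with concentration of a trace functional. The naive route, $\|\Cov^{(B)}\|_F \le \sqrt{N}\,\|\Cov^{(B)}\|_{\mrop}$, needs $\|\Cov^{(B)}\|_{\mrop} \le C$ with probability $1 - N^{-a}$. Markov's inequality on the $L^p$ bounds of \Cref{lem:improved-cumulant-cov} only yields $\|\Cov^{(B)}\|_{\mrop} \le N^{\eps}$ at that probability level, with no universal $C$. So we instead work on the event $\calE = \{K^{(B)} \succeq \gamma\}$, which by \Cref{lem:organized-gap} holds with probability $1 - O(N^{-a})$ for every $|B| \le 2$. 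By \Cref{fac:improved-cumulant-start},
\[ \Cov^{(B)} = (K^{(B)})^{-1}\bigl(\Id + \grad r^{(B)}\bigr) \mcom \]
so on $\calE$ we have
\[ \|\Cov^{(B)}\|_F \le \gamma^{-1}\bigl(\sqrt{N} + \|\grad r^{(B)}\|_F\bigr) \mper \]
It therefore suffices to show $\|\grad r^{(B)}\|_F \le \sqrt{N}$ with probability $1 - O(N^{-a})$. The resulting constant $C = 2\gamma^{-1}$ then depends only on $\beta, T$ and not on $a$.

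For this we use the row decomposition of \Cref{lem:r-decomp}: $r^{(B)}_i = \zeta^{(B)}_i + m^{(B)}_i \lambda^{(B)}_i$, which gives
\[ \|\grad r^{(B)}\|_F^2 = \sum_{i \notin B} \bigl\|\grad \zeta_i^{(B)} + \lambda_i^{(B)} \grad m_i^{(B)} + m_i^{(B)} \grad \lambda_i^{(B)}\bigr\|_2^2 \mper \]
By \Cref{lem:tap-derivative-estimates-new}, combined with \Cref{lem:improved-cumulant-kappa34} for the $\kappa_3^{(B\cup\{i\})}$ term in the bound on $\grad\lambda_i^{(B)}$, each row has $p$-th moments satisfying
\[ \E\|\grad\zeta_i^{(B)}\|^p \le C_p N^{-p/2} \mcom \qquad \E\|\grad\lambda_i^{(B)}\|^p \le C_p N^{-p/2} \mcom \qquad \E\bigl\|\lambda_i^{(B)}\grad m_i^{(B)}\bigr\|^p \le C_p N^{-p/2} \mcom \]
the last using $\|\grad m_i^{(B)}\| \lesssim \|y_i\|$ and H\"older. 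Hence each row $X_i$ of $\grad r^{(B)}$ satisfies $\E\|X_i\|_2^{p} \le C_p N^{-p/2}$ for every fixed $p$. By Markov's inequality with large $p$ and a union bound over $i \notin B$ and over the $O(N^2)$ sets $|B| \le 2$, we get $\max_{i,B}\|X_i\|_2 \le N^{-1/2+\eps}$ with probability $1 - O(N^{-a})$. This only gives $\|\grad r^{(B)}\|_F \le N^{\eps}$, which is certainly at most $\sqrt{N}$, so the argument closes with room to spare.

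The main obstacle is making sure the union bound over $B$ is legitimate. Each $L^p$ estimate must hold uniformly in $B$ and $i$, which is how \Cref{lem:tap-derivative-estimates-new} is stated. Also, \Cref{lem:organized-gap} must apply to the cavity matrices $K^{(B)}$. This follows by rerunning its proof for the model on $[N] \setminus B$, whose external field has the same law up to $O(1/N)$ shifts handled by \Cref{lem:external-field-contiguity}. If the uniform row bound turns out delicate, a fallback is the cruder Frobenius moment bound $\E\|\grad r^{(B)}\|_F^p \le C_p$ from \Cref{cor:r-grad12}, which gives $\|\grad r^{(B)}\|_F \le N^{(a+2)/p}$ after Markov; this is still at most $\sqrt{N}$ once $p > 2(a+2)$.
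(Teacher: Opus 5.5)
Your argument is correct and is essentially the paper's. On $\{K^{(B)}\succeq\gamma\}$ you use $K^{(B)}\Cov^{(B)}=\Id+\grad r^{(B)}$ to reduce to $\|\grad r^{(B)}\|_F\le\sqrt N$. The paper closes this exactly as in your fallback, by Markov's inequality on $\E\|\grad r^{(B)}\|_F^p\le C_p$ from \Cref{cor:r-grad12}. Your row-wise union bound uses the same row estimates that prove that corollary, so it is the same argument with slightly different bookkeeping.

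One small correction to your justification: applying \Cref{lem:organized-gap} to $K^{(B)}$ needs no field contiguity. The model on $[N]\setminus B$ is exactly the planted model on $N-|B|$ spins at inverse temperature $\beta\sqrt{1-|B|/N}$, with an external field of the same form.
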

\begin{proof}
    We will show that there exists a universal $C>0$ such that for all $a>0$, there exists $C_a > 0$ such that uniformly over $B$ with $|B| \le 2$,
    \begin{equation}\label{eq:improved-cumulant-cov-high-prob-goal}
        \Pr[\|\Cov^{(B)}\|_F > C\sqrt N] \le C_a N^{-a}\mper
    \end{equation} 
    The conclusion follows from \cref{eq:improved-cumulant-cov-high-prob-goal} by a union bound (and adjusting $a$).
    We provide the proof of \cref{eq:improved-cumulant-cov-high-prob-goal} for $B = \emptyset$ and omit the superscript $(\emptyset)$; the case of general $B$ is analogous.

    Let $\gamma>0$ be given by \cref{lem:organized-gap} and set $\calE = \{K\succeq\gamma\Id\}$.
    On $\calE$, \cref{fac:improved-cumulant-start} gives
    \[
        \|\Cov\|_F
        \le \|K^{-1}\|_{\mrop}\|\Id+\grad r\|_F
        \le \gamma^{-1}\left(\sqrt N+\|\grad r\|_F\right)\mper
    \]
    Take $p\ge\max(2,2a)$.
    By Markov's inequality, \cref{cor:r-grad12}, and \cref{lem:organized-gap},
    \begin{align*}
        \Pr\left[\|\Cov\|_F>2\gamma^{-1}\sqrt N\right]
        &\le \Pr[\calE^c]
        +\Pr\left[\|\grad r\|_F>\sqrt N\right] \\
        &\le \Pr[\calE^c]
        +N^{-p/2}\E\|\grad r\|_F^p
        \le C_a N^{-a}\mper
    \end{align*}
    This proves \cref{eq:improved-cumulant-cov-high-prob-goal} with $C=2\gamma^{-1}$.
\end{proof}

\begin{lemma}\label{lem:di-concentration}
    For all $p\ge 2$, there exists $C_p > 0$ such that for all $|B|\le 1$,
    \[
        \max_{i\in [N] \setminus B}
        \E \left|\|d_i^{(B)}\|_2^2 - \frac{\beta^2}{N} \|\Cov^{(B \cup \{i\})}\|_F^2 \right|^p
        \le C_p N^{-p/2}.
    \]
\end{lemma}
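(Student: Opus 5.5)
We prove the bound for $B=\emptyset$; general $|B|\le 1$ is identical. Write $g \coloneqq \beta\Cov^{(i)}\bM_{i,-i}$, where $\Cov^{(i)}=\Cov^{(\{i\})}$ is evaluated at the cavity field $u_i$ and hence is independent of the row $\bW_{i,-i}$. The plan is a two-step comparison,
\[
    \|d_i\|_2^2 \;\approx\; \|g\|_2^2 \;\approx\; \frac{\beta^2}{N}\|\Cov^{(i)}\|_F^2 ,
\]
with each step shown to cost $O_{L^p}(N^{-1/2})$.

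\textbf{Step 1: replace $d_i$ by $g$.} Write
\[
    \|d_i\|^2-\|g\|^2=\langle d_i-g,\,d_i+g\rangle .
\]
By \Cref{lem:tap-derivative-estimates-new}, $\E\|d_i-g\|^p\le C_pN^{-p/2}$. It then suffices to have $\E\|g\|^p\le C_p$, since this also gives $\E\|d_i\|^p\le C_p$. Cauchy--Schwarz and H\"older then give an $L^p$ bound of order $N^{-1/2}$ for this difference.

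To bound $\|g\|$, decompose $\bM_{i,-i}=\bW_{i,-i}+\frac{\beta\bx_i}{N}\bx_{-i}$.
\begin{itemize}
    \item The Gaussian part is, conditionally on the cavity, a centered Gaussian vector with covariance $\frac{\beta^2}{N}(\Cov^{(i)})^2$. So $\E_{\bW_{i,-i}}\|\beta\Cov^{(i)}\bW_{i,-i}\|^p \lesssim N^{-p/2}\|\Cov^{(i)}\|_F^p$.
    \item The spike part is at most $\frac{\beta^2}{\sqrt N}\|\Cov^{(i)}\|_{\mathrm{op}}$.
\end{itemize}
Both are $O_{L^p}(1)$: the first by \Cref{lem:cumulant-bounds} ($\E\|\Cov^{(i)}\|_F^p\lesssim N^{p/2}$), the second by \Cref{lem:improved-cumulant-cov}.

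\textbf{Step 2: concentrate $\|g\|^2$ in the row.} Expand
\[
    \|g\|^2=\beta^2\bM_{i,-i}^\top(\Cov^{(i)})^2\bM_{i,-i}
\]
into three pieces.
\begin{itemize}
    \item \emph{Quadratic Gaussian form.} $\beta^2\bW_{i,-i}^\top(\Cov^{(i)})^2\bW_{i,-i}$ has conditional mean exactly $\frac{\beta^2}{N}\Tr(\Cov^{(i)})^2=\frac{\beta^2}{N}\|\Cov^{(i)}\|_F^2$, the target quantity. Its conditional $p$-th central moment is controlled by Hanson--Wright or Gaussian hypercontractivity for degree-$2$ chaos:
    \[
        \frac{C_p}{N^p}\|(\Cov^{(i)})^2\|_F^p\le \frac{C_p}{N^p}\,N^{p/2}\|\Cov^{(i)}\|_{\mathrm{op}}^{2p}.
    \]
    Here we use $\|(\Cov^{(i)})^2\|_F\le \sqrt N\|\Cov^{(i)}\|_{\mathrm{op}}^2$, which needs the operator-norm moments of \Cref{lem:improved-cumulant-cov} rather than the crude Frobenius bound. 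Taking expectations gives $O(N^{-p/2})$.
    \item \emph{Cross term.} $\frac{2\beta^3\bx_i}{N}\bW_{i,-i}^\top(\Cov^{(i)})^2\bx_{-i}$ is conditionally Gaussian with standard deviation at most
    \[
        \frac{C}{N^{3/2}}\|(\Cov^{(i)})^2\bx_{-i}\|\le \frac{C}{N}\|\Cov^{(i)}\|_{\mathrm{op}}^2 .
    \]
    \item \emph{Spike term.} $\frac{\beta^4}{N^2}\bx_{-i}^\top(\Cov^{(i)})^2\bx_{-i}\le \frac{\beta^4}{N}\|\Cov^{(i)}\|_{\mathrm{op}}^2$.
\end{itemize}
The last two pieces are $O_{L^p}(N^{-1})$ by \Cref{lem:improved-cumulant-cov}.

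Combining both steps with Minkowski's inequality yields the claim, uniformly in $i$. For $|B|=1$ we need the same inputs for cavities of size $|B\cup\{i\}|=2$. These are available: \Cref{lem:improved-cumulant-cov} and \Cref{lem:tap-derivative-estimates-new} allow $|B|\le 2$ and $|B|\le 3$ respectively, and \Cref{lem:cumulant-bounds} applies to any cavity of bounded size. This size restriction is why the statement is limited to $|B|\le1$.

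The main obstacle is the first Step 2 estimate. With only the crude bound $\|(\Cov^{(i)})^2\|_F\le\|\Cov^{(i)}\|_F^2=O(N)$, the fluctuation would be $O(1)$ rather than $O(N^{-1/2})$. So the whole argument relies on the $L^p$ operator-norm bound of \Cref{lem:improved-cumulant-cov}, which is proved before this lemma and is therefore not circular.
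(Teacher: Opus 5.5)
Your proposal is correct and follows essentially the same route as the paper. It uses the identity $\|d_i\|_2^2-\|g\|_2^2=\langle d_i-g,\,d_i+g\rangle$ together with the $\E\|d_i-\beta\Cov^{(i)}\bM_{i,-i}\|_2^p\le C_pN^{-p/2}$ estimate from \Cref{lem:tap-derivative-estimates-new}, and then Gaussian quadratic- and linear-form moment bounds on $\beta^2\bM_{i,-i}^\top(\Cov^{(i)})^2\bM_{i,-i}$ via $\|(\Cov^{(i)})^2\|_F\le\sqrt N\|\Cov^{(i)}\|_{\mathrm{op}}^2$ and \Cref{lem:improved-cumulant-cov}. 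The only cosmetic difference is that you bound $\E\|d_i\|_2^p$ through $\E\|g\|_2^p$ directly, whereas the paper reads it off the $j=1$ estimate of \Cref{lem:tap-derivative-estimates-new}; the proof of that lemma performs the same computation.
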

\begin{proof}
    We provide the proof for $B = \emptyset$ and omit the superscript $(\emptyset)$; the case of general $B$ is analogous.
    Fix $i\in[N]$ and set 
    \[
        \epsilon_i = d_i - \beta\Cov^{(i)}\bM_{i,-i}\mcom \qquad 
        H_i = (\Cov^{(i)})^2\mper
    \]
    Note that $v_i^{-1} \grad m_i = v_i^{-1} \Cov_i$ has $1$ in its $i$-th entry, and its remaining entries are $d_i$.
    Thus \cref{lem:tap-derivative-estimates-new} implies, for every $p\ge2$,
    \[
        \E\|d_i\|_2^p\le C_p\mcom\qquad
        \E\|\epsilon_i\|_2^p\le C_pN^{-p/2}\mper
    \]
    Note that
    \[
        \|d_i\|_2^2-\beta^2\bM_{i,-i}^\top H_i\bM_{i,-i}
        = \|d_i\|_2^2-\|d_i-\epsilon_i\|_2^2
        = 2\langle d_i,\epsilon_i\rangle - \|\epsilon_i\|_2^2\mper
    \]
    By H\"older's inequality,
    \begin{equation}\label{lem:di-concentration-step1}
        \E\left|\|d_i\|_2^2-\beta^2\bM_{i,-i}^\top H_i\bM_{i,-i}\right|^p
        \le C_pN^{-p/2}\mper
    \end{equation}
    Conditional on $(\bx,\bW_{-i,-i},\bg)$, the matrix $H_i$ is fixed and
    \[
        \bM_{i,-i}=\bW_{i,-i}+\eta_i\mcom\qquad
        \eta_i=\frac{\beta\bx_i\bx_{-i}}{N}\mcom
    \]
    where $\bW_{i,-i}\sim\calN(0,\Id/N)$ and $\|\eta_i\|_2\le\frac{\beta}{\sqrt N}$.
    Writing $\E_i$ for expectation over $\bW_{i,-i}$ under this conditioning, standard Gaussian quadratic- and linear-form moment bounds give
    \[
        \E_i\left|\bM_{i,-i}^\top H_i\bM_{i,-i}-\frac{\Tr H_i}{N}\right|^p
        \le \frac{C_p}{N^p} \|H_i\|_F^p
        +\frac{\|H_i\eta_i\|_2^p}{N^{p/2}}
        +|\eta_i^\top H_i\eta_i|^p
        \le \frac{C_p}{N^p} \|H_i\|_F^p\mper
    \]
    Since $\|H_i\|_F\le\sqrt N\|\Cov^{(i)}\|_{\mrop}^2$, \cref{lem:improved-cumulant-cov} implies
    \[
        \E\left|\bM_{i,-i}^\top H_i\bM_{i,-i}-\frac{\Tr H_i}{N}\right|^p
        \le C_pN^{-p/2}\mper
    \]
    Combining with \cref{lem:di-concentration-step1} and $\Tr H_i=\|\Cov^{(i)}\|_F^2$ proves the result.
\end{proof}

\begin{proof}[Proof of \cref{lem:improved-cumulant}\ref{it:improved-cumulant-kappa3-high-prob}]
    We will show that there exists a universal $C>0$ such that for all $a>0$, there exists $C_a>0$ such that uniformly over $B$ with $|B|\le 1$,
    \begin{equation}\label{eq:improved-cumulant-kappa3-high-prob-goal}
        \Pr[\|\kappa_3^{(B)}\|_F>C\sqrt N]\le C_aN^{-a}\mper
    \end{equation}
    The conclusion follows by a union bound (and adjusting $a$).
    We provide the proof of \cref{eq:improved-cumulant-kappa3-high-prob-goal} for $B = \emptyset$ and omit the superscript $(\emptyset)$; the case of general $B$ is analogous.

    Let $A,U$ be defined as in \eqref{eq:AU}; as observed in the proof of \cref{lem:improved-cumulant-kappa34}, $T - A - U$ is linear.
    By \cref{fac:improved-cumulant-start},
    \begin{equation}\label{eq:improved-cumulant-kappa3-high-prob-start}
        K\kappa_3 = \grad^2 r - D^2 A(m)\{\Cov,\Cov\} - D^2 U(m)\{\Cov,\Cov\}\mper
    \end{equation}
    By \cref{eq:D2U} and the estimates $\|m\|_2 \le \sqrt{N}$, $\|\Cov\|_F \le \sqrt{N} \|\Cov\|_{\mrop}$,
    \[
        \|D^2U(m)\{\Cov,\Cov\}\|_F
        \le \frac{6\beta^2}{N}\|m\|_2\|\Cov\|_{\mrop}\|\Cov\|_F
        \le 6\beta^2\|\Cov\|_{\mrop}^2\mper
    \]
    Together with \cref{cor:r-grad12,lem:improved-cumulant-cov}, this implies, for every fixed $p\ge 2$,
    \[
        \max\left\{
            \E\|\grad^2 r\|_F^p,
            \E\|D^2U(m)\{\Cov,\Cov\}\|_F^p
        \right\} \le C_pN^{p/4}\mper
    \]
    By Markov's inequality with $p\ge\max(2,4a)$,
    \begin{equation}\label{eq:grad2r-D2U-high-prob-ub}
        \Pr\left(
            \max(\|\grad^2 r\|_F, \|D^2U(m)\{\Cov,\Cov\}\|_F)
            \ge \sqrt{N}
        \right) \le C_a N^{-a}\mper
    \end{equation}
    We turn to controlling the term $\|D^2 A(m)\{\Cov,\Cov\}\|_F$.
    As noted in the proof of \cref{lem:di-concentration}, the $i$-th coordinate of $v_i^{-1}\grad m_i$ is $1$, and the remaining coordinates are $d_i$.
    So,
    \[
        \|D^2A(m)\{\Cov,\Cov\}\|_F^2
        = 4\sum_i m_i^2\|v_i^{-1}\grad m_i\|_2^4
        \le 4\sum_i(1+\|d_i\|_2^2)^2\mper
    \]
    Set $q_i = \beta^2 \|\Cov^{(i)}\|_F^2 / N$ and
    \[
        \Gamma = \sum_i \left|\|d_i\|_2^2 - q_i \right|^2\mper
    \]
    By Jensen's inequality and \cref{lem:di-concentration}, for every $p\ge 1$,
    \[
        \E \Gamma^p
        \le N^{p-1} \sum_i \E \left|\|d_i\|_2^2 - q_i \right|^{2p}
        \le C_p\mper
    \]
    By Markov's inequality with $p \ge \max(1,a)$, $\Pr(\Gamma \le N) \ge 1 - C_a N^{-a}$.
    \cref{lem:improved-cumulant-cov-high-prob} provides a universal constant $C_0$ such that
    \[
        \Pr(\max_i q_i \le \beta^2 C_0^2)
        \ge 1 - C_a N^{-a}\mper
    \]
    On the intersection of these two events,
    \[
        \|D^2A(m)\{\Cov,\Cov\}\|_F^2
        \le 4 \sum_i(1+\|d_i\|_2^2)^2
        \le 8\sum_i (1 + q_i)^2 + 8\Gamma
        \le 8N\left((1 + \beta^2 C_0^2)^2 + 1\right)\mper
    \]
    This provides an absolute $C$ such that
    \[
        \Pr(\|D^2A(m)\{\Cov,\Cov\}\|_F > C\sqrt{N}) \le C_a N^{-a}\mper
    \]
    Recalling \eqref{eq:improved-cumulant-kappa3-high-prob-start}, a union bound with \cref{eq:grad2r-D2U-high-prob-ub} shows
    \[
        \Pr(\|K\kappa_3\|_F > C \sqrt{N}) \le C_a N^{-a}\mcom
    \]
    after adjusting $C$ and $C_a$.
    Combining with \cref{lem:organized-gap} proves the result.
\end{proof}

We finally present the deferred proof of \cref{lem:tap-derivative-estimates-new}.
\begin{proof}[Proof of \cref{lem:tap-derivative-estimates-new}]
    We provide the proof for $B = \emptyset$ and omit the superscript $(\emptyset)$; the case of general $B$ is analogous.
    The proof proceeds in five steps, and uses only \cref{lem:cumulant-bounds,lem:cumulant-contraction-estimates} and Gaussian moment estimates.

    \textit{Step 1: cavity expansion for $d_i$.}
    Let $\epsilon_i = d_i - \beta \Cov^{(i)} \bM_{i,-i}$. In this step we prove
    \begin{equation}\label{eq:tap-derivative-estimates-new-step1}
        \E\|\epsilon_i\|_2^p\le C_pN^{-p/2}\mcom
    \end{equation}
    which is the first estimate of the lemma.
    For $s' \in \{\pm 1\}$, Taylor expansion gives
    \[
        m^{(i,s')}
        = \nabla F^{(i)}(u_i + s'\beta \bM_{i,-i})
        = \nabla F^{(i)}(u_i)
        + s' \beta \Cov^{(i)} \bM_{i,-i}
        + \frac{\beta^2}{2} \kappa_3^{(i)} [\bM_{i,-i}^{\otimes 2}]
        + \epsilon_{i,s'},
    \]
    where
    \[
        \epsilon_{i,s'} = \frac{s'\beta^3}{2}\int_0^1(1-s)^2
        \kappa_{4,ss'}^{(i)} [\bM_{i,-i}^{\otimes 3}]\,\dif s\mper
    \]
    Thus
    \[
        d_i = \frac{m^{(i,+)}-m^{(i,-)}}{2}
        = \beta \Cov^{(i)} \bM_{i,-i} + \frac{\epsilon_{i,+} - \epsilon_{i,-}}{2}\mcom
    \]
    and
    \begin{equation}\label{eq:tap-d-taylor-remainder}
        \epsilon_i = \frac{\epsilon_{i,+} - \epsilon_{i,-}}{2}
        =\frac{\beta^3}{4}\int_0^1(1-s)^2
        \left(\kappa_{4,s}^{(i)}+\kappa_{4,-s}^{(i)}\right)
        [\bM_{i,-i}^{\otimes3}]\,\dif s\mper
    \end{equation}
    Applying \cref{lem:cumulant-contraction-estimates} with $r=4$, $k=0$, and $\ell=3$ yields \cref{eq:tap-derivative-estimates-new-step1}.
    
    \textit{Step 2: moments of $d_i$ and its derivatives.}
    In this step, we prove the auxiliary estimate
    \begin{equation}\label{eq:tap-d-derivatives-aux}
        \E\|\grad^j d_i\|_F^p\le C_pN^{pj/4}
    \end{equation}
    for $j=0,1,2,3$.
    Conditional on $(\bx,\bW_{-i,-i},\bg)$, the matrix $\Cov^{(i)}$ is fixed and
    \[
        \bM_{i,-i}=\bW_{i,-i}+\eta_i\mcom\qquad
        \eta_i=\frac{\beta\bx_i\bx_{-i}}{N}\mcom
    \]
    where $\bW_{i,-i} \sim \calN(0, \Id/N)$ and $\|\eta_i\|_2 \le \beta / \sqrt{N}$.
    Write $\E_i$ for expectation over $\bW_{i,-i}$ under this conditioning.
    Gaussian moments give
    \[
        \E_i\|\Cov^{(i)}\bM_{i,-i}\|_2^p
        \le \frac{C_p}{N^{p/2}}\|\Cov^{(i)}\|_F^p
        + \|\Cov^{(i)}\eta_i\|_2^p
        \le \frac{C_p}{N^{p/2}}\|\Cov^{(i)}\|_F^p\mper
    \]
    Taking expectation and using the covariance bound from \cref{lem:cumulant-bounds}, we obtain
    \[
        \E\|\Cov^{(i)}\bM_{i,-i}\|_2^p\le C_p
    \]
    Together with \cref{eq:tap-derivative-estimates-new-step1} this implies \cref{eq:tap-d-derivatives-aux} for $j=0$.
    For $j=1,2,3$, differentiating $d_i=(m^{(i,+)}-m^{(i,-)})/2$ and then integrating in $s$ gives
    \[
        \grad^j d_i
        =\frac{\kappa_{j+1,1}^{(i)}-\kappa_{j+1,-1}^{(i)}}2
        =\frac\beta2\int_{-1}^1
        \kappa_{j+2,s}^{(i)}[\bM_{i,-i}]\,\dif s\mper
    \]
    Applying \cref{lem:cumulant-contraction-estimates} with $r=j+2$, $k=0$, and $\ell=1$ proves \cref{eq:tap-d-derivatives-aux}.

    \textit{Step 3: derivatives of $m_i$ and $\zeta_i$.}
    In this step we prove the lemma's estimates on $(v_i)^{-1} \grad^j m_i$ and $\grad^j \zeta_i$.
    Set $a_i=\arctanh(m_i)$.
    By the definition of $d_i$,
    \[
        \grad a_i=v_i^{-1}\grad m_i=v_i^{-1}\Cov_i=(1,d_i),
    \]
    where we write the $i$-th coordinate first.
    Differentiating this identity and using \cref{eq:tap-d-derivatives-aux} gives
    \begin{equation}\label{eq:tap-a-derivatives-aux}
        \E\|\grad^j a_i\|_F^p\le C_pN^{p(j-1)/4}
    \end{equation}
    for $j=1,2,3$.
    Since $m_i=\tanh a_i$ and $v_i=1-m_i^2$, we calculate
    \[
        \frac{\tanh'(a_i)}{v_i}=1\mcom\qquad
        \frac{\tanh''(a_i)}{v_i}=-2m_i\mcom\qquad
        \frac{\tanh'''(a_i)}{v_i}=6m_i^2-2\mper
    \]
    Thus the chain rule gives
    \begin{align*}
        v_i^{-1}\grad m_i
        &=\grad a_i\mcom\\
        v_i^{-1}\grad^2m_i
        &=\grad^2a_i-2m_i(\grad a_i)^{\otimes2}\mcom\\
        v_i^{-1}\grad^3m_i
        &=\grad^3a_i-2m_i\sum_{\Sym}\grad^2a_i\otimes\grad a_i
        +(6m_i^2-2)(\grad a_i)^{\otimes3}\mper
    \end{align*}
    Since $|m_i|\le1$, all coefficients are bounded.
    Applying \cref{eq:tap-a-derivatives-aux} proves the desired estimates on $v_i^{-1}\grad^j m_i$.
    Finally, differentiating the integral representation in \cref{eq:zeta} yields
    \[
        \grad^j\zeta_i
        =-\frac{\beta^3}{4}\int_{-1}^1(1-s^2)
        \kappa_{j+3,s}^{(i)}[\bM_{i,-i}^{\otimes3}]\,\dif s\mper
    \]
    Applying \cref{lem:cumulant-contraction-estimates} with $r=j+3$, $k=0$, and $\ell=3$ proves the claimed estimates on $\grad^j\zeta_i$.
    
    \textit{Step 4: cavity estimate for covariance trace.}
    Set $\Xi_i=\Tr\Cov-\Tr\Cov^{(i)}$.
    In this step, we prove the auxiliary estimate
    \begin{equation}\label{eq:tap-cavity-trace-derivatives}
        \E\|\grad^j\Xi_i\|_F^p\le C_pN^{p(j+1)/4}
    \end{equation}
    for $j=0,1,2,3$.
    Note that here, $\grad$ includes derivatives in coordinate $i$ of the external field $u = h$, even though $\Cov^{(i)}$ does not depend on this coordinate.

    Let $\bsigma$ be a sample from the Gibbs measure.
    The law of total covariance yields
    \begin{align*}
        \Xi_i
        &= \Tr \E[\Cov(\bsigma | \sigma_i)] 
        - \Tr \Cov^{(i)} 
        + \Tr \Cov(\E[\bsigma | \sigma_i]) \\
        &= \frac{1+m_i}{2} \kappa_{2,1}^{(i)}[\Id]
        + \frac{1-m_i}{2} \kappa_{2,-1}^{(i)}[\Id]
        - \kappa_2^{(i)}[\Id]
        + v_i(1+\|d_i\|_2^2)
        = \frac12 \Xi_{i,1} + \frac{m_i}{2} \Xi_{i,2} 
        + v_i(1+\|d_i\|_2^2)\mcom
    \end{align*}
    where
    \[
        \Xi_{i,1} = \left(\kappa_{2,1}^{(i)} + \kappa_{2,-1}^{(i)} - 2 \kappa_2^{(i)}\right)[\Id]\mcom\qquad
        \Xi_{i,2} = \left(\kappa_{2,1}^{(i)} - \kappa_{2,-1}^{(i)}\right)[\Id]\mper
    \]
    For $j=0,1,2,3$, Taylor expansion in $s$ gives
    \begin{align*}
        \grad^j \Xi_{i,1}
        = \left(
            \kappa_{j+2,1}^{(i)}
            + \kappa_{j+2,-1}^{(i)}
            - 2\kappa_{j+2}^{(i)}
        \right)[\Id]
        &=\beta^2\int_0^1(1-s)
        \left(
            \kappa_{j+4,s}^{(i)}
            +\kappa_{j+4,-s}^{(i)}
        \right)
        [\Id\otimes\bM_{i,-i}^{\otimes2}]\,\dif s,\\
        \grad^j \Xi_{i,2} = \left(
            \kappa_{j+2,1}^{(i)}
            -\kappa_{j+2,-1}^{(i)}
        \right)[\Id]
        &=\beta\int_{-1}^1
        \kappa_{j+3,s}^{(i)}[\Id\otimes\bM_{i,-i}]\,\dif s\mper
    \end{align*}
    Applying \cref{lem:cumulant-contraction-estimates} with $(r,k,\ell)=(j+4,1,2)$ and $(j+3,1,1)$ respectively, and integrating in $s$, bounds
    \[
        \E \|\grad^j \Xi_{i,1}\|_F^p \le C_p N^{pj/4}\mcom\qquad
        \E \|\grad^j \Xi_{i,2}\|_F^p \le C_p N^{p(j+1)/4}\mper
    \]
    From the conclusion of step 3 and the trivial bound $v_i \le 1$, we have $\E\|\grad^j m_i\|_F^p\le C_pN^{p(j-1)/4}$ for $j=1,2,3$.
    Differentiating $v_i=1-m_i^2$ gives the same bound for $\grad^j v_i$; thus
    \begin{equation}\label{eq:tap-m-v-derivatives-aux}
        \max\left\{
            \E\|\grad^j m_i\|_F^p\mcom
            \E\|\grad^j v_i\|_F^p
        \right\}
        \le C_pN^{p(j-1)/4}
    \end{equation}
    for $j=1,2,3$.
    Combining \cref{eq:tap-d-derivatives-aux,eq:tap-m-v-derivatives-aux} with $0\le v_i\le1$ yields
    \[
        \E\left\|\grad^j\left(v_i(1+\|d_i\|_2^2)\right)\right\|_F^p
        \le C_pN^{pj/4}
    \]
    for $j=0,1,2,3$.
    Together this proves \cref{eq:tap-cavity-trace-derivatives}.

    \textit{Step 5: derivatives of $\lambda_i$.}
    In this step we prove the lemma's estimates on $\grad^j\lambda_i$.
    Using $\Tr\Cov=N-\|m\|_2^2$ and the definitions $\epsilon_i = d_i - \beta\Cov^{(i)}\bM_{i,-i}$ from Step 1 and $\Xi_i = \Tr \Cov - \Tr \Cov^{(i)}$ from Step 4, we obtain
    \[
        \lambda_i
        =\frac{\beta^2}{N}\Tr\Cov-\beta\bM_{i,-i}^\top d_i
        =\frac{\beta^2}{N}\Xi_i
        -\beta^2\Cov^{(i)}\left[\bM_{i,-i}^{\otimes2}-\frac{\Id}{N}\right]
        -\beta\bM_{i,-i}^\top\epsilon_i\mper
    \]
    Differentiating with respect to the external field gives
    \begin{equation}\label{eq:tap-centered-lambda-derivatives}
        \grad^j\lambda_i
        =\frac{\beta^2}{N}\grad^j\Xi_i
        -\beta^2\kappa_{j+2}^{(i)}
        \left[\bM_{i,-i}^{\otimes2}-\frac{\Id}{N}\right]
        -\beta\grad^j(\bM_{i,-i}^\top\epsilon_i)\mcom
    \end{equation}
    for $j=0,1,2,3$.
    Since $\bM_{i,-i}$ does not depend on the external field, differentiating \cref{eq:tap-d-taylor-remainder} gives
    \[
        \grad^j(\bM_{i,-i}^\top\epsilon_i)
        = \frac{\beta^3}{4}
        \int_0^1(1-s)^2
        \left(\kappa_{j+4,s}^{(i)}+\kappa_{j+4,-s}^{(i)}\right)
        [\bM_{i,-i}^{\otimes4}]\,\dif s\mper
    \]
    By \cref{lem:cumulant-contraction-estimates} with $r=j+4$, $k=0$, and $\ell=4$,
    \begin{equation}\label{eq:grad-Meps}
        \E\|\grad^j(\bM_{i,-i}^\top\epsilon_i)\|_F^p\le C_pN^{p(j-4)/4}\mper
    \end{equation}
    We will bound the $\grad^j \Xi_i$ term in \cref{eq:tap-centered-lambda-derivatives} using \cref{eq:tap-cavity-trace-derivatives}.
    We now turn to the $\kappa_{j+2}^{(i)}$ term.
    Recall from Step 2 the notation $\bM_{i,-i}=\bW_{i,-i}+\eta_i$, and that $\E_i$ denotes expectation over $\bW_{i,-i}$ conditional on $(\bx,\bW_{-i,-i},\bg)$.
    Then
    \[
        \bM_{i,-i}^{\otimes2}-\frac{\Id}{N}
        =\left(\bW_{i,-i}^{\otimes2}-\frac{\Id}{N}\right)
        +\bW_{i,-i}\otimes\eta_i+\eta_i\otimes\bW_{i,-i}
        +\eta_i^{\otimes2}\mper
    \]
    For any tensor $H$ of order at least two measurable with respect to $(\bx,\bW_{-i,-i},\bg)$, Gaussian quadratic- and linear-form moment bounds give
    \begin{align*}
        \E_i\left\|H\left[\bM_{i,-i}^{\otimes2}-\frac{\Id}{N}\right]\right\|_F^p
        \le C_p\left(\frac1N+\frac{\|\eta_i\|_2}{\sqrt N}+\|\eta_i\|_2^2\right)^p \|H\|_F^p
        \le\frac{C_p}{N^p}\|H\|_F^p\mper
    \end{align*}
    The last inequality uses $\|\eta_i\|_2\le\beta/\sqrt N$ from Step 2.
    Taking $H=\kappa_{j+2}^{(i)}$, which is fixed under the conditioning, and taking expectation yields
    \[
        \E\left\|\kappa_{j+2}^{(i)}
        \left[\bM_{i,-i}^{\otimes2}-\frac{\Id}{N}\right]\right\|_F^p
        \le\frac{C_p}{N^p}\E\|\kappa_{j+2}^{(i)}\|_F^p\mper
    \]
    Combining this with \cref{eq:tap-cavity-trace-derivatives,eq:tap-centered-lambda-derivatives,eq:grad-Meps} proves
    \[
        \E\|\grad^j\lambda_i\|_F^p
        \le C_p\left(N^{p(j-3)/4}+N^{-p}\E\|\kappa_{j+2}^{(i)}\|_F^p\right)
    \]
    for $j=0,1,2,3$, as desired.
    If $j=0$, \cref{lem:cumulant-bounds} gives $\E\|\kappa_2^{(i)}\|_F^p\le C_pN^{p/2}$, and we obtain the final assertion $\E|\lambda_i|^p \le C_pN^{-p/2}$ of the lemma.
\end{proof}

\end{document}